\theoremstyle{plain}
\newtheorem{theorem}[equation]{Theorem}
\newtheorem{lemma}[equation]{Lemma}
\newtheorem{corollary}[equation]{Corollary}
\newtheorem{proposition}[equation]{Proposition}
\theoremstyle{definition}
\newtheorem{definition}[equation]{Definition}
\theoremstyle{remark}
\newtheorem{remark}[equation]{Remark}
\numberwithin{equation}{section}
\newcommand{\eps}{\varepsilon}
\newcommand{\dist}{\operatorname{dist}}
\newcommand{\dv}{\operatorname{div}}
\newcommand{\re}{\mathbb{R}}
\newcommand{\rn}{\mathbb{R}^n}
\newcommand{\reu}{\mathbb{R}^{n+1}_+}
\newcommand{\ree}{\mathbb{R}^{n+1}}
\newcommand{\N}{\mathbb{N}}
\newcommand{\dd}{\mathbb{D}}
\newcommand{\om}{\Omega}
\newcommand{\F}{\mathcal{F}}
\newcommand{\LL}{\mathcal{L}}
\newcommand{\W}{\mathcal{W}}
\newcommand{\sbf}{{\bf S}}
\newcommand{\pom}{\partial\Omega}
\newcommand{\hm}{\omega}
\newcommand{\vp}{\varphi}
\newcommand{\ttm}{{\tt m}}
\renewcommand{\P}{\mathcal{P}}
\newcommand{\Bb}{{\bf B}}
\renewcommand{\emptyset}{\mbox{\textup{\O}}}
\DeclareMathOperator{\supp}{supp}
\DeclareMathOperator{\diam}{diam}
\DeclareMathOperator{\interior}{\tt int}
\DeclareMathOperator*{\esssup}{ess\,sup}
\title{The Dirichlet Problem for elliptic equations with singular drift terms}
\begin{document}

\author{S. Hofmann}
\address{
Department of Mathematics
\\
University of Missouri
\\
Columbia, MO 65211, USA} 
\email{hofmanns@missouri.edu}

\thanks{The author was partially supported by NSF grant  
DMS-2349846. }

\date{\today}

\subjclass[2020]{
35J15, 35J25, 42B37}

\keywords{Dirichlet problem, elliptic equations, elliptic measure, drift terms, Carleson 
measures}

\begin{abstract}
We establish $L^p$ solvability
of the Dirichlet problem, for some finite $p$,  
in a 1-sided chord-arc domain $\Omega$ 
(i.e., a uniform domain with Ahlfors-David regular boundary),
for elliptic equations of the form
\[
Lu=-\dv(A\nabla u) \,+\, \Bb\cdot \nabla u=:L_0 u\,+\, \Bb\cdot \nabla u=0\,,
\]
given that the analogous result holds (typically with a different value of $p$) for the homogeneous second order operator $L_0$.  Essentially, we 
assume that $|\Bb(X)|\lesssim \dist(X,\pom)^{-1}$, and that
$|\Bb(X)|^2\dist(X,\pom) dX$ is a Carleson measure in $\Omega$.

\end{abstract}

\maketitle

\tableofcontents

\section{Introduction, history,  and statement of main result}\label{Sintro}

In this work, we treat the Dirichlet problem with data in $L^p$ (denoted $(D)_p$), for the equation $Lu=0$, where
$L$ is an elliptic operator with a drift term, i.e.,
\begin{equation}\label{Ldef}
Lu:=-\dv(A\nabla u) \,+\, \Bb\cdot \nabla u\,=:\,L_0 u\,+\, \Bb\cdot \nabla u\,,
\end{equation}
in the setting of a {\bf 1-sided chord arc domain}, i.e., a uniform domain $\Omega\subset \ree$
with ($n$-dimensional) Ahlfors-David regular boundary (all notation and terminology will be defined in the sequel).  Our results are perturbative in nature:  we assume that the Dirichlet problem is solvable for the purely second order 
operator $L_0$, with data in $L^p$ for some finite $p$, and show that the analogous
solvability result holds (for a possibly different $p$) for the operator $L$. 
Writing $\delta(X):= \dist(X,\pom)$,
we assume in particular that for some $M_0<\infty$, $\Bb$ satisfies
\begin{equation}\label{driftsize}
|\Bb(X)|\,\leq\, \sqrt{M_0} \,\delta(X)^{-1}\,,\qquad \text{a.e. } X\in \Omega\,,
\end{equation} 
and that $d\mu(X):=|\Bb(X)|^2\delta(X) dX$
 is a Carleson measure in $\Omega$, i.e.,
\begin{equation}\label{driftCarleson}
\|\Bb\|_{T^{2,\infty}(\Omega)}\,:=\,
\sup_B 
r_B^{-n}\iint_{\Omega\cap B} |\Bb(X)|^2\,\delta(X) \,dX\,\leq M_0
 \,<\,\infty\,,
\end{equation}
where the supremum runs over all balls $B\subset \ree$ centered on $\pom$, and $r_B$ is the radius of $B$.  
More precisely, we consider a slightly stronger version of \eqref{driftCarleson}, in which $\Bb$ is replaced by its essential supremum on a Whitney ball, i.e.,
for $X\in \Omega$, set  
\begin{equation}\label{bstardef}
{\Bb_*}(X):= \esssup_{Y:\, |X-Y|<\delta(X)/4} |\Bb(Y)|\,,
\end{equation}
and we suppose that
\begin{equation}\label{driftmax}
M_1:=\|\Bb_*\|_{T^{2,\infty}(\Omega)}\,:=\,
\sup_B 
r_B^{-n}\iint_{\Omega\cap B} \big(\Bb_*(X)\big)^2\,\delta(X) \,dX
 \,<\,\infty\,,
\end{equation}
We observe that \eqref{driftmax} implies each of \eqref{driftsize} 
and \eqref{driftCarleson} (with $M_0\lesssim M_1$).

The 
principal part of the operator is assumed to be uniformly elliptic, i.e., $A$ 
is an $(n+1)\times(n+1)$ matrix of (real) bounded measurable
 coefficients, not necessarily symmetric, defined on
$\Omega\subset \ree$, satisfying the 
uniform ellipticity condition
\begin{equation}
\label{uniellip} \lambda|\xi|^{2}\leq\,\langle A(X)\xi,\xi\rangle
:= \sum_{i,j=1}^{n+1}A_{ij}(X)\xi_{j}\xi_{i}, \quad
  \Vert A\Vert_{L^{\infty}(\Omega)}\leq\Lambda,
\end{equation}
 for some $\lambda>0$, $\Lambda<\infty$, and for all $\xi\in\mathbb{R}^{n+1}$, and a.e.~$X\in\Omega$.

 Our main result is as follows.
 \begin{theorem}\label{Tmain} Let $\Omega\subset\ree$, $n\geq 2$, 
 be a 1-sided chord arc domain (CAD), and let $L_0$ and $L$ be defined as in \eqref{Ldef}, where the coefficients of the drift term, and of the principal part, satisfy (respectively) \eqref{driftmax} 
 and \eqref{uniellip}.  Suppose that for some $p_0<\infty$, the 
 Dirichlet problem $(D)_{p_0}$ is solvable for the equation $L_0u=0$ in $\Omega$.  
 Then there is a finite $p$ (possibly larger than $p_0$), such that $(D)_p$ is 
 solvable for $Lu=0$ in $\Omega$.  
 \end{theorem}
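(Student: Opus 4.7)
The strategy is perturbative, in the spirit of Fefferman--Kenig--Pipher and its extensions to rough domains (Hofmann--Martell--Mayboroda and collaborators). At the top level, the goal is to show that the $L$-elliptic measure $\omega_L$ lies in $A_\infty(\omega_{L_0})$ on $\pom$; since $(D)_{p_0}$ for $L_0$ is equivalent to $\omega_{L_0}\in A_\infty(\sigma)$ (where $\sigma$ is the ADR surface measure), this would yield $\omega_L\in A_\infty(\sigma)$, and hence $(D)_p$ for $L$ for some finite $p$.

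The preliminary step is to develop the basic boundary theory of $\omega_L$ in the 1-sided CAD setting. The size bound \eqref{driftsize} makes the bilinear form associated to $L$ well-posed and yields Caccioppoli and Moser-type interior estimates; the Carleson control \eqref{driftmax} then permits the construction of a Green's function $G_L$, non-tangential convergence for $L$-solutions, comparison-principle estimates (change of poles, boundary H\"older regularity), and a Riesz representation $u(X)=\int f\,d\omega_L^X$. Corresponding facts for the adjoint operator $L^*$, needed below, come from the same analysis since $-\dv(\Bb^t\cdot)$ is of the same critical form.

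Next, I would handle the small-constant case: if $M_1$ in \eqref{driftmax} is sufficiently small, then $\omega_L$ and $\omega_{L_0}$ are comparable in an $A_\infty$ sense. Given boundary data $f$, let $u_0$ solve $L_0u_0=0$ and $u$ solve $Lu=0$, both with data $f$. The difference $v:=u-u_0$ satisfies $L_0v=-\Bb\cdot\nabla u$ with zero boundary values, so by the adjoint Green's representation,
\[
v(X)=-\iint_{\om} G_{L_0^*}(Y,X)\,\Bb(Y)\cdot\nabla u(Y)\,dY.
\]
Splitting the integrand by Cauchy--Schwarz as $\bigl(|\Bb(Y)|\,\delta(Y)^{1/2}\bigr)\cdot\bigl(|\nabla u(Y)|\,\delta(Y)^{1/2}\bigr)\cdot\delta(Y)^{-1}|G_{L_0^*}(Y,X)|$ and invoking the Carleson embedding (with measure $|\Bb|^2\delta\,dY$) together with the usual square-function/non-tangential-maximal-function estimates for $L_0$-solutions, one obtains $\|\nn v\|_{L^p(\sigma)}\lesssim M_1^{1/2}\|\nn u\|_{L^p(\sigma)}$ at appropriate $p$. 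This closeness of solutions translates, via the Riesz formula, into the required quantitative proximity of $\omega_L$ to $\omega_{L_0}$.

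To upgrade the small-constant case to the full theorem, I would invoke the extrapolation of Carleson measures of John--Lewis--Murray type, adapted to the 1-sided CAD/sawtooth framework as in Hofmann--Martell. For each surface cube $Q_0$, a corona/stopping-time decomposition produces a family $\F$ whose associated sawtooth subdomains $\Omega_{\F,Q_0}$ inherit the 1-sided CAD property, on which the Carleson norm of $\Bb_*$ is as small as desired. Applying the small-constant result on each sawtooth, then transferring the $A_\infty$ estimate from sawtooth to ambient domain via projection, gives the conclusion. The main obstacle is the small-constant step: because the drift is scale-critical in size, no naive Neumann or Gr\"onwall argument can work; the only handle is the cancellation produced by pairing $|\Bb|^2\delta$ as a Carleson measure against a square function of a solution, and this must be carried out carefully because $L_0$ need not be symmetric (forcing attention to which Green's function appears) and because the ambient domain admits no exterior corkscrew, ruling out layer-potential methods and obliging one to operate entirely through square-function/dyadic/sawtooth technology.
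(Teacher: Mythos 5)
Your high-level plan matches the paper's: develop basic estimates for $L$, prove a small-constant perturbation theorem in the spirit of \cite[Theorem 2.5]{FKP} showing $\hm_L\in A_\infty(\hm_{L_0})$, and then remove the smallness by extrapolation of Carleson measures over sawtooth subdomains.

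However, the small-constant step --- the crux of the argument --- has a genuine gap. You write
\[
v(X)=u(X)-u_0(X)=-\iint_{\Omega} G_{L_0^*}(Y,X)\,\Bb(Y)\cdot\nabla u(Y)\,dY\,,
\]
which pairs the \emph{known} Green function $G_{L_0^*}$ against the \emph{unknown} gradient $\nabla u$. Then, after Cauchy--Schwarz, the second factor is a square function of the perturbed solution $u$, weighted by $G_{L_0^*}(\cdot,X)$, and you claim to control it by ``the usual square-function/non-tangential-maximal-function estimates for $L_0$-solutions''. This is inconsistent ($u$ is an $L$-solution, not an $L_0$-solution) and circular: the square-function / NTM comparison for $L$-solutions is essentially equivalent to the $A_\infty$ property for $\hm_L$ that you are trying to prove. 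One can try to establish the weighted square-function bound for $u$ directly by integration by parts against $G_0$, but $L_0 u = -\Bb\cdot\nabla u\neq 0$, so extra drift terms appear; closing those requires a precise accounting that is not indicated here and in any case would have to be carried out with the $G_0$-weighted Carleson condition rather than the $\sigma$-Carleson one you invoke. The paper (following FKP) uses the \emph{other} representation,
\[
u_0(X)-u(X)=\iint_{\Omega} G_L(X,Y)\,\Bb(Y)\cdot\nabla u_0(Y)\,dY\,,
\]
which places $\nabla u_0$ (the good $L_0$-solution) inside the integral. The unknown $G_L$ is then converted, via the CFMS estimate, into the kernel ratio $\hm^X(\Delta_Y)/\hm_0(\Delta_Y)$, whose contribution is controlled by Carleson embedding (with respect to $\hm_0$) as $\eps\|h^X\|_{L^2(\hm_0)}$, a term that is then absorbed; the $\nabla u_0$ factor is handled by the DJK square-function estimate for $L_0$. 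No square function of $u$ ever appears.

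Two further subtleties you do not flag, both of which matter: (i) the small-constant Carleson hypothesis must be stated with respect to the $L_0$-Green-function weight, $\iint_{B\cap\Omega}|\Bb|^2\, G_0(X_0,Y)\,dY\lesssim\eps^2\hm_0(\Delta)$, not $\iint |\Bb|^2\,\delta\,dY\lesssim\eps^2\sigma(\Delta)$, since the comparison target is $\hm_0$ rather than $\sigma$; the paper shows (Corollary \ref{corsawtooth}) how the $\sigma$-Carleson smallness on the sawtooth implies the $\hm_{0,\star}$-weighted version. (ii) In the transfer from sawtooth to ambient domain, the dyadic DJK sawtooth/projection lemma must be applied to $L_0$, not to $L$, because the comparison principle for the adjoint equation $L^*u=0$ (hence the pole-change formula, Lemma \ref{lemmapolechange}) is unavailable for $L$. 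The paper therefore compares $\hm_\star$ to $\hm_{0,\star}$ via Theorem \ref{Tauxsmall}, and $\hm_{0,\star}$ to $\hm_0$ via the DJK projection lemma for $L_0$, rather than applying DJK to $L$ directly.
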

 
 Of course, this result is quantitative, i.e., 
 the exponent $p$, as well as the constants that will appear in our estimates for 
 solutions of $(D)_p$, will depend only on the ``allowable parameters": 
 dimension, the constants in \eqref{driftmax} 
 and \eqref{uniellip}, the 1-sided chord arc constants
 for $\Omega$ and its boundary, the exponent $p_0$, and the constants in the quantitative estimates for solutions of $(D)_{p_0}$ for $L_0$.
 
A previous result of this type, valid in Lipschitz domains, 
was proved by J.~L.~Lewis and the present author 
in \cite{HL} (see also \cite{KP}, for related results).  The purpose of the present work is to extend the result of \cite{HL} 
to the setting of a 1-sided chord-arc domain.
In a Lipschitz domain, one can (after a suitable localization)
exploit the use of a 
pullback mechanism to reduce matters to the case that $\Omega$ 
is the half-space $\reu$.  Such an approach is clearly unavailable in the much more general setting of a 1-sided chord arc domain, the boundary of which
need not be given locally as a graph.  In \cite{HL}, the authors 
also proved a parabolic version of
Theorem \ref{Tmain}, valid in ``regular Lip(1,1/2) domains" (see, e.g., \cite{BHMN} for the definition) again using a pullback argument.  Indeed, the parabolic case was the main focus of the work \cite{HL}.  The proof of the elliptic version of the result in \cite{HL} 
exploited the fact that, 
given a certain local ampleness
property of elliptic-harmonic measure (see \eqref{eq2.Bourgain1} below), 
one can establish doubling of the elliptic-harmonic measure\footnote{For
parabolic measure, doubling
remains an open question, even in the presence of
the parabolic version of \eqref{eq2.Bourgain1}.} associated to an operator with a drift. We observe 
that in the absence of the ampleness property \eqref{eq2.Bourgain1}, 
it is not clear whether 
the doubling property holds. 
The proof of doubling given in \cite{HL} relies on first establishing
a pointwise (local) upper bound for the Green function (in turn deduced from 
\eqref{eq2.Bourgain1}), and this Green function bound
can fail, in general, for operators with a drift\footnote{Indeed, 
a counter-example has been presented in \cite{P}.  On the other hand,
the pointwise local upper bound for the Green function
is valid (as we shall show in the sequel) when $M_0$ is sufficiently
small in \eqref{driftsize}, and also in the presence of \eqref {eq2.Bourgain1},
as shown in \cite{HL}; the latter fact may be extended to our setting
(see Section \ref{appdoubl}).}.

As in \cite{HL}, we make essential
use of the method of ``extrapolation of Carleson measures"
(see the introduction to Section
\ref{section2key} for a brief discussion of this method and its history).
Nevertheless, our implementation of this method
is different to that of \cite{HL}: necessarily so, in order 
to treat non-graph boundaries to which the aforementioned
pullback mechanism does not apply.
We remark that we obtain the conclusion of
 Theorem \ref{Tmain} (i.e., $L^p$ solvability of the Dirichlet problem) without
 relying on the doubling property of elliptic measure
 for the operator with a drift.  
 On the other hand, 
 our arguments yield \eqref {eq2.Bourgain1} {\em a posteriori}, and
 doubling then follows, as in \cite[Ch.~III, Lemma 4.3]{HL} (see 
Section \ref{appdoubl}).


\smallskip

\noindent{\bf Outline of the paper}.
The paper is organized as follows.  In Section \ref{Sdefinitions}, we introduce our notation and terminology, and we record some previously known geometric facts.  In Section \ref{Ssmall1},
we present a number of preliminary results\footnote{E.g., existence of solutions and elliptic measure, continuity of solutions up to the boundary, Green function estimates, etc.}, 
well-known for the homogeneous second order
$L_0=-\dv A\nabla$, which we extend to the operator $L=L_0+\Bb\cdot\nabla$ under certain conditions (e.g., sometimes, but not always, smallness in the pointwise size condition \eqref{driftsize}).  We also
give an approximation result, which will allow us to work with operators for which the drift coefficient $\Bb$ has been truncated near the boundary, and is thus (qualitatively) bounded.
Of course, all of our quantitative estimates will be independent of such truncations.
There is a considerable amount of material to review in Section \ref{Ssmall1}, 
but on the other hand, most of it is fairly routine and standard, so the expert reader 
could probably skim this section rapidly, at least on a first reading.
Sections \ref{Ssmall2}, \ref{section2key}, and  \ref{Sgeneral} are the heart of the matter.
In Section \ref{Ssmall2}, we present a small constant perturbation result, in the spirit
of \cite[Theorem 2.5]{FKP}, in which we show that the elliptic measures for $L$ and $L_0$,
denoted by $\hm$ and $\hm_0$,
are mutually absolutely continuous in the sense of $A_\infty$, given that $\Bb$ is
small in the sense of a certain $\hm_0$-adapted Carleson measure condition.
In Section \ref{section2key}, we prove two lemmas that will play a key role in removing the smallness assumption in Section \ref{Ssmall2}. Finally, 
in Section \ref{Sgeneral}, we 
remove the small constant restriction 
via the method of ``extrapolation of Carleson measures." 

\section{Notation, terminology, and some known geometric results}\label{Sdefinitions}


In the sequel, we shall assume that $\Omega\subset \ree$ is an open set (typically with additional properties to be specified below), we let $B=B(X,r)$ denote the 
Euclidean ball in $\ree$ of radius $r$, centered at $X$, and for $X\in \pom$, we let
\[
\Delta = B\cap \pom
\]
denote the corresponding surface ball on the boundary.  
We list further notation as follows.

\begin{list}{$\bullet$}{\leftmargin=0.4cm  \itemsep=0.2cm}

\item We use the letters $c,C$ to denote harmless positive constants, not necessarily
the same at each occurrence, which depend only on the ``allowable parameters'': dimension, and the constants appearing in the hypotheses of the main theorem.  We shall also
sometimes write $a\lesssim b$ and $a \approx b$ to mean, respectively,
that $a \leq C b$ and $0< c \leq a/b\leq C$, where the constants $c$ and $C$ are as above, unless
explicitly noted to the contrary.  At times, we shall designate by $M$ a particular constant whose value will remain unchanged throughout the proof of a given lemma or proposition, but
which may have a different value during the proof of a different lemma or proposition.

\item Given a domain $\Omega \subset \ree$, we shall typically
use lower case letters $x,y,z$, etc., to denote points on $\partial \Omega$, and capital letters
$X,Y,Z$, etc., to denote generic points in $\ree$ (especially those in $\ree\setminus \partial\Omega$).


\item Given a Euclidean ball $B$ or surface ball $\Delta$, its radius will be denoted
$r_B$ or $r_\Delta$, respectively.

\item Given a Euclidean or surface ball $B= B(X,r)$ or $\Delta = \Delta(x,r)$, its concentric
dilate by a factor of $\kappa >0$ will be denoted
by $\kappa B := B(X,\kappa r)$ or $\kappa \Delta := \Delta(x,\kappa r).$

\item For $X \in \Omega$, we set $\delta(X):= \dist(X,\partial\Omega)$.

\item We let $\mathcal{H}^n$ denote $n$-dimensional Hausdorff measure, and let
$\sigma := \mathcal{H}^n\big|_{\partial\Omega}$ denote the surface measure on $\partial \Omega$.

\item For a Borel set $E\subset \ree$, we let $1_E$ denote the usual
indicator function of $E$, i.e. $1_E(x) = 1$ if $x\in E$, and $1_E(x)= 0$ if $x\notin E$.

\item For a Borel set $E\subset \ree$,  we let $\interior(E)$ denote the interior of $E$.
If $E\subset \partial\Omega$, then $\interior(E)$ will denote the relative interior, i.e., the largest relatively open set in $\partial\Omega$ contained in $E$.  Thus, 
 $\partial E := \overline{E} \setminus {\rm int}(E)$ will denote the boundary of a set
$E\subset \partial\Omega$. 

\item For a Borel set $E$, we denote by $C(E)$ (resp. Lip$(E)$) the space of continuous (resp. Lipschitz continuous) functions on
$E$, and by $C_c(E)$ (resp. Lip$_c(E)$) the subspace of $C(E)$ (resp. Lip$(E)$)
with compact support in $E$. 

\item For a Borel subset $E\subset\partial\Omega$, we
set $\fint_E f d\sigma := \sigma(E)^{-1} \int_E f d\sigma$.

\item We shall use the letter $I$ (and sometimes $J$)
to denote a closed $(n+1)$-dimensional Euclidean cube with sides
parallel to the co-ordinate axes, and we let $\ell(I)$ denote the side length of $I$.
We use $Q$ to denote a dyadic ``cube''
on $\partial \Omega$.  The
latter exist, given that $\partial \Omega$ is ADR  (cf. \cite{DS1}, \cite{Ch}), and enjoy certain properties
which we enumerate in Lemma \ref{lemmaCh} below.

\begin{definition}\label{def1.ADR}
({\bf Ahlfors-David regular, or ADR}). We say that a closed set $E \subset \ree$ is $n$-dimensional ADR (or simply ADR) (``Ahlfors-David regular'') if
there is some uniform constant $C$ such that
\begin{equation} \label{eq1.ADR}
\frac1C\, r^n \leq \mathcal{H}^n(E\cap B(x,r)) \leq C r^n,\,\,\,\forall r\in(0,R_0),x \in E,\end{equation}
where $R_0$ is the diameter of $E$ (which may be infinite).   When $E=\partial \Omega$,
the boundary of a domain $\Omega$, we shall sometimes for convenience simply
say that ``$\Omega$ has the ADR property'' to mean that $\partial \Omega$ is ADR.
\end{definition}

\begin{definition}\label{defdouble} {\bf (Doubling)} We say that a measure $\ttm$ defined on
$\pom$ is {\em doubling} (with doubling constant $N_{db}$), if
for every surface ball $\Delta\subset \pom$,
we have
\[
\ttm(2\Delta)\,\leq\, N_{db} \,\ttm(\Delta)\,.
\]
\end{definition}

\begin{lemma}\label{lemmaCh}\textup{({\bf Existence and properties of the ``dyadic grid''})
\cite{Ch,HK}.}
Suppose that $E\subset \ree$ is an ADR set.  Then there exist
constants $ a_0>0,\, s>0$ and $C_1<\infty$, depending only on $n$ and the
ADR constant, such that for each $k \in \mathbb{Z},$
there is a collection of Borel sets (``cubes'')
$$
\dd_k:=\{Q_{j}^k\subset E: j\in \mathfrak{I}_k\},$$ where
$\mathfrak{I}_k$ denotes some (possibly finite) index set depending on $k$, satisfying

\begin{list}{$(\theenumi)$}{\usecounter{enumi}\leftmargin=.8cm
\labelwidth=.8cm\itemsep=0.2cm\topsep=.1cm
\renewcommand{\theenumi}{\roman{enumi}}}

\item $E=\cup_{j}Q_{j}^k\,\,$ for each
$k\in{\mathbb Z}$.

\item If $m\geq k$ then either $Q_{i}^{m}\subset Q_{j}^{k}$ or
$Q_{i}^{m}\cap Q_{j}^{k}=\emptyset$.

\item For each $(j,k)$ and each $m<k$, there is a unique
$i$ such that $Q_{j}^k\subset Q_{i}^m$.

\item $\diam\big(Q_{j}^k\big)\leq C_1 2^{-k}$.

\item Each $Q_{j}^k$ contains some ``surface ball'' $\Delta \big(x^k_{j},a_02^{-k}\big):=
B\big(x^k_{j},a_02^{-k}\big)\cap E$.

\item $\mathcal{H}^n\big(\big\{x\in Q^k_j:{\rm dist}(x,E\setminus Q^k_j)\leq \vartheta \,2^{-k}\big\}\big)\leq
C_1\,\vartheta^s\,\mathcal{H}^n\big(Q^k_j\big),$ for all $k,j$ and for all $\vartheta\in (0,a_0)$.
\end{list}
\end{lemma}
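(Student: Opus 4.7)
The plan is to follow the classical construction of M.~Christ \cite{Ch}, extended to the general metric measure space setting in \cite{HK}. The ADR hypothesis \eqref{eq1.ADR} is needed only at the final step; the combinatorial skeleton of the argument uses only the metric.

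First, for each $k \in \ZZ$, I would fix a maximal $2^{-k}$-separated subset $\{x_j^k\}_{j\in\mathfrak{I}_k}\subset E$; maximality yields the covering $E \subset \bigcup_j B(x_j^k, 2^{-k})$, and the separated centers will be those appearing in property (v). Next, I would organize the union of all centers into a tree: for each $(j,k)$ and each $m < k$, assign to $x_j^k$ a unique ``ancestor'' among the $\{x_i^m\}_{i\in\mathfrak{I}_m}$ by choosing a nearest one and breaking ties with a fixed rule (e.g.~lexicographic on the index set). Preliminary cubes are defined as Voronoi cells of the $\{x_j^k\}$ at scale $k$, and the final $Q_j^k$ is obtained by merging all descendant Voronoi cells at finer scales whose ancestor at scale $k$ is $x_j^k$. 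Properties (i)--(v) then reduce to triangle-inequality bookkeeping that compares each Voronoi cell with the balls $B(x_j^k, a_0\,2^{-k})$ and $B(x_j^k, C_1\, 2^{-k})$, with $a_0$ and $C_1$ depending only on dimension.

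The only delicate point is the small-boundary property (vi). The standard approach is to peel the boundary strip $\{x \in Q_j^k : \dist(x, E\setminus Q_j^k) \leq \vartheta\, 2^{-k}\}$ into dyadic layers, at each stage using the upper ADR bound from \eqref{eq1.ADR} to dominate the $\mathcal{H}^n$-mass of an annular tube around the Voronoi interfaces by a fixed fraction of the mass of the union of a bounded number of ambient surface balls. Summing the resulting geometric series over scales yields the polynomial gain $\vartheta^s$ for some $s > 0$ depending only on the ADR constant. I expect this layer-decomposition estimate to be the main technical obstacle; the rest of the argument is purely combinatorial. Since the lemma is quoted verbatim from the literature, I would not reproduce the details and would simply cite \cite{Ch,HK}.
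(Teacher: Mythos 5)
The paper does not prove this lemma; it is stated as a quoted result with citations to \cite{Ch} and \cite{HK}, and you correctly end by saying you would likewise defer to those references rather than reproduce the construction. Your sketch is a fair compressed summary of the Christ/Hyt\"onen--Kairema argument (separated nets, a tree of ancestors, cubes built by aggregating descendants, and the iterated-layer estimate for the small-boundary property~(vi)), so there is nothing here to compare against a ``paper's proof'' that does not exist.

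One small caveat worth flagging for your own understanding: describing the preliminary cubes as literal Voronoi cells at a fixed scale, later merged, glosses over the fact that Christ's cubes are defined by tracking the \emph{entire} chain of nearest centers across all finer scales (with a fixed tie-breaking rule), and one must take closures/interiors carefully to get the nesting property~(ii) on the nose rather than up to measure zero. Likewise, while the quantitative content of~(vi) does use the ADR bound, the doubling property that ADR supplies is already implicitly used in establishing bounded overlap of the covering balls and the existence of the centers $x_j^k$ in~(v); so ADR is not strictly confined to ``the final step.'' Neither point affects the correctness of deferring to \cite{Ch,HK}.
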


A few remarks are in order concerning this lemma.

\begin{list}{$\bullet$}{\leftmargin=0.4cm  \itemsep=0.2cm}

\item In the setting of a general space of homogeneous type, this lemma has been proved by Christ
\cite{Ch} (see also \cite{HK}), with the
dyadic parameter $1/2$ replaced by some constant $\delta \in (0,1)$.
In fact, one may always take $\delta = 1/2$ (see  \cite[Proof of Proposition 2.12]{HMMM}).
In the presence of the Ahlfors-David
property, the result already appears in \cite{DS1,DS2}. Some predecessors of this construction have appeared in \cite{David88} and \cite{David91}.

\item  For our purposes, we may ignore those
$k\in \mathbb{Z}$ such that $2^{-k} \gtrsim {\rm diam}(E)$, in the case that the latter is finite.

\item  We shall denote by  $\mathbb{D}=\mathbb{D}(E)$ the collection of all relevant
$Q^k_j$, i.e., $$\mathbb{D} := \cup_{k} \mathbb{D}_k,$$
where, if $\diam (E)$ is finite, the union runs
over those $k$ such that $2^{-k} \lesssim  {\rm diam}(E)$.

\item Properties $(iv)$ and $(v)$ imply that there are uniform constants $c,C$
such that for each cube $Q\in\mathbb{D}_k$,
there is a point $x_Q\in Q$, a Euclidean ball $B(x_Q,r_k)$ and a surface ball
$\Delta(x_Q,r_k):= B(x_Q,r_k)\cap E$ with
$r_k=c 2^{-k} \approx {\rm diam}(Q)$
and \begin{equation}\label{cube-ball}
\Delta(x_Q,r_k)\subset Q \subset \Delta(x_Q,Cr_k)\,.
\end{equation}
We shall refer to the point $x_Q$ as the ``center'' of $Q$.



\item For a dyadic cube $Q\in \mathbb{D}_k$, we shall
set $\ell(Q) = 2^{-k}$, and we shall refer to this quantity as the ``length''
of $Q$.  Evidently, $\ell(Q)\approx \diam(Q).$
\end{list}



\end{list}

For future reference, with $E=\pom$, we set
\begin{equation}\label{eq3.4a}
\dd_Q:=\left\{Q'\in\dd(\pom):Q'\subset Q\right\}\,.
\end{equation}


\begin{definition} ({\bf Tree (or ``stopping time tree")}).  \label{def1.tree}
A tree $\sbf\subset \dd$ is a collection of cubes such that
\begin{enumerate}
\item $\sbf$ has a unique maximal (or ``top") cube $Q(\sbf)$, i.e.,
$Q\subset Q(\sbf)$ for all $Q\in\sbf$.
\item $\sbf$ is {\em semi-coherent}, i.e., if $Q\in \sbf$, and
$Q\subset Q'\subset Q(\sbf)$, then $Q'\in\sbf$.
\end{enumerate}
\end{definition}

\begin{definition} ({\bf Corkscrew condition}).  \label{def1.cork}
Following
\cite{JK}, we say that a domain $\Omega\subset \ree$
satisfies the ``Corkscrew condition'' if for some uniform constant $c>0$ and
for every ball $B=B(x,r),$ with $x\in \partial\Omega$ and
$0<r<\diam(\partial\Omega)$, there is a ball
$B(X_B,cr)\subset B(x,r)\cap\Omega$.  The point $X_B\subset \Omega$ is called
a ``Corkscrew point'' relative to $B.$  
\end{definition}

\begin{remark}\label{remark1.2}
We note that, on the other hand, every $X\in\Omega$, with $\delta(X)<\diam(\pom)$,
may be viewed as a Corkscrew point,
relative to some ball $B$ centered on $\pom$.
Indeed, set $r=2 \delta(X)$,  
fix $\hat{x}\in\pom$ such that $|X-\hat{x}|=\delta(X)$, and let
$B:=B(\hat{x},r)$.
\end{remark}

\begin{definition}({\bf Harnack Chain condition}).  \label{def1.hc} Again following \cite{JK}, we say
that $\Omega$ satisfies the Harnack Chain condition if there are uniform positive constants
$c,C$ such that
for every $\rho >0,\, K\geq 1$, and pair of points
$X,X' \in \Omega$ with $\delta(X),\,\delta(X') \geq\rho$ and $|X-X'|<K\,\rho$, 
there is a chain of open balls
$B_1,\dots,B_N \subset \Omega$, $N\leq C(K)$,
with $X\in B_1,\, X'\in B_N,$ $B_k\cap B_{k+1}\neq \emptyset$
and $c\diam (B_k) \leq \dist (B_k,\partial\Omega)\leq C\diam (B_k).$  The chain of balls is called
a ``Harnack Chain''.
\end{definition}

\begin{definition}({\bf 1-sided NTA, aka ``uniform" domain}). \label{def1.nta} 
We say that a
domain $\Omega\subset \ree$ is 1-sided NTA ({\it Non-tangentially accessible})
(aka ``uniform") if it satisfies the Corkscrew and
Harnack Chain conditions.
\end{definition}

\begin{definition}({\bf 1-sided CAD}). \label{def1.cad}  
We say that a connected open set $\om \subset \ree$
is a 1-sided CAD ({\it 1-sided Chord-arc domain}), if it is a uniform (1-sided NTA) 
domain, and if $\pom$ is ADR.
\end{definition}


\begin{definition}\label{Y12and120} ($Y^{1,2}$, $W^{1,2}$ {\bf and} $Y^{1,2}_0$, 
$W^{1,2}_0$).
As usual, $W^{1,2}(\Omega):=\{f\in L^{2}(\Omega): \nabla f \in L^2(\Omega)\}$,
endowed with the norm $\|f\|_{W^{1,2}(\Omega)}:= \|f\|_{L^2(\Omega)} +
\|\nabla f\|_{L^2(\Omega)}$, and $W^{1,2}_0(\Omega)$ is the completion of
$C_0^\infty(\Omega)$ in the $W^{1,2}(\Omega)$ norm.

Also as usual, in $\re^d$ set $2^*:= 2d/(d-2)$, thus for us, $d=n+1$ and
$2^*:= 2(n+1)/(n-1)$.  Then 
$Y^{1,2}(\Omega):=\{f\in L^{2^*}(\Omega): \nabla f \in L^2(\Omega)\}$, with norm
\[
\|f\|_{Y^{1,2}(\Omega)} \,:= \,\|f\|_{L^{2^*}(\Omega)}\, +\, 
\|\nabla f\|_{L^{2}(\Omega)}\,.
\]
We let $Y^{1,2}_0(\Omega)$ denote the completion of $C_0^\infty(\Omega)$ in the 
$Y^{1,2}(\Omega)$ norm.  
\end{definition}

\begin{definition}\label{conentmaxdef} {\bf (Non-tangential approach regions and maximal functions)}.
Given $z\in \pom$, we define a non-tangential approach region (``cone") with vertex at $z$ by
\begin{equation}\label{eqconedef}
\Gamma(z) := \{Y\in\Omega: |z-Y|< 10\delta(Y)\}\,,
\end{equation}
and for a measurable function $H$ defined in $\Omega$, let
\begin{equation}\label{eqntmaxdef}
N_*H(z):= \sup_{Y\in\Gamma(z)} |H(Y)|\,,
\end{equation}
denote the non-tangential maximal function of $H$.
\end{definition}

\begin{definition}\label{DLp}
{\bf (Dirichlet problem with data in $L^p$)}. 
 \begin{center}
$(D)_p\left\{ \begin{array}{rl}Lu&=0  \textrm{ in } \Omega  \\
u\vert_{\pom}&=f\in L^p(\pom)
\\N_* u &\in L^p(\pom)\,,
\end{array} \right.$
\end{center} 
where $u\vert_{\pom}=f$ is understood in the sense of nontangential convergence. 
\end{definition}

\begin{definition}\label{Dc}
{\bf Continuous Dirichlet problem}.
 \begin{center} 
$(D)\left\{ \begin{array}{rl}Lu&\!\!=0  \textrm{ in } \Omega  \\
u\vert_{\pom}&\!\!=f \in C_c(\pom)
\\ u&\!\!\in C(\overline{\Omega})\,.
\end{array} \right.$
\end{center} 
\end{definition}

\begin{remark} As is well known, solvability of the continuous Dirichlet problem, in conjunction with the weak maximum principle, implies the existence of elliptic measure via the Riesz representation theorem.
\end{remark}

\begin{definition}\label{deflocalAinfty} ({\bf $A_\infty$ and weak-$A_\infty$ for elliptic measure}).  Let $\ttm$ be a doubling measure on $\pom$.
We say that elliptic measure $\hm$  is (locally) in weak-$A_\infty$ on $\pom$, with respect to
$\ttm$, and we write $\hm\in$ weak-$A_\infty(\ttm)$,
if there are  uniform positive constants $C$ and $s$
such that for every ball $B=B(x,r)$ centered on $\pom$, 
with radius  $r<\diam(\pom)/4$, and associated surface ball $\Delta=B\cap\pom$,
\begin{equation}\label{eq1.localwainfty}
\hm^X (E) \leq C \left(\frac{\ttm(E)}{\ttm(\Delta)}\right)^{\! s}\,\hm^X (2\Delta)\,,
\qquad \forall \, X\in\om\setminus 4B\,, \,\,\forall \mbox{ Borel } E\subset \Delta\,;
\end{equation}
equivalently,
if for every ball $B$ and surface ball $\Delta=B\cap\pom$ as above,
and for each  
point $X\in\om\setminus 4B$, $\hm^X\in$ weak-$A_\infty(\ttm,\Delta)$ with uniformly controlled weak-$A_\infty$ constants, i.e., \eqref{eq1.localwainfty} holds uniformly, with $\Delta$ replaced by $\Delta'=B'\cap\pom$, and with $E\subset \Delta'$, for all $B'\subset B$ and all 
$X\in \Omega\setminus 4B$.

If, {\em in addition}, $\hm^X$ satisfies the doubling property (Definition \ref{defdouble}), 
for $X\in \Omega\setminus 4B$,
locally for all $\Delta'=B'\cap\pom$, with $B'\subset B$,
then we say that
elliptic measure $\hm$  is (locally) in $A_\infty$ on $\pom$ with respect to
$\ttm$, and we write $\hm\in A_\infty(\ttm)$.

In the special case that $\ttm=\sigma$, we will sometimes simply write
$\hm\in$ weak-$A_\infty$, or $\hm\in A_\infty$.
\end{definition}

\begin{remark}\label{Dp=Ainfty} 
It is well known that solvability of $(D)_p$ for some finite $p$ is equivalent to
the property that $\hm^X$ belongs (locally) 
to weak-$A_\infty$ (or to $A_\infty$, in the case that
$\hm^X$ is doubling) with respect to $\sigma$, 
in the sense of Definition \ref{deflocalAinfty}, and 
in fact the Poisson kernel $k^X:=d\hm^X/d\sigma$ 
satisfies a reverse H\"older condition with exponent
$q=p/(p-1)$.  See, e.g., \cite{H,HLe}.
\end{remark}

\subsection{Some geometric results}
We record some known geometric facts that will be useful in the sequel.

Let us first make a standard Whitney decomposition of $\Omega$.
Let $\mathcal{W}=\W(\om)$ denote a collection
of (closed) dyadic Whitney cubes of $\om$, so that the cubes in $\mathcal{W}$
form a pairwise non-overlapping covering of $\om$, which satisfy
\begin{equation}\label{Whintey-4I}
4 \diam(I)\leq
\dist(4I,\pom)\leq \dist(I,\pom) \leq 40\diam(I)\,,\qquad \forall\, I\in \mathcal{W}\,\end{equation}
(just dyadically divide the standard Whitney cubes, as constructed in  \cite[Chapter VI]{St},
into cubes with side length 1/8 as large)
and also
$$(1/4)\diam(I_1)\leq\diam(I_2)\leq 4\diam(I_1)\,,$$
whenever $I_1$ and $I_2$ touch.

We fix a small parameter $\tau_0>0$, so that
for any $I\in \W$, and any $\tau \in (0,\tau_0]$,
the concentric dilate
\begin{equation}\label{whitney1}
I^*(\tau):= (1+\tau) I
\end{equation} 
still satisfies the Whitney property
\begin{equation}\label{whitney}
\diam I\approx \diam I^*(\tau) \approx \dist\left(I^*(\tau), E\right) \approx \dist(I,E)\,, \quad 0<\tau\leq \tau_0\,.
\end{equation}

 We recall that in \cite[Section 3, and Appendix A]{HM-I}, there is a 
 construction of Whitney regions, ``Carleson boxes" (and Carleson tents), and ``sawtooth" subdomains corresponding to a tree $\sbf$, with the following properties. 
 \begin{proposition}\label{sawtoothprop}\cite[Section 3, and Appendix A]{HM-I}. 
 Let $\Omega$ be a 1-sided chord arc domain.  Then given a tree $\sbf\subset \dd(\pom)$, there is a ``sawtooth" subdomain
 $\Omega_\sbf\subset \Omega$, satisfying the following properties:
\begin{enumerate}
\item $\Omega_\sbf$ is a 1-sided chord arc domain.
\smallskip
\item $\Omega_\sbf = \interior\left(\cup_{Q\in \sbf} \,U_Q\right)$, where $U_Q$ is a
 ``Whitney region" relative to $Q$.
\smallskip
\item The Whitney regions are of the form $U_Q=\cup_{I\in\W(Q)}I^*$,
where $I^*$ is the concentrically fattened version of $I$ defined above, and 
$\W(Q)\subset \W$, with
\[
\ell(Q)\approx\ell(I)\approx \dist(Q,I)\approx \dist(Q,I^*)\approx \dist(\pom,I^*)\,,
\quad \forall\, I\in\W(Q)\,.
\] 
\item For the Whitney regions, we have
\begin{equation}\label{UQprop}
\dist(U_Q,Q) \approx \dist(U_Q,\pom)\approx \diam(Q)\approx \diam(U_Q)
 \approx |U_Q|^{1/(n+1)}\,.
\end{equation}
\item The Whitney regions enjoy the bounded overlap 
property 
$$\sum_{Q\in\dd(\pom)} 1_{U_Q} (X) \lesssim 1\,,\qquad \forall\, X\in\Omega\,.$$ 
\item In the special case that $\sbf = \dd_{Q_0}$ for some $Q_0\in\dd$, we use the notation
$R_{Q_0}:= \interior\left(\cup_{Q\subset Q_0} U_{Q}\right)$, and we refer to $R_{Q_0}$ as the ``Carleson box" relative to $Q_0$.  For each $Q\in\dd$, the Carleson box $R_Q$ has the property
that  there is a ball $B_Q=B(x_Q, r_Q)$, with $r_Q\approx \ell(Q)$, such that
$R_Q\supset 2B_Q\cap\Omega$, where
$2B_Q= B(x_Q, 2r_Q)$ is the concentric double of $B_Q$. 
\item By (2) (or (6)), and (4), it follows that
\[
\diam(\Omega_{\sbf}) \lesssim \diam (Q(\sbf))\,.
\]
In particular, $\diam(R_Q) \lesssim \diam(Q)$.
\item Given a surface ball $\Delta=B\cap\pom$, where $B=B(x,r)$ is centered on $\pom$, there
is a ``Carleson Tent" $T_\Delta$ associated to $\Delta$, such that 
\[
B\cap\Omega\subset 
(5/4)B\cap\Omega \subset T_{\Delta} \subset \kappa B\cap \Omega\,,
\] 
where $(5/4)B = B(x,5r/4)$, and $\kappa B = B(x,\kappa r)$ for some sufficiently large, uniform constant $\kappa$.   Moreover,
$T_\Delta$ is a 1-sided  chord arc domain.
Finally, if $B'\subset B$, and $\Delta'=B'\cap\pom$, then 
$T_{\Delta'}\subset T_{\Delta}$. 
\end{enumerate}
Furthermore, all of the implicit and explicit 
constants in properties (1), (3), (4), (5), (6), (7) and (8) 
depend only on dimension and on the 1-sided chord arc constants for $\Omega$.
\end{proposition}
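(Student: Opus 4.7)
The proof I would present follows the blueprint of \cite[Section 3, and Appendix A]{HM-I}, so the proposal is essentially a road map through their construction. The plan is to build the Whitney regions $U_Q$ first, and only then assemble them into sawtooths $\Omega_\sbf$; once the $U_Q$ are defined in a dyadically compatible way the properties (3)--(5) are immediate, and properties (1), (2), (6)--(8) reduce to a verification of the 1-sided CAD axioms for the union.

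First I would associate to each $Q\in\dd(\pom)$ a collection $\W(Q)\subset\W(\Omega)$ by the rule that $I\in\W(Q)$ iff $\ell(I)\approx\ell(Q)$ and $\dist(I,Q)\lesssim\ell(Q)$, with implicit constants chosen large enough so that $\W(Q)\neq\emptyset$ (possible by the ADR/Corkscrew property, which guarantees a Corkscrew point at scale $\ell(Q)$ in any such Whitney cube) and small enough to preserve the bounded-overlap property (5). I would then set $U_Q:=\bigcup_{I\in\W(Q)} I^*(\tau)$ with the Whitney fattening parameter $\tau\in(0,\tau_0]$ fixed, so that \eqref{whitney} gives (3) and, together with the Corkscrew point description, (4). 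The bounded overlap (5) follows because $X\in U_Q$ forces $\ell(Q)\approx \delta(X)$ and $\dist(X,Q)\lesssim\delta(X)$, leaving only finitely many candidate $Q$ by the ADR property.

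Next, for a tree $\sbf$ I would define $\Omega_\sbf:=\interior\bigl(\bigcup_{Q\in\sbf} U_Q\bigr)$ and verify that it is a 1-sided CAD. Connectedness and the Harnack chain condition follow from the semi-coherence of $\sbf$: two points in $\Omega_\sbf$ live in $U_Q,U_{Q'}$ for $Q,Q'\in\sbf$, and one constructs a chain by walking $Q\to Q(\sbf)\to Q'$ through the ancestral chain in $\sbf$, using that adjacent dyadic ancestors share touching Whitney cubes. The Corkscrew condition at a point $y\in\pom_\sbf$ is obtained by locating the cube $Q\in\sbf$ whose $U_Q$ supplies $y$ to the boundary, and producing a scale $\ell(Q)$ corkscrew in $U_Q$ via the Whitney property (4). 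The main obstacle, as in \cite{HM-I}, is establishing the ADR property of $\pom_\sbf$; the approach is to decompose $\pom_\sbf$ into a portion on $\pom$ (bounded using the ADR property of $\pom$ together with property (vi) of Lemma \ref{lemmaCh}, which controls the boundary trace of the stopping regions) and a portion lying on faces of Whitney cubes (bounded using the bounded overlap (5) and $\mathcal{H}^n$-sizes of Whitney faces at each scale).

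Property (6), for the Carleson box $R_{Q_0}$, would then follow from the observation that $U_Q$ for $Q\subset Q_0$ exhausts a full neighborhood of $Q_0$ at every scale, so one may take $B_{Q_0}$ centered at $x_{Q_0}$ with $r_{Q_0}$ a small multiple of $\ell(Q_0)$. Property (7) is immediate from (2) and (4), since the diameters of $U_Q$ are dominated by $\ell(Q(\sbf))$ and the family tiles a neighborhood of $Q(\sbf)$. Finally, property (8), the Carleson tent $T_\Delta$ associated to $\Delta=B(x,r)\cap\pom$, is obtained by specializing the construction to $\sbf=\dd_{Q_\Delta}$ for the smallest family of boundary cubes $Q_\Delta$ covering $\Delta$ at scale $\approx r$ and then enlarging to include the Whitney cubes meeting $(5/4)B$; the containment $B\cap\Omega\subset T_\Delta\subset\kappa B\cap\Omega$ is forced by \eqref{whitney} and the choice of scale, the 1-sided CAD property is inherited from the sawtooth, and the monotonicity $T_{\Delta'}\subset T_\Delta$ for $B'\subset B$ follows directly from the inclusion of the corresponding dyadic trees.
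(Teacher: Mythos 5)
This Proposition is not proved in the paper at all: it is stated as a citation of \cite[Section~3 and Appendix~A]{HM-I}, so the ``paper's own proof'' consists of a reference. Your proposal is a plausible road map through the construction in \cite{HM-I} and is not wrong in spirit, but it omits a genuinely nontrivial step and mis-orders another.

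The most significant gap is in the definition of $U_Q$. Your rule ($I\in\W(Q)$ iff $\ell(I)\approx\ell(Q)$ and $\dist(I,Q)\lesssim\ell(Q)$, then $U_Q:=\cup I^*(\tau)$) does not by itself make $U_Q$ connected; in a general uniform domain the Whitney cubes ``near'' $Q$ at scale $\ell(Q)$ can sit in several pieces. In \cite{HM-I} one first forms a preliminary collection $\W^0_Q$ by exactly the rule you wrote, and then augments it: using the Harnack chain condition, one adjoins all Whitney cubes appearing in chains joining the preliminary cubes (and joining them to the corkscrew cube for $Q$), which keeps all of properties (3)--(5) and additionally forces $U_Q$ to be open and connected. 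That augmentation is what makes the sawtooth $\Omega_\sbf$ an open connected set and is needed before one can even start verifying the 1-sided NTA axioms. If you simply ``choose the implicit constants large enough,'' you have not shown that the chain cubes stay within the required distance of $Q$; that is precisely what the Harnack chain estimate guarantees, and it must be invoked explicitly. Relatedly, for property (6) you assert that the $U_Q$, $Q\subset Q_0$, ``exhaust a full neighborhood of $Q_0$ at every scale''; this is again a consequence of the Harnack-chain augmentation and of the corkscrew condition, not of the preliminary definition. The ADR bound for $\pom_\sbf$ you sketch correctly (the split into the part on $\pom$ and the part on fattened Whitney faces), and the treatment of (7) and (8) is in line with \cite{HM-I}, though for (8) the tent $T_\Delta$ is taken to be a single sawtooth over a single boundary cube $Q_\Delta$ of scale $\approx r$ containing $\Delta$ (fattened by a bounded number of neighbors), rather than over a ``family'' of cubes, and the lower containment $B\cap\Omega\subset T_\Delta$ again relies on the chain-augmented Whitney regions.
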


\begin{remark}\label{remarknotation}
We caution the reader that our notation differs from that of \cite{HM-I}.  In the latter reference, the subdomains $\Omega_\sbf$ are denoted $\Omega_{\F,Q}$, where, translating to the notation of the present paper, we have $Q=Q(\sbf)$, the maximal cube in $\sbf$, and
$\F=\{Q_j\}_j$ denotes the collection of sub-cubes of $Q=Q(\sbf)$ that are maximal with respect to the property that $Q_j\notin\sbf$.
\end{remark}

We recall some further results from \cite{HM-I} that will be useful in the sequel.

\begin{proposition}\label{prop:sawtooth-contain}\cite[Proposition 6.1]{HM-I}.  
Suppose that $\Omega$ is a
1-sided chord-arc domain.
Fix $Q_0\in \dd$, let $\sbf\subset \dd_{Q_0}$ be a tree, and let
$\F=\{Q_j\}_j$ denote the collection of sub-cubes of $Q_0$ that are maximal 
with respect to the property that $Q_j\notin\sbf$. Then
\begin{equation}\label{eq5.0}
Q_0\setminus \left(\cup_\F Q_j\right)
\subset Q_0\cap\partial\Omega_{\sbf}
\subset\partial\Omega\cap\partial\Omega_{\sbf}
\subset \overline{Q_0} \setminus \left(\cup_\F \,\,{\rm int}\!\left(Q_j\right)\right)\,.
\end{equation}
\end{proposition}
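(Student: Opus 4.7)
The plan is to treat the three inclusions separately, using only the Whitney-region description $\Omega_\sbf=\interior(\cup_{Q\in\sbf}U_Q)$ from Proposition~\ref{sawtoothprop}(2), the geometric comparability \eqref{UQprop}, the dyadic nesting of Lemma~\ref{lemmaCh}, and the semi-coherence of $\sbf$ (Definition~\ref{def1.tree}). Throughout I would assume $Q_0=Q(\sbf)$, since otherwise $Q_0\in\F$ and the statement is vacuous or trivial.

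For the first inclusion, I would fix $x\in Q_0\setminus(\cup_\F Q_j)$ and first argue that every dyadic cube $Q\subset Q_0$ with $x\in Q$ belongs to $\sbf$: indeed, if $Q\notin\sbf$, then $Q$ is either in $\F$ or contained in some $Q_j\in\F$, and in either case $x\in\cup_\F Q_j$, a contradiction. Consequently, for every $k$ large enough that $2^{-k}<\ell(Q_0)$, the unique $Q^k\in\dd_k$ containing $x$ satisfies $Q^k\subset Q_0$ and $Q^k\in\sbf$, and by Proposition~\ref{sawtoothprop}(3) together with \eqref{UQprop}, the Whitney region $U_{Q^k}\subset\Omega_\sbf$ sits within distance $\approx\ell(Q^k)$ of $x$. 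Letting $k\to\infty$ gives $x\in\overline{\Omega_\sbf}$, and since $x\in\pom\subset\ree\setminus\Omega_\sbf$, we get $x\in\partial\Omega_\sbf$, as required. The second inclusion is immediate from $Q_0\subset\pom$.

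For the third inclusion, I would take $x\in\pom\cap\partial\Omega_\sbf$ and choose a sequence $X_m\in\Omega_\sbf$ with $X_m\to x$. By Proposition~\ref{sawtoothprop}(2), each $X_m$ lies in some $U_{Q_m}$ with $Q_m\in\sbf\subset\dd_{Q_0}$, and \eqref{UQprop} forces $\ell(Q_m)\to 0$ and $\dist(x,Q_m)\to 0$, so $x\in\overline{Q_0}$. To rule out $x\in\interior(Q_j)$ for some $Q_j\in\F$, suppose the contrary and pick $r>0$ with $\Delta(x,r)\subset Q_j$. For $m$ large, $\ell(Q_m)<\ell(Q_j)$ and $Q_m\cap\Delta(x,r)\neq\emptyset$, so the dyadic nesting of Lemma~\ref{lemmaCh}(ii) yields $Q_m\subset Q_j$. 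Semi-coherence of $\sbf$, applied to $Q_m\subset Q_j\subset Q_0=Q(\sbf)$, then forces $Q_j\in\sbf$, contradicting $Q_j\in\F$.

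The hard part, such as it is, is the final step: the combinatorial use of dyadic nesting plus semi-coherence to preclude $x$ from lying in the interior of a stopping-time cube. Everything else is a direct consequence of \eqref{UQprop}, which converts shrinking cube-scale into shrinking distance from $x$.
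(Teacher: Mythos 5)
The paper does not prove this proposition; it is imported verbatim from \cite[Proposition~6.1]{HM-I}, so there is no in-text argument to compare against. Judged on its own terms, your proof is correct: the first inclusion via the observation that every dyadic ancestor of $x$ (below $Q_0$) lies in $\sbf$, hence $U_{Q^k}\subset\Omega_\sbf$ accumulates at $x$; the second inclusion trivially; and the third via $\delta(X_m)\approx\ell(Q_m)\to 0$ and $\dist(x,Q_m)\to 0$ (so $x\in\overline{Q_0}$), followed by dyadic nesting plus semi-coherence to rule out $x\in\interior(Q_j)$. This is the expected route given only the structural facts in Proposition~\ref{sawtoothprop} and Lemma~\ref{lemmaCh}, and it matches what the argument in \cite{HM-I} must accomplish.

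Two small remarks. First, you were right to flag the implicit normalization $Q_0=Q(\sbf)$: without it, the third inclusion fails (take $Q(\sbf)\subsetneq Q_0$, so $\F=\{Q_0\}$ and the right-hand set degenerates to $\partial Q_0$), and indeed the paper's Remark~\ref{remarknotation} confirms that $Q_0=Q(\sbf)$ is intended. Second, in the first inclusion you should note explicitly that $\interior(U_{Q^k})$ is nonempty and lies in $\Omega_\sbf=\interior(\cup_{Q\in\sbf}U_Q)$; this is immediate since $U_{Q^k}$ is a finite union of fattened Whitney cubes, but it is the step that converts ``$U_{Q^k}\subset\cup_{\sbf}U_Q$'' into ``points of $\Omega_\sbf$ near $x$.'' Neither point is a gap, just a spot where a referee might ask you to spell things out.
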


\begin{remark}\label{remark2.15}
Note that \eqref{eq5.0} implies in particular that 
\begin{multline}\label{eq2.16}
Q_0\cap\partial\Omega_{\sbf} = \left[Q_0
\setminus \left(\cup_\F \,\,{\rm int}\!\left(Q_j\right)\right)\right]\cap\pom_\sbf
\\[4pt]
= \left[Q_0\setminus \left(\cup_\F Q_j\right)\right] \bigcup 
\left(\pom_\sbf\cap \left(\cup_{\F}\,  \left[Q_j\setminus \interior (Q_j)\right]\right)\right)\,.
\end{multline}
\end{remark}

 \begin{proposition}\label{prop5.0a}\cite[Proposition 6.3]{HM-I}.
   Suppose that $\partial\Omega$ is ADR, and that $\ttm$ is a doubling measure on
$\partial\Omega$ (Definition \ref{defdouble}).
Then $\partial Q:=\overline{Q}\setminus {\rm int}(Q)$
 has $\ttm$-measure 0, for every $Q\in\dd$.  
 \end{proposition}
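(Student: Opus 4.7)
The plan is to show that $\partial Q$ is a \emph{porous} subset of $\pom$ and then to deduce $\ttm(\partial Q)=0$ from porosity plus doubling, via the standard Lebesgue differentiation theorem for doubling measures on spaces of homogeneous type. The essential geometric input is property $(v)$ of Lemma \ref{lemmaCh}: every dyadic cube contains an inscribed surface ball of comparable size.

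\emph{Porosity.} I claim there is a uniform constant $c>0$ such that for every $x\in\partial Q$ and every $r\in(0,\ell(Q))$ one can find $y\in\pom$ with $|x-y|\leq r$ and $\Delta(y,cr)\cap \partial Q=\emptyset$. To verify this, let $Q\in\dd_k$ and choose the smallest $m\in\N$ with $2^{-(k+m)}\leq r/(2C_1)$; then $\ell(R):=2^{-(k+m)}\approx r$, and the unique cube $R\in\dd_{k+m}$ containing $x$ satisfies $\diam(R)\leq C_1\ell(R)\leq r/2$, whence $R\subset\Delta(x,r/2)$. By the nesting property $(ii)$, either $R\subset Q$ (when $x\in Q$) or $R\subset\pom\setminus Q$ (when $x\notin Q$). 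Property $(v)$ gives a center $x_R\in R$ with $\Delta(x_R,a_0\ell(R))\subset R$, and the triangle inequality shows that every $z\in\Delta(x_R,a_0\ell(R)/2)$ satisfies $\dist(z,\pom\setminus R)\geq a_0\ell(R)/2$. If $R\subset Q$ then $\pom\setminus Q\subset\pom\setminus R$, so $\dist(z,\pom\setminus Q)>0$; if $R\subset\pom\setminus Q$, then $Q\subset\pom\setminus R$, so $\dist(z,Q)>0$. Either way $z\notin \overline{Q}\cap\overline{\pom\setminus Q}=\partial Q$. Taking $y:=x_R$ and $c:=a_0/(8C_1)$ (so that $cr\leq a_0\ell(R)/2$) yields the porosity.

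\emph{Conclusion.} Assume for contradiction that $\ttm(\partial Q)>0$. The Lebesgue differentiation theorem for doubling measures on spaces of homogeneous type provides a point $x_0\in\partial Q$ with
\[
\lim_{r\to 0^+}\frac{\ttm(\partial Q\cap\Delta(x_0,r))}{\ttm(\Delta(x_0,r))}=1.
\]
For each sufficiently small $r>0$, porosity supplies $y=y(x_0,r)$ with $\Delta(y,cr)\subset\Delta(x_0,2r)\setminus\partial Q$, and since $\Delta(x_0,2r)\subset\Delta(y,3r)$, iterating the doubling property a bounded number of times gives $\ttm(\Delta(y,3r))\leq C\,\ttm(\Delta(y,cr))$ for some $C$ depending only on $c$ and the doubling constant. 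Therefore
\[
\ttm(\Delta(x_0,2r)\setminus\partial Q)\,\geq\,\ttm(\Delta(y,cr))\,\geq\, C^{-1}\,\ttm(\Delta(x_0,2r)),
\]
which yields $\ttm(\partial Q\cap\Delta(x_0,2r))/\ttm(\Delta(x_0,2r))\leq 1-C^{-1}<1$ for every small $r$, contradicting the density limit.

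The principal obstacle is the porosity claim itself; once that is in hand, the doubling hypothesis and Lebesgue differentiation conclude the argument routinely. The inscribed-ball property $(v)$ is indispensable here: without a definite interior ball inside each dyadic cube one could not rule out that a cube is pinched arbitrarily thin near $\partial Q$, and the porosity construction would break down.
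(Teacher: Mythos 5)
Your proof is correct, and it rests on the same essential geometric input that underlies the cited \cite[Proposition 6.3]{HM-I}: property $(v)$ of Lemma \ref{lemmaCh} gives each dyadic cube an inscribed surface ball whose (shrunk) concentric half is disjoint from $\partial Q$, i.e.\ $\partial Q$ is porous, and a porous set is null for any doubling measure. Where \cite{HM-I} extracts the conclusion by iterating over successive generations of dyadic cubes meeting $\partial Q$ (each generation losing a fixed fraction of mass to the inscribed balls), you package the same mechanism through the Lebesgue differentiation theorem for doubling measures; the two routes are equivalent and of comparable length, so this is a difference of presentation rather than of substance.
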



\begin{proposition}\cite[Proposition 6.7]{HM-I}\label{prop:Pj}.  Suppose that
$\Omega$ is a 1-sided chord-arc domain.
Fix $Q_0\in \dd$, let $\sbf\subset \dd_{Q_0}$ be a tree, and let
$\F=\{Q_j\}_j$ denote the collection of sub-cubes of $Q_0$ that are maximal 
with respect to the property that $Q_j\notin\sbf$.  Then for each
$Q_j\in\F$, there is a surface ball $P_j\subset \partial\Omega_{\sbf}$, satisfying
\begin{equation}\label{eqpj}
\diam(P_j)\approx \dist(P_j,Q_j)\approx \dist(P_j,\partial\Omega) \approx \ell(Q_j),
\end{equation}
where the uniform implicit constants depend only on the 1-sided chord-arc 
constants for $\Omega$ (in particular, on the parameters fixed in the construction of the Whitney regions $U_Q$, and the sawtooth regions $\Omega_{\sbf}$). 
\end{proposition}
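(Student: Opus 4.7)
The plan is to reduce the construction of $P_j$ to the location of a single point $y_j\in\pom_\sbf$ satisfying $\delta(y_j)\approx \ell(Q_j)$ and $\dist(y_j,Q_j)\approx \ell(Q_j)$; after that, since $\Omega_\sbf$ is itself a 1-sided chord-arc domain by Proposition~\ref{sawtoothprop}(1), its boundary $\pom_\sbf$ is ADR, so I can simply set
\[
P_j\,:=\,\pom_\sbf\cap B\bigl(y_j,\,c_0\,\ell(Q_j)\bigr)
\]
for a sufficiently small dimensional constant $c_0$. The bound $\diam(P_j)\leq 2c_0\ell(Q_j)$ is trivial, $\diam(P_j)\gtrsim\ell(Q_j)$ follows from the ADR lower estimate $\mathcal{H}^n(P_j)\gtrsim \ell(Q_j)^n$, and the required distances from $P_j$ to $Q_j$ and to $\pom$ inherit from those of $y_j$ once $c_0$ is small.

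To produce $y_j$ I would exhibit a pair of points $X_j^+\in \Omega_\sbf$ and $X_j^-\in \Omega\setminus\overline{\Omega_\sbf}$, both with $\delta\approx\ell(Q_j)$ and with $|X_j^+-X_j^-|\lesssim \ell(Q_j)$, and then join them by a Harnack chain in $\Omega$ (Definition~\ref{def1.hc}). The chain has bounded length, and each of its balls has radius $\approx\ell(Q_j)$ and is at distance $\approx\ell(Q_j)$ from $\pom$; so the piecewise linear path through its centers stays in the $\delta\approx\ell(Q_j)$ regime, and as it goes from the open set $\Omega_\sbf$ into $\Omega\setminus\overline{\Omega_\sbf}$ it must cross $\pom_\sbf$ at some point $y_j$, automatically inheriting $\delta(y_j)\approx \ell(Q_j)$. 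The lower bound $\dist(y_j,Q_j)\geq \delta(y_j)$ (since $Q_j\subset\pom$) combined with $\dist(y_j,Q_j)\leq \dist(y_j,X_j^+)+\dist(X_j^+,Q_j)\lesssim\ell(Q_j)$ yields $\dist(y_j,Q_j)\approx\ell(Q_j)$. The ``plus'' point is cheap: since $Q_j$ is maximal in $\F$, its dyadic parent $\widetilde{Q}_j$ lies in $\sbf$, and Proposition~\ref{sawtoothprop}(3)--(4) supplies some $I\in\W(\widetilde{Q}_j)$ with $\ell(I)\approx\ell(Q_j)$ and $\dist(I,Q_j)\approx\ell(Q_j)$; I take $X_j^+$ to be the center of $I^*\subset U_{\widetilde{Q}_j}\subset\Omega_\sbf$. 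For $X_j^-$, I would use Lemma~\ref{lemmaCh}(v) to fix $x^*\in Q_j$ with $\Delta(x^*,a_0\ell(Q_j))\subset Q_j$, and let $X_j^-$ be a Corkscrew point in $\Omega$ for the ball $B(x^*,\eta\ell(Q_j))$, where $\eta\in(0,a_0/2)$ is a small dimensional parameter calibrated in the next step.

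Verifying that $X_j^-\notin\overline{\Omega_\sbf}$ is the main obstacle. Suppose toward contradiction that $X_j^-\in I^*$ for some $I\in\W(Q')$ with $Q'\in\sbf$. Proposition~\ref{sawtoothprop}(3) forces
\[
\ell(Q')\,\approx\,\ell(I)\,\approx\,\delta(X_j^-)\,\approx\, \eta\,\ell(Q_j),\qquad \dist(Q',X_j^-)\,\lesssim\,\eta\,\ell(Q_j)\,,
\]
so $\dist(Q',x^*)\lesssim\eta\,\ell(Q_j)$. Choosing $\eta$ small enough relative to $a_0$ and the implicit constants in Proposition~\ref{sawtoothprop}(3) and \eqref{Whintey-4I}, the cube $Q'$ is forced to meet $\Delta(x^*,a_0\ell(Q_j))\subset Q_j$ while having $\ell(Q')<\ell(Q_j)$; dyadic nesting (Lemma~\ref{lemmaCh}(ii)) then yields $Q'\subsetneq Q_j$. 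Since $Q'\in\sbf\subset\dd_{Q_0}$ and $Q'\subsetneq Q_j\subsetneq Q_0=Q(\sbf)$, the semi-coherence of $\sbf$ (Definition~\ref{def1.tree}) forces $Q_j\in\sbf$, contradicting $Q_j\in\F$. The delicate part of the whole argument is this uniform calibration of $\eta$ against the various geometric constants; once it is done, the Harnack-chain construction of $y_j$, the topological crossing, and the ADR fattening to $P_j$ are routine.
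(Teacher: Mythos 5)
Your argument is correct, and I verified each step. In brief: (i) the choice of $X_j^+$ as the center of $I^*\subset U_{\widetilde{Q}_j}$ gives $X_j^+\in\interior(U_{\widetilde{Q}_j})\subset\Omega_{\sbf}$ with $\delta(X_j^+)\approx\dist(X_j^+,Q_j)\approx\ell(Q_j)$ by Proposition~\ref{sawtoothprop}(3); (ii) if $X_j^-$ landed in $I^*$ for some $I\in\W(Q')$ with $Q'\in\sbf$, then the Whitney and corkscrew scales force $\ell(Q')\approx\eta\ell(Q_j)$ and $\dist(Q',x^*)\lesssim\eta\ell(Q_j)$, so for $\eta$ small $Q'$ meets $\Delta(x^*,a_0\ell(Q_j))\subset Q_j$ with $\ell(Q')<\ell(Q_j)$, giving $Q'\subsetneq Q_j$ by Lemma~\ref{lemmaCh}(ii) and then $Q_j\in\sbf$ by semi-coherence, a contradiction (this also shows $X_j^-\notin\cup_{Q\in\sbf}U_Q\supset\Omega_{\sbf}$, which is all the topological crossing needs); (iii) since $\delta(X_j^\pm)\approx_\eta\ell(Q_j)$ and $|X_j^+-X_j^-|\lesssim\ell(Q_j)$, the Harnack chain has bounded length, intersecting consecutive balls have comparable radii, so all radii are $\approx\ell(Q_j)$, hence the piecewise-linear path stays in $\{Y:\delta(Y)\approx\ell(Q_j)\}$ and has length $\lesssim\ell(Q_j)$, so the crossing point $y_j$ has both $\delta(y_j)\approx\ell(Q_j)$ and $\dist(y_j,Q_j)\approx\ell(Q_j)$; and (iv) the ADR lower and upper bounds for $\pom_{\sbf}$ together give $\diam(P_j)\approx c_0\ell(Q_j)$, while the distance estimates for $P_j$ are inherited from $y_j$ once $c_0$ is small relative to those implicit constants. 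Two wording-level points worth tightening but not gaps: the path should run through chosen intersection points $Z_k\in B_k\cap B_{k+1}$ rather than literally the centers (convexity then puts each segment inside a single $B_k$), and the constant $c_0$ is not purely dimensional but depends on the 1-sided CAD constants and the sawtooth-construction parameters, as do all your implicit constants.

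Since the paper cites \cite[Proposition~6.7]{HM-I} without reproducing a proof, I cannot line your argument up against the original, but the Harnack-chain-crossing construction of $y_j$ is the natural mechanism here and yields exactly the claimed estimates, so the proposal stands as a self-contained and correct proof of the statement.
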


Let $x^\star_j\in\pom_\sbf$ denote the center of the surface 
ball $P_j$ in Proposition
\ref{prop:Pj}, and let $t_j\approx \ell(Q_j)$ be its radius.

\begin{proposition}\cite[Proposition 6.12]{HM-I}. \label{prop:Pj2}
Let $\Omega, Q_0, \sbf, \F$ and $P_j$ be as in Proposition \ref{prop:Pj}.
For $Q_j\in\F$, let $MP_j:=B(x_j^\star,Mt_j)\cap \pom_\sbf$
be the $M$-fold concentric dilate of the surface ball $P_j$ on $\pom_\sbf$.
Then for $M$ large enough, 
there is a surface ball 
$\Delta_{\star}:=B(x_\star, t_\star)\cap \pom_\sbf$, 
with $t_\star\approx\ell(Q_0)$, and $x_\star\in \partial\Omega_{\sbf}$, 
satisfying
\begin{equation}\label{eqDstar}
\Delta_{\star}\subset \big(Q_0\cap\partial\Omega_{\sbf}\big)
\bigcup\left(\cup_{Q_j\in\F} MP_j\right)\,,
\end{equation}
where $M$ and the various implicit constants 
depend only on the 1-sided chord-arc 
constants for $\Omega$ (in particular, on the parameters 
fixed in the construction of the Whitney regions $U_Q$, and the 
sawtooth regions $\Omega_{\sbf}$).
\end{proposition}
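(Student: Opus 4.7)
The plan is to locate a point $x_\star \in \pom_\sbf$ near the dyadic center $x_{Q_0}$ of $Q_0$, with radius $t_\star := c_0\ell(Q_0)$ for a small but uniform constant $c_0$, and then verify the inclusion $\Delta_\star \subset (Q_0\cap\pom_\sbf) \cup \cup_{Q_j\in\F} MP_j$ by splitting $y \in \Delta_\star$ into a ``$\pom$-portion'' and a ``tent portion''.

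For the choice of $x_\star$, I would arrange $|x_\star - x_{Q_0}| < a_0 \ell(Q_0)/4$ with $x_\star \in \pom_\sbf$, where $a_0$ is the constant of Lemma \ref{lemmaCh}(v). If $x_{Q_0} \in \pom_\sbf$ (equivalently, $x_{Q_0} \notin \cup_\F \interior(Q_j)$), I take $x_\star := x_{Q_0}$. Otherwise, $x_{Q_0} \in \interior(Q_j)$ for some stopping cube $Q_j \in \F$. In the subcase $\ell(Q_j) \ll \ell(Q_0)$, the surface ball $\Delta(x_{Q_0}, a_0 \ell(Q_0)/4) \subset Q_0$ (by Lemma \ref{lemmaCh}(v)) must contain a point of $Q_0 \setminus \cup_\F Q_j \subset \pom_\sbf$, which I select as $x_\star$; in the subcase $\ell(Q_j) \approx \ell(Q_0)$, the proposition is trivial upon setting $\Delta_\star := MP_j$ with $t_\star = M t_j \approx \ell(Q_0)$. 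Finally I fix $t_\star := c_0 \ell(Q_0)$ with $c_0 := a_0/4$.

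For $y \in \Delta_\star$ I distinguish two cases. In Case~(a), $y \in \pom$: by Proposition \ref{prop:sawtooth-contain}, $y \in \pom \cap \pom_\sbf \subset \overline{Q_0} \setminus \cup_\F \interior(Q_j)$; combined with the triangle inequality $|y - x_{Q_0}| \leq t_\star + |x_\star - x_{Q_0}| < a_0 \ell(Q_0)/2$, Lemma \ref{lemmaCh}(v) forces $y \in \Delta(x_{Q_0}, a_0 \ell(Q_0)) \subset Q_0$, hence $y \in Q_0 \cap \pom_\sbf$. In Case~(b), $y \in \pom_\sbf \setminus \pom$: then $y$ lies on the boundary between a fattened Whitney cube $I^* \in \W(Q)$ for some $Q \in \sbf$ and an adjacent fattened Whitney cube $J^*$ not belonging to any $\W(Q')$ with $Q' \in \sbf$. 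By Proposition \ref{sawtoothprop}(3) and the Whitney structure, $J$ is associated to a dyadic cube $Q_J \notin \sbf$ with $\ell(Q_J) \approx \ell(J) \approx \ell(I) \approx \ell(Q)$; the smallness of $c_0$ guarantees that $Q_J$ cannot be a dyadic sibling of $Q_0$ (otherwise $y$ would lie at distance $\gtrsim \ell(Q_0)$ from $x_\star$), so $Q_J \subset Q_j$ for some $Q_j \in \F$, with $\ell(Q_j) \approx \ell(J)$ by maximality of $\F$. Thus $\dist(y, Q_j) \approx \ell(Q_j)$, and Proposition \ref{prop:Pj} places $x_j^\star$ within distance $\approx t_j \approx \ell(Q_j)$ of $Q_j$, yielding $|y - x_j^\star| \lesssim t_j$; for $M$ large (uniform in the allowable parameters), $y \in MP_j$.

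The main obstacle will be the precise Whitney-adjacency bookkeeping in Case~(b): correctly identifying for each tent point $y$ the stopping cube $Q_j$ associated to the neighboring Whitney cube $J^*$, and controlling $|y - x_j^\star|$ uniformly by a constant multiple of $t_j$. The constant $M$ emerges from these quantitative estimates, depending only on the 1-sided chord-arc constants of $\Omega$ and on the parameters fixed in the construction of $\W(Q)$ and $\Omega_\sbf$. The secondary issue is the degenerate choice of $x_\star$ when $x_{Q_0}$ falls inside a stopping cube, which is dispatched by the two-subcase analysis above; in particular the condition $\ell(Q_j) \approx \ell(Q_0)$ is what justifies taking the trivial $\Delta_\star := MP_j$.
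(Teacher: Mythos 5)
Since this proposition is cited to \cite[Proposition 6.12]{HM-I} and not reproved in the paper, your proposal cannot be compared with a local argument; I review it on its own terms.

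The overall strategy (pick $x_\star\in\pom_\sbf$ near the dyadic center $x_{Q_0}$, take $t_\star$ a small multiple of $\ell(Q_0)$, and split $y\in\Delta_\star$ according to whether $y\in\pom$ or $y\in\pom_\sbf\setminus\pom$) is reasonable, and Case (a) works. But there is a genuine gap in the construction of $x_\star$. In the subcase $x_{Q_0}\in\interior(Q_j)$ with $\ell(Q_j)\ll\ell(Q_0)$, you assert that $\Delta(x_{Q_0},a_0\ell(Q_0)/4)$ \emph{must} contain a point of $Q_0\setminus\cup_\F Q_j$, and hence a point of $\pom_\sbf\cap\pom$. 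This is false in general: the size of the single stopping cube containing $x_{Q_0}$ gives no information about the union $\cup_\F Q_j$. Concretely, take $\sbf=\{Q\in\dd_{Q_0}: \ell(Q)>2^{-k_0}\ell(Q_0)\}$ with $k_0$ large; then $\F$ is the entire generation $k_0$ below $Q_0$, so $\cup_\F Q_j=Q_0$ and $Q_0\setminus\cup_\F Q_j=\emptyset$, while every $Q_j\in\F$ satisfies $\ell(Q_j)=2^{-k_0}\ell(Q_0)\ll\ell(Q_0)$. In this scenario $\pom_\sbf$ lives entirely in $\Omega$ (the sawtooth never touches $\pom$ except possibly on a set of measure zero), so there is no point in $Q_0\setminus\cup_\F Q_j$ to choose. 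The fix is to drop the insistence that $x_\star\in\pom$: Proposition \ref{prop:Pj} already supplies a point $x_j^\star\in P_j\subset\pom_\sbf$ at distance $\lesssim\ell(Q_j)\ll\ell(Q_0)$ from $Q_j\ni x_{Q_0}$, and this point serves as $x_\star$ with $|x_\star-x_{Q_0}|\lesssim\ell(Q_j)<a_0\ell(Q_0)/4$. Your Cases (a)--(b) concern only $y\in\Delta_\star$, not $x_\star$, so they survive this change of $x_\star$.

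Two smaller points. First, in Case (b) you assert ``$\ell(Q_j)\approx\ell(J)$ by maximality of $\F$''; this is not justified and can fail, since $Q_j$ could be a much larger ancestor of $Q_J$ as long as the cubes between $Q_J$ and $Q_j$ are all outside $\sbf$. Fortunately this stronger statement is not needed: the one-sided bound $\ell(J)\approx\ell(Q_J)\le\ell(Q_j)$ still gives $\dist(y,x_j^\star)\le\dist(y,Q_J)+\diam(Q_j)+\dist(Q_j,x_j^\star)\lesssim\ell(Q_j)\approx t_j$, which is all that is needed to place $y\in MP_j$ for $M$ large. Second, you flag but do not resolve the bookkeeping required to rule out that the adjacent Whitney cube $J^*$ is assigned to a $Q_J$ lying outside $Q_0$ (so that $Q_J$ is not below any $Q_j\in\F$); the argument that the smallness of $c_0$ forces $Q_J\subset Q_0$ needs to be written out, since $Q$ near $\partial Q_0$ with $\ell(Q)$ comparable to $c_0\ell(Q_0)$ is a borderline case that requires tracking explicit constants, and simply ruling out ``siblings of $Q_0$'' is not enough.
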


We now observe that, in light of Remark \ref{remark2.15}, we may deduce the following
subtle (but eventually useful) self-improvement of Proposition \ref{prop:Pj2}.

\begin{corollary}\label{corPj}
Let $\Omega, Q_0, \sbf, \F$, $\Delta_{\star}$, $P_j$ and $MP_j$ be as in Proposition 
\ref{prop:Pj2}.  Then for a possibly larger choice of
$M$ (but still uniform, and depending only on allowable parameters), 
we have the following improvement of \eqref{eqDstar}:
\begin{equation}\label{eqDstar2}
\Delta_{\star}\subset \big(Q_0\setminus \left(\cup_\F Q_j\right)\big)
\bigcup\left(\cup_{Q_j\in\F} MP_j\right)\,.
\end{equation}
\end{corollary}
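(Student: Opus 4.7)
The plan is to combine Proposition~\ref{prop:Pj2} with the decomposition identity in Remark~\ref{remark2.15}, and then to verify that the ``extra'' portion of $Q_0\cap \partial\Omega_\sbf$ (those points lying in $Q_j\setminus \interior(Q_j)$ for some $Q_j\in\F$) can be absorbed into the dilates $MP_j$, at the cost of replacing $M$ by a possibly larger constant.

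First, apply Proposition~\ref{prop:Pj2} with some uniform $M_0$ to obtain
\[
\Delta_\star \subset \big(Q_0\cap \partial\Omega_\sbf\big) \,\cup \bigcup_{Q_j\in\F} M_0\,P_j\,.
\]
Then invoke the identity \eqref{eq2.16} from Remark~\ref{remark2.15}:
\[
Q_0\cap \partial\Omega_\sbf \,=\, \Big[Q_0\setminus \bigcup_\F Q_j\Big]\, \cup \,\Big(\partial\Omega_\sbf \cap \bigcup_\F \big(Q_j\setminus \interior(Q_j)\big)\Big)\,.
\]
Thus, to prove \eqref{eqDstar2}, it suffices to show that, for some uniform $M\geq M_0$ depending only on the allowable parameters,
\[
\partial\Omega_\sbf \cap \big(Q_j\setminus \interior(Q_j)\big) \subset MP_j\,, \qquad \forall\, Q_j\in\F\,.
\]

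To verify this containment, fix $Q_j\in\F$ and a point $x$ in the left-hand side. Since $x\in Q_j$, we have $\dist(x,P_j)\leq \dist(Q_j,P_j) \lesssim \ell(Q_j)\approx t_j$ by \eqref{eqpj}. Combined with $\diam(P_j)\approx t_j$, this yields
\[
|x-x_j^\star| \,\leq\, \dist(x,P_j) + \diam(P_j) \,\lesssim\, t_j\,,
\]
where $x_j^\star$ is the center of $P_j$, so $x\in B(x_j^\star, Mt_j)\cap \partial\Omega_\sbf = MP_j$ for $M$ sufficiently large (depending only on the allowable parameters). There is no serious obstacle here: the only observation needed is that the ``extra'' points are honest points of $Q_j$ itself (not merely of its closure), while $\diam(Q_j)\approx \ell(Q_j)\approx t_j$ is exactly the scale associated to $P_j$. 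The content of the corollary is thus the replacement of the set-theoretic expression $Q_0\cap \partial\Omega_\sbf$ by the more concrete $Q_0\setminus \bigcup_\F Q_j$, which should be more amenable to the later measure-theoretic manipulations.
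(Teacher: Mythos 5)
Your argument is essentially the same as the paper's: invoke the identity \eqref{eq2.16} from Remark~\ref{remark2.15} to reduce to showing $\pom_\sbf\cap\big(Q_j\setminus\interior(Q_j)\big)\subset MP_j$, which then follows from \eqref{eqpj} after enlarging $M$. One small slip in the write-up: the inequality $\dist(x,P_j)\leq\dist(Q_j,P_j)$ goes the wrong way (a single point of $Q_j$ can be farther from $P_j$ than the closest point of $Q_j$ is); the correct bound is $\dist(x,P_j)\leq\dist(Q_j,P_j)+\diam(Q_j)\lesssim\ell(Q_j)\approx t_j$, which gives the same conclusion $|x-x_j^\star|\lesssim t_j$, so the argument is unaffected.
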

\begin{proof}
Using \eqref{eq2.16}, we need only observe that for each $j$,
\[
\pom_\sbf\cap  \left[Q_j\setminus \interior (Q_j)\right] 
\subset \pom_\sbf\cap Q_j\subset MP_j\,,
\]
by \eqref{eqpj} and the definition of $MP_j$, provided that we fix $M$ large enough.
\end{proof}


\begin{lemma}[Local Hardy Inequality]\label{localHardy}
Let $\Omega\subset\re^{n+1}$ be a 1-sided $\mathrm{CAD}$, and consider any ball
$B=B(x,r)$, with $x\in \pom$.  Set $\Omega_{r}:= \Omega\cap B(x,r)$, and 
$\Omega_{M r}:= \Omega\cap B(x,M r)$, for $M>0$.  Then there is a 
constant $\kappa_0\in (1,\infty)$, depending only on the 1-sided CAD constants, 
such that for any 
$u\in W^{1,2}(\Omega_{2\kappa_0 r})$ 
with vanishing trace on $2\kappa_0\Delta:= B(x,2\kappa_0 r)\cap\pom$,
\begin{equation}\label{eqHardyloc}
\iint_{\Omega_r} \left(\frac{u(Y)}{\delta(Y)}\right)^2 dY\, \lesssim\,
\iint_{\Omega_{\kappa_0r}}  |\nabla u(Y)|^2 dY\,,
\end{equation}
where the implicit constant depends only on dimension and the 1-sided CAD constants.
\end{lemma}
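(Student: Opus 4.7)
The plan is to reduce to a \emph{global} Hardy inequality on a 1-sided CAD, namely the Carleson tent $T_\Delta$ associated to the surface ball $\Delta=B\cap\pom$. By Proposition \ref{sawtoothprop}(8), $T_\Delta$ is itself a 1-sided CAD and there is a uniform constant $\kappa$ (depending only on the 1-sided CAD constants of $\Omega$) with
\[
B(x,\tfrac{5}{4}r)\cap\Omega\,\subset\, T_\Delta\,\subset\, B(x,\kappa r)\cap\Omega.
\]
I would fix $\kappa_0:=\kappa$, which secures $T_\Delta\subset \Omega_{\kappa_0 r}$ and $\partial T_\Delta\cap\pom\subset \kappa\Delta\subset 2\kappa_0\Delta$.

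Introduce a radial cutoff $\chi\in C^\infty_c(B(x,\tfrac{5}{4}r))$ with $\chi\equiv 1$ on $B(x,r)$ and $|\nabla\chi|\lesssim r^{-1}$, and set $v:=\chi u$. Since $\mathrm{supp}\,\chi\subset B(x,\tfrac{5}{4}r)$ lies strictly inside $T_\Delta$, $v$ vanishes in a neighborhood of the interior portion $\partial T_\Delta\cap\Omega$ of the tent boundary; and on $\partial T_\Delta\cap\pom\subset 2\kappa_0\Delta$ the trace of $v$ is zero because that of $u$ is. Thus $v\in W^{1,2}_0(T_\Delta)$. A short geometric check shows that for $Y\in\Omega_r$ the interior sides of $\partial T_\Delta$ lie at distance $\gtrsim (\kappa-\tfrac{5}{4})r>\delta(Y)$, while the nearest point of $\pom$ to $Y$ is in $2\Delta\subset \partial T_\Delta\cap\pom$; consequently $\delta_{T_\Delta}(Y):=\dist(Y,\partial T_\Delta)$ coincides with $\delta(Y)$ on $\Omega_r$. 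I would then apply the global Hardy inequality for $W^{1,2}_0$ functions on the 1-sided CAD $T_\Delta$ (classical, due to Ancona, Lewis and Wannebo; available whenever the boundary is uniformly $n$-thick, a property guaranteed by ADR) to obtain
\[
\int_{T_\Delta}\!\left(\frac{v}{\delta_{T_\Delta}}\right)^{\!2}\,dY\,\lesssim\,\int_{T_\Delta}|\nabla v|^2\,dY,
\]
and restrict the left side to $\Omega_r$, where $v=u$ and $\delta_{T_\Delta}=\delta$.

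Finally, the error term $r^{-2}\int_{T_\Delta}|u|^2$ produced by $|\nabla v|^2\lesssim|\nabla u|^2 + u^2|\nabla\chi|^2$ must be absorbed. For this I would invoke a Poincar\'e inequality on $T_\Delta$: since the trace of $u$ vanishes on the ADR set $\partial T_\Delta\cap\pom$, which has $\mathcal{H}^n$-measure $\approx r^n$, the standard Poincar\'e inequality on the 1-sided CAD $T_\Delta$ (with partial vanishing boundary trace on a set of positive $n$-capacity) yields $\int_{T_\Delta}|u|^2\lesssim r^2\int_{T_\Delta}|\nabla u|^2$. Combining this with the inclusion $T_\Delta\subset \Omega_{\kappa_0 r}$ produces \eqref{eqHardyloc}. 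The main obstacle is precisely the appeal to the two global inequalities just used on $T_\Delta$: both are well-known consequences of uniformity together with Ahlfors regularity of the boundary, but if a self-contained treatment is desired one can instead establish a pointwise Hardy bound of the form $|u(Y)|\lesssim \delta(Y)\,\mathcal{M}_{2\delta(Y)}(|\nabla u|)(\hat y)$ via a Whitney-cube chain from $Y$ to a boundary projection $\hat y\in 2\kappa_0\Delta$, and conclude by the $L^2$-boundedness of a localized Hardy--Littlewood maximal function on $\pom$.
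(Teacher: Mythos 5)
Your argument is correct in outline and genuinely different from the paper's: the paper applies the global Hardy inequality of Ancona/Lewis directly in $\Omega$ (after replacing $u$ by $u\eta$ so that $u\eta\in W^{1,2}_0(\Omega)$), whereas you apply it inside the Carleson tent $T_\Delta$. Both routes then face the same residual task, namely absorbing the cutoff error $r^{-2}\iint u^2$. There the two approaches diverge materially: the paper \emph{proves} the needed Poincar\'e-type bound by hand, covering $\Omega_{2r}$ by Carleson boxes $R_Q$ and running a dyadic telescoping argument over the Whitney regions $U_{Q'}$ (this is the bulk of the paper's proof); you instead invoke a scale-invariant $L^2$-Poincar\'e inequality on $T_\Delta$ for $W^{1,2}$ functions with vanishing trace only on the ADR piece $\partial T_\Delta\cap\pom$. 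That Poincar\'e inequality is plausible (it can be obtained, for instance, by viewing the uniform domain $T_\Delta$ as a Jones $W^{1,2}$-extension domain, extending, and using positive $\mathcal H^n$-measure of the zero-trace set), but it is precisely the nontrivial ingredient the paper takes pains to establish directly, so you should either cite a precise reference that covers the 1-sided CAD setting with uniform constants or prove it --- otherwise the argument has a hidden gap exactly where the real work lies. The alternative you sketch at the end (a pointwise Hardy bound via Whitney chains plus a maximal function) is essentially a repackaging of the same telescoping mechanism the paper uses.

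One smaller inaccuracy: the claim that $\delta_{T_\Delta}(Y)=\delta(Y)$ on $\Omega_r$ is not correct. The interior portion $\partial T_\Delta\cap\Omega$ lies outside $B(x,\tfrac54 r)$, so for $Y\in\Omega_r$ it is only at distance $\geq r/4$ from $Y$; but $\delta(Y)$ can be as large as $r$, so the interior boundary may well be the nearest piece. What you do have, automatically from $T_\Delta\subset\Omega$, is the one-sided inequality $\delta_{T_\Delta}(Y)\le\delta(Y)$, and that is exactly the direction needed, since it gives $\bigl(u/\delta\bigr)^2\le\bigl(v/\delta_{T_\Delta}\bigr)^2$ on $\Omega_r$. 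So the step is salvageable with the correct (and simpler) observation; just drop the equality claim.
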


\begin{remark}\label{remarkHardy}
A global version of this result, in which $\Omega_r$ and $\Omega_{\kappa_0r}$ 
are replaced by $\Omega$, and $u\in W^{1,2}_0(\Omega)$,
appears in \cite{A}, and for that result, the corkscrew and Harnack chain
conditions are not required, rather only ``uniform thickness" of the complement of $\Omega$
(in particular, Ahlfors-David regularity of the boundary suffices).  A more general result appears in \cite{L}.  Lemma \ref{localHardy} will be a routine consequence of these global results.
\end{remark}

\begin{proof}[Proof of Lemma \ref{localHardy}]
 Let $\vp$ be a smooth bump function adapted to the ball $B=B(x,r)$,
i.e., $\vp\in C_0^\infty(2B)$, $\vp\equiv 1$ on $B$, $0\leq \vp\leq 1$, 
and $|\nabla \vp|\lesssim r^{-1}$, and similarly, let $\eta$ be
be a smooth bump function adapted to the ball $B(x,\kappa_0r)$, 
with $\eta\equiv 1$ on $B(x,\kappa_0r)$, and $\supp(\eta) \subset B(x,2\kappa_0r)$.
Then $u=u\eta$ on $\Omega_{\kappa_0 r}$, so replacing
$u$ by $u\eta$, we may assume that
$u\in W^{1,2}_0(\Omega)$, and hence by density, 
we may suppose that $u\in C_0^\infty(\Omega)$.

Observe further that in particular, $u\vp\in W^{1,2}_0(\Omega)$.
Thus, by the global result of \cite{A} (or \cite{L}), we see that
\begin{multline*}
\iint_{\Omega_r} \left(\frac{u(Y)}{\delta(Y)}\right)^2 dY\,\leq\,
\iint_{\Omega} \left(\frac{u(Y)\vp(Y)}{\delta(Y)}\right)^2 dY
\\[4pt]
\lesssim\,
\iint_{\Omega} |\nabla u|^2\vp^2 dY\,+\, \iint_{\Omega} |\nabla \vp|^2 u^2 dY\,=:\, I+II\,.
\end{multline*}
Term $I$ clearly satisfies the desired bound in \eqref{eqHardyloc}.  

To handle term $II$, 
first note that by the properties of $\vp$,
\[
II \lesssim r^{-2} \iint_{\Omega_{2r}}  u^2 dY\,.
\]
We choose a collection $\mathcal{C}\subset \dd(\pom)$,
of uniformly bounded cardinality depending only on the 1-sided CAD constants, 
with $\ell(Q)\approx r$ for all $Q\in\mathcal{C}$, and such that
\[
\Omega_{2r}\subset \bigcup_{Q\in\mathcal{C}} R_Q\,,
\]
where $R_Q$ is the ``Carleson box" associated to $Q$, as in 
Proposition \ref{sawtoothprop} (6).  
Our goal is now to prove that 
\begin{equation}\label{eqHardyloc2}
r^{-2} \iint_{R_Q}  u^2 dY\,\lesssim\, 
\iint_{R^*_Q}|\nabla u|^2 dY\,,
\end{equation}
uniformly for each $Q\in\mathcal{C}$, where $R^*_Q =
\interior\left(\cup_{Q'\subset Q} U^*_{Q'}\right)$, and $U^*_{Q'}$ is a fattened version
of the Whitney region $U_{Q'}$,
which still retains the same Whitney properties:  in particular,
$\diam(U_{Q'}^*)\approx \dist(U_{Q'}^*,\pom)\approx \ell(Q')$.  Indeed,
observe that by construction, there is a uniform constant
$\kappa_0$ such that
$R^*_Q\subset \Omega_{\kappa_0r}$ for each $Q\in\mathcal{C}$, thus, 
\eqref{eqHardyloc2} implies the desired bound in \eqref{eqHardyloc}.

We now turn to the proof of \eqref{eqHardyloc2}.  
For any $Q\in \dd$, and any  $f\in L^1(U_Q^*)$,
let $[f]_Q$ and $[f]^*_Q$ denote, respectively, the mean value of $f$ on $U_Q$, and on the fattened Whitney region $U_Q^*$, i.e.,
\[
[f]_Q:= |U_Q|^{-1}\iint_{U_Q} f \,dY\,,\qquad [f]^*_Q:= |U^*_Q|^{-1}\iint_{U^*_Q} f \,dY
\]
Fix an arbitrary
$Q_0\in\mathcal{C}$, and observe that by definition of the Carleson regions, 
\[
r^{-2} \iint_{R_{Q_0}}  u^2 dY\,\leq\, r^{-2}\sum_{Q\subset Q_0} \iint_{U_Q} u^2 dY
\lesssim \, III + IV\,,
\]
where
\[
III\,= \, r^{-2}\sum_{Q\subset Q_0} \iint_{U_Q} \big|u-[u]_Q\big|^2 dY\,,
\]
and
\[
IV\,=\,
r^{-2}\sum_{Q\subset Q_0} 
\iint_{U_Q} \big|\fint_Q\sum_{Q'\subset Q}
1_{Q'}(x)\left([u]_{Q'} - [u]_{Q_{child}'}\right) d\sigma(x)\big|^2 dY\,,
\]
where for $Q'\subset Q$ with $Q'\ni x$, we let $Q'_{child}$ denote the child of
$Q'$ that contains $x$, and where we have replaced $[u]_{Q} = [u]_{Q}-u(x)$ by a telescoping sum,
using the fact that we have reduced to the case that $u \in C_0^\infty(\Omega)$.

By Poincar\'e's inequality (see, e.g., \cite[Section 4]{HM-I})
\[
\iint_{U_Q} \big|u-[u]_{Q}\big|^2 dY \, \lesssim \, \ell(Q)^2
\iint_{U^*_Q} |\nabla u|^2 dY\,,
\]
where as above $U^*_Q$ is a fattened version of $U_Q$.  Since the fattened Whitney regions retain the bounded overlap property, we easily obtain \eqref{eqHardyloc2} for term $III$.

Similarly (again see \cite[Section 4]{HM-I}),
\[
\big|[u]_{Q'} - [u]_{Q_{child}'}\big| \,\lesssim \,\ell(Q') [|\nabla u|]_{Q'}^*\,,
\]
and therefore
\begin{multline*} 
IV\, \lesssim \,r^{-2}\sum_{Q\subset Q_0} |U_Q| \,\left(
\sum_{Q'\subset Q}\frac{\sigma(Q')}{\sigma(Q)}
\,\ell(Q')\,|U^*_{Q'}|^{-1} \iint_{U^*_{Q'}} |\nabla u| \,dY\right)^2
\\[4pt]
\lesssim \, r^{-2}\sum_{Q\subset Q_0}\ell(Q)^{1-n} 
\, \left(\iint_{R^*_Q}|\nabla u|\, dY\right)^2\,,
\end{multline*}
where we have used that $|U_Q|\approx |U^*_Q|\approx \ell(Q)^{n+1}$, and $\sigma(Q)\approx \ell(Q)^n$, for each $Q\in \dd(\pom)$.
 Moreover $|R_Q^*| \approx \ell(Q)^{n+1}$, so by Cauchy-Schwarz, we see in turn that
\[
IV \,\lesssim\, r^{-2}\sum_{Q\subset Q_0}\ell(Q)^{2} 
\, \iint_{R^*_Q}|\nabla u|^2 dY\,
\approx\,
\sum_{k=0}^\infty2^{-2k}\sum_{Q\in \dd_k(Q_0)}
\, \iint_{R^*_Q}|\nabla u|^2 dY\,,
\] 
where $\dd_k(Q_0)$ is the collection of subcubes of $Q_0$ with 
$\ell(Q)=2^{-k}\ell(Q_0)\approx 2^{-k} r$.
Note that by definition, $R_Q^*$ is contained in a union of fat Whitney regions $U^*_{Q'}$ with
$Q'\subset Q$, thus, for distinct $Q_1,Q_2\in \dd_k(Q_0)$, 
any pair of corresponding sub-cubes $Q_1'\subset Q_1$ and $Q_2'\subset Q_2$ are disjoint.  
Since the regions $U^*_{Q'}$ have bounded overlaps, we find that
\[
\sum_{Q\in \dd_k(Q_0)}
\, \iint_{R^*_Q}|\nabla u|^2 dY \lesssim \iint_{R^*_{Q_0}}|\nabla u|^2 dY\,,
\] 
uniformly for each $k$, so we may sum the geometric series to obtain the desired bound for $IV$.
\end{proof}


\section{Preliminaries}
\label{Ssmall1}

In this section,
for operators of the form $L=-\dv A\nabla +\Bb\cdot\nabla=L_0 +\Bb\cdot\nabla$, 
we verify that certain standard estimates 
for solutions of the homogeneous second order equation 
$L_0u=0$, continue to hold for solutions of $Lu=0$, assuming that $\Bb$ satisfies
\eqref{driftsize} (but not necessarily the Carleson measure condition \eqref{driftCarleson}).
For some (but not all) of these results, we shall require the ``small constant" 
version of \eqref{driftsize}, namely
 \begin{equation} \label{smallptwise}
|\Bb(X)|\,\leq \, \frac{\eps_0}{\delta(X)}\,,\qquad \text{a.e. } x\in \Omega\,,
 \end{equation}
where $\eps_0$ will be 
sufficiently small, depending only on dimension, the 
ellipticity parameters $\lambda$ and $\Lambda$, and the 
1-sided chord-arc constants for $\Omega$.

In addition, we obtain solvability of the Dirichlet problem with continuous data, and 
consequent existence of elliptic measure, under certain conditions. 

Some of the results in this section hold for both $L=-\dv A\nabla + \Bb\cdot\nabla$ 
and its adjoint $L^*$, and for those results, we shall consider more generally
the operator
\begin{equation}\label{Lboth}
\mathcal{L} u:= -\dv(A\nabla u + \Bb_1 u) +\Bb_2 \cdot\nabla u\,.
\end{equation}
Taking $\Bb_1=0,\, \Bb_2=\Bb$, we recover the operator $L$, and 
taking $\Bb_2=0,\,\Bb_1=\Bb$ (and replacing
$A$ by its transpose $A^{\tt \!T}$), we recover $L^*$. 

We begin with some standard {\em a priori} estimates.

\subsection{A priori estimates}\label{apriori}

\begin{lemma}[De Giorgi-Nash-Moser estimates and Harnack's inequality]\label{Moser}
Let $\Omega\subset\re^{n+1}$ be an open set, 
let $\LL$ be defined as in \eqref{Lboth}, and suppose that \eqref{driftsize} holds for
each of $\Bb_1$ and $\Bb_2$.
Consider a ball $B=B(X,r)$ such that $3B \subset \Omega$.  Let
$u\in W^{1,2}(2B)$ be a weak solution of $\LL u=0$ in $2B$.  Then
\begin{equation}\label{eqMoser}
\sup_B |u|^2\,\lesssim \, |2B|^{-1}\iint_{2B} |u|^2 dY\,,
\end{equation}
and 
\begin{equation}\label{eqHolder}
|u(x)-u(y)|\lesssim |x-y|^\alpha \sup_{2B} |u|\,,\qquad \forall\, x,y\in B\,.
\end{equation}
If in addition, $u$ is non-negative in $2B$, then
\begin{equation}\label{eqHarnack}
\sup_{B} u \,\lesssim\, \inf_B u\,.
\end{equation}
Moreover, the implicit constants and the H\"older exponent $\alpha$ depend only on dimension, ellipticity of $A$, and the constant in \eqref{driftsize}.
\end{lemma}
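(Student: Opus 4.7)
The plan is to reduce to the classical interior estimates for uniformly elliptic second-order equations with bounded lower-order coefficients, via a rescaling that exploits the geometric hypothesis $3B\subset\Omega$ to trivialize the effect of \eqref{driftsize} on the ball $2B$. The key observation is that if $B=B(X_0,r)$ satisfies $3B\subset \Omega$, then every $Y\in 2B$ has $\delta(Y)\geq r$, because $\partial\Omega$ lies outside $B(X_0,3r)$ and so $|Y-x|\geq 3r-2r=r$ for each $x\in\partial\Omega$. Consequently \eqref{driftsize} forces $|\Bb_i(Y)|\leq \sqrt{M_0}\,r^{-1}$ on $2B$ for $i=1,2$, i.e., the drift coefficients are uniformly bounded at the scale $r$.

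Next I would perform the dilation: set $\widetilde{u}(Z):=u(X_0+rZ)$ on $\widetilde{B}:=B(0,2)$. A routine change of variables shows that $\widetilde{u}\in W^{1,2}(\widetilde{B})$ is a weak solution of
\[
-\dv\bigl(\widetilde{A}\,\nabla \widetilde{u} + \widetilde{\Bb}_1\, \widetilde{u}\bigr)+\widetilde{\Bb}_2\cdot\nabla\widetilde{u}\,=\,0
\]
on $\widetilde{B}$, where $\widetilde{A}(Z):=A(X_0+rZ)$ retains the ellipticity constants $\lambda,\Lambda$, and $\widetilde{\Bb}_i(Z):=r\,\Bb_i(X_0+rZ)$ satisfies $\|\widetilde{\Bb}_i\|_{L^\infty(\widetilde{B})}\leq \sqrt{M_0}$. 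Since the rescaled lower-order terms are uniformly bounded, the classical Moser local boundedness estimate, the De Giorgi--Nash H\"older continuity theorem, and Moser's Harnack inequality all apply to $\widetilde{u}$ on $B(0,1)\subset\widetilde{B}$, with constants depending only on $n$, $\lambda$, $\Lambda$, and $M_0$.

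Undoing the scaling (using $|\nabla \widetilde{u}(Z)|^2=r^2|\nabla u(X_0+rZ)|^2$, $|\widetilde{B}|=r^{-(n+1)}|2B|$, and the fact that Euclidean distances transform by a factor of $r$) converts these three statements into \eqref{eqMoser}, \eqref{eqHolder}, and \eqref{eqHarnack} for $u$ on $B$, with an absolute H\"older exponent $\alpha$ depending only on the stated parameters. No substantive obstacle arises: the hypothesis $3B\subset\Omega$ is tailored precisely so that $r\,\Bb_i$ is bounded on the rescaled ball, at which point the lemma is just a restatement of well-known interior estimates for divergence-form operators with $L^\infty$ first-order coefficients.
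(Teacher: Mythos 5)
Your proof is correct and takes essentially the same route as the paper: observe that $3B\subset\Omega$ forces $\delta\geq r$ on $2B$, hence $|\Bb_i|\lesssim r^{-1}$ there by \eqref{driftsize}; rescale to unit scale so the drift becomes uniformly bounded; then invoke the classical interior De Giorgi--Nash--Moser and Harnack results (the paper cites \cite[Chapter 8]{GT}). Your write-up merely spells out the dilation more explicitly than the paper's two-line argument.
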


\begin{proof} 
Since $3B\subset \Omega$,
we have $r\leq \delta(X)$ in $2B$, and therefore
$|\Bb_1(X)| + |\Bb_2(X)|  \lesssim r^{-1}$, for a.e. $X\in 2B$.
By scale invariance, we may assume $r=1$, and thus we have reduced to the case
that $\|\Bb\|_{L^\infty(2B)} \lesssim 1$.  In this form, the results stated may all be found in 
\cite[Chapter 8]{GT}.
\end{proof}

\begin{remark}
Note that the smallness condition \eqref{smallptwise} is not needed in Lemma \ref{Moser},
nor will it be needed in the next result,
Lemma \ref{Caccinterior}.
\end{remark}

\begin{lemma}[interior Caccioppoli's inequality]\label{Caccinterior}
Let $\Omega\subset\re^{n+1}$ be an open set, 
let $\LL$ be defined as in \eqref{Lboth}, and suppose that \eqref{driftsize} holds for
each of $\Bb_1$ and $\Bb_2$.
Consider a ball $B=B(X,r)$ such that $3B \subset \Omega$.  Let
$u$ be a $W^{1,2}$ solution of $\LL u=0$ in $2B$.  Then
\[
\iint_{B}|\nabla u|^2 dY \,\lesssim\, r^{-2} \iint_{2B} u^2 dY\,,
\]
where the implicit constant depends only on ellipticity of $A$ and the constant in 
\eqref{driftsize}.
\end{lemma}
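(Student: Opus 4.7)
The plan is to run the classical Caccioppoli argument, testing the weak formulation of $\mathcal{L}u=0$ against $\varphi=u\eta^{2}$ for a standard cutoff $\eta\in C_{0}^{\infty}(2B)$ with $\eta\equiv 1$ on $B$ and $|\nabla\eta|\lesssim 1/r$. The key observation that makes the drift terms harmless is that the hypothesis $3B\subset\Omega$ forces $\delta(Y)\gtrsim r$ on $2B$, so \eqref{driftsize} gives $|\mathbf{B}_{1}(Y)|+|\mathbf{B}_{2}(Y)|\lesssim \sqrt{M_{0}}\,r^{-1}$ a.e.\ on $2B$. Thus the drift terms live at exactly the same scale as the cutoff, and no smallness of $M_{0}$ will be needed.

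Concretely, I would write
\[
\iint \bigl(A\nabla u+\mathbf{B}_{1}u\bigr)\cdot\nabla(u\eta^{2})\,dY \;+\; \iint (\mathbf{B}_{2}\cdot\nabla u)\,u\eta^{2}\,dY \,=\,0,
\]
expand $\nabla(u\eta^{2})=\eta^{2}\nabla u+2u\eta\nabla\eta$, and isolate the ellipticity term $\iint\eta^{2}\langle A\nabla u,\nabla u\rangle\geq \lambda\iint\eta^{2}|\nabla u|^{2}$. The remaining pieces are of three types: the standard principal-part cross term $\iint 2u\eta\langle A\nabla u,\nabla\eta\rangle$; the drift--gradient terms $\iint\eta^{2}u(\mathbf{B}_{1}+\mathbf{B}_{2})\cdot\nabla u$; and the drift--cutoff term $\iint 2u^{2}\eta\,\mathbf{B}_{1}\cdot\nabla\eta$. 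Each of the first two types is handled by Young's inequality with a small parameter $\varepsilon$: writing $|u||\nabla u|\leq \varepsilon|\nabla u|^{2}+C_{\varepsilon}u^{2}/r^{2}$ (using $|\mathbf{B}_{i}|\lesssim 1/r$ to convert the drift factor into $1/r$), these produce a piece $C\varepsilon\iint\eta^{2}|\nabla u|^{2}$ that is absorbed into the left-hand side after choosing $\varepsilon$ small (depending only on $\lambda,\Lambda$ and $\sqrt{M_{0}}$), plus a piece $\lesssim r^{-2}\iint_{2B}u^{2}$. The third type is already bounded directly by $r^{-2}\iint_{2B}u^{2}$ since $|\mathbf{B}_{1}||\nabla\eta|\lesssim r^{-2}$.

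I expect no serious obstacle here; this is a textbook Caccioppoli estimate once one notices the scaling remark. The only point requiring mild care is that the constant in Young's inequality needed to absorb the drift contributions depends on $M_{0}$ (not just on $\lambda,\Lambda$), which is consistent with the statement but worth tracking. A minor technical point is the justification of $\varphi=u\eta^{2}$ as an admissible test function: since $u\in W^{1,2}(2B)$ and $\eta\in C_{0}^{\infty}(2B)$, we have $u\eta^{2}\in W_{0}^{1,2}(2B)$ with $\eta^{2}|\nabla u|,\,u\eta|\nabla\eta|\in L^{2}$, which together with the $L^{\infty}$ bound on $\mathbf{B}_{1},\mathbf{B}_{2}$ on $2B$ ensures every integral above converges absolutely. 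Collecting the bounds and dividing through yields the stated inequality with implicit constant depending only on dimension, $\lambda,\Lambda$ and $M_{0}$.
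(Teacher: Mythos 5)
Your proposal is correct and takes essentially the same approach as the paper: test the weak formulation against $u\eta^{2}$ (the paper's $\vp^{2}u$), use $3B\subset\Omega$ to turn \eqref{driftsize} into $|\Bb_{1}|+|\Bb_{2}|\lesssim r^{-1}$ on $2B$, and absorb the gradient contributions by Young's inequality with constant depending only on $\lambda,\Lambda$ and $M_{0}$. You are in fact marginally more careful than the paper's display, which nominally bounds term $I$ by $r^{-1}\iint|\nabla u|\,u\vp^{2}$ and thereby elides the drift--cutoff piece $\iint 2u^{2}\vp\,\Bb_{1}\cdot\nabla\vp$ that you track separately; since that piece is directly $\lesssim r^{-2}\iint_{2B}u^{2}$, the conclusion is unaffected.
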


\begin{proof}
The proof is standard.  
Let $\vp$ be a smooth bump function adapted to the ball $B=B(x,r)$,
i.e., $\vp\in C_0^\infty(2B)$, $\vp\equiv 1$ on $B$, $0\leq \vp\leq 1$, 
and $|\nabla \vp|\lesssim r^{-1}$, so that 
\begin{multline*}
\iint_{B}|\nabla u|^2 dY \,\leq \,\iint_{\Omega}|\nabla u|^2\vp^2 dY\,
\lesssim \, \iint_{\Omega} A\nabla u \cdot \nabla u \,\vp^2dY
\\[4pt]
= \,\iint_{\Omega} A\nabla u \cdot \nabla (u \vp^2)dY\,-\,
2\iint_{\Omega} A\nabla u\cdot \nabla \vp\, u \vp\, dY\, =:\, I + II\,.
\end{multline*}
As usual, for a small number $\gamma$ at our disposal,
\[
|II| \,\lesssim\, \gamma \!\iint_{\Omega} |\nabla u|^2 \vp^2 dY \,+\,
\frac1{\gamma} \iint_{\Omega} |\nabla \vp|^2 u^2 dY 
\]
Choosing $\gamma>0$ small enough, we may hide the small term.  With $\gamma$ now fixed, 
the second term satisfies the desired bound, since $|\nabla \vp|^2 \lesssim r^{-2}$.

Since $3B\subset \Omega$, we have
$r\leq \delta(X)$ for all $X\in 2B$, hence by \eqref{driftsize},
\begin{equation*} 
|\Bb_1(X)| + |\Bb_2(X)| \, \lesssim \, r^{-1}\,,\qquad \text{a.e. } X\in 2B\,.
\end{equation*}
Thus, since $\LL u=0$ in $2B$,
\begin{multline*}
|I| = \big|\iint_\Omega \Bb_2\cdot\nabla u\, u\vp^2 dY \,
+\, \iint_\Omega u \,\Bb_1\cdot\nabla(u\vp^2) dY\,\big|
\\[4pt]
\lesssim \frac1{r} \iint_\Omega |\nabla u| \,u\vp^2 dY\,
\lesssim \gamma \!\iint_{\Omega} |\nabla u|^2 \vp^2 dY \,+\,
\frac1{\gamma}\,\frac1{r^2}\iint_{2B} u^2 dY\,,
\end{multline*}
where again $\gamma>0$ is at our disposal.  Again we may hide the small term with
$\gamma$ fixed small enough.
\end{proof}

\begin{lemma}[Caccioppoli's inequality at the boundary]\label{Cacc}
Let $\Omega\subset\re^{n+1}$ be a 1-sided $\mathrm{CAD}$, and consider any ball
$B=B(x,r)$, with $x\in \pom$. 
Let $\LL$ be defined as in \eqref{Lboth}, and suppose that \eqref{smallptwise} holds for
each of $\Bb_1$ and $\Bb_2$, with 
$\eps_0$ sufficiently small, depending only on the ellipticity of $A$.  
Set $\Omega_{r}:= \Omega\cap B(x,r)$, and
$\Omega_{2r}:= \Omega\cap B(x,2r)$, and let $u\in W^{1,2}(\Omega_{2r})$
be a solution of $\LL u=0$ in 
$\Omega_{2r}$, with vanishing trace on $2\Delta:= B(x,2r) \cap\pom$.  Then
\begin{equation}\label{eqCacc}
\iint_{\Omega_r} |\nabla u|^2 dY\,\lesssim \,r^{-2} \iint_{\Omega_{2r}}u^2 dY\,.
\end{equation}
where the implicit constants depend only on the allowable parameters.
\end{lemma}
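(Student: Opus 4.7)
The plan is the standard Caccioppoli argument obtained by testing the weak formulation of $\mathcal{L}u=0$ against $u\varphi^2$, but with all drift contributions absorbed using the smallness hypothesis \eqref{smallptwise} combined with the local Hardy inequality of Lemma \ref{localHardy}. The small parameter $\eps_0$ is what ultimately allows absorption on the left.

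I would fix a Lipschitz cutoff $\varphi$ supported in $B(x,3r/2)$ with $\varphi\equiv 1$ on $B(x,r)$ and $|\nabla\varphi|\lesssim r^{-1}$. Since $u$ has vanishing trace on $2\Delta$ and $\varphi$ vanishes outside $B(x,3r/2)\subset B(x,2r)$, the function $u\varphi^2$ lies in $W^{1,2}_0(\Omega_{2r})$, and $u\varphi$ (extended by zero) lies in $W^{1,2}_0(\Omega)$. Testing $\mathcal{L}u=0$ against $\psi=u\varphi^2$, expanding $\nabla(u\varphi^2)=\varphi^2\nabla u+2u\varphi\nabla\varphi$, and using ellipticity yields
\[
\lambda\!\iint\!|\nabla u|^2\varphi^2\leq -2\!\iint\! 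A\nabla u\cdot\nabla\varphi\,u\varphi-\iint\!\mathbf{B}_1u\cdot\nabla u\,\varphi^2-2\!\iint\!\mathbf{B}_1\cdot\nabla\varphi\,u^2\varphi-\iint\!\mathbf{B}_2\cdot\nabla u\,u\varphi^2.
\]

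Each right-hand term is handled with Cauchy--Schwarz and a small parameter $\gamma>0$. The first (classical) term contributes $\gamma\iint|\nabla u|^2\varphi^2+C_\gamma r^{-2}\iint_{\Omega_{3r/2}}u^2$. For the two drift terms containing $\nabla u$, the smallness $|\mathbf{B}_i|\leq\eps_0/\delta$ gives
\[
\Big|\!\iint\!\mathbf{B}_i\cdot\nabla u\,u\varphi^2\Big|\leq \gamma\iint|\nabla u|^2\varphi^2+\frac{\eps_0^2}{\gamma}\iint\Big(\frac{u\varphi}{\delta}\Big)^{\!2},
\]
while the remaining drift term is bounded by $\gamma\iint(u\varphi/\delta)^2+C_\gamma r^{-2}\iint_{\Omega_{3r/2}}u^2$. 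To dispose of the $\iint(u\varphi/\delta)^2$ terms, note that $u\varphi$ extends to an element of $W^{1,2}_0(\Omega)$ supported in $\Omega_{3r/2}$, so Lemma \ref{localHardy} applied at a sufficiently large scale (equivalently the global version cited in Remark \ref{remarkHardy}) gives
\[
\iint\Big(\frac{u\varphi}{\delta}\Big)^{\!2}\,\lesssim\,\iint|\nabla(u\varphi)|^2\,\lesssim\,\iint|\nabla u|^2\varphi^2+r^{-2}\iint_{\Omega_{3r/2}}u^2.
\]

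Collecting everything,
\[
\lambda\iint|\nabla u|^2\varphi^2\,\leq\, C\big(\gamma+\eps_0^2/\gamma\big)\iint|\nabla u|^2\varphi^2+C_\gamma\,r^{-2}\iint_{\Omega_{2r}}u^2,
\]
and I would then pick $\gamma$ small in terms of $\lambda,\Lambda$ and the Hardy constant, and next $\eps_0$ small enough (depending on $\gamma$, hence on the ellipticity) to force $C(\gamma+\eps_0^2/\gamma)<\lambda/2$. Absorption on the left, together with $\varphi\equiv 1$ on $B(x,r)$ and $\Omega_{3r/2}\subset\Omega_{2r}$, then yields \eqref{eqCacc}. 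The main obstacle, relative to the interior case (Lemma \ref{Caccinterior}), is that $|\mathbf{B}_i|$ may be as large as $\delta^{-1}$ near $\pom$, so the naive bound $|\mathbf{B}_i|\lesssim r^{-1}$ fails; Hardy's inequality is exactly what converts the resulting $\iint(u/\delta)^2$ terms back into $\iint|\nabla u|^2$, but absorbing the latter on the left requires the prefactor to be strictly smaller than $\lambda$, which is precisely why the smallness hypothesis \eqref{smallptwise} is imposed here (whereas it was not needed for Lemmas \ref{Moser} and \ref{Caccinterior}).
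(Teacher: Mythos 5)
Your proof is correct and follows essentially the same route as the paper's: test the weak form with $u\varphi^2$, use ellipticity, control the drift contributions via the smallness bound $|\Bb_i|\leq\eps_0/\delta$ together with the Hardy inequality applied to $u\varphi\in W^{1,2}_0(\Omega)$, and absorb the resulting small multiple of $\iint|\nabla u|^2\varphi^2$ on the left. The only cosmetic difference is that the paper dispenses with the auxiliary parameter $\gamma$ for the drift terms, bounding $|I|\lesssim\eps_0(I_1+I_2+I_3)$ directly, applying Hardy to reduce $I_3$ to $I_1+I_2$, and then absorbing $\eps_0 I_1$; your version introduces $\gamma$ uniformly and then fixes $\gamma$ before $\eps_0$, which yields the same dependence of $\eps_0$ on the allowable parameters.
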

\begin{proof}
The proof is again standard.   
As above, choose $\vp\in C_0^\infty(2B)$, with $\vp\equiv 1$ on $B$, $0\leq \vp\leq 1$, 
and $|\nabla \vp|\lesssim r^{-1}$.  
We then have
\begin{multline*}
\iint_{\Omega_r} |\nabla u|^2 dY\,\leq\,
\iint_{\Omega} |\nabla u|^2 \vp^2 dY\,
 \lesssim \,\iint_{\Omega} A\nabla u\cdot \nabla u \,\vp^2 dY
\\[4pt] 
=\,  \iint_{\Omega} A\nabla u\cdot \nabla \!\left(u \vp^2\right) dY\,-\,
2\iint_{\Omega} A\nabla u\cdot \nabla \vp\, u \vp\, dY\,=:\, I + II\,.
\end{multline*}
As usual, for a small number $\gamma$ at our disposal,
\[
|II| \lesssim \gamma \iint_{\Omega} |\nabla u|^2 \vp^2 dY \,+\,
\frac1{\gamma}\iint_{\Omega} |\nabla \vp|^2 u^2 dY\,.
\]
Choosing $\gamma>0$ small enough, we may hide the small term.  With $\gamma$ now fixed, 
the second term satisfies the desired bound in \eqref{eqCacc}.

Since $\LL u=0$ in $\Omega_{2r}$, and $u\vp^2\in W^{1,2}_0(\Omega_{2r})$, 
using \eqref{smallptwise}, we also have
\begin{multline*}
|I| = \big|\!\iint_\Omega \Bb_2\cdot\nabla u\, u\vp^2 dY 
+ \iint_\Omega u \,\Bb_1\cdot\nabla(u\vp^2) dY\,\big|
\lesssim  \eps_0 \iint_\Omega \left(|\nabla u|\vp+|\nabla\vp|u\right)\, \frac{u\vp}{\delta} \, dY
\\[4pt]
\lesssim \eps_0\left( \iint_\Omega |\nabla u|^2\vp^2 dY+
\iint_\Omega |\nabla \vp|^2 u^2 dY+
\iint_\Omega \left(\frac{u\vp}{\delta}\right)^2 dY\right) 
\\[4pt] =: \,\eps_0\big(I_1+I_2+I_3\big)\,.
\end{multline*}
It is enough to deal with $I_1$ and $I_2$, since,
by the global Hardy inequality in \cite{A} and \cite{L}, we have
\[
I_3 \lesssim \int_{\Omega}\big(|\nabla u|\vp \,+\,|\nabla \vp|u \big)^2dY\,
\approx\,I_1+I_2\,.
\]
In turn, as was the case for term $II$ treated above, $\eps_0I_1$ may be hidden, provided $\eps_0$ is small enough, and term $I_2$ yields the desired estimate in \eqref{eqCacc}.
\end{proof}

\begin{lemma}[H\"older continuity at the boundary and Carleson's estimate]\label{proppde}
Let $\Omega\subset\re^{n+1}$ be a 1-sided $\mathrm{CAD}$, and consider any ball
$B=B(x_0,r)$, with $x_0\in \pom$.  Set $\Omega_{r}:= \Omega\cap B(x_0,r)$, and
$\Omega_{2r}:= \Omega\cap B(x_0,2r)$.
Let $\LL$ be defined as in \eqref{Lboth}, and suppose that \eqref{smallptwise} holds for
each of $\Bb_1$ and $\Bb_2$, with 
$\eps_0$ sufficiently small.  If $u\in W^{1,2}(\Omega_{2r})$ is a non-negative 
solution to $\LL u=0$ in $\Omega_{2r}$, 
with vanishing trace on $2\Delta:= B(x,2r) \cap\pom$, then for some $\alpha>0$,
\begin{equation}\label{holderatbdry}
u(X) \, \lesssim \, \left(\frac{\delta(X)}{r}\right)^\alpha \sup_{ \Omega_{2r}} u\,,
\qquad \forall \, X\in \Omega_r\,,
\end{equation}
and if $X_B$ is a corkscrew point relative to $B=B(x,r)$, then
\begin{equation}\label{carleson}
\sup_{Y\in \Omega_{r}} u \lesssim u(X_B)
\end{equation}
where the implicit constants and the exponent $\alpha$ depend only on ellipticity, 
dimension, and the 1-sided chord arc constants.
\end{lemma}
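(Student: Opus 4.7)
The plan is to establish the two conclusions in sequence: first the boundary Hölder bound \eqref{holderatbdry}, and then deduce the Carleson estimate \eqref{carleson} from it by the standard Harnack-chain/contradiction argument. Throughout, the smallness of $\eps_0$ together with the local Hardy inequality (Lemma \ref{localHardy}) is what allows the drift terms to be absorbed into the main parts of every energy inequality, exactly as in the proof of Lemma \ref{Cacc}.

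For \eqref{holderatbdry}, the main point is the canonical oscillation decay: there exists $\theta\in(0,1)$, depending only on the allowable parameters, such that for every $\rho\leq r$,
\[
\sup_{\Omega\cap B(x_0,\rho/2)} u \,\leq\, \theta \sup_{\Omega\cap B(x_0,\rho)} u\,.
\]
Iterating yields $\sup_{\Omega\cap B(x_0,\rho)} u \lesssim (\rho/r)^\alpha \sup_{\Omega_{2r}} u$ with $\alpha:=\log_2(1/\theta)$, and \eqref{holderatbdry} follows by applying this at the scale $\rho=2\delta(X)$ centered at the boundary point $\hat x\in 2\Delta$ closest to $X$. The decay itself has two ingredients. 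First, Moser iteration at the boundary, based on the boundary Caccioppoli inequality (Lemma \ref{Cacc}) and the Sobolev embedding applied to the extension of $u$ by $0$ outside $\Omega$ (legitimate because the trace vanishes on $2\Delta$), yields the local boundedness $\sup_{\Omega_{\rho/2}} u \lesssim (\fint_{\Omega_\rho} u^2\, dY)^{1/2}$. Second, a De Giorgi level-set argument exploits that $\mathcal H^n(\partial\Omega\cap B(x_0,\rho))\approx \rho^n$ (by ADR) and that $u$ vanishes identically there, to produce a definite subset of $\Omega_\rho$ on which $u\leq M/2$, where $M:=\sup_{\Omega_\rho} u$; pairing this with the Moser bound applied to $(u-M/2)_+$ (again absorbing drift terms via \eqref{smallptwise} and Lemma \ref{localHardy}) yields $\theta\in(0,1)$.

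For the Carleson estimate \eqref{carleson}, I would run the classical Salsa/Jerison--Kenig argument. Given $Y_0\in\Omega_r$ with $u(Y_0)=N$, if $\delta(Y_0)\gtrsim r$ then a Harnack chain of uniformly bounded length (Definition \ref{def1.hc}) connects $Y_0$ to $X_B$ and Lemma \ref{Moser} immediately gives $N\lesssim u(X_B)$. Otherwise $\delta(Y_0)\ll r$, the chain length is $K\approx \log(r/\delta(Y_0))$ and hence $N\leq C^K u(X_B)$, while \eqref{holderatbdry} forces $N\lesssim (\delta(Y_0)/r)^\alpha \sup_{\Omega_{2r}}u$. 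Iterating, at each step picking a new point $Y_{k+1}$ that attains a fixed fraction of the supremum of $u$ in a successively smaller boundary sub-ball where $u$ still carries a substantial portion of its mass, one matches the exponential Harnack loss $C^K$ against the Hölder gain $(\delta/r)^\alpha$ and closes the argument.

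The principal obstacle is the control of the drift terms in the Moser iteration underlying the oscillation decay. As in Lemma \ref{Cacc}, these terms are reduced via Cauchy--Schwarz to $\|u/\delta\|_{L^2}^2$, which is controlled by $\|\nabla u\|_{L^2}^2$ through the local Hardy inequality (Lemma \ref{localHardy}); the smallness $\eps_0\ll 1$ then renders the drift contribution absorbable. Once this absorption mechanism is in place, the remaining arguments follow the well-known templates for nonnegative solutions vanishing on a portion of the boundary of a 1-sided chord-arc domain.
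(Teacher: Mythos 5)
You take a genuinely different route for the Hölder bound \eqref{holderatbdry}. The paper does not run a Moser/De~Giorgi iteration directly for $\mathcal L$ at the boundary. Instead, following \cite{HL}, it compares $u$ with the $L_0$-solution $u_0$ in the Carleson tent $T(t/\kappa_1)$ with the same boundary data, shows via the local Hardy inequality (Lemma~\ref{localHardy}) and the smallness \eqref{smallptwise} that the scaled Dirichlet energy of $w=u-u_0$ is $O(\eps_0)$ times that of $u$, and then imports the known decay of the scaled energy $\Phi(u_0,s)$ for the pure second-order operator. Iterating $\Phi(u,\theta t)\le\frac12\Phi(u,t)$ and recovering pointwise bounds from the local energy gives the Hölder decay. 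The Carleson estimate is then read off from the CFMS/JK argument, exactly as you describe; that part of your proposal matches the paper.

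However, there is a genuine gap in your sketch of the oscillation-decay step. You say the De~Giorgi level-set argument ``exploits that $\mathcal H^n(\partial\Omega\cap B(x_0,\rho))\approx \rho^n$ (by ADR) and that $u$ vanishes identically there, to produce a definite subset of $\Omega_\rho$ on which $u\leq M/2$.'' But $\partial\Omega$ has $\mathcal H^{n+1}$ (i.e.\ Lebesgue) measure zero in $\mathbb R^{n+1}$, so the ADR size of the boundary does \emph{not} produce a set of positive Lebesgue density in $\Omega_\rho$ where $u$ is small; and since $\Omega$ is only a 1-sided CAD (no exterior corkscrew), the complement $B(x_0,\rho)\setminus\Omega$ may itself be Lebesgue-null, so the standard De~Giorgi measure-density step is not available. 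The correct ingredient in this setting is uniform capacity density of the complement (ADR implies uniform $W^{1,2}$-fatness), which yields a Sobolev--Poincar\'e inequality for functions vanishing on $\partial\Omega\cap B_\rho$; this is what drives the oscillation decay in the literature for pure second-order operators on 1-sided CAD domains. Your sketch conflates surface measure with Lebesgue density and omits the capacity mechanism, which is precisely the delicate point. A secondary concern, not fatal but not addressed, is carrying the Hardy-based absorption of the drift terms --- including the divergence-form coefficient $\Bb_1$ --- uniformly through all levels of the De~Giorgi/Moser iteration; the paper's comparison argument is designed to perform the absorption exactly once rather than at every iteration step.
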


\begin{proof}
It suffices to prove \eqref{holderatbdry}.  Indeed, the fact that \eqref{holderatbdry} 
holds for every ball $B$ centered on $\pom$ yields \eqref{carleson}, by the proof in
\cite[Theorem 1.1]{CFMS}\footnote{see also \cite[Lemma 4.4]{JK}, 
where the same argument is used}, 
which carries over to the present setting.  The only 
ingredients required are Harnack's inequality, the Corkscrew and Harnack Chain conditions,
and \eqref{holderatbdry}, all of which are available to us.  We omit the details.

Turning to \eqref{holderatbdry}, we shall follow the argument in 
\cite[pp.~12-14]{HL}.    Let $\kappa$ and $\kappa_0$ be the constants in 
in Proposition \ref{sawtoothprop} (8), and in Lemma \ref{localHardy}, respectively.
Fix $M\gg \kappa \kappa_0=:\kappa_1$, and
let $X\in \Omega_r$, with $\delta(X)\leq r/M$ (if $\delta(X)> r/M$, 
then there is nothing to prove).
Choose $\hat{x}\in\pom$ such that
$|X-\hat{x}|= \delta(X)$, and set $\rho=\delta(X)$. 
Then by \eqref{eqMoser} and Lemma \ref{localHardy}, we have
\begin{equation}\label{equrhobound}
u^2(X)\lesssim \rho^{-n-1} \iint_{B(X,\,\rho/4)} u^2 dY \lesssim
\rho^{-n-1} \iint_{\Omega_{\rho}(\hat{x})} u^2 dY
\lesssim  \rho^{1-n} \iint_{\Omega_{\kappa_0\rho}(\hat{x})} |\nabla u|^2 dY\,,
\end{equation}
where $\Omega_{\rho}(\hat{x}):= B(\hat{x},\rho)\cap\Omega$.  

For $0<t\leq r/M$, 
set $\Delta_t:= B(\hat{x},t)\cap\pom$, 
and let $T(t):=T_{\Delta_t}$ be the Carleson tent
associated to $\Delta_t$ as in Proposition \ref{sawtoothprop} (8), so that 
(with $M$ chosen large as above)
\begin{equation}\label{OtTt}
\Omega_{t}(\hat{x})\subset
\Omega_{5t/4}(\hat{x}) \subset T(t) \subset \Omega_{\kappa t}(\hat{x}) \subset
\Omega_{2\kappa t}(\hat{x})\subset
 \Omega_{3r/2}\,.
\end{equation}
By Proposition \ref{sawtoothprop} (8), $T(t)$ is a 1-sided 
CAD\footnote{Indeed, it is for this reason that we have gone to the trouble of replacing
$\Omega_t(\hat{x})$ by $T(t)$.} for each $t$.
For $t$ momentarily fixed,
we may solve the 
Dirichlet problem in $T(t/\kappa_1)$
 for $L_0u=0$, with data $u_0=u$ (in the trace sense) on $\partial T(t/\kappa_1)$.  
 Set $w:=u-u_0$, so that
$w\in W^{1,2}_0(T(t/\kappa_1))$.  Using that $L_0u=0$ and $\LL u=0$, we see that
\begin{multline*}
\iint_{T(t/\kappa_1)}|\nabla w|^2 dY \,\approx\,
\iint_{T(t/\kappa_1)} A\nabla w\cdot \nabla w \,dY\,
= \iint_{T(t/\kappa_1)} A\nabla u\cdot \nabla w \,dY
\\[4pt]
 =-\iint_{T(t/\kappa_1)}  \Bb_2\cdot\nabla u\, w\, dY 
- \iint_{T(t/\kappa_1)}  u \,\Bb_1\cdot\nabla w\, dY
\\[4pt]
\lesssim \, \eps_0 \iint_{T(t/\kappa_1)} \left(|\nabla u| \,\frac{w}{\delta'} + |\nabla w|\, \frac{u}{\delta}\right) dY\\[4pt]
\lesssim \,\eps_0 \left(\iint_{T(t)} 
 |\nabla u|^2 dY+\iint_{T(t/\kappa_1)}|\nabla w|^2 dY\right)\,, 
\end{multline*}
where $\delta'(Y):= \dist(Y,\partial T(t))\leq \delta(Y)$, and we have used
\eqref{smallptwise} and either the 
global Hardy inequality of \cite{A} (or \cite{L}) in the domain $T(t/\kappa_1)$
(to handle the term $w/\delta'$), 
or else \eqref{OtTt}, the local Hardy inequality 
Lemma \ref{localHardy}, and then \eqref{OtTt} again, to handle the term
$u/\delta$.  To guide the reader through the latter step, we observe that
\eqref{OtTt} yields that
$T(t/\kappa_1) \subset \Omega_{t/\kappa_0}(\hat{x})$
(since $\kappa_1:=\kappa\kappa_0$), whence application of  
Lemma \ref{localHardy} yields an integral over $\Omega_t(\hat{x})\subset T(t)$.

Hiding the small term involving $w$, we obtain
\begin{equation}\label{wsmallbound}
\iint_{T(t/\kappa_1)}|\nabla w|^2 dY 
\,\lesssim \,\eps_0 \iint_{T(t)} |\nabla u|^2 dY\,.
\end{equation}

Given $s>0$ and $f\in W^{1,2}(\Omega_{2r})$, set
\[
\Phi(f,s)\,:=\, s^{1-n} \iint_{T(s)}|\nabla f|^2 dY\,,
\]
and observe that for $s<t/(4\kappa_1^2)$,
letting $X_t$ denote a corkscrew point
relative to the ball $B(\hat{x},t/(\kappa_0\kappa_1))$, and
using \eqref{OtTt} and the case $\Bb_1=0=\Bb_2$ of Lemma \ref{Cacc}, we have
\begin{multline*}
\Phi(u_0,s)\,\leq\, s^{1-n} \iint_{\Omega_{\kappa s}(\hat{x})}|\nabla u_0|^2 \,dY
\,\lesssim \,s^{-n-1} \iint_{\Omega_{2\kappa s}(\hat{x})}u_0^2 \,dY
\\[4pt]
\lesssim \, \left(\frac{s}{t}\right)^{2\alpha_0}\sup_{\Omega_{t/(\kappa_0\kappa_1)}(\hat{x})} u^2_0 \,\lesssim\, 
\left(\frac{s}{t}\right)^{2\alpha_0} u_0^2(X_t)
\, \lesssim \, \left(\frac{s}{t}\right)^{2\alpha_0} t^{-n-1} \iint_{\Omega_{t/(\kappa_0\kappa_1)}(\hat{x})}u_0^2\, dY
\\[4pt]
\lesssim\, \left(\frac{s}{t}\right)^{2\alpha_0} t^{1-n} \iint_{\Omega_{t/\kappa_1}(\hat{x})}|\nabla u_0|^2\, dY
\,\lesssim \, \left(\frac{s}{t}\right)^{2\alpha_0}\Phi(u_0,t/\kappa_1)\,,
\end{multline*}
where in the second line of the display, we have used that Lemma \ref{proppde} 
(with some H\"older exponent $\alpha_0>0$) and 
\eqref{eqMoser} hold for $u_0$, and in the last line we have used 
Lemma \ref{localHardy}, applied in the 1-sided CAD $T(t/\kappa_1)$.

Combining the last estimate with \eqref{wsmallbound}, we deduce that for $s<t/(4\kappa\kappa_0)$,
\begin{multline*}
\Phi(u,s)\, \lesssim\, \Phi(u_0,s) \,+\, \Phi(w,s)
\,\lesssim \,\left(\frac{s}{t}\right)^{2\alpha_0}\Phi(u_0,t/\kappa_1) 
\,+\, \left(\frac{t}{s}\right)^{n-1}\Phi(w,t/\kappa_1)
\\[4pt]
\lesssim\, \left[\left(\frac{s}{t}\right)^{2\alpha_0} \,+\, \eps_0\left(\frac{t}{s}\right)^{n-1}\right]\Phi(u,t)\,.
\end{multline*}
Set $\theta= s/t$, so that first choosing $\theta<1/(4\kappa_1^2)$ sufficiently small, and then choosing $\eps_0$ depending on $\theta$, we see that for all $t\le r/M$
\[
\Phi(u,\theta t) \leq \frac12 \Phi(u, t)\,.
\]
Iterating the latter estimate starting with $t=r/M$, and
using \eqref{equrhobound} and \eqref{OtTt}, we find that for some $\alpha>0$,
\[
u^2(X)\lesssim \Phi(u,\kappa_0 \rho) \lesssim \left(\frac{\rho}{r}\right)^{2\alpha} \Phi(u, r/M)\,.
\]
Moreover, by \eqref{OtTt} and Lemma \ref{Cacc},
\[
 \Phi(u, r/M) \lesssim r^{-n-1}\iint_{\Omega_{2\kappa r/M}(\hat{x})} u^2\, dY 
 \lesssim \sup_{\Omega_{2r}} u^2\,.
\]
Since we had set $\rho =\delta(X)$, the lemma now follows.
\end{proof}

Our next task is to discuss existence of solutions.

\subsection{Existence of $Y^{1,2}$ solutions 
with data in Lip$_c(\pom)$, in the small constant case}\label{ssy12}

In this subsection, we construct $Y^{1,2}$
solutions to the Dirichlet problem with data in Lip$_c(\pom)$, in the small
constant case.  Specifically, we consider the operator
\begin{equation}\label{LdefShmsection}
L=-\dv A\nabla +\Bb\cdot\nabla\,,
\end{equation}
in the case that the constant $\eps_0$ in \eqref{smallptwise} 
is small enough.  
The first step is to construction solutions to the Poisson-Dirichlet problem, with vanishing trace:
\begin{equation}\label{Poisson-DirichletY12}
(\mbox{PD})\,\,\left\{
\begin{array}{l}
L w=F\in Y^{-1,2}(\Omega),\\[5pt]
w \in \,Y^{1,2}_0(\Omega),
\end{array}\right.
\end{equation}
We state this construction as a lemma, with estimates, for future reference in the sequel.

\begin{lemma}\label{lemmaPDbounds} Let $L$ be as in \eqref{LdefShmsection}, with
$|\Bb(X)|\leq \eps_0/\delta(X)$.  If $\eps_0$ is small enough, depending only on the allowable parameters, then given $F\in Y^{-1,2}(\Omega)$, there is a unique solution 
$w\in Y^{1,2}_0(\Omega)$ to the problem {\em (PD)}.  Moreover,
\begin{equation}\label{wfbound3}
\|w\|_{L^{2^*}(\Omega)}\,\leq\,
\|w\|_{Y^{1,2}_0(\Omega)} \,\lesssim \,\|F\|_{Y^{-1,2}(\Omega)}\lesssim\, \|F\|_{L^{2_*}(\Omega)}\,,
\end{equation}
where the implicit constants depend only on allowable parameters.
\end{lemma}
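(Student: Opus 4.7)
The plan is a direct application of the Lax--Milgram theorem on the Hilbert space $Y^{1,2}_0(\Omega)$. Note that by Sobolev embedding (applied to the $C_0^\infty$ functions whose closure defines $Y^{1,2}_0$), the full $Y^{1,2}$ norm is equivalent to $\|\nabla\cdot\|_{L^2(\Omega)}$ on this space, so I take the latter as the inner-product norm. The associated bilinear form is
$$B(u,v)\,:=\,\iint_\Omega A\nabla u\cdot\nabla v\,dX \,+\, \iint_\Omega (\Bb\cdot\nabla u)\,v\,dX\,,$$
and the weak formulation of (PD) is exactly $B(w,v) = \langle F,v\rangle$ for every $v\in Y^{1,2}_0(\Omega)$.

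The essential analytic input is the global Hardy inequality $\|u/\delta\|_{L^2(\Omega)}\lesssim \|\nabla u\|_{L^2(\Omega)}$ from \cite{A}, \cite{L}. Although originally stated for $W^{1,2}_0(\Omega)$, it extends by density to $u\in Y^{1,2}_0(\Omega)$: if $u_n\in C_0^\infty(\Omega)$ converges to $u$ in the $Y^{1,2}$ norm, then $\nabla u_n\to \nabla u$ in $L^2$, the sequence $u_n/\delta$ is bounded in $L^2$, and $u_n\to u$ in $L^{2^*}$, whence (on a subsequence) $u_n/\delta\to u/\delta$ a.e.\ and the bound passes to the limit by weak lower semicontinuity. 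Combined with \eqref{smallptwise}, Hardy controls the drift term by
$$\bigg|\iint_\Omega (\Bb\cdot\nabla u)\,v\,dX\bigg|\,\leq\, \eps_0\,\|\nabla u\|_{L^2}\,\|v/\delta\|_{L^2}\,\leq\, C_H\eps_0\,\|\nabla u\|_{L^2}\|\nabla v\|_{L^2},$$
with $C_H$ depending only on allowable parameters. The principal part of $B$ is bounded by $\Lambda\|\nabla u\|_{L^2}\|\nabla v\|_{L^2}$ and is coercive with constant $\lambda$, so $B$ is bounded on $Y^{1,2}_0\times Y^{1,2}_0$ and
$$B(u,u)\,\geq\,(\lambda - C_H\eps_0)\|\nabla u\|_{L^2}^2,$$
which is coercive once $\eps_0 < \lambda/(2C_H)$. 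Since any $F\in Y^{-1,2}(\Omega)$ acts as a bounded linear functional on $Y^{1,2}_0(\Omega)$, Lax--Milgram yields a unique solution $w\in Y^{1,2}_0(\Omega)$ with $\|\nabla w\|_{L^2}\lesssim \|F\|_{Y^{-1,2}}$, and Sobolev embedding then gives $\|w\|_{L^{2^*}}\lesssim \|F\|_{Y^{-1,2}}$.

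Finally, the embedding $\|F\|_{Y^{-1,2}}\lesssim \|F\|_{L^{2_*}}$ (with $2_* = 2(n+1)/(n+3)$ the dual Sobolev exponent to $2^*$) is an immediate consequence of H\"older and Sobolev: for $v\in Y^{1,2}_0(\Omega)$,
$$\bigg|\iint_\Omega Fv\,dX\bigg|\,\leq\, \|F\|_{L^{2_*}}\|v\|_{L^{2^*}}\,\lesssim\, \|F\|_{L^{2_*}}\|\nabla v\|_{L^2}\,,$$
which identifies $F$ as a bounded functional on $Y^{1,2}_0$ of the stated norm. The only (mild) obstacle in the argument is the density justification of Hardy on $Y^{1,2}_0$, and the bookkeeping needed to check that the smallness threshold for $\eps_0$ depends only on $\lambda$ and the $L^2$-Hardy constant for $\Omega$ (and hence only on allowable parameters); smallness is not needed for boundedness of $B$ or for the Sobolev step, only for coercivity.
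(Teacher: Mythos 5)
Your argument is correct and is essentially the same as the paper's: both set up the bilinear form $\mathfrak{B}(w,v)=\iint_\Omega(A\nabla w\cdot\nabla v+\Bb\cdot\nabla w\,v)\,dY$ on $Y^{1,2}_0(\Omega)$, use the global Hardy inequality together with the smallness $\eps_0$ to establish coercivity, and invoke Lax--Milgram followed by Sobolev embedding. Your density extension of Hardy to $Y^{1,2}_0$ is a detail the paper leaves implicit; it is a welcome clarification but does not change the route.
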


\begin{proof}
The technique is standard.
Indeed, if $\eps_0$ is small enough, depending only on dimension, ellipticity of $A$,
and the 1-sided CAD constants, then
the bilinear form
\[
\mathfrak{B}(w,v) := \iint_\Omega \big(A \nabla w \cdot\nabla v
+ \Bb \cdot \nabla w \, v\big) dY
\]
is bounded and coercive on $Y^{1,2}_0(\Omega)$ (here, we use the global Hardy inequality
of \cite{A} or \cite{L} to deal with a term involving $v/\delta$).  
We omit the routine details.  Thus by Lax-Milgram, given $ F\in Y^{-1,2}(\Omega)$, 
we see that there is a unique $w\in Y^{1,2}_0(\Omega)$ such that
\begin{equation}\label{LM}
\mathfrak{B}(w,v) = 
\langle F,v\rangle \,,\qquad \forall \, v\in Y^{1,2}_0(\Omega)\,,
\end{equation}
where  $\langle \cdot,\cdot\rangle$ denotes the duality 
pairing of $Y^{1,2}_0(\Omega)$ and its dual
$Y^{-1,2}(\Omega)$.
In particular, if we set $v=w$, then
\begin{equation*}
\|\nabla w\|^2_{L^2(\Omega)} \approx 
\iint_\Omega A\nabla w\cdot\nabla w\,dY = 
\mathfrak{B}(w,w) +O\left(\eps_0\|\nabla w\|^2_{L^2(\Omega)}\right)\,,
\end{equation*}
where in the last step we have used \eqref{smallptwise} and the global Hardy inequality.
For $\eps_0$ chosen small enough, we may hide the small term to obtain
\begin{equation*}
\|w\|^2_{Y^{1,2}_0(\Omega)}\,\lesssim\,\|\nabla w\|^2_{L^2(\Omega)} \,\lesssim \,\mathfrak{B}(w,w)\, \lesssim \,\|F\|_{Y^{-1,2}(\Omega)}
\,\|w\|_{Y^{1,2}_0(\Omega)}\,,
\end{equation*}
by \eqref{LM} and Sobolev embedding.  Consequently, again using Sobolev embedding, 
we obtain \eqref{wfbound3}.
\end{proof}

We now give the usual construction  (see, e.g., \cite[page 5]{K}) of the solution to the Dirichlet problem with compactly supported Lipschitz data.
Let $f\in \text{Lip}_c(\ree)$.  Then $Lf\in Y^{-1,2}(\Omega)$ in the weak sense 
(again, this observation uses the Hardy inequality of \cite{A}), so there is a unique
$w\in Y^{1,2}_0(\Omega)$ such that \eqref{LM} holds with $F=Lf$, i.e.,
$Lw = Lf$ in the weak sense.  Setting $u=f-w$, we see that
$u\in Y^{1,2}(\Omega)$, $Lu=0$ in $\Omega$,
and that $u|_{\pom} =f|_{\pom}$ in the trace sense, and moreover, 
given any $f_0\in \text{Lip}_c(\pom)$, the solution $u$ does not depend
on the particular $\text{Lip}_c$ extension of $f_0$ to $\ree$.

\subsection{Weak maximum principle and continuity of solutions with 
Lip$_c(\pom)$ data}\label{WMP}
Let us now observe that the solutions $u$ 
constructed as above satisfy the weak maximum principle.  
We follow the argument in \cite[pp 179-180]{GT}: given
$f\in \text{Lip}_c(\ree)$, let $u$ be the solution constructed above,
and let $f_0\in \text{Lip}_c(\pom)$ denote the restriction of $f$ to $\pom$.
 Set $M:= \sup_{\pom} f_0$, and observe that we seek to show that
 $(u-M)^+=0$ in $\Omega$.  Note that $(u-M)^+\in Y^{1,2}_0(\Omega)$, thus, since 
 $\nabla u = \nabla (u-M)^+$ in the set $\{X\in \Omega:(u(X)-M)^+ \neq 0\}$, 
 and since $Lu=0$ in the weak sense, we have
 \begin{multline*}
 \iint_{\Omega} |\nabla (u-M)^+|^2 dY
 \approx  \iint_{\Omega} A\nabla u\cdot\nabla (u-M)^+ dY
 \\[4pt]
 = - \iint_{\Omega} \Bb\cdot \nabla u (u-M)^+  dY
 =  \iint_{\Omega} \Bb\cdot \nabla (u-M)^+ \, (u-M)^+  dY
 \\[4pt]
 \lesssim \eps_0  \iint_{\Omega} |\nabla (u-M)^+|^2 dY\,,
 \end{multline*}
 where in the last step we have used the Hardy inequality.
 For $\eps_0$ small enough, we see that $\nabla (u-M)^+=0$  in $\Omega$,
 and since the boundary trace of $(u-M)^+$ is zero, we must have
 $u\leq M$ in $\Omega$.
 
 Next, we discuss continuity up to the boundary for the solutions to the Dirichlet problem
 constructed in Subsection \ref{ssy12}.
The following lemma will be useful in several circumstances.
 
\begin{lemma}\label{localCalpha}
 Let $L$ be as in \eqref{LdefShmsection}, where $\Bb$ satisfies \eqref{driftsize}.
 Suppose further that the conclusion of Lemma \ref{proppde} holds for $L$,
 and that for each $f\in$ {\em Lip}$_c(\pom)$, the equation $Lu=0$ has a solution
 $u\in Y^{1,2}(\Omega)$ (or in $W^{1,2}(\Omega)$), 
 with data $u\lfloor_{\pom}=f$ (in the trace sense),
 for which the weak maximum principle holds.
 Then for every compact set $K\subset \overline{\Omega}$,
 \begin{equation}\label{eqlocalCalpha}
 |u(Y) -f(x)|\,\leq\, C_{K,f} \,|Y-x|^{\alpha/2}\,,\qquad Y \in K\,, x\in\pom \,,
 \end{equation}
 where $\alpha$ is the 
 H\"older exponent in 
  \eqref{holderatbdry}, and where the constant $C_{K,f}$ may depend upon $K$ and $f$, but otherwise depends only on the constants in Lemma \ref{proppde}. 
 \end{lemma}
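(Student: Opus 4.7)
The argument uses the standard splitting of the boundary data $f$ near $x$, combined with the hypothesized boundary Hölder estimate applied to a pair of auxiliary nonnegative solutions.

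\emph{Reductions.} When $Y\in\pom$, the estimate follows from the Lipschitz continuity of $f$. When $|Y-x|\geq c_K$ for some $c_K$ depending on $K$, it follows from the sup-norm bound $\|u\|_\infty\leq\|f\|_\infty$ supplied by the weak maximum principle (Subsection~\ref{WMP}). Assume therefore $Y\in\Omega$ and $r_0:=|Y-x|$ is small, and set $\rho:=r_0^{1/2}$, so that $\delta(Y)\leq r_0=\rho^2$.

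\emph{Splitting.} Choose $\eta\in{\rm Lip}(\pom)$ with $\eta\equiv 1$ on $\Delta_\rho$, $\supp\eta\subset\Delta_{2\rho}$, $|\nabla_{\rm tan}\eta|\lesssim 1/\rho$. Writing $g:=f-f(x)$ (Lipschitz with $g(x)=0$ and $|g(z)|\leq L_f|z-x|$, where $L_f$ is the Lipschitz constant of $f$), decompose $g=g_1+g_2$ where $g_1:=\eta g\in {\rm Lip}_c(\pom)$ is supported in $\Delta_{2\rho}$ with $\|g_1\|_\infty\leq 2L_f\rho$, and $g_2:=(1-\eta)g$ vanishes on $\Delta_\rho$. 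Let $v_1\in Y^{1,2}(\Omega)$ solve $Lv_1=0$ with data $g_1$; the weak maximum principle gives $\|v_1\|_\infty\leq 2L_f\rho$. Define $v_2:=u-f(x)-v_1$. Then $Lv_2=0$ in $\Omega$ (since $L$ annihilates constants), $v_2\in W^{1,2}_{\rm loc}(\Omega)$ with $\|v_2\|_\infty\lesssim\|f\|_\infty$, and its boundary trace $g_2$ vanishes on $\Delta_\rho$. The estimate reduces to showing $|v_2(Y)|\lesssim r_0^{\alpha/2}\|f\|_\infty$.

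\emph{Bounding $v_2$ in a Carleson tent.} Since $v_2$ may be sign-changing, I localize to a Carleson tent $T:=T_{\Delta_R(x)}$ (Proposition~\ref{sawtoothprop}(8)) with $R$ large enough that $\supp f\cup\{Y\}\subset T$; then $T$ is a bounded 1-sided CAD, with 1-sided CAD constants depending only on those of $\Omega$. By the $Y^{1,2}(T)$ Dirichlet theory (Lemma~\ref{lemmaPDbounds}, applied in $T$, which is valid since $|\Bb|\leq\eps_0/\delta\leq\eps_0/\delta_T$ in $T$), decompose $v_2\vert_T=V^+-V^-$, where $V^\pm$ are the nonneg $L$-solutions in $T$ with boundary data $(v_2\vert_{\partial T})^\pm$ (these are bounded Lipschitz on the bounded $\partial T$, hence compactly supported there). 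Both $V^\pm$ vanish on $\partial T\cap\Delta_\rho$, since $v_2$ does. Applying the hypothesized boundary Hölder estimate (the conclusion of Lemma~\ref{proppde}) in the sub-CAD $T$ at the scale $\rho$ at the boundary point $x$, one obtains
\[
V^\pm(Y)\,\lesssim\,\left(\frac{\delta_T(Y)}{\rho}\right)^{\!\alpha}\|V^\pm\|_{L^\infty(T)}\,\lesssim\, r_0^{\alpha/2}\|f\|_\infty\,,
\]
so $|v_2(Y)|\leq V^+(Y)+V^-(Y)\lesssim r_0^{\alpha/2}\|f\|_\infty$. Combining with $|v_1(Y)|\leq 2L_f\rho\lesssim r_0^{\alpha/2}$ (using $\rho=r_0^{1/2}\leq r_0^{\alpha/2}$ for $r_0<1$ and $\alpha\in(0,1]$) yields $|u(Y)-f(x)|\leq C_{K,f}\,r_0^{\alpha/2}$, which is \eqref{eqlocalCalpha}.

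\emph{Main obstacle.} The principal technical point is justifying that Lemma~\ref{proppde} may be invoked in the sub-CAD $T$ rather than in $\Omega$. This is valid because the proof of Lemma~\ref{proppde} given above depends only on the pointwise smallness \eqref{smallptwise} of $\Bb$, the boundary Caccioppoli inequality (Lemma~\ref{Cacc}), and the local Hardy inequality (Lemma~\ref{localHardy}), all of which propagate to $T$ with uniformly controlled constants: $\delta_T\leq\delta$ gives $|\Bb|\leq\eps_0/\delta_T$ in $T$, and Proposition~\ref{sawtoothprop}(8) controls the 1-sided CAD constants of $T$ in terms of those of $\Omega$. If one prefers to avoid this localization, an alternative is to observe that for convex $\phi$ one has $L(\phi(v_2))=-\phi''(v_2)\langle A\nabla v_2,\nabla v_2\rangle\leq 0$, so $|v_2|$ is a nonnegative $L$-subsolution vanishing on $\Delta_\rho$, and apply a subsolution version of Lemma~\ref{proppde} (whose proof proceeds by the same Caccioppoli / Morrey-decay scheme, on the favorable-sign side).
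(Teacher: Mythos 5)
Your scheme is recognizably in the same spirit as the paper's: you reduce to a boundary ball of radius $\rho=|Y-x|^{1/2}$, split off a piece of the data that is $O(\rho)$ near $x$ (handled by the maximum principle), and try to apply the boundary H\"older estimate of Lemma~\ref{proppde} to the remainder, which vanishes on $\Delta_\rho$. However, there is a genuine gap in the way you make the remainder nonnegative.

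To split $v_2\vert_T=V^+-V^-$ you must solve the $L$-Dirichlet problem in the sub-domain $T$, and you justify this by invoking Lemma~\ref{lemmaPDbounds} in $T$, citing $|\Bb|\le\eps_0/\delta\le\eps_0/\delta_T$. But Lemma~\ref{localCalpha} does \emph{not} assume the pointwise smallness \eqref{smallptwise}; it assumes only the large-constant bound \eqref{driftsize}, and Remark~\ref{remark_no_smallness} explicitly records that the lemma will also be used without smallness. In particular it is invoked in Lemma~\ref{hmkexists} for the truncated operators $L_k=L_0+\Bb_k\cdot\nabla$, where $\Bb_k$ is merely bounded and $\sqrt{M_0}$ may be large. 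In that setting the bilinear form of $L$ on $Y^{1,2}_0(T)$ need not be coercive, so Lemma~\ref{lemmaPDbounds} (Lax--Milgram in $T$), the weak maximum principle in $T$, and the uniqueness needed to identify $v_2\vert_T$ with $V^+-V^-$ are all unavailable. There is also a secondary technical issue: $v_2\vert_{\partial T}$ is not Lipschitz on $\partial T\cap\Omega$ (it is only H\"older there as a solution), so even in the small-constant case the data $(v_2\vert_{\partial T})^\pm$ lie outside the class covered by the Lip$_c$ construction and an additional density argument would be required.

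The paper sidesteps both problems by never leaving $\Omega$: it writes $u_f-f(x_0)=u_{\tilde f}+f(x_0)(u_\vp-1)$ with $\tilde f := f-f(x_0)\vp\in{\rm Lip}_c$, replaces $u_{\tilde f}$ by $u_{f_r}$ with $f_r:=\tilde f\,\Phi_r$ vanishing on $\Delta(x_0,2\sqrt r)$ (the difference being $O(\sqrt r)$ by the maximum principle), and then dominates the sign-changing solution by a nonnegative one via $|u_{f_r}|\le u_{|f_r|}$ (a one-line consequence of linearity and the hypothesized weak maximum principle), and similarly treats $1-u_\vp\ge0$. Lemma~\ref{proppde} is then applied to $u_{|f_r|}$ and $1-u_\vp$ in $\Omega$ at scale $\sqrt{r}$. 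This is exactly the step your proof is missing: rather than constructing positive/negative parts by solving in a tent, dominate by the solution with data $|f_r|$ in $\Omega$ itself. Your ``alternative'' via convexity and subsolutions would require a subsolution version of Lemma~\ref{proppde}, which is also not among the hypotheses of the lemma.
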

 
\begin{remark}\label{remark_no_smallness}
In particular, Lemma \ref{localCalpha} applies in the small constant case, by the construction above and Lemma \ref{proppde}.  On the other hand,
note that we do {\em not} impose smallness of the constant in \eqref{driftsize}), and in the sequel,
we shall also use Lemma \ref{localCalpha} in the absence of smallness.  The same remark applies to Corollary \ref{corlocCalpha} below.
\end{remark}

 \begin{proof}[Proof of Lemma \ref{localCalpha}]
 Let $f \in$ Lip$_c(\pom)$, and let $u_f$ be the corresponding solution. 
Let $x_0\in\pom $, and $Y\in K$, and set $r=|Y-x_0|$.  Since we allow the constant
in \eqref{eqlocalCalpha} to depend on $K$ and on $f$, we may suppose that $r<1$.
Let $\vp\in C_0^\infty(B(x_0,2)$ 
be a smooth bump function with $\vp\equiv 1$ on $B(x_0,1)$, and let
$\Phi_r\in C^\infty(\ree)$, with $0\leq \Phi_r\leq 1$, $\Phi_r\equiv 0$ on
$B(x_0, 2\sqrt{r})$, and $\Phi_r \equiv 1$ on 
$\ree\setminus B(x_0, 4\sqrt{r})$.

Set $\tilde{f}:= f-f(x_0)\,\vp$, define $f_r:= \tilde{f}\, \Phi_r$, and 
 let $u_{\tilde{f}}$ and $u_{f_r}$ denote the corresponding solutions.
Note that $\sup_{\pom}| \tilde{f}-f_r| \, \lesssim_f \sqrt{r}$,
therefore by the maximum principle,
\[
\sup_\Omega |u_{\tilde{f}}-u_{f_r}| \lesssim_f \sqrt{r}\,,
\]
Also by the maximum principle, we have 
$|u_{f_r}|\leq u_{|f_r|}\lesssim\|f\|_\infty$, and since $f_r$ vanishes on 
$\Delta(x_0, 2\sqrt{r})=B(x_0, 2\sqrt{r})\cap\pom$, as 
does the trace of the solution $v:=1-u_\vp$, we therefore have
\[
|u_\vp(Y)-1| \,+ \,|u_{f_r}(Y)|\,\lesssim_f \left(\frac{\delta(Y)}{\sqrt{r}}\right)^\alpha\,,\quad
Y\in B(x_0, \!\sqrt{r})\cap\Omega\,,
\]
by Lemma \ref{proppde}.  Since $\delta(Y) \leq |Y-x_0| = r$, we obtain
\[
|u_f(Y) - f(x_0)|\leq 
|u_{\tilde{f}}(Y)| +|f(x_0)\,(u_\vp(Y)-1)|
\lesssim_f r^{\alpha/2} + r^{1/2} \lesssim_f r^{\alpha/2}\,.
\]
 \end{proof}
 
 \begin{corollary}\label{corlocCalpha} Under the same hypotheses as in Lemma
 \ref{localCalpha}, we have
  \begin{equation}\label{eqcorlocCalpha}
 |u(Y) -u(X)|\,\leq\, C_{K,f} \,|Y-X|^{\alpha/4}\,,\qquad X,Y \in K\,,
 \end{equation}
 where $\alpha$ is the  smaller of the two 
 H\"older exponent in \eqref{eqHolder} and
  \eqref{holderatbdry}.
 \end{corollary}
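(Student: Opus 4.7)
The plan is to prove \eqref{eqcorlocCalpha} by a standard dichotomy argument, splitting on whether the pair $X,Y \in K$ lies deep in the interior of $\Omega$ or close to the boundary, at the scale $r := |X-Y|$. Since the constant $C_{K,f}$ is allowed to depend on $K$ and $f$, I may assume throughout that $r \leq 1$ (otherwise the estimate is trivial after enlarging the constant). The threshold between the two regimes will be $\delta(X) \sim \sqrt{r}$, chosen so that the interior Hölder exponent $\alpha$ from \eqref{eqHolder} and the boundary Hölder exponent $\alpha/2$ from \eqref{eqlocalCalpha} both produce the same final power $r^{\alpha/4}$ of $r$.

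In the interior case, suppose $\delta(X) \geq \sqrt{r}$. Set $B := B(X, \sqrt{r}/3)$. Since $r \leq \sqrt{r}$, the point $Y$ lies in $B$, and $3B \subset B(X,\delta(X)) \subset \Omega$. The interior Hölder estimate \eqref{eqHolder} from Lemma \ref{Moser}, combined with the weak maximum principle assumed in the hypotheses (so that $\sup_\Omega |u| \leq \|f\|_{L^\infty(\pom)}$), yields
\[
|u(X)-u(Y)| \,\lesssim\, |X-Y|^{\alpha} \sup_{2B}|u| \,\lesssim_f\, r^{\alpha} \,\leq\, r^{\alpha/4},
\]
using $r\leq 1$ in the last inequality.

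In the boundary case, suppose $\delta(X) < \sqrt{r}$. Choose $x_0 \in \pom$ with $|X - x_0| = \delta(X) < \sqrt{r}$, and note that $|Y - x_0| \leq |Y-X| + |X - x_0| \leq r + \sqrt{r} \leq 2\sqrt{r}$. Both $X$ and $Y$ remain in a fixed compact enlargement of $K$, so I may apply Lemma \ref{localCalpha} twice to obtain
\[
|u(X) - f(x_0)| \,\lesssim_{K,f}\, |X-x_0|^{\alpha/2} \,\leq\, r^{\alpha/4}, \qquad |u(Y) - f(x_0)| \,\lesssim_{K,f}\, |Y-x_0|^{\alpha/2} \,\lesssim\, r^{\alpha/4},
\]
and the triangle inequality gives $|u(X)-u(Y)| \lesssim_{K,f} r^{\alpha/4}$.

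There is no real obstacle in this proof: the only choice to make is the cutoff scale $\sqrt{r}$, and the loss of a further factor of $1/2$ in the exponent of the boundary case (relative to the exponent $\alpha/2$ in Lemma \ref{localCalpha}) is precisely what is needed to balance the distance $|Y-x_0| \lesssim \sqrt{r}$ against the target power of $r$. Taking the minimum $\alpha$ of the two Hölder exponents in \eqref{eqHolder} and \eqref{holderatbdry} ensures that both cases simultaneously yield $r^{\alpha/4}$, proving \eqref{eqcorlocCalpha}.
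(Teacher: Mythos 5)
Your proof is correct and takes essentially the same approach as the paper: dichotomize at the scale $\delta \sim \sqrt{r}$ and balance the interior Hölder estimate against the boundary one. The only variation is in the boundary case, where the paper compares $u(X)$ and $u(Y)$ via two separate nearest boundary points $\hat{x},\hat{y}$ (introducing a third term $|f(\hat{x})-f(\hat{y})|$), whereas you use the single point $x_0$ nearest to $X$ for both—a slight streamlining that eliminates the Lipschitz-in-$f$ term.

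One cosmetic point to tidy: the containment $Y\in B(X,\sqrt{r}/3)$ requires $r < \sqrt{r}/3$, i.e.\ $r<1/9$, not merely $r\leq 1$; and to be safe one should use a slightly smaller radius (say $\sqrt{r}/4$) so that $3B$ sits strictly inside $B(X,\delta(X))$. Both are harmless since the range $r\gtrsim 1$ is absorbed into $C_{K,f}$.
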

 \begin{proof}
 Since the constants in \eqref{eqcorlocCalpha} are allowed to depend on
 $K$ and $f$, we may assume that $r:=|X-Y|<1/2$, and that $\delta(X),\delta(Y) <1$.
 
 Suppose first that $r\leq \frac12\max(\delta^2(X),\delta^2(Y))$.  In this case, 
$ \delta(X) \approx \delta(Y)$, so by
 \eqref{eqHolder}, we have
  \[
  |u(Y) -u(X)|\,\lesssim_f  \left(\frac{|X-Y|}{\delta(X)}\right)^\alpha \lesssim_f  r^{\alpha/2}\,.
  \]
  On the other hand, suppose that $r\geq \frac12\max(\delta^2(X),\delta^2(Y))$, and fix
  $\hat{x},\hat{y} \in\pom$ such that $\delta(X)=|X-\hat{x}|$ and 
  $\delta(Y)=|Y-\hat{y}|$.
  Then
  \[
   |u(Y) -u(X)| \leq    |u(Y) -f(\hat{y})| + |f(\hat{y})-f(\hat{x})| + |u(X) -f(\hat{x})|=:I+II+III\,.
  \]
  By Lemma \ref{localCalpha}, since $\delta(Y) +\delta(X) \lesssim r^{1/2}$ in the present case,
  \[
  I+III \lesssim_{K,f} r^{\alpha/4}\,.
  \]
  By the triangle inequality, $|\hat{y}-\hat{x}| \leq \delta(Y) +\delta(X) +|X-Y|\lesssim r^{1/2}$, 
  so that $II \lesssim_f r^{1/2}$.
 \end{proof}

 \subsection{Existence of elliptic measure in the small constant case, and a local 
 ampleness property} \label{schm}
 We establish solvability of the Dirichlet problem with continuous, compactly supported data
 on $\pom$, and consequently, we obtain
 existence of elliptic measure, for the operator $L$ defined in \eqref{LdefShmsection}, under the smallness assumption \eqref{smallptwise}.  To be precise, we have the following.

\begin{lemma}\label{hmexistssmall}
Let $L$ be as in \eqref{LdefShmsection}, and suppose that \eqref{smallptwise} holds
for $\Bb$.  If $\eps_0$ is small enough, depending only on allowable parameters, then
the continuous Dirichlet problem is solvable for the equation $Lu=0$ in $\Omega$, and
therefore elliptic measure exists for $L$ in $\Omega$.
\end{lemma}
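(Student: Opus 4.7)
The plan is to combine the ingredients from Subsections \ref{ssy12}--\ref{WMP} with a standard approximation argument, and then invoke the Riesz representation theorem.

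First, for $f \in \mathrm{Lip}_c(\pom)$, the construction at the end of Subsection \ref{ssy12} produces a solution $u_f = F - w \in Y^{1,2}(\Omega)$ to $L u_f = 0$ with trace $f$ on $\pom$ (here $F \in \mathrm{Lip}_c(\ree)$ is any Lipschitz extension of $f$). The weak maximum principle from Subsection \ref{WMP} gives $\|u_f\|_{L^\infty(\Omega)} \le \|f\|_{L^\infty(\pom)}$, and Corollary \ref{corlocCalpha} gives that $u_f$ extends continuously to $\overline{\Omega}$ with $u_f|_{\pom} = f$. So $(D)$ is solvable for data in $\mathrm{Lip}_c(\pom)$, with the quantitative control $\|u_f\|_{L^\infty(\overline{\Omega})} \le \|f\|_{L^\infty(\pom)}$.

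Second, I would extend to $f \in C_c(\pom)$ by density. Since $\pom$ is ADR, hence a metric space of homogeneous type, $\mathrm{Lip}_c(\pom)$ is uniformly dense in $C_c(\pom)$ (one may concretely mollify $f$ using, e.g., the standard construction via truncated McShane extensions, keeping the supports inside a fixed compact neighborhood of $\mathrm{supp}\, f$). Pick $\{f_k\} \subset \mathrm{Lip}_c(\pom)$ with $f_k \to f$ uniformly on $\pom$. Applying the $L^\infty$ bound above to $u_{f_k} - u_{f_j}$ (which solves $L(u_{f_k}-u_{f_j})=0$ with Lipschitz data $f_k-f_j$) shows that $\{u_{f_k}\}$ is Cauchy in $C(\overline{\Omega})$. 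Let $u$ be the uniform limit; $u \in C(\overline{\Omega})$ with $u|_{\pom} = f$, and $Lu = 0$ in $\Omega$ in the weak sense, since $u_{f_k} \to u$ locally uniformly in $\Omega$ and, by interior Caccioppoli (Lemma \ref{Caccinterior}) applied to $u_{f_k} - u_{f_j}$, also locally in $W^{1,2}(\Omega)$. This establishes the continuous Dirichlet problem.

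Third, existence of elliptic measure is then automatic: for each $X \in \Omega$, the map $C_c(\pom) \ni f \mapsto u_f(X)$ is linear, bounded by $\|f\|_{L^\infty(\pom)}$, and positive (by the maximum principle applied to $-f$ when $f \ge 0$), so by the Riesz representation theorem there is a unique Borel probability measure $\hm^X$ on $\pom$ with $u_f(X) = \int_{\pom} f \, d\hm^X$, as per the remark following Definition \ref{Dc}. I anticipate the main technical issue to be the density step: one must be sure that the approximating Lipschitz functions can be chosen with supports contained in a common compact set so that the weak maximum principle applies with a uniform $L^\infty$ bound. This is handled by multiplying a pointwise mollification of $f$ by a fixed Lipschitz cutoff that equals $1$ on a neighborhood of $\mathrm{supp}\, f$ and has compact support in $\pom$.
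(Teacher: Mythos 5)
Your proposal is correct and follows essentially the same route as the paper: approximate $f\in C_c(\pom)$ uniformly by $\{f_k\}\subset\mathrm{Lip}_c(\pom)$, use the weak maximum principle to get a uniformly Cauchy sequence of solutions, use interior Caccioppoli to pass to the weak limit equation, use the boundary regularity results from Lemma \ref{localCalpha}/Corollary \ref{corlocCalpha} to get continuity up to the boundary, and invoke Riesz representation. The only minor difference is organizational: you establish that each $u_{f_k}\in C(\overline{\Omega})$ first (via Corollary \ref{corlocCalpha}) and then take a uniform limit in $C(\overline{\Omega})$, whereas the paper takes the uniform limit in $\Omega$ and afterward proves boundary continuity of the limit directly via a three-term estimate (which requires a small case distinction depending on whether the boundary point is near or far from $\supp(f_m)$, since the constant in Lemma \ref{localCalpha} depends on the approximant). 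Your ordering is slightly cleaner, and both are valid; the paper's version is a bit more explicit about why the constant's dependence on $f_m$ causes no trouble.
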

 
 \begin{proof}  The proof is standard.
 Given a nonnegative function $f\in C_c(\pom)$, let $\{f_m\}_m$ be a sequence
 of nonnegative functions in Lip$_c(\pom)$, such that
 $f_m\to f$ uniformly on $\pom$.  Set $u_m:=u_{f_m}$, 
 the solution of the Dirichlet problem with data
$f_m$, as constructed in Subsection \ref{ssy12}.  Since these solutions satisfy the 
weak maximum principle (Subsection \ref{WMP}), we have
\[
\|u_m-u_j\|_{L^\infty(\Omega)}= \|u_{f_m-f_j}\|_{L^\infty(\Omega)}  \leq \|f_m-f_j\|_{L^\infty(\pom)}\,,
\]
thus, $u_m$ converges uniformly in $\Omega$, and therefore by Caccioppoli's inequality,
$u_m$ converges also in $W^{1,2}_{loc}(\Omega)$.  Let $u$ denote the limit.  Then
$Lu=0$ in the weak sense. Indeed, for $\Phi\in C_0^\infty(\Omega)$,
\[
0=\iint_\Omega \left[A\nabla u_m \cdot \nabla \Phi + \Bb\cdot \nabla u_k \Phi\right]dY
\to \iint_\Omega \left[A\nabla u \cdot \nabla \Phi + \Bb\cdot \nabla u \Phi\right]dY\,.
\]
Now let $x\in\pom$, $Y\in\Omega$, and observe that
\[
|u(Y) -f(x)|\leq
|u(Y)-u_m(Y)|+|u_m(Y) -f_m(x)| + |f_m(x)-f(x)| =:I+II+III\,.
\]
Let $\epsilon>0$. Since $u_m\to u$ uniformly in $\Omega$, and $f_m\to f$ uniformly on $\pom$,
we can fix $m=m(\epsilon)$ such that $I+III \leq \epsilon$, uniformly for all $Y\in \Omega$ and 
$x\in\pom$.  With this choice of $m$ now fixed, we find that $II\leq \epsilon$, 
either by application of Lemma \ref{localCalpha} in the case that
$Y,x$ lie in a suitable neighborhood of $\supp(f_m)$, or else by application of
\eqref{holderatbdry}, 
in the case that $x$ is away from $\supp(f_m)$ and $Y$ is near enough to $\pom$.

Thus, we have shown that $u\in C(\overline{\Omega})$ with $u\equiv f$ on $\pom$. Moreover,
$u$ satisfies the weak maximum principle
$\|u\|_{L^\infty(\Omega)}\leq \|f\|_{L^\infty(\pom)}$, as may be seen
by taking limits of the analogous inequality for each $u_m,f_m$.
As usual, it then follows that the mapping $f\to u(X)$ defines a bounded linear functional on
$C_c(\pom)$, for each $X\in \Omega$, so that
\[
u(X) = \int_{\pom} f\, d\hm^X\,,
\]
for some Radon measure $\hm^X$ (the elliptic measure for $L$), by Riesz representation.
\end{proof}

In the next lemma, we suppose that $\eps_0$ is small enough that
elliptic measure exists, by Lemma \ref{hmexistssmall}.

\begin{lemma}\label{Bourgainhm}  Let 
$\Omega$ be a 1-sided CAD, and set $L=-\dv A \nabla +\Bb\cdot\nabla$,
where $\Bb$ satisfies \eqref{smallptwise}.
Let $\hm$ denote elliptic measure for $L$ in $\Omega$. 
If $\eps_0$ is small enough, depending only on dimension, ellipticity of $A$,
and the 1-sided CAD constants, then there is a uniform constant
$c\in(0,1)$ 
with the same dependence,
such that for every $x \in \partial\Omega$, and every $r\in (0,\diam(\partial\Omega))$,
if $Y \in\Omega_{r/2}:= \Omega \cap B(x,r/2)$, then
\begin{equation}\label{eq2.Bourgain1}
\omega^{Y} (\Delta(x,r)) \geq c>0 \;.
\end{equation}
\end{lemma}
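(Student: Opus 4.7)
The plan is to use a Bourgain-style lower bound argument, in which an $L$-harmonic \textit{barrier} $u_\varphi$ (with continuous data supported in $\Delta(x,r)$) is shown to be bounded below uniformly on $\Omega_{r/2}$, so that
\[
\omega^{Y}(\Delta(x,r))\, =\, \int_{\pom}\mathbf{1}_{\Delta(x,r)}\,d\omega^{Y}\,\geq\,\int_{\pom}\varphi\,d\omega^{Y}\,=\,u_{\varphi}(Y)\,.
\]
The two workhorses are the boundary H\"older estimate (Lemma \ref{proppde}) and Harnack's inequality (Lemma \ref{Moser}); both are available here because $\eps_0$ is already assumed small enough for Lemma \ref{hmexistssmall} to apply, which delivers a genuine $C(\overline{\Omega})$ solution $u_{\varphi}$.

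Concretely, I will pick $\varphi\in \mathrm{Lip}_{c}(\pom)$ with $0\leq \varphi\leq \mathbf{1}_{\Delta(x,r)}$, $\varphi\equiv 1$ on $\Delta(x,7r/8)$, and $\supp\varphi\subset \Delta(x,r)$. Set $w:=1-u_{\varphi}$. Since $L(1)=0$, $w$ is a non-negative solution of $Lw=0$ in $\Omega$, bounded by $1$, whose boundary trace $1-\varphi$ vanishes on $\Delta(x,7r/8)$. For any $\hat{y}\in \Delta(x,5r/8)$, the triangle inequality gives $\Delta(\hat{y},r/4)\subset \Delta(x,7r/8)$, so applying Lemma \ref{proppde} to $w$ with the ball $B(\hat{y},r/8)$ yields
\[
w(X)\,\lesssim\, \left(\frac{\delta(X)}{r}\right)^{\alpha}\,,\qquad X\in \Omega\cap B(\hat{y},r/8)\,,
\]
with the implicit constant depending only on the allowable parameters.

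\textbf{Case A ($\delta(Y)\leq \gamma r$).} Letting $\hat{y}\in\pom$ satisfy $|Y-\hat{y}|=\delta(Y)$, for $\gamma<1/8$ we have $|\hat{y}-x|<r/2+\gamma r<5r/8$, so the preceding bound applies at $X=Y$ and gives $w(Y)\lesssim \gamma^{\alpha}$. Fixing $\gamma$ small enough that this is $\leq 1/2$ yields $u_{\varphi}(Y)\geq 1/2$.

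\textbf{Case B ($\delta(Y)>\gamma r$).} Let $X_{*}$ be a corkscrew point relative to $B(x,\gamma r/2)$, so $|X_{*}-x|<\gamma r/2$ and $\delta(X_{*})\approx \gamma r$. Case A applies to $X_{*}$, so $u_{\varphi}(X_{*})\geq 1/2$. Since $|Y-X_{*}|<r$ and $\min(\delta(Y),\delta(X_{*}))\gtrsim \gamma r$, the Harnack chain condition produces a chain of at most $N=N(\gamma)$ balls in $\Omega$ connecting $Y$ to $X_{*}$. Iterating Harnack's inequality \eqref{eqHarnack} from Lemma \ref{Moser} along this chain for the non-negative solution $u_{\varphi}$ gives $u_{\varphi}(Y)\gtrsim u_{\varphi}(X_{*})\geq 1/2$, which is the desired uniform lower bound.

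There is no genuinely hard step; the only things to be careful about are (i) choosing the plateau set $\{\varphi\equiv 1\}=\Delta(x,7r/8)$ large enough that \emph{every} $Y\in \Omega_{r/2}$ with small $\delta(Y)$ has its nearest boundary point $\hat{y}$ at safe distance from the transition region of $\varphi$, and (ii) verifying that the small-constant hypothesis \eqref{smallptwise} suffices to invoke both the existence result (Lemma \ref{hmexistssmall}) and the boundary H\"older estimate (Lemma \ref{proppde}); both are arranged by taking $\eps_{0}$ no larger than the minimum of the thresholds required by those two lemmas.
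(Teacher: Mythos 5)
Your proof is correct and takes essentially the same approach as the paper: show $1-\omega^Y(\Delta(x,r))$ is small via the boundary H\"older estimate (Lemma \ref{proppde}) when $\delta(Y)\lesssim r$, then propagate to the rest of $\Omega_{r/2}$ via Harnack and the Harnack chain condition, starting from a suitable corkscrew point. The only (minor) difference is that you introduce an explicit continuous barrier $\varphi$ and work with $w=1-u_\varphi$, which cleanly justifies the vanishing-trace hypothesis needed for Lemma \ref{proppde}, whereas the paper applies the lemma directly to $1-\omega^Y(\Delta(x,r))$ and leaves that justification implicit; this is a cosmetic improvement, not a different route.
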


\begin{proof} Let $Y \in \Omega_{r/2}$.
Set $u(Y):= 1-\omega^{Y} (\Delta(x,r))$.  Suppose first that
$\delta(Y) \leq \eta r$, where $\eta>0$ is a sufficiently small number to be chosen.
Then by Lemma \ref{proppde},
$u(Y) \leq C \eta^\alpha \leq 1/2$, provided that $\eta$ is small enough.  With this choice of
$\eta$ now fixed, we see that 
$\omega^{Y} (\Delta(x,r)) \geq 1/2$, for all
$Y \in \Omega_{r/2} \cap \{\delta(Y)\leq \eta r\}$.  In particular, the latter bound holds if
$Y=Y_0$ is a Corkscrew point relative to the ball $B(x,\eta r)$.  Since 
$\delta(Y_0) \approx_\eta r$, we see that \eqref{eq2.Bourgain1} holds for all
$Y \in  \Omega_{r/2} \cap \{\delta(Y)> \eta r\}$, with $c=c(\eta)$, by Harnack's inequality.
\end{proof}

 \subsection{Existence of elliptic measure in the large constant case, and an
 approximation result}\label{approxhm}
 In this subsection, we show that there is an elliptic measure for an operator
 $L$ as in \eqref{LdefShmsection}, with $\Bb$ satisfying \eqref{driftsize}, 
provided that $L$ can be suitably approximated
 by operators 
 \[
 L_k:= -\dv A\nabla +\Bb_k \cdot \nabla = L_0 + \Bb_k \cdot \nabla\,,
 \]
 for which the hypotheses of Lemma \ref{localCalpha} (and thus also of Corollary 
 \ref{corlocCalpha}) hold for each $L_k$ with constants that are uniform in $k$.
 Specifically, we shall set 
 \begin{equation}\label{bbkdef}
 \Bb_k:= \Bb 1_{\Omega_k}\,,
 \end{equation}
 where $\Omega_k:=\{X\in \Omega: \delta(X)> 1/k\}$ if $\Omega$ is bounded,
 or $\Omega_k:=\{X\in \Omega: \delta(X)> 1/k\}\cap B(x_0,k)$, for some fixed $x_0\in\pom$, if
 $\Omega$ is unbounded.
 Then $\|\Bb_k\|_{L^\infty(\Omega)} \leq k\sqrt{M_0}$, by \eqref{driftsize}, and
 $L_k=L_0$ in the border strip $\{X\in\Omega:\delta(X)\leq 1/k\}$. 

 In the next pair of Lemmata, we let $\hm_k^X$ and $\hm^X$ denote the respective
elliptic measures, at the point $X$, for the operators $L_k$ and $L$ in $\Omega$.
 
 \begin{lemma}\label{hmkexists}
 Let $L$ be as in \eqref{LdefShmsection}, with $\Bb$ satisfying \eqref{driftsize}.
Define $L_k$ as above.  Then for each positive integer $k$, 
the continuous Dirichlet problem is solvable for the equation
$L_k u=0$ in $\Omega$, the solution satisfies the weak maximum principle, and consequently
elliptic measure $\hm_k$ exists for $L_k$.
 \end{lemma}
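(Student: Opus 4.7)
The strategy exploits two key features of the truncated drift $\Bb_k$: it is uniformly bounded, with $\|\Bb_k\|_{L^\infty(\Omega)}\leq k\sqrt{M_0}$, and it vanishes identically on the strip $\{X\in\Omega:\delta(X)\leq 1/k\}$, so that $L_k\equiv L_0$ near $\pom$. The first property puts us back in the world of classical divergence-form theory with bounded coefficients; the second lets us import all boundary regularity from $L_0$. As a first step, I would establish the weak maximum principle for $L_k$. Because the operator is in divergence form with no zeroth-order coefficient and no $b^i u$ term inside the divergence, and $\Bb_k$ is bounded, the classical weak maximum principle applies (cf.\ \cite[Theorem 8.1]{GT}, whose sign hypothesis is trivially met). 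In particular, the only $Y^{1,2}_0(\Omega)$ solution of $L_k u=0$ is $u\equiv 0$.

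For existence of $Y^{1,2}$ solutions with Lipschitz data, I would proceed as in Subsection \ref{ssy12}: given $f\in \text{Lip}_c(\pom)$, fix an extension $F\in \text{Lip}_c(\ree)$ and look for $u=F+w$ with $w\in Y^{1,2}_0(\Omega)$ solving $L_k w=-L_k F$ weakly. The bilinear form
\[
\mathfrak{B}_k(w,v):=\iint_{\Omega}\bigl( A\nabla w\cdot\nabla v\,+\,\Bb_k\cdot\nabla w\,v\bigr)\,dY
\]
is bounded on $Y^{1,2}_0(\Omega)\times Y^{1,2}_0(\Omega)$, but it fails to be coercive once $k$ is large, so Lax--Milgram (as used in Lemma \ref{lemmaPDbounds}) is unavailable. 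However, since $\supp(\Bb_k)\Subset\Omega$, the first-order term defines a compact perturbation on $Y^{1,2}_0(\Omega)$ via the local compact embedding into $L^2(\supp\Bb_k)$. The Fredholm alternative then reduces existence to uniqueness of the homogeneous problem, which is supplied by Step~1.

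For continuity up to $\overline{\Omega}$, I would exploit that $L_k=L_0$ on $\{\delta<1/k\}$, where Lemma \ref{proppde} holds trivially (with $\varepsilon_0=0$), yielding the boundary H\"older estimate \eqref{holderatbdry}. Combined with the interior De Giorgi--Nash--Moser bounds of Lemma \ref{Moser}, this verifies the hypotheses of Lemma \ref{localCalpha} (see Remark \ref{remark_no_smallness}) and Corollary \ref{corlocCalpha}, so $u\in C(\overline{\Omega})$ with $u|_{\pom}=f$. Finally, to pass from Lipschitz to continuous data, I would imitate the argument in Lemma \ref{hmexistssmall}: approximate $f\in C_c(\pom)$ uniformly by $f_m\in\text{Lip}_c(\pom)$, use the maximum principle to get $\|u_m-u_j\|_{L^\infty(\Omega)}\leq\|f_m-f_j\|_{L^\infty(\pom)}$, upgrade to convergence in $W^{1,2}_{loc}$ by Lemma \ref{Caccinterior}, and read off $L_k u=0$ weakly in the limit, together with $u\in C(\overline{\Omega})$ and $u|_{\pom}=f$. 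Riesz representation on $C_c(\pom)$ then produces $\hm_k^X$.

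The main obstacle is precisely the failure of coercivity of $\mathfrak{B}_k$ when $k$ is large, which is the essential difference from the small-constant setting of Lemma \ref{lemmaPDbounds}. The resolution via Fredholm hinges on the compact support of $\Bb_k$ in $\Omega$ (giving compactness of the first-order term) and on the weak maximum principle (giving triviality of the kernel). A secondary technical point is that $\Omega$ need not be bounded, so one must systematically work in $Y^{1,2}_0(\Omega)$; this causes no real difficulty since $\Bb_k$ has bounded support and Sobolev embedding supplies the integrability required to make sense of $L_k F \in Y^{-1,2}(\Omega)$.
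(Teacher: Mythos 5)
Your overall strategy — construct $W^{1,2}$ or $Y^{1,2}$ solutions with Lipschitz data, establish the weak maximum principle, note that $L_k=L_0$ in the border strip so that Lemma \ref{proppde} and Lemma \ref{localCalpha} apply, and then run the approximation argument of Lemma \ref{hmexistssmall} to pass from Lipschitz to continuous data — matches the structure of the paper's proof, and the continuity and Riesz-representation steps are essentially identical. The difference lies in how you produce the $Y^{1,2}_0$ solution $w$ of $L_kw=-L_kF$ once coercivity is lost.

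Here, however, there is a genuine gap. Your Fredholm argument hinges on the claim that $\supp(\Bb_k)\Subset\Omega$, and later you assert outright that ``$\Bb_k$ has bounded support.'' Recall that $\Bb_k=\Bb 1_{\Omega_k}$ with $\Omega_k=\{X\in\Omega:\delta(X)>1/k\}$. This set is bounded away from $\pom$, but it is \emph{not} bounded when $\Omega$ is unbounded (think of $\Omega=\reu$: then $\Omega_k$ is a half-space). Consequently the embedding $Y^{1,2}_0(\Omega)\hookrightarrow L^2(\supp\Bb_k)$ is not compact in general, the perturbation $w\mapsto \Bb_k\cdot\nabla w$ is not compact on $Y^{1,2}_0(\Omega)$, and the Fredholm alternative is unavailable. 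The argument does go through when $\Omega$ is bounded, since then $\Omega_k$ is indeed compactly contained in $\Omega$; but the lemma is stated for a general 1-sided CAD, which may be unbounded. The paper sidesteps this entirely by outsourcing the existence step: it cites \cite[Lemma 4.2]{KS} in the bounded case and \cite[Theorem 5.3]{M} in the unbounded case, the latter being precisely a result designed to produce $Y^{1,2}_0$ solutions for elliptic equations with (qualitatively) bounded lower-order coefficients in unbounded domains, where compactness considerations fail. Likewise, the weak maximum principle is taken from \cite[Lemma 8.1]{GT} in the bounded case but from \cite[Theorem 5.1]{M} in the unbounded case, which you also need. To salvage your argument in the unbounded case you would either need an independent argument replacing the compactness (e.g.\ the methods of \cite{M}), or restrict $\Omega$ to be bounded.
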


 \begin{lemma}\label{hmklimit}
Let $L$ be as in \eqref{LdefShmsection}, with $\Bb$ satisfying \eqref{driftsize}.
Define $L_k$ as above, and suppose that 
the hypotheses of Lemma \ref{localCalpha} 
hold for each $L_k$, and that Lemma \ref{proppde} holds for each $L_k$, 
with constants that are uniform in $k$.
Then elliptic measure $\hm$ exists for $L$, and a subsequence
$\hm_{k_j}^X$ converges weakly to $\hm^X$, for each $X \in \Omega$.
\end{lemma}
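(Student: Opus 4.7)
The plan is to construct $\hm^X$ by passing to the limit in the continuous-data solutions of $L_k u = 0$, and then to verify that this construction realizes $\hm^X$ as the weak limit of $\hm_{k_j}^X$. Fix $f \in C_c(\pom)$ and apply Lemma \ref{hmkexists} to produce $u_k \in C(\overline{\Omega})$ solving $L_k u_k = 0$ with $u_k|_{\pom} = f$; the weak maximum principle for $L_k$ gives $\|u_k\|_{L^\infty(\Omega)} \leq \|f\|_{L^\infty(\pom)}$. By the uniform-in-$k$ hypotheses, Corollary \ref{corlocCalpha} applies to each $u_k$ with constants independent of $k$, so $\{u_k\}$ is equicontinuous on every compact $K \subset \overline{\Omega}$ and uniformly bounded. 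An Arzel\`a--Ascoli argument together with an exhaustion of $\overline{\Omega}$ by compacts extracts a subsequence $u_{k_j} \to u_f$ uniformly on compact subsets of $\overline{\Omega}$, with $u_f \in C(\overline{\Omega})$ and $u_f|_{\pom} = f$. A further diagonal extraction over a countable dense subset of $C_c(\pom)$, combined with the contraction $\|u_f - u_g\|_{L^\infty(\Omega)} \leq \|f - g\|_{L^\infty(\pom)}$ inherited from the maximum principle for $L_k$, yields a single subsequence $\{k_j\}$ that serves for every $f \in C_c(\pom)$ simultaneously.

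Next I would verify that $Lu_f = 0$ weakly in $\Omega$. Fix $\Phi \in C_0^\infty(\Omega)$ and choose $m$ large enough that $K := \supp \Phi \subset \Omega_m$. For $k_j \geq m$, $\Bb_{k_j}$ coincides with $\Bb$ on $K$, and \eqref{driftsize} gives $|\Bb| \leq \sqrt{M_0}\, m$ on $K$, so in particular $\Bb\Phi \in L^2(\Omega)$. The interior Caccioppoli inequality (Lemma \ref{Caccinterior}) applied on a Whitney covering of a neighborhood of $K$ furnishes a bound on $\|\nabla u_{k_j}\|_{L^2}$ over that neighborhood that is uniform in $j$, so a further subsequence of $\nabla u_{k_j}$ (not relabeled) converges weakly in $L^2_{loc}(\Omega)$; uniform convergence $u_{k_j} \to u_f$ identifies the weak limit as $\nabla u_f$. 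Passing to the limit in the weak formulation $\iint_\Omega A\nabla u_{k_j}\cdot\nabla\Phi\, dY + \iint_\Omega \Bb_{k_j}\cdot\nabla u_{k_j}\,\Phi\, dY = 0$ then yields $Lu_f = 0$ weakly, the drift integral limiting by testing the weak convergence of $\nabla u_{k_j}$ against $\Bb\Phi \in L^2(K)$.

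For each $X \in \Omega$, the map $f \mapsto u_f(X)$ is linear (by uniqueness of continuous solutions for $L_k$ and passage to the limit), nonnegative (by the maximum principle, since $f \geq 0 \Rightarrow u_k \geq 0$), and bounded by $\|f\|_{L^\infty(\pom)}$. Riesz representation then produces a Radon measure $\hm^X$ on $\pom$ with $u_f(X) = \int_{\pom} f\, d\hm^X$, which is by construction the elliptic measure for $L$ at $X$. Finally, since $\int f\, d\hm_{k_j}^X = u_{k_j}(X) \to u_f(X) = \int f\, d\hm^X$ for every $f \in C_c(\pom)$, the chosen subsequence gives $\hm_{k_j}^X \rightharpoonup \hm^X$ weakly for every $X \in \Omega$ simultaneously.

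The main obstacle I anticipate is passing to the limit in the drift term $\iint \Bb_{k_j}\cdot\nabla u_{k_j}\,\Phi\, dY$ without any smallness assumption on $\Bb$, since a priori $|\Bb|$ can be of size $\delta^{-1}$ and we have only weak $L^2_{loc}$ convergence of the gradients. This is precisely what the cut-off $\Bb_{k_j} := \Bb\, 1_{\Omega_k}$ is designed to overcome: on the compactly supported test region $K \subset \Omega_m$, the coefficient $\Bb_{k_j}$ is bounded uniformly in $j$ (in fact equals $\Bb$) and lies in $L^2(K)$, so weak convergence of $\nabla u_{k_j}$ suffices and no smallness of $\eps_0$ is required.
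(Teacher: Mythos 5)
Your argument follows the paper's own proof quite closely: Arzel\`a--Ascoli on the equicontinuous family $\{u_k\}$, identification of the limit as a weak solution of $Lu=0$ by using that $L_k=L$ in $\supp(\Phi)$ for large $k$, continuity up to the boundary via the uniform-in-$k$ H\"older estimate, and then Riesz representation. One technical slip: you begin with $f\in C_c(\pom)$ and invoke Corollary~\ref{corlocCalpha} to get uniform-in-$k$ equicontinuity of $u_k$, but that corollary (via Lemma~\ref{localCalpha}) is stated for the $Y^{1,2}$ (resp.\ $W^{1,2}$) solutions with $f\in\mathrm{Lip}_c(\pom)$; the continuous-Dirichlet solutions for general $f\in C_c(\pom)$ coming from Lemma~\ref{hmkexists} are only uniform limits of those, so the quantitative H\"older constant $C_{K,f}$ is not available for them directly. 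This is easily repaired---and is exactly what the paper does---by running the Arzel\`a--Ascoli and boundary-continuity argument for $f\in\mathrm{Lip}_c(\pom)$ first, and then using the contraction $\|u_f-u_g\|_{L^\infty}\le\|f-g\|_{L^\infty}$ (which you already invoke in the diagonal step) to extend to all of $C_c(\pom)$; equivalently, take your countable dense set inside $\mathrm{Lip}_c(\pom)$. Two smaller deviations, both harmless: the paper obtains strong $W^{1,2}(\Omega')$ convergence by applying interior Caccioppoli to the differences $u_j-u_k$ (which solve $L(u_j-u_k)=0$ in $\Omega'$ for $j,k$ large) and using the uniform $L^\infty$ Cauchyness, whereas you extract a weak $L^2_{\mathrm{loc}}$ limit of $\nabla u_{k_j}$ and test against $\Bb\Phi\in L^2$; and your explicit diagonal extraction over a countable dense set to obtain a single subsequence valid for all data simultaneously is a clarification the paper leaves implicit.
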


 \begin{proof}[Proof of Lemma \ref{hmkexists}]
 We follow a familiar strategy.
First consider $f\in$ Lip$_c(\ree)$.  Since $\Bb_k$ is (qualitatively) bounded (and compactly supported if $\Omega$ is unbounded),
  by \cite[Lemma 4.2]{KS} in the case that $\Omega$ is bounded
  (respectively, \cite[Theorem 5.3]{M} in the case that $\Omega$ is unbounded),
  there is a solution $w$ of the equation $L_kw=L_kf$, belonging to $W^{1,2}_0(\Omega)$
  (resp., $Y^{1,2}_0(\Omega)$), with constants that may depend upon $k$.   
  Set $u:= f-w$, so that $u\in W^{1,2}(\Omega)$
  (resp., $Y^{1,2}(\Omega)$)
   is a weak solution
 of $L_k u =0$, with $u\lfloor_{\pom}= f$ in the trace sense.
  Then by \cite[Lemma 8.1]{GT} (resp., \cite[Theorem 5.1]{M}), 
  the weak maximum principle holds:
 \begin{equation}\label{WMPR}
 \sup_{\Omega} |u| \leq \sup_{\pom} |f|\,,
 \quad f\in \text{Lip}_c(\ree)\,. 
 \end{equation}
 
   We now proceed as in the proof of
   Lemma \ref{hmexistssmall}:  given a nonnegative function 
   $f\in C_c(\pom)$, we let $\{f_m\}_m$ be a sequence in Lip$_c(\pom)$, such that
 $f_m\to f$ uniformly on $\pom$.  Set $u_m:=u_{f_m}$, 
 the solution of the Dirichlet problem with data
$f_m$, as constructed above.  Since these solutions satisfy the 
weak maximum principle \eqref{WMPR}, we have
\[
\|u_m-u_j\|_{L^\infty(\Omega)}= \|u_{f_m-f_j}\|_{L^\infty(\Omega)}  \leq \|f_m-f_j\|_{L^\infty(\pom)}\,,
\]
thus, $u_m$ converges uniformly in $\Omega$, and therefore by Caccioppoli's inequality,
$u_m$ converges also in $W^{1,2}_{loc}(\Omega)$.  The limit $u$ is therefore a solution
of $L_ku=0$ in $\Omega$.
 
Recall that by construction, $L_k=L_0$ when $\delta(X)\leq 1/k$, and therefore
Lemma \ref{proppde} and
Lemma \ref{localCalpha} apply (with constants that may depend upon $k$) to the pair
$u_m$ and $f_m$.  The rest of the proof now follows that of Lemma \ref{hmexistssmall},
essentially verbatim, 
so we leave the remaining details to the reader. 
\end{proof}

\begin{proof}[Proof of Lemma \ref{hmklimit}]
Let $f\in$ Lip$_c(\pom)$, and set $u_k$ be the $Y^{1,2}$ (or $W^{1,2}$) solution
of $L_ku_k=0$, with boundary data $f$.
By assumption, the hypotheses of Lemma \ref{proppde}
hold uniformly in $k$ for each $L_k$, thus by
Lemma \ref{localCalpha} and Corollary \ref{corlocCalpha},
 the family $\{u_k\}$ is equicontinuous on 
 each compact set $K\subset \overline{\Omega}$, and
 moreover, the solutions $u_k$ are uniformly bounded on $\Omega$, by the weak maximum principle.  Consequently, for each such $K$, there is a subsequence of $u_{k_j}$ converging
 uniformly on $K$, and in particular, on each bounded subdomain 
 $\Omega'$ compactly contained in $\Omega$.  Observe that $L_k=L$ in any 
 such $\Omega'$, for all $k$ large enough (depending on $\Omega'$).
 Hence, by Caccioppoli's inequality (Lemma
 \ref{Caccinterior}), the convergence holds also in $W^{1,2}(\Omega')$.
By choosing a countable collection of compact sets that exhaust
 $\overline{\Omega}$ (obviously, this step is unnecessary when 
 $\Omega$ is bounded), we find a subsequential limit $u$ defined on all of $\Omega$.
Note that
\[
\|u\|_{L^\infty(\Omega)} \leq \sup_k \|u_k\|_{L^\infty(\Omega)} \leq 
\|f\|_{L^\infty(\Omega)}\,,
\]
since the solutions $u_k$ satisfy the weak maximum principle.

Let $\Phi\in C_0^\infty(\Omega)$.   Then $\Bb_k=\Bb$, hence $L_k=L$, in the support of $\Phi$,
provided $k$ is large enough.  Thus $Lu_k=0$ in $\supp(\Phi)$, and 
by the $W^{1,2}$ convergence on compactly contained subdomains, we conclude
that $u$ is a weak solution of $Lu=0$ in $\Omega$.
 
 We now claim that $u\in C(\overline{\Omega})$, and that $u=f$ on $\pom$.  Continuity in 
 $\Omega$ is immediate from \eqref{eqHolder}, since $Lu=0$.  Let us now verify continuity up to the boundary.  Let $x\in \pom$, and let $Y\in\Omega \cap B(x,1)=:\Omega_1(x)$. 
By re-numbering, we may assume that $u_k\to u$ uniformly on a compact set $K_1$
containing $\Omega_1(x)$.
We then have
\[
|u(Y)-f(x)| \leq |u(Y)-u_k(Y)| +|u_k(Y) - f(x)| \leq |u(Y)-u_k(Y)| + C_{K_1,f} |Y-x|^{\alpha/2}\,,
\]
since Lemma \ref{proppde}, and hence the quantitative conclusion of Lemma \ref{localCalpha}, apply to $u_k$ with constants independent of $k$.  
Letting $k\to \infty$, we find that
\eqref{eqlocalCalpha} holds for $u$, thus in particular $u\in C(\overline{\Omega})$,
with $u\lfloor_{\pom}=f$, as claimed.

As usual, existence of the elliptic measure $\hm$ now follows from the 
Riesz representation theorem.  Finally, the claimed weak convergence is simply the statement that for $f\in C_c(\pom)$, and for each $X\in\Omega$, if $u_k$ and $u$ denote, respectively, the solutions to $L_ku_k=0$ and $Lu=0$ with data $f$, then for some subsequence,
\[
\int_{\pom} f\, d\hm_{k_j}^X = u_{k_j}(X) \to u(X) = \int_{\pom} f\, d\hm^X\,.
\]
\end{proof}

\subsection{The Green function: existence, estimates, and consequences}

\begin{lemma}[Green function existence, properties and estimates]
 \label{lemma2.green}
Let $\Omega\subset \ree$ be a bounded
1-sided CAD, and let $L=-\dv A \nabla +\Bb\cdot\nabla$.
Suppose that $\Bb$ 
is {\tt qualitatively} bounded.  
Then there exists a unique Green function
$G(X,Y)$ defined on $\Omega\times\Omega\setminus \{X=Y\}$, satisfying
\begin{equation}\label{greeny12}
G(Z,\cdot), G(\cdot,Z) \in W^{1,2}(\Omega \setminus B(Z,r))\,\,\, \forall\, Z\in \Omega\,, \,r>0\,;
\end{equation}
\begin{equation}
\label{eq2.green-cont}
G(Z,\cdot)\big|_{\pom}\equiv 0\equiv
G(\cdot,Z)\big|_{\pom}\,\,\, \text{in the trace sense}\,,\,\,\, \forall \,Z\in\Omega\,;
\end{equation}
\begin{equation}
\label{eq2.green3}
G(X,Y)\geq 0\,,\qquad \forall X,Y\in\Omega\,,\, X\neq Y\,.
\end{equation}
Moreover,
setting $\mathfrak{G}F(X):= \iint_\Omega G(X,Y) F(Y) dy$ 
and $\mathfrak{G}^*F(Y):=\iint_\Omega G(X,Y) F(X) dX$,
we have 
\begin{equation}\label{LG1}
\mathfrak{G}F,\, \mathfrak{G}^*\!F\, \in W^{1,2}_0(\Omega)\,,
\quad \forall \, F\in L^{\infty}(\Omega)\,;
\end{equation}
\begin{equation}\label{LG}
L\mathfrak{G}F = F\,\, \text{\em and } \,\, L^*\mathfrak{G}^*F= F\,,
\quad \forall \, F\in L^{\infty}(\Omega)\,,
\end{equation}
in the weak sense.
In addition, for every $\Phi \in$ {\em Lip}$_c(\ree)$,
and $X\in\Omega$, we have
\begin{equation}\label{eq2.14}
u(X) \,-
\, \Phi(X)
=
-\iint_\Omega
\big[A^{\tt \!T}(Z)\nabla_Z G(X,Z) \,+\, \Bb(Z) G(X,Z)\big]\cdot\nabla\Phi(Z)\, dZ\,,
\end{equation}
where $u$ is the $W^{1,2}$ solution of $Lu=0$ with $u\lfloor_{\pom}=\Phi\lfloor_{\pom}$
in the trace sense, and $A^{\tt \!T}$ denotes the transpose of $A$.

Finally, if in addition \eqref{smallptwise} holds, with $\eps_0$ sufficiently small, depending only
on $n$, ellipticity of $A$, and the 1-sided CAD constants, then
\begin{equation}\label{eq2.green}
|X-Y|^{1-n} \lesssim G(X,Y) \lesssim |X-Y|^{1-n}\,,\qquad |X-Y|\leq \frac12 \delta(Y)\,,
\end{equation}
where the implicit constants also depend only on the allowable parameters.  In particular,
$\eps_0$ and the implicit constants in \eqref{eq2.green} do {\em \bf not} depend upon
the qualitative $L^\infty$ bound for $\Bb$.
\end{lemma}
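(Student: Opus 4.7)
The plan is to split the lemma into two parts: (i) the qualitative existence of $G$ and the structural properties \eqref{greeny12}--\eqref{eq2.14}, using the small-constant bound \eqref{smallptwise} together with the qualitative $L^\infty$ bound on $\Bb$ to invoke standard theory; and (ii) the sharp pointwise bound \eqref{eq2.green}, for which the delicate point is that the constants must not depend on the qualitative $L^\infty$ bound. For (i) the construction follows the Gr\"uter--Widman scheme, built on Lemma \ref{lemmaPDbounds}. For (ii) I will compare $G$ with the Green function $G_0$ of $L_0=-\dv A\nabla$, using the identity $L(G-G_0)=-\Bb\cdot\nabla G_0$ together with the smallness condition.

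For (i): since $\Omega$ is bounded, $Y^{1,2}_0(\Omega)\cong W^{1,2}_0(\Omega)$ and $L^\infty(\Omega)\subset L^{2_*}(\Omega)$. Lemma \ref{lemmaPDbounds} then provides bounded solution operators $\mathfrak{G}$ for $L$ and $\mathfrak{G}^*$ for $L^*$, the latter being of the form $\LL$ with the transposed matrix and enjoying the same coercivity under \eqref{smallptwise}. For each fixed $Y\in\Omega$ and $0<r<\delta(Y)/2$, set $G_r(\cdot,Y):=\mathfrak{G}\bigl(|B(Y,r)|^{-1}1_{B(Y,r)}\bigr)\in Y^{1,2}_0(\Omega)$. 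These are non-negative by the weak maximum principle of Subsection \ref{WMP}, vanish on $\pom$ in the trace sense, and weakly solve $LG_r(\cdot,Y)=0$ outside $B(Y,r)$; by Moser (Lemma \ref{Moser}) and interior Caccioppoli (Lemma \ref{Caccinterior}) the family $\{G_r\}_{r<\delta(Y)/2}$ is locally uniformly bounded and locally bounded in $W^{1,2}(\Omega\setminus\{Y\})$. A diagonal subsequence $r_j\to 0$ yields the limit $G(\cdot,Y)$ satisfying \eqref{greeny12}--\eqref{eq2.green3}. The identities \eqref{LG1}--\eqref{LG} follow by comparing $\mathfrak{G}F$ with $\iint G_{r}(\cdot,Y)F(Y)\,dY$ and sending $r\to 0$; the consistency $G(X,Y)=G^*(Y,X)$, where $G^*$ is built analogously for $L^*$, results from the adjoint identity $\iint H\cdot\mathfrak{G}F=\mathfrak{B}(\mathfrak{G}F,\mathfrak{G}^*H)=\iint F\cdot\mathfrak{G}^*H$.

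To obtain the representation \eqref{eq2.14}: for $\Phi\in\mathrm{Lip}_c(\ree)$, write $u=\Phi-w$ with $w\in Y^{1,2}_0$ and $Lw=L\Phi$ weakly, as in Subsection \ref{ssy12}. Fix $X\in\Omega$, set $\Psi_r:=|B(X,r)|^{-1}1_{B(X,r)}$ and $v_r:=\mathfrak{G}^*\Psi_r\in Y^{1,2}_0$. Testing $L^*v_r=\Psi_r$ against $w$ and using that the bilinear form for $L^*$ is the transpose of that for $L$, we obtain
\begin{equation*}
\iint_\Omega \Psi_r\, w\, dZ \,=\, \mathfrak{B}(w,v_r) \,=\, \mathfrak{B}(\Phi,v_r),
\end{equation*}
the last equality from the weak equation $Lw=L\Phi$ tested against $v_r$. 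Passing to $r\to 0$ (with $v_r\to G(X,\cdot)$ in $W^{1,2}_{\mathrm{loc}}(\Omega\setminus\{X\})$ and $w$ continuous at $X$ by Lemma \ref{Moser}) and rearranging yields \eqref{eq2.14}.

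For (ii): under \eqref{smallptwise} with $\eps_0$ small, the purely second-order operator $L_0$ admits a Green function $G_0$ satisfying the classical two-sided bound $G_0(X,Y)\approx|X-Y|^{1-n}$ together with $|\nabla_Z G_0(Z,Y)|\lesssim|Z-Y|^{-n}$ for $|X-Y|,|Z-Y|\leq\delta(Y)/2$, by Gr\"uter--Widman in the 1-sided CAD setting. Since $LG(\cdot,Y)=\delta_Y=L_0G_0(\cdot,Y)$, we have $L(G-G_0)(\cdot,Y)=-\Bb\cdot\nabla G_0(\cdot,Y)$, and using the representation via $\mathfrak{G}$ applied to regularizations of the right-hand side,
\begin{equation*}
(G-G_0)(X,Y)\,=\,-\iint_\Omega G(X,Z)\,\Bb(Z)\cdot\nabla_Z G_0(Z,Y)\,dZ.
\end{equation*}
The main obstacle is to bound the right-hand side by $C\eps_0\,G_0(X,Y)$ \emph{uniformly} in the qualitative $L^\infty$ bound on $\Bb$. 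The key input is an a priori upper bound $G(X,Z)\lesssim|X-Z|^{1-n}$ on the safe ball $\{|Z-Y|<\delta(Y)/2\}$ with constant independent of $\|\Bb\|_\infty$; this is obtained by a bootstrap, first using the classical ($\|\Bb\|_\infty$-dependent) Gr\"uter--Widman bound for $G$ to justify the identity, then invoking \eqref{smallptwise}, $\delta(Z)\gtrsim|X-Y|$ (valid in the safe ball), $|\nabla G_0(Z,Y)|\lesssim|Z-Y|^{-n}$, and an explicit fractional-integral computation in the safe region that yields $|G-G_0|(X,Y)\leq C\eps_0\,|X-Y|^{1-n}$. Choosing $\eps_0$ small enough and absorbing closes the argument, giving both the upper bound in \eqref{eq2.green} with constants independent of $\|\Bb\|_\infty$, and then the matching lower bound via $G_0 \lesssim G + |G-G_0| \lesssim G + \eps_0 G_0$. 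This bootstrap step, whose independence from $\|\Bb\|_\infty$ is the crux of the lemma, is the main obstacle; as the introduction's footnote emphasizes, the corresponding bound is known to \emph{fail} without \eqref{smallptwise}.
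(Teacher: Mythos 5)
Your decomposition into (i) qualitative structure and (ii) the quantitative bound \eqref{eq2.green} matches the logical structure of the paper's proof, but the actual arguments differ substantially. For (i), the paper does not reconstruct the Green function: it cites \cite{KS} (and \cite{S}) for all of \eqref{greeny12}--\eqref{eq2.14} directly, noting those results do not even require the 1-sided CAD hypothesis. Your Gr\"uter--Widman-style construction from $\mathfrak{G}(|B(Y,r)|^{-1}1_{B(Y,r)})$ is plausible in outline, but it is extra work that duplicates the cited references.

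For (ii), the real crux of the lemma, you take a genuinely different route -- a perturbative comparison of $G$ with $G_0$ via $L(G-G_0)(\cdot,Y)=-\Bb\cdot\nabla G_0(\cdot,Y)$ -- and here there are concrete gaps. First, the representation
\[
(G-G_0)(X,Y)=-\iint_\Omega G(X,Z)\,\Bb(Z)\cdot\nabla_Z G_0(Z,Y)\,dZ
\]
has a global domain of integration, but your "explicit fractional-integral computation" only discusses the "safe ball" $\{|Z-Y|<\delta(Y)/2\}$. The contribution from the rest of $\Omega$, in particular from a neighborhood of $\pom$ where $|\Bb(Z)|$ may be large in absolute terms (only constrained by $\eps_0/\delta(Z)$) and where $\nabla_Z G_0(Z,Y)$ is not pointwise bounded, is not addressed at all. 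Estimating that piece requires Cauchy--Schwarz with Hardy, Caccioppoli, and the Carleson estimate for $G_0$, and is not a routine fractional-integral bound; omitting it leaves a real hole. Second, you mention ``regularizations of the right-hand side'' in passing, but $\Bb\cdot\nabla G_0(\cdot,Y)$ fails to lie in $L^{2_*}(\Omega)$ near $Y$ when $n\geq 3$, so even writing the representation through $\mathfrak{G}$ needs genuine justification, not a gesture.

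The paper sidesteps both difficulties. For the upper bound it never compares with $G_0$: it uses only the Lax--Milgram/Sobolev bound of Lemma \ref{lemmaPDbounds} (namely $\|w\|_{L^{2^*}}\lesssim\|F\|_{L^{2_*}}$ for $Lw=F$, with constant depending only on allowable parameters) applied to $F=1_{B(Y,r/4)}$, together with Harnack in both variables, yielding $G(X,Y)\lesssim |X-Y|^{1-n}$ in two lines. For the lower bound it tests the representation formula \eqref{eq2.14} against a cutoff $\Phi$ supported in an annulus around $X$, so that the left side is $\hm^X(\pom)=1$, and bounds the right side by $G(X,Y)s^{n-1}$ via Harnack, Cauchy--Schwarz, and Caccioppoli. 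Both steps avoid any a priori comparison with $G_0$, and hence never need the problematic global integral against $\nabla_Z G_0$. Your perturbative route could perhaps be made to work with substantial additional effort, but as written it does not constitute a complete proof of \eqref{eq2.green}.
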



\begin{proof}[Proof of Lemma \ref{lemma2.green}] 
By the qualitative assumption, the existence of a unique Green function,
satisfying properties \eqref{greeny12} - \eqref{eq2.14},
follows from the results in \cite{KS} 
(or the results of \cite{S}).\footnote{In the case of an unbounded domain, one 
could substitute the results of \cite{M} for those of \cite{KS,S}.  We have not done this here, since
we shall use the Green function only in bounded subdomains of our original domain.}

On the other hand, property \eqref{eq2.green} is the only quantitative conclusion in the Lemma, and to prove it, we cannot use the pointwise bounds
   of \cite{KS} off the shelf, since those estimates depend quantitatively on impermissible quantities;
   however, we can and do use the aforementioned existence and qualitative properties of the Green function proved in \cite{KS}.

 Let us now give the proof of \eqref{eq2.green}. 
   We first establish the right hand inequality. 
  We recall the standard upper and lower Sobolev exponents in $\re^d$, namely, 
  with $d=n+1$, 
  \[
  2^*=2d/(d-2)=2(n+1)/(n-1)\,,\quad \,\,  2_*=2d/(d+2)=2(n+1)/(n+3)\,.
  \]
   
First note that by Lemma \ref{lemmaPDbounds}, if $\eps_0$ is small enough,
then for any given $F\in Y^{-1,2}$, there is a unique
$w\in Y^{1,2}_0(\Omega)$ solving $Lw=F$ in the weak sense. Moreover, $w$ 
satisfies the bound \eqref{wfbound3}, so in particular
 \begin{equation}\label{2starbound}
 \|w\|_{L^{2^*}(\Omega)} \lesssim  \|F\|_{L^{2_*}(\Omega)}\,,
 \end{equation}
 with implicit constants depending only on allowable parameters.
 
 On the other hand, by 
 \cite[Lemma 4.2, and Propositions 5.3 and 6.13]{KS}, given $F\in W^{-1,2}(\Omega)$,
   there is a unique $w_1\in W^{1,2}_0{\Omega}$ such that $Lw_1 = F$ in the weak sense. 
  By Sobolev embedding, $w_1\in Y^{1,2}_0(\Omega)$, hence by uniqueness, for
   $F\in L^\infty(\Omega)$, we have $w_1=w$, the solution constructed in Lemma
   \ref{lemmaPDbounds}, which therefore satisfies \eqref{2starbound}.  Moreover,
   for $F\in L^\infty(\Omega)$, we have the formula
   \[
   w(X) =\iint_\Omega G(X,Y) F(Y) dY\,.
   \]
   Suppose now that $r:= |X-Y|\leq \delta(Y)/4$.  Let $B_1:= B(X,r/4)$,
   $B_2=B(Y,r/4)$, and set $F:=1_{B_2}$.  
   Then by Harnack's inequality \eqref{eqHarnack}, and \eqref{2starbound},
   \[
 |B_1|^{1/2^*}|B_2|\, G(X,Y)\, \lesssim \left( \iint_{B_1}\Big(\iint_{B_2}G(Z,W)\,dW\Big)^{2^*}dZ\right)^{1/2^*} \,\lesssim \, 
  |B_2|^{1/2_*}\,,
   \]
   i.e, in ambient dimension $n+1$,
   \[
   G(X,Y) \lesssim |B_1|^{-1/2^*} |B_2|^{-1/2^*} \approx r^{1-n} = |X-Y|^{1-n}\,.
   \]
   Using Harnack again, we may extend the estimate from the regime $|X-Y|\leq \delta(Y)/4$ to
   $|X-Y|\leq \delta(Y)/2$.

   We now turn to the left hand inequality in \eqref{eq2.green}.  We follow the argument in \cite{HL}.
   Note that by Lemma \ref{hmexistssmall}, the elliptic measure for $L$ exists. 
   Consequently, for $\Phi\in C_0^\infty(\ree)$, with $\Phi(X)=0$,
   \eqref{eq2.14} becomes
   \begin{equation}  \label{eqRiesz1}
\int_{\pom} \Phi \, d\hm^X \,
=\,
-\iint_\Omega
\big[A(Y)\nabla_Y G(Y,X) \,+\, \Bb(Y) G(X,Y)\big]\cdot\nabla\Phi(Y)\, dY\,,
\end{equation}
Now set $s=|X-Y|\leq\delta(Y)/2$, and choose 
$\Phi\in C_0^\infty(\ree)$ with $0\leq \Phi\leq 1$,
$\Phi\equiv 0$ in $B(X,s/4)$, and $\Phi \equiv 1 $ on 
$\overline{\Omega}\setminus B(X,s/2)$,
with $|\nabla \Phi|\lesssim 1/s$.
Since the left hand side of \eqref{eqRiesz1} equals 1, and 
since $\delta(Z) \gtrsim s$ in the support of $\nabla\Phi$, we have
\[
1\lesssim \frac1s \iint_{s/4 \leq |X-Z|\leq s/2}\left( |\nabla_Z G(X,Z)| 
+ s^{-1} G(X,Z) \right) dZ
\lesssim G(X,Y) s^{n-1}\,,
\]
by Harnack's inequality, where we have used Cauchy-Schwarz and
Caccioppoli's inequality to handle the gradient term.
The left hand inequality in \eqref{eq2.green} follows.
\end{proof}

In the next lemma, and in the sequel, we use the notation
\begin{equation}\label{DXdef}
\Delta_Y:=B\big(Y,10\delta(Y)\big)\cap\pom\,,\qquad Y\in\Omega\,.
\end{equation}
Note that $\Delta_Y$ is essentially a surface ball.  Indeed, if $\hat{y}\in \pom$ is chosen so that
$|Y-\hat{y}|=\delta(Y)$, then $\Delta(\hat{y},9\delta(Y))\subset \Delta_Y\subset 
\Delta(\hat{y},11\delta(Y))$.

\begin{lemma}[``CFMS" estimates]\label{l2.10}
Suppose that $\Omega$ is a bounded 1-sided CAD,
and let $L=-\dv A \nabla +\Bb\cdot\nabla$,
where $\Bb$ satisfies \eqref{smallptwise},
and is {\tt qualitatively} bounded.  
If $\eps_0$ is sufficiently small, depending only
on $n$, ellipticity of $A$, and the 1-sided CAD constants, then for all
$X,Y\in \Omega$ such that $|X-Y|\ge \delta(Y)/4$ we have
\begin{equation}\label{eqn:right-CFMS}
\frac{G(X,Y)}{\delta(Y)}\,
\lesssim
\,\frac{\hm^X( \Delta_Y)}{\sigma( \,\Delta_Y)}\,,
\end{equation}
and for all $X,Y\in \Omega$ such that $X\in \Omega \setminus B(Y, 50\kappa_0\delta(Y))$
\begin{equation}\label{eqn:left-CFMS}
\frac{\hm^X( \Delta_Y)}{\sigma( \,\Delta_Y)}\,
\lesssim\, \frac{G(X,Y)}{\delta(Y)}\,,
\end{equation}
where the implicit constants depend only on allowable parameters, and $\kappa_0$ is the constant fixed in Lemma \ref{localHardy}.
\end{lemma}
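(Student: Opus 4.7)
Set $d:=\delta(Y)$ and let $\hat y\in\pom$ with $|Y-\hat y|=d$, so $\sigma(\Delta_Y)\approx d^n$ and $\Delta(\hat y,9d)\subset\Delta_Y\subset\Delta(\hat y,11d)$.  Both $G(\cdot,Y)$ and $\hm^{\,\cdot}(\Delta_Y)$ are $L$-harmonic in $\Omega\setminus\{Y\}$; the first vanishes on $\pom$ in trace sense, and the second has trace $1_{\Delta_Y}$.

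\textbf{Proof of \eqref{eqn:right-CFMS}.}  In the near regime $d/4\le|X-Y|\le d/2$, Lemma~\ref{lemma2.green} gives $G(X,Y)\lesssim d^{1-n}$, while the bound $|X-\hat y|\le 3d/2$ together with Lemma~\ref{Bourgainhm} and Harnack's inequality (Lemma~\ref{Moser}) gives $\hm^X(\Delta_Y)\gtrsim 1$, and \eqref{eqn:right-CFMS} follows.  For $|X-Y|>d/2$, I consider
\[
F(X):=G(X,Y)-C\,d^{1-n}\hm^X(\Delta_Y),
\]
which is $L$-harmonic in $X$ in $U:=\Omega\setminus\overline{B(Y,d/2)}$, and show $F\le 0$ there by the weak maximum principle.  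On $\pom\cap\partial U$ the trace of $F$ equals $-C\,d^{1-n}1_{\Delta_Y}\le 0$ (after first approximating $1_{\Delta_Y}$ from above by continuous bumps and taking limits at the end); on $\partial B(Y,d/2)\cap\Omega$ the same Green upper bound and Bourgain estimate from the near regime make $F\le 0$ provided $C$ is large enough.  The Hardy-based coercivity argument of Subsection~\ref{WMP} transfers to $U$ because the hole $\overline{B(Y,d/2)}$ lies at distance $\ge d/2$ from $\pom$, so test functions supported in $U$ and vanishing on $\partial B(Y,d/2)$ extend by zero to elements of $W^{1,2}_0(\Omega)$, where the global Hardy inequality applies.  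Hence $F\le 0$ in $U$, which is \eqref{eqn:right-CFMS}.

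\textbf{Proof of \eqref{eqn:left-CFMS}.}  Pick $\Phi\in\mathrm{Lip}_c(\ree)$ with $1_{\Delta_Y}\le\Phi\le 1$, $\Phi\equiv 1$ on $B(\hat y,12d)$, $\supp\Phi\subset B(\hat y,20d)$, and $|\nabla\Phi|\lesssim 1/d$.  The hypothesis $X\in\Omega\setminus B(Y,50\kappa_0 d)$ yields $|X-\hat y|\ge 49\kappa_0 d>20d$, so $X\notin\supp\Phi$, and the representation formula \eqref{eq2.14} gives
\[
\hm^X(\Delta_Y)\,\le\,\int\Phi\,d\hm^X \,=\, -\iint_{\mathcal A}\bigl[A(Z)\nabla_Z G(X,Z)+\Bb(Z)G(X,Z)\bigr]\cdot\nabla\Phi(Z)\,dZ,
\]
with $\mathcal A:=B(\hat y,20d)\setminus B(\hat y,12d)\supset\supp\nabla\Phi$.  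Using $|\nabla\Phi|\lesssim 1/d$, Cauchy--Schwarz, Caccioppoli's inequality (Lemmas~\ref{Caccinterior} and \ref{Cacc}) applied to the $L^*$-solution $G(X,\cdot)$ on a mild fattening of $\mathcal A$, the drift bound $|\Bb|\lesssim 1/\delta$ combined with the local Hardy inequality (Lemma~\ref{localHardy}) to handle the $\Bb G$ term, and finally the Carleson estimate \eqref{carleson} applied to $G(X,\cdot)$ on the ball $B(\hat y,25d)$ (valid since $|X-\hat y|>50d$ for $\kappa_0$ large enough), one obtains
\[
\hm^X(\Delta_Y)\,\lesssim\,d^{n-1}\sup_{\mathcal A^*}G(X,\cdot)\,\lesssim\,d^{n-1}G(X,X_B),
\]
where $X_B$ is a corkscrew of $B(\hat y,25d)$.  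A Harnack chain of bounded length between $X_B$ and $Y$ (both at depth comparable to $d$, at mutual distance $\approx 25d$, with $X$ far from both) gives $G(X,X_B)\approx G(X,Y)$, and dividing by $\sigma(\Delta_Y)\approx d^n$ yields \eqref{eqn:left-CFMS}.

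\textbf{Main obstacle.}  The delicate point in the second proof is the treatment of the singular drift term $\Bb G$ in the integral identity: since $|\Bb|\lesssim 1/\delta$, the integrand is borderline singular near $\pom$, and only the local Hardy inequality (Lemma~\ref{localHardy}) allows one to absorb the weight $1/\delta$ into an $L^2$-norm of $\nabla G$, which is then controlled by Caccioppoli and the Carleson estimate.  A secondary technicality is justifying the weak maximum principle in the non-CAD subdomain $U=\Omega\setminus\overline{B(Y,d/2)}$; this is handled, as noted, by extending test functions by zero across the smooth interior hole, so that the ambient Hardy inequality on $\Omega$ is still available.
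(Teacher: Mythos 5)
The proposal is correct and follows essentially the same route as the paper: for \eqref{eqn:right-CFMS} one establishes the bound on a small sphere around $Y$ via the Green function upper bound \eqref{eq2.green}, Lemma \ref{Bourgainhm}, and Harnack, then extends to all $X$ with $|X-Y|\geq\delta(Y)/4$ by the weak maximum principle (which you correctly justify in the punctured domain $U$ by extending test functions by zero and invoking the global Hardy inequality); and for \eqref{eqn:left-CFMS} one applies the Riesz-type identity \eqref{eq2.14} with a suitable bump function, and controls the resulting annular integral by Cauchy--Schwarz, the local Hardy inequality (Lemma \ref{localHardy}), boundary Caccioppoli (Lemma \ref{Cacc}), and the Carleson estimate \eqref{carleson}. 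The only differences are cosmetic (bump centered at $\hat y$ rather than at $Y$, an explicit Harnack chain from $X_B$ to $Y$ at the end, and slightly different radii whose compatibility with the fixed $\kappa_0>1$ would need a routine adjustment of constants), and you correctly identify the singular drift term $\Bb\,G$ and the weak maximum principle in the non-CAD subdomain as the two technical points requiring care.
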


\begin{proof}
We follow the argument in \cite[p.~10]{K}.  Given $Y\in\Omega$, 
and using the right hand inequality in \eqref{eq2.green},we see that for
$|X-Y|=\delta(Y)/4$,
\begin{equation*}
\frac{G(X,Y)}{\delta(Y)}\,\lesssim \, \delta(Y)^{-n}
\lesssim
\,\frac{\hm^X( \Delta_Y)}{\sigma( \,\Delta_Y)}\,,
\end{equation*}
by Lemma \ref{Bourgainhm} (and Harnack's inequality), and ADR.
Estimate \eqref{eqn:right-CFMS} now follows by the maximum principle.

To prove \eqref{eqn:left-CFMS}, we set $R:= \delta(Y)$, and 
fix $Y\in \Omega$ and 
$X\in \Omega \setminus B(Y, 20R)$. 
Choose a smooth bump function
$\Phi\in C_0^\infty (B(Y, 11R))$, with $0\leq \Phi\leq 1$,
$\Phi\equiv 1$ on $B(Y, 10R)$, and $|\nabla \Phi|\lesssim R^{-1}$.  
Choose $\hat{y}\in\pom$ such that $|Y-\hat{y}|=\delta(Y) = R$, and for any
$r>0$ set $\Omega_{r}:=\Omega\cap B(\hat{y},r)$.
Note that 
$\Phi(X)=0$, so that by \eqref{eq2.14} and Lemma \ref{hmexistssmall},
\begin{multline*}
\hm^X(\Delta_Y)\leq
\int_{\pom}\Phi\, d\hm^X 
=
-\iint_\Omega
\big[A(Z)\nabla_Z G(X,Z) \,+\, \Bb(Z) G(X,Z)\big]\cdot\nabla\Phi(Z)\, dZ
\\[4pt]
\lesssim\, \frac1R\iint_{\Omega_{12R}}
\left(|\nabla_Z G(X,Z)| \,+\, \frac{ G(X,Z)}{\delta(Z)}\right) dZ 
\\[4pt]
\lesssim R^{(n-1)/2} \left(\iint_{\Omega_{12\kappa_0 R}}|\nabla_Z G(X,Z)|^2 dZ\right)^{1/2}
\\[4pt]
\lesssim R^{(n-1)/2}R^{-1} \left(\iint_{\Omega_{24\kappa_0 R}} G^2(X,Z) dZ\right)^{1/2}
\lesssim R^{n-1} G(X,Y)\,,
\end{multline*}
where we have used Cauchy-Schwarz and the local Hardy inequality 
(Lemma \ref{localHardy}), then Caccioppoli at the boundary (Lemma \ref{Cacc})
and finally the Carleson estimate (Lemma \ref{proppde}, specifically
 \eqref{carleson}).  Since $R=\delta(Y)$, the ADR property yields \eqref{eqn:left-CFMS}.
\end{proof}

\begin{lemma}\label{lemmadouble} 
Under the same hypotheses as in Lemma \ref{l2.10}, for any ball $B=B(x,r)$, centered on
$\pom$, setting $\Delta =B\cap\pom$, we have
\begin{equation*} 
\hm^X(2\Delta) \,\lesssim\, \hm^X(\Delta)\,,\qquad X\in\Omega\setminus 4B\,,
\end{equation*}
where the implicit constant depends only on the allowable parameters.
\end{lemma}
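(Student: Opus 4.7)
The plan is to apply the CFMS estimates of Lemma~\ref{l2.10} to compare both $\hm^X(\Delta)$ and $\hm^X(2\Delta)$ to Green function values $G(X,Y)$ at suitably chosen interior points $Y\in\Omega$ with $\delta(Y)$ of order $r$, and then exploit Harnack's inequality applied to the adjoint solution $G(X,\cdot)$ to conclude. Preliminarily, observe that when $r$ is comparable to $\diam(\pom)$ the conclusion is trivial, since by Lemma~\ref{Bourgainhm} both $\hm^X(\Delta)$ and $\hm^X(2\Delta)$ are then bounded below by a uniform constant (while being bounded above by $1$); hence we may assume $r\leq c\,\diam(\pom)$ for some small $c$ depending only on allowable parameters, leaving room to place auxiliary points.

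For the lower bound on $\hm^X(\Delta)$, I would choose a Corkscrew point $Y_1$ for a small sub-ball $B(x,r/K)$, with $K$ a large constant to be fixed. Then $\delta(Y_1)\approx r/K$, the surface ball $\Delta_{Y_1}$ of \eqref{DXdef} is contained in $\Delta$, and for $K$ large enough, the assumption $X\in\Omega\setminus 4B$ forces $|X-Y_1|\geq 3r > 50\kappa_0\delta(Y_1)$. Applying \eqref{eqn:right-CFMS} at $Y_1$ together with ADR then yields
\[
\hm^X(\Delta)\,\geq\,\hm^X(\Delta_{Y_1})\,\gtrsim\,\frac{\sigma(\Delta_{Y_1})}{\delta(Y_1)}\,G(X,Y_1)\,\approx\,r^{n-1}\,G(X,Y_1)\,.
\]
For the matching upper bound on $\hm^X(2\Delta)$, I would choose a second point $Y_2$ with $\delta(Y_2)\gtrsim r$ and $|Y_2-x|\lesssim r$, arranged so that the surface ball $\Delta_{Y_2}$ contains $2\Delta$; such a point is obtained by taking a Corkscrew for a sufficiently dilated ball $B(x,\kappa r)$ and, if needed, invoking the Harnack chain condition to reach a point with the required distance to $\pom$. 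Estimate \eqref{eqn:left-CFMS} at $Y_2$ then gives
\[
\hm^X(2\Delta)\,\leq\,\hm^X(\Delta_{Y_2})\,\lesssim\,\frac{\sigma(\Delta_{Y_2})}{\delta(Y_2)}\,G(X,Y_2)\,\approx\,r^{n-1}\,G(X,Y_2)\,.
\]
Since both $Y_1$ and $Y_2$ lie in $\Omega\cap CB$ with $\delta(Y_i)\approx r$, they can be joined by a Harnack chain of uniformly bounded length staying at distance $\gtrsim r$ from $X\in\Omega\setminus 4B$; Harnack's inequality (Lemma~\ref{Moser}) applied to $G(X,\cdot)$ along this chain then yields $G(X,Y_1)\approx G(X,Y_2)$, and combining the three displays gives the desired doubling.

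The main obstacle is enforcing the distance hypothesis of \eqref{eqn:left-CFMS}, namely $|X-Y_2|\geq 50\kappa_0\delta(Y_2)$, simultaneously with the geometric requirement $\Delta_{Y_2}\supset 2\Delta$, given only the modest separation $X\in\Omega\setminus 4B$. If the Corkscrew constant of $\Omega$ is small one may be forced to take $\delta(Y_2)$ a large multiple of $r$, and then the separation $\geq 50\kappa_0\delta(Y_2)$ can fail for some $X\in\Omega\setminus 4B$. The remedy is to first apply Harnack's inequality for $G(X,\cdot)$ along a chain leading to an auxiliary point where the CFMS distance condition does hold, and then propagate back; the 1-sided NTA Harnack chain property ensures such detours have uniformly bounded length. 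This is in the spirit of the classical NTA argument and carries over to the 1-sided CAD setting precisely because interior Corkscrew points and Harnack chains are available at every scale.
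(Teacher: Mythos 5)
Your overall strategy — both CFMS estimates plus Harnack for $G(X,\cdot)$ — is the right one, and your lower bound on $\hm^X(\Delta)$ via a corkscrew point $Y_1$ at a small sub-scale is correct and matches the paper. The problem is the upper bound on $\hm^X(2\Delta)$, and you in fact put your finger on the exact obstruction: if you insist on a \emph{single} pole $Y_2$ with $\Delta_{Y_2}\supset 2\Delta$, then $\delta(Y_2)$ is forced to be a large multiple of $r$, and the hypothesis $X\in\Omega\setminus B(Y_2,50\kappa_0\delta(Y_2))$ of \eqref{eqn:left-CFMS} can fail for $X$ just outside $4B$. The remedy you propose — Harnack-chaining $G(X,\cdot)$ from $Y_2$ to an auxiliary point $Y_2'$ where the distance condition holds — does not close this gap. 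Harnack gives $G(X,Y_2)\approx G(X,Y_2')$, but the step you still cannot justify is $\hm^X(2\Delta)\lesssim r^{n-1}G(X,Y_2)$: that step was supposed to come from \eqref{eqn:left-CFMS} applied at $Y_2$, which is exactly what you cannot do. Applying \eqref{eqn:left-CFMS} at $Y_2'$ instead only controls $\hm^X(\Delta_{Y_2'})$, and since $\delta(Y_2')$ is small, $\Delta_{Y_2'}$ is a tiny surface ball that does not cover $2\Delta$. So neither pole gives you control of all of $2\Delta$.

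The missing idea is a covering argument rather than a chaining argument. Cover $2\Delta$ by a collection $\{\Delta_k\}$ of $\approx M^n$ surface balls of radius $\approx r/M$ (with $M$ a large fixed constant), and for each $k$ choose a corkscrew point $Y_k$ for a dilate of $B_k$ so that $\Delta_k\subset\Delta_{Y_k}$ and $\delta(Y_k)\approx r/M$. Because $\delta(Y_k)$ is now a small multiple of $r$, the separation $|X-Y_k|\gtrsim r$ (automatic for $X\notin 4B$) dominates $50\kappa_0\delta(Y_k)$ once $M$ is chosen large enough, so \eqref{eqn:left-CFMS} applies at each $Y_k$. Summing over the boundedly many $k$ gives $\hm^X(2\Delta)\lesssim\sum_k\hm^X(\Delta_{Y_k})\lesssim r^{n-1}\sum_k G(X,Y_k)\lesssim r^{n-1}G(X,Y)$, using Harnack to compare all the $G(X,Y_k)$ to a common $G(X,Y)$ at a corkscrew $Y$ for $B(x,r/M)$. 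Combined with your correct lower bound $\hm^X(\Delta)\gtrsim r^{n-1}G(X,Y)$ (via \eqref{eqn:right-CFMS} at $Y$, using $\Delta_Y\subset\Delta$), this yields the doubling. In short: small poles on both sides, and many of them on the upper-bound side; this is precisely how the paper proceeds.
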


\begin{proof} The lemma will be an easy corollary of Lemma \ref{l2.10}, and the proof 
is the standard one.  Let 
$\kappa_0$ be the fixed constant in Lemma \ref{localHardy} and Lemma \ref{l2.10}, and let
$M$ be a sufficiently large number to be fixed momentarily. Given a ball $B=B(x,r)$,
let $X\in\Omega\setminus 4B$, let
$Y$ be a corkscrew point relative to the ball $B(x,r/M)$, and cover $2\Delta$ by a collection
of surface balls $\mathcal{C}=\{\Delta_k\}_k$, such that $\Delta_k=B_k\cap\pom$, and
each $B_k$ is centered on $\pom$,
and has radius $r_{B_k}= r/(M^2\kappa_0)$. By the ADR property of $\pom$,
we may do this in such a way that $\mathcal{C}$ has 
cardinality $\#(\mathcal{C})\approx (M^2\kappa_0)^n$.  
For each $k$, let $Y_k$ be a corkscrew point relative to $MB_k$.
Choosing $M$ large enough, depending only on the constants in the Corkscrew condition,
we have $\Delta_Y\subset \Delta$, and also 
$\Delta_k \subset \Delta_{Y_k}$, and furthermore, 
$X\in \Omega \setminus B(Y_k, 50\kappa_0\delta(Y_k))$.
Note that, since $\kappa_0$ and $M$ have been fixed, we have
\[
\delta(Y)\approx\delta(Y_k) \approx r\,, \quad \forall\, k\,,
\]
and thus also, by ADR, 
\[
\sigma(2\Delta)\approx \sigma(\Delta)
\approx \sigma(\Delta_{Y_k})\approx \sigma(\Delta_Y) \approx r^n\,.
\]
By Lemma \ref{l2.10}, and then Harnack's inequality, we then have
\[
\frac{\hm^X( \Delta_{Y_k})}{r^n}\,
\lesssim\, \frac{G(X,Y_k)}{r}\,\lesssim\,\frac{G(X,Y)}{r} \,.
\]
Consequently, since $2\Delta$ is covered by the collection $\mathcal{C}$, we see that
\[
\frac{\hm^X( 2\Delta)}{r^n}\,\leq\, \sum_k \,\frac{\hm^X( \Delta_{Y_k})}{r^n}\,
\lesssim\,  \sum_k  \frac{G(X,Y)}{r}\, \lesssim \,\frac{\hm^X( \Delta_Y)}{r^n}\,
\leq \, \frac{\hm^X( \Delta)}{r^n}\,,
\]
where in the last two steps we have used \eqref{eqn:right-CFMS} and the fact 
that the cardinality of
$\mathcal{C}$ is bounded by a universal constant, and that $\Delta_Y\subset \Delta$.
\end{proof}

Finally, we record two results that hold for the homogeneous second order operator 
$L_0=-\dv A\nabla$.
These results are well known.  We refer the reader to, e.g., \cite[pp. 10-11]{K}: the proofs given there carry over routinely to our setting, and we omit the details. 

\begin{lemma}\label{lemmaBHP}(Comparison Principle/Boundary Harnack Principle)
Let $\Omega$ be a 1-sided CAD, let
$\Delta=B\cap\pom$ be a surface ball centered on $\pom$, and let $T_\Delta$ be the associated Carleson tent, as in Proposition \ref{sawtoothprop} (8).  Suppose that $u,v$ are two non-negative $W^{1,2}$ solutions of the equation $L_0 u=L_0v=0$ in the tent $T_{2\Delta}$,
vanishing in the trace sense on $\partial T_{2\Delta}\cap\pom$ 
(and hence continuously on $\partial T_{\Delta}\cap\pom$, by Lemma \ref{proppde} and properties of the tents).   Let $X_B$ be a corkscrew point relative to the ball $B$.  Then
\[
\sup_{X\in T_\Delta}\, \frac{u(X)}{v(X)} \,\lesssim \,\frac{u(X_B)}{v(X_B)}\,,
\]
where the implicit constant depends only on ellipticity, dimension, 
and the 1-side CAD constants.
\end{lemma}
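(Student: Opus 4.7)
The plan is to use the classical Caffarelli--Fabes--Mortola--Salsa boundary Harnack argument, which in the present setting reduces, thanks to the mutual vanishing of $u$ and $v$ on $\partial T_{2\Delta}\cap\partial\Omega$, to a direct application of the weak maximum principle on the sub-tent $T_\Delta$. First, by the strong maximum principle applied to the non-negative solution $v$, either $v\equiv 0$ in $T_{2\Delta}$ (in which case the statement is vacuous) or $v>0$ throughout $T_{2\Delta}$. Normalize by rescaling $v$ so that $u(X_B)=v(X_B)$; it then suffices to establish $u(X)\leq C_0\, v(X)$ for all $X\in T_\Delta$, with $C_0$ depending only on the allowable parameters.

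The main geometric step is to prove $u\leq C_0\, v$ on the ``ceiling'' $\mathcal{E}:=\partial T_\Delta\cap\Omega$. By the Carleson tent construction recorded in Proposition \ref{sawtoothprop}, every $Y\in\mathcal{E}$ satisfies $\delta(Y)\approx r_B\approx\delta(X_B)$ and $|Y-X_B|\lesssim r_B$, so the Harnack chain condition for the 1-sided CAD $\Omega$ produces a chain of balls of cardinality controlled by the allowable parameters, joining $X_B$ to $Y$ within $T_{2\Delta}$ (this is precisely the role of the enlargement from $T_\Delta$ to $T_{2\Delta}$ in the hypothesis). Iterating Harnack's inequality (Lemma \ref{Moser}) along the chain gives $u(Y)\approx u(X_B)$ and $v(Y)\approx v(X_B)$, and hence, after the normalization, $u(Y)\leq C_0\,v(Y)$ on $\mathcal{E}$.

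To conclude, set $f:=u-C_0\,v$, which is a weak solution of $L_0 f=0$ in $T_{2\Delta}$. By the hypothesis, both $u$ and $v$ vanish on $\partial T_{2\Delta}\cap\partial\Omega$ in the trace sense, and continuously on $\partial T_\Delta\cap\partial\Omega$ by Lemma \ref{proppde}; on the ceiling $\mathcal{E}$, the previous step combined with interior H\"older continuity (Lemma \ref{Moser}) gives $f\leq 0$ pointwise and continuously. These two facts together yield $f^+\in W^{1,2}_0(T_\Delta)$, and the weak maximum principle for the pure second-order operator $L_0$ in the bounded 1-sided CAD $T_\Delta$ (Proposition \ref{sawtoothprop}(8))---proved by testing $L_0 f=0$ against $f^+$ and invoking ellipticity---then gives $f\leq 0$ in $T_\Delta$. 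Undoing the normalization yields $u(X)/v(X)\leq C_0\, u(X_B)/v(X_B)$ for $X\in T_\Delta$, which is the claim.

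The main technical obstacle is the verification that $f^+\in W^{1,2}_0(T_\Delta)$: this requires combining the $W^{1,2}$ trace-vanishing on $\partial T_\Delta\cap\partial\Omega$ with the pointwise bound on $\mathcal{E}$ and continuity up to $\partial T_\Delta$, via standard characterizations of $W^{1,2}_0$. A secondary, purely geometric, point to check is that the Harnack chain joining $X_B$ to a point of $\mathcal{E}$ can indeed be arranged to lie inside $T_{2\Delta}$ (so that Harnack applies to $u,v$ along the whole chain); this is a routine consequence of the nesting and quantitative properties of tents in Proposition \ref{sawtoothprop}.
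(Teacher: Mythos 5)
Your proof has a fatal geometric error at the step where you claim that every $Y\in\mathcal{E}:=\partial T_\Delta\cap\Omega$ satisfies $\delta(Y)\approx r_B$. This is false: the Carleson tent $T_\Delta$ is a sawtooth-type subdomain of $\Omega$ whose boundary $\partial T_\Delta$ is required to be ADR (Proposition \ref{sawtoothprop} asserts that $T_\Delta$ is a 1-sided CAD), and the portion $\partial T_\Delta\cap\Omega$ must ``descend'' continuously to meet $\partial T_\Delta\cap\partial\Omega$. Concretely, $\partial T_\Delta\cap\Omega$ is built from faces of (fattened) Whitney cubes $U_{Q'}$ of all sizes $\ell(Q')\lesssim r_B$, including arbitrarily small ones near the edge of $\Delta$, so $\delta(Y)$ ranges over an interval like $(0, Cr_B)$, not over $\approx r_B$. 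Even in the trivial case $\Omega=\mathbb{R}^{n+1}_+$ with $T_\Delta$ a cylinder $\Delta\times(0,r)$, the lateral boundary $\partial\Delta\times(0,r)$ reaches down to $\delta=0$. Consequently the Harnack-chain argument yields no uniform constant on $\mathcal{E}$: for $Y$ with $\delta(Y)\ll r_B$, a chain to $X_B$ has length $\approx\log(r_B/\delta(Y))$ and the iterated Harnack constant blows up. The subsequent maximum-principle step then has nothing to work with, since $f^+$ does not vanish (and cannot be shown to be nonpositive) on the full inner boundary of $T_\Delta$.

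The point you are missing is precisely why the genuine CFMS/Kenig boundary Harnack argument (which the paper references at \cite[pp.~10--11]{K}) does not reduce to Harnack plus the maximum principle. That argument instead compares $u$ from above and $v$ from below to the Green function (equivalently, to elliptic measure) of the tent: roughly, $u(X)\lesssim u(X_B)\,\hm_{T_{2\Delta}}^{X}(\cdot)$ via the Carleson estimate \eqref{carleson} and the maximum principle, while $v(X)\gtrsim v(X_B)\,\hm_{T_{2\Delta}}^{X}(\cdot)$ via the lower CFMS bound \eqref{eqn:left-CFMS} and ampleness (Lemma \ref{Bourgainhm}). It is this Green-function/elliptic-measure mediation that supplies the correct behavior as $X$ approaches the sawtooth part of $\partial T_\Delta$, where your direct Harnack comparison fails.
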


\begin{lemma}\label{lemmapolechange}(Pole Change formula)
Let $\Omega$ be a 1-sided CAD, let $B$ be a ball centered on $\pom$,  
and let $X_B$ be a corkscrew point relative to
$B$.  Let $\hm_0$ denote elliptic measure for $L_0$.
Then for every Borel set 
$E\subset \Delta:=B\cap\pom$, we have
\[
 \frac{\omega_0^X(E)}{\omega_0^X(\Delta)}\,\approx\,\omega_0^{X_B}(E)\,,
 \qquad \forall X\in \Omega\setminus 2B\,.
\]
\end{lemma}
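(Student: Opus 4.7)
The plan combines the Bourgain ampleness estimate for $L_0$ (the special case of Lemma \ref{Bourgainhm} with $\Bb\equiv 0$, in which \eqref{smallptwise} holds trivially) with the Boundary Harnack Principle (Lemma \ref{lemmaBHP}) and the Harnack chain condition, following the standard template in \cite[pp.\,10--11]{K}. Since $X_B$ is a corkscrew point relative to $B$, Bourgain together with \eqref{eqHarnack} gives $\omega_0^{X_B}(\Delta)\approx 1$, so the stated conclusion is equivalent to
\[
\frac{\omega_0^X(E)}{\omega_0^X(\Delta)}\,\approx\,\frac{\omega_0^{X_B}(E)}{\omega_0^{X_B}(\Delta)}\,,\qquad X\in\Omega\setminus 2B\,.
\]
Writing $u(Y):=\omega_0^Y(E)$ and $v(Y):=\omega_0^Y(\Delta)$, we see that $u,v$ are non-negative $L_0$-harmonic in $\Omega$ and both vanish in the trace sense on $\pom\setminus\Delta$, hence continuously on its relative interior by Lemma \ref{proppde}.

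The heart of the argument is the comparison $u(X)/v(X)\approx u(X_B)/v(X_B)$ for every $X\in\Omega\setminus 2B$. I would prove it by applying Lemma \ref{lemmaBHP} to the pair $(u,v)$ in Carleson tents $T_{\Delta'}$ whose enlarged base $2\kappa\Delta'$ is disjoint from $\Delta$, where $\kappa$ is the containment constant from Proposition \ref{sawtoothprop}(8); for such a tent both $u$ and $v$ vanish continuously on $\partial T_{2\Delta'}\cap\pom$, so Lemma \ref{lemmaBHP}, invoked in both directions by swapping the roles of $u$ and $v$, yields
\[
\sup_{T_{\Delta'}}\frac{u}{v}\,\lesssim\,\frac{u(X_{B'})}{v(X_{B'})}\,\lesssim\,\inf_{T_{\Delta'}}\frac{u}{v}\,,
\]
where $X_{B'}$ is a corkscrew for $B'$. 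An arbitrary $X\in\Omega\setminus 2B$ is then linked to $X_B$ by chaining such tent comparisons, concluding with a single bounded Harnack chain across $(3B\setminus 2B)\cap\Omega$ to reach $X_B$; over the final chain, \eqref{eqHarnack} applied separately to $u$ and $v$ controls the ratio by allowable-parameter constants.

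The main obstacle is the geometric construction of this tent chain in the 1-sided CAD setting. In the Lipschitz setting of \cite{K}, vertical chains aligned with the defining graph do the job, and that tool is unavailable here. Its substitute is the dyadic cube structure on $\pom$ (Lemma \ref{lemmaCh}) together with the Whitney/tent structure of $\Omega$ (Proposition \ref{sawtoothprop}), which by the ADR property of $\pom$ provide enough ``room'' in $\pom\setminus\Delta$ to place intermediate surface balls $\Delta'$ at the required scales, and guarantee that the total number of tent-BHP and Harnack-chain links is controlled by the allowable parameters alone. Once the chain is in hand, iteration of the above local estimates delivers the desired comparison.
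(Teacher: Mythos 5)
The approach you take is genuinely different from the paper's. The paper's one-line proof applies CFMS (Lemma \ref{l2.10}) to trade elliptic measures of surface balls for values of the Green function, and then applies the boundary Harnack principle (Lemma \ref{lemmaBHP}) to $Y\mapsto G_{L_0}(X,Y)$ and $Y\mapsto G_{L_0}(X_B,Y)$, both viewed as adjoint solutions in a small tent $T_{c\Delta}$. The crucial feature of that route is that the pole $X$ is held fixed and all chaining happens in the $Y$-variable, inside the single bounded region $T_{c\Delta}$, so the total number of BHP/Harnack links is controlled by allowable parameters \emph{independently of where $X$ sits in $\Omega\setminus 2B$}. Your route replaces the Green functions by the harmonic measures themselves, applying BHP to $u(Y)=\omega_0^Y(E)$ and $v(Y)=\omega_0^Y(\Delta)$ and propagating the ratio $u/v$ by moving the pole.

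That is where the gap is. You assert that ``the total number of tent-BHP and Harnack-chain links is controlled by the allowable parameters alone,'' but this is false for your chain, because the pole has to traverse the distance from $X$ to $X_B$. Each individual tent-BHP step and each Harnack step produces an $\approx$ with a constant depending only on allowable parameters, but the number of steps is not bounded: if $X$ sits at distance $d$ from $\Delta$, the tents that are admissible (those with $2\kappa\Delta'$ disjoint from $\Delta$) near $X$ have scale at most $\approx d/\kappa$, while the ``target'' point $Z\in(3B\setminus 2B)\cap\Omega$ at which you will finally Harnack over to $X_B$ lives at scale $\approx r_B$. Stepping down (or across) from scale $d$ to scale $r_B$ through overlapping tents takes on the order of $\log(d/r_B)$ steps, and $d/r_B$ is unbounded over $X\in\Omega\setminus 2B$. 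Consequently the multiplicative constants accumulate to something like $C^{\log(d/r_B)}$, which is not an allowable constant. This is not an artifact of the $1$-sided CAD setting; even in the Lipschitz/half-space model the same objection applies if one chains the pole rather than the interior variable.

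The missing ingredient is a mechanism that reduces an arbitrary pole $X\in\Omega\setminus 2B$ to poles on a fixed compact set near $B$ \emph{before} chaining. One standard way is a maximum principle (or domain decomposition/representing measure) argument in $\Omega\setminus\overline{2B}$: show the two-sided bound $c_1\,v(Z)\le u(Z)\le c_2\,v(Z)$ for $Z\in\partial(2B)\cap\Omega$ via your tent-BHP plus a \emph{bounded} Harnack chain (which is fine, since $\partial(2B)\cap\Omega$ sits at scale $\approx r_B$ at distance $\approx r_B$ from $\Delta$), observe that $u-c_1 v\ge 0$ and $c_2 v-u\ge 0$ on the full boundary of $\Omega\setminus\overline{2B}$ (using that $u,v\to 0$ on $\pom\setminus 2\overline{\Delta}$), and then invoke the maximum principle to propagate the bound to all $X\in\Omega\setminus 2B$. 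Alternatively, simply follow the paper's Green function route. As written, your proposal is incomplete at precisely the step you flagged as the ``main obstacle,'' and the obstacle is not merely geometric bookkeeping: the chain is not bounded, so the constants do not close.
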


Lemma \ref{lemmapolechange} follows from 
Lemma \ref{l2.10}, along with
Lemma \ref{lemmaBHP} applied to the Green function $G_{L_0}(X,Y)$ in the adjoint variable $Y$. 
We remark that we do not know whether Lemma
\ref{lemmapolechange}  holds for $L$ (as opposed to $L_0$), even in the small constant case.  The obstacle is that we do not know whether Lemma
\ref{lemmaBHP} holds for non-negative solutions of the adjoint equation $L^*u=0$ (in particular, for the Green function for $L$, in the adjoint variable).

\section{Perturbing $L_0$ in a small constant case}\label{Ssmall2}

In this section, we prove an auxiliary result in a suitable ``small constant" case,
analogous to \cite[Theorem 2.5]{FKP}, in which we 
show that elliptic measure for $L$ belongs to $A_\infty$ with respect to 
the elliptic measure for $L_0$.
Before stating the precise result, let us record some notation and a couple of definitions.

In this section, we assume that $\Omega$ is a {\em bounded} 1-sided CAD, with
$\diam(\Omega)=: r_0$.  We let $X_0\in\Omega$ be a fixed corkscrew point with respect to
the entire boundary, which we can view as $\Delta_{2r_0}=\pom$, where $\Delta_{2r_0}$ is any surface ball on $\pom$ of radius $2r_0$, and arbitrary center on $\pom$.  Thus
\[
\delta(X_0) \approx r_0\,.
\]
Set $L_0:= -\dv A\nabla$, $L:= L_0 + \Bb\cdot\nabla$,
let $G_0$ and $G$ denote, respectively, the Green functions for 
$L_0$ and $L$ in $\Omega$, and let $\hm^X_0$ and $\hm^X$ denote the corresponding
elliptic measures, at any point $X\in \Omega$.  In the special case that
$X=X_0$, we shall often simply write $\hm_0:=\hm_0^{X_0}$, and $\hm:=\hm^{X_0}$.
We ensure the existence of $G$ and $\hm$, by first working with truncations of 
$\Bb$, as in Subsection \ref{approxhm}.  After establishing appropriate quantitative
estimates for elliptic measure that hold uniformly for all the truncations, we may then pass to the limit.  We shall return to the latter point shortly, in more detail (see the beginning of the proof of
Theorem \ref{Tauxsmall}).

\begin{definition}\label{epsmall}{\bf ($\eps$-smallness)} We say that 
$\Bb$ is ``$\eps$-small with respect to $\hm_0$" if
\begin{equation}\label{eqepsmall1}
|\Bb(Y)| \leq \frac{\eps}{\delta(Y)}\,,\quad \text{a.e. } Y\in\Omega\,,
\end{equation}
and if for every $\Delta=B\cap\pom$, 
 with $B=B(x,r)$, $x\in\pom$, $0<r<r_0$,
 \begin{equation}\label{eqepsmall2}
 \iint_{B\cap\Omega} |\Bb(Y)|^2\, G_0(X_0,Y) \,dY\, \leq\, \eps^2 \hm_0(\Delta)
 \end{equation}
 \end{definition}
 
Recall that, associated to any point 
$X\in\Omega$, we use the notation
\[
 \Delta_X:=B\big(X,10\delta(X)\big)\cap\pom\,,
 \] 
 which essentially defines a 
surface ball on $\pom$.
 
 \begin{definition}\label{defample} {\bf ($(\ttm,c_0,\theta)$-ampleness)}
 Let $\ttm$ be a doubling Borel measure on $\pom$, 
 and let $c_0,\theta\in (0,1)$.  We say that an
 elliptic measure $\hm$ has the $(\ttm,c_0,\theta)$-ampleness property if for all $X\in \Omega$, and for any Borel set $F\subset \Delta_X$,
 \begin{equation}\label{eqample}
 \ttm(F) \geq (1-\theta)\,\ttm(\Delta_X) \,\implies\, \hm^X(F) \geq c_0\,.
 \end{equation}
 \end{definition}
 
 We shall require the following lemma, which is a routine extension of \cite[Lemma 2.2]{BL}.
 For the purposes of Lemma \ref{BLlemma2}, we suppose that
 $L$ is an elliptic operator (possibly with a drift term)
 defined on $\Omega$, for which the continuous Dirichlet problem (Definition \ref{Dc})
is solvable, the weak maximum principle holds,
 and for which positive solutions satisfy Harnack's inequality.
 
 \begin{lemma}\label{BLlemma2} Suppose that $\Omega$ is an open set with an ADR boundary.    Let $L$ be an elliptic operator as in the preceeding paragraph, and let
  $\hm$ denote the elliptic measure associated to $L$. 
Let $\ttm$ be a doubling Borel measure on $\pom$, with doubling constant $N_{db}$, 
and suppose that
 $\hm$ has the $(\ttm,c_0,\theta)$-ampleness property for some constants $c_0,\theta\in(0,1)$.
 Then, given $s\in (0,1)$, there is a constant $\eta=\eta(\theta,s,N_{db})$ such that
 for any surface ball $\Delta=B\cap\pom$ with radius less than $\frac12 \diam(\pom)$, and for any Borel set $F\subset \Delta$, we have the implication
 \begin{equation}\label{eqBL}
 \ttm(F)\geq (1-\eta)\, \ttm(\Delta) \,\implies\, \hm^Z\big(\frac12 \Delta\big)
 \leq C_{\!s} \hm^Z(F) + s \hm^Z(\Delta)\,,\quad \forall\, Z\in \Omega\setminus 2B\,.
 \end{equation}
  \end{lemma}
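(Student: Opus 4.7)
The plan is to reduce the inequality to a bound on $\hm^Z(\tfrac12\Delta \cap E)$, where $E := \Delta \setminus F$. Since
\[
\hm^Z(\tfrac12\Delta) \,\leq\, \hm^Z(F) + \hm^Z(\tfrac12\Delta\cap E),
\]
it suffices to establish $\hm^Z(\tfrac12\Delta\cap E) \leq s\,\hm^Z(\Delta) + (C_s-1)\,\hm^Z(F)$ for $Z\in\Omega\setminus 2B$, under $\ttm(E)\leq\eta\,\ttm(\Delta)$. The overall strategy is a Calder\'on--Zygmund stopping-time decomposition of $E$ in $\Delta$, combined with the ampleness property applied at interior corkscrew points of the stopping cubes, and a Harnack-chain transfer from the interior back to the exterior pole $Z$.

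Fix a small threshold $\theta_0=\theta_0(\theta,N_{db})\ll\theta$, chosen so that absorbing finitely many doubling factors keeps relevant densities below $\theta$. Let $\{Q_j\}$ be the maximal dyadic subcubes of $\Delta$ (in the grid of Lemma~\ref{lemmaCh}) with $\ttm(E\cap Q_j)>\theta_0\,\ttm(Q_j)$; by maximality, the parent $\widetilde Q_j$ of each $Q_j$ satisfies $\ttm(F\cap\widetilde Q_j)\geq(1-\theta_0)\,\ttm(\widetilde Q_j)$, and
\[
\sum_j\ttm(Q_j)\,\leq\,\frac{N_{db}}{\theta_0}\,\ttm(E)\,\leq\,\frac{N_{db}\,\eta}{\theta_0}\,\ttm(\Delta).
\]
By Lebesgue differentiation in the doubling space $(\pom,|\cdot|,\ttm)$, $E\cap\tfrac12\Delta$ is covered, modulo a $\ttm$-null set, by the $Q_j$'s that meet $\tfrac12\Delta$. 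For each such $Q_j$, I would pick a corkscrew point $X_j$ at a sufficiently small sub-scale of $\widetilde Q_j$ so that $\Delta_{X_j}\subset\widetilde Q_j$, and, by the doubling of $\ttm$, $\ttm(F\cap\Delta_{X_j})/\ttm(\Delta_{X_j})\geq 1-\theta$. The ampleness hypothesis then gives $\hm^{X_j}(F\cap\Delta_{X_j})\geq c_0$.

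The hard part is converting these pointwise interior bounds into a measure inequality for $\hm^Z$ at an \emph{external} pole $Z\in\Omega\setminus 2B$. Since the boundary Harnack principle and the pole-change formula (Lemmas~\ref{lemmaBHP} and~\ref{lemmapolechange}) are available only for $L_0$, the argument must use only what is at hand: interior Harnack for positive solutions, the weak maximum principle, and the Bourgain-type nondegeneracy $\hm^W(\Delta_W)\gtrsim 1$ which is itself an instance of ampleness (take $F=\Delta_W$). The plan is to use these to establish a \emph{one-step inequality at each scale} of the form
\[
\hm^Z(Q_j)\,\leq\,(1-c_1)\,\hm^Z(\widetilde Q_j)\,+\,C\,\hm^Z(F\cap\widetilde Q_j),
\]
for uniform $c_1\in(0,1)$ and $C$, by running a Harnack chain of length comparable to $\log(\diam\Omega/\ell(\widetilde Q_j))$ from $X_j$ to a nearby interior point, then propagating to $Z$ by comparing the two positive solutions $W\mapsto\hm^W(F\cap\widetilde Q_j)$ and $W\mapsto\hm^W(\widetilde Q_j)$ using the nondegeneracy. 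Summing over $j$, using bounded overlap of the parent cubes, and iterating the construction $N=N(s,c_1)$ times so that $(1-c_1)^N\leq s$, with each iteration absorbing a factor $c_1^{-1}$ into the final constant $C_s$, then yields the target inequality. The main obstacle is precisely this pole-change substitute: it is the place where the ``routine extension'' of Bennewitz--Lewis becomes technically delicate, since one cannot appeal to the usual comparison principle for $L$.
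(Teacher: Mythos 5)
Your outline correctly captures the first two ingredients of the Bennewitz--Lewis argument that the paper cites without reproducing: the reduction to bounding $\hm^Z(\tfrac12\Delta\cap E)$ with $E=\Delta\setminus F$, the Calder\'on--Zygmund stopping-time decomposition of $E$ inside $\Delta$ at a density threshold tuned so that the ampleness hypothesis can be invoked for the parents $\widetilde Q_j$, and the observation that ampleness at an interior point $X_j$ with $\Delta_{X_j}\subset\widetilde Q_j$ yields $\hm^{X_j}(F\cap\Delta_{X_j})\geq c_0$.

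However, the transfer from the interior pole $X_j$ to the exterior pole $Z\in\Omega\setminus 2B$ -- which you yourself flag as ``the main obstacle'' -- is not actually carried out, and the mechanism you propose cannot work under the stated hypotheses. You suggest running a Harnack chain of length $\sim\log(\diam\Omega/\ell(\widetilde Q_j))$ from $X_j$ outward and then ``propagating to $Z$ by comparing the two positive solutions.'' The lemma is formulated for a general open set whose only geometric assumption is that $\pom$ is ADR; the Harnack chain (uniform domain) condition is deliberately \emph{not} assumed, so the chains you invoke need not exist. Moreover, even in a uniform domain, a Harnack chain of logarithmic length incurs a multiplicative constant of size $C^{\log(\diam\Omega/\ell(\widetilde Q_j))}$, which degenerates as $\ell(\widetilde Q_j)\to 0$; a uniform one-step inequality of the form $\hm^Z(Q_j)\leq(1-c_1)\hm^Z(\widetilde Q_j)+C\,\hm^Z(F\cap\widetilde Q_j)$ cannot be obtained this way. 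Finally, the ``comparison of the two positive solutions using the nondegeneracy'' is left entirely unspecified; without a boundary Harnack principle for $L$ (which, as you note, is unavailable), this step has no content.

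It is also worth saying that you have slightly misidentified where the difficulty lies. The original argument of Bennewitz and Lewis, even for the Laplacian, never invokes a comparison (boundary Harnack) principle and is designed precisely so that the exterior-pole transfer relies only on the weak maximum principle (applied in $\Omega$ minus small balls over the stopping cubes) together with interior Harnack and an iteration that produces the geometric decay encoded in the free parameter $s$. That maximum-principle iteration is the substance of the lemma, and it is exactly the part your sketch leaves unconstructed. Note also that even granting your one-step estimate, summing over $j$ gives $\hm^Z(\tfrac12\Delta\cap E)\lesssim(1-c_1)\hm^Z(\Delta)+C\hm^Z(F)$ with an implicit bounded-overlap constant that may exceed $1$, so the contraction needed to iterate is not secured. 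In short: the decomposition and the ampleness application are on target, but the heart of the proof -- the maximum-principle transfer across scales to the exterior pole -- is missing, and the substitute you propose is incompatible with the hypotheses.
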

  
  We omit the proof of Lemma \ref{BLlemma2}, which originally appeared as \cite[Lemma 2.2]{BL}, in the special case that $L$ is the Laplacian, and $\ttm =\sigma$, the surface measure on
  $\pom$.  The proof carries over to the present setting essentially unchanged:  in fact, the argument in \cite{BL}
  requires only that $L$ and $\ttm$ satisfy the properties stated above.
  
  Our main result in the section is the following analogue of \cite[Theorem 2.5]{FKP}. 
  We use the notation and terminology discussed above, in the preamble to this section.

 \begin{theorem}\label{Tauxsmall} Let $\Omega$ be a bounded 1-sided CAD, with
 $\diam(\Omega) =r_0<\infty$.  Then there is a constant $\eps_0>0$, depending on dimension, ellipticity of the coefficient matrix $A$, and the 1-sided CAD constants, such that if
 $\Bb$ is $\eps$-small with respect to $\hm_0:=\hm_0^{X_0}$, with $\eps\leq \eps_0$,
 then $\hm\in A_\infty(\hm_0)$ in the sense of Definition \ref{deflocalAinfty} with $\ttm=\hm_0$,
 and all constants implicit in the $A_\infty$ condition
 depend only on the same allowable parameters.  In particular,
 $\hm^{X_0}\in A_\infty(\hm_0,\pom)$.
 \end{theorem}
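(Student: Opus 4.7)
The plan is to establish that $\hm$ enjoys the $(\hm_0, c_0, \theta)$-ampleness property of Definition \ref{defample} for some small $\theta$ and positive $c_0$ depending on allowable parameters, and then invoke Lemma \ref{BLlemma2} together with a standard self-improvement iteration to upgrade this to the weak-$A_\infty$ estimate of Definition \ref{deflocalAinfty}. Doubling of $\hm$, required to promote weak-$A_\infty$ to $A_\infty$, is furnished by Lemma \ref{lemmadouble}, since the pointwise smallness \eqref{eqepsmall1} implies \eqref{smallptwise} for $\eps\leq\eps_0$. By the approximation machinery of Subsection \ref{approxhm}, it suffices to derive the quantitative estimates uniformly for the truncations $L_k=L_0+\Bb_k\cdot\nabla$, with $\Bb_k:=\Bb\,1_{\Omega_k}$ qualitatively bounded and inheriting $\eps$-smallness uniformly in $k$; the resulting estimates then transfer to $\hm$ via the subsequential weak convergence $\hm_{k_j}\to\hm$ of Lemma \ref{hmklimit}.

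To establish ampleness, fix $X\in\Omega$ and $F\subset\Delta_X$ with $\hm_0^X(F)\geq(1-\theta)\hm_0^X(\Delta_X)$, and set $E:=\Delta_X\setminus F$. Let $u$ and $u_0$ be the continuous solutions (guaranteed in the qualitatively bounded setting by Subsections \ref{approxhm} and \ref{schm}) of $Lu=0$ and $L_0u_0=0$ with boundary data $\chi_E$. Since $L_0(u-u_0)=-\Bb\cdot\nabla u$ in the weak sense, with $(u-u_0)|_{\pom}=0$, the Green function representation for $L_0$ yields
\[
u(X)-u_0(X)\;=\;-\iint_\Omega G_0(X,Y)\,\Bb(Y)\cdot\nabla u(Y)\, dY,
\]
and Cauchy--Schwarz bounds the right-hand side by $J_1^{1/2}J_2^{1/2}$, where
\[
J_1:=\iint_\Omega |\Bb(Y)|^2\, G_0(X,Y)\, dY,\qquad J_2:=\iint_\Omega |\nabla u(Y)|^2\, G_0(X,Y)\, dY.
\]
The integral $J_1$ is controlled by $C\eps^2\,\hm_0^X(\Delta_X)$ by reducing to \eqref{eqepsmall2} after a change of pole from $X_0$ to $X$ using the two-sided CFMS estimate of Lemma \ref{l2.10} and the pole-change formula Lemma \ref{lemmapolechange} for $L_0$. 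For $J_2$, the identity $L_0(u^2)=-2A\nabla u\cdot\nabla u-2u\,\Bb\cdot\nabla u$ (immediate from $L_0u=-\Bb\cdot\nabla u$), combined with the Green function representation applied to $u^2$, gives
\[
2\iint_\Omega G_0(X,Y)\,A\nabla u\cdot\nabla u\, dY\;=\;\hm_0^X(E)\,-\,u(X)^2\,-\,2\iint_\Omega G_0\,u\,\Bb\cdot\nabla u\, dY,
\]
in which the last term is absorbed into the left-hand side by Cauchy--Schwarz, $\eps$-smallness, and $\|u\|_\infty\leq 1$ (weak maximum principle), leaving $J_2\lesssim\hm_0^X(\Delta_X)$. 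Combining, $|\hm^X(E)-\hm_0^X(E)|\lesssim\eps\,\hm_0^X(\Delta_X)$. Since $\hm_0^X(E)\leq\theta\,\hm_0^X(\Delta_X)$ and Lemma \ref{Bourgainhm} gives $\hm^X(\Delta_X)\geq c_*>0$, choosing $\theta$ and $\eps$ small enough yields $\hm^X(F)\geq c_*/2=:c_0$.

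With ampleness established, Lemma \ref{BLlemma2} produces the implication that $\hm_0^X(F)\geq(1-\eta)\hm_0^X(\Delta)$ entails $\hm^Z(\tfrac12\Delta)\leq C_s\,\hm^Z(F)+s\,\hm^Z(\Delta)$ for $Z\in\Omega\setminus 2B$, and a classical Calder\'on--Zygmund stopping-time iteration then yields the full weak-$A_\infty$ condition with constants depending only on allowable parameters, completing the proof in view of the doubling of $\hm$. The main technical obstacle is the pole change underlying the bound on $J_1$: \eqref{eqepsmall2} is given only at the distinguished corkscrew $X_0$, whereas ampleness is needed at an arbitrary $X\in\Omega$. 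This is handled via the CFMS equivalence $G_0(X,Y)/\delta(Y)\approx\hm_0^X(\Delta_Y)/\sigma(\Delta_Y)$ (both directions, Lemma \ref{l2.10}) together with the fact that the ratio $\hm_0^X(\Delta)/\hm_0(\Delta)$ is essentially constant as $\Delta$ ranges over surface balls in a fixed local region far from $X$, after which \eqref{eqepsmall2} transfers locally and is then summed over a boundedly overlapping cover of $\Omega$. A secondary routine point is justifying, in the truncated qualitatively bounded setting, the Green function identities for $u-u_0$ and for $u^2$ with $u$ merely in $W^{1,2}_{\mathrm{loc}}\cap L^\infty$: this is handled by testing against smooth cutoffs of $G_0(X,\cdot)$ and passing to the limit.
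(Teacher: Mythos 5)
Your approach is structurally different from the paper's: rather than proving an $L^2(\hm_0)$ bound on the Radon--Nikodym density $h^X=d\hm^X/d\hm_0$ by duality (the paper's route, via the $L$-Green function $G$ and $\nabla u_0$), you attempt a direct pointwise bound on $|\hm^X(E)-\hm_0^X(E)|$ using the $L_0$-Green function $G_0$ and $\nabla u$. The $J_2$ estimate via the identity $L_0(u^2)=-2A\nabla u\cdot\nabla u-2u\,\Bb\cdot\nabla u$ and self-absorption is fine, and the endgame (ampleness, Lemma \ref{BLlemma2}, doubling) agrees with the paper. However, there is a genuine gap in the claimed bound $J_1=\iint_\Omega|\Bb|^2\,G_0(X,\cdot)\lesssim\eps^2\,\hm_0^X(\Delta_X)$.

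The proposed ``pole change'' route via the two-sided CFMS estimate fails in the regime of points $Y$ lying in the cone above $X$ at scales $\delta(Y)\gg\delta(X)$, because there $|X-Y|\approx\delta(Y)$, so only the \emph{upper} CFMS inequality \eqref{eqn:right-CFMS} is available (the lower bound \eqref{eqn:left-CFMS} requires $X\in\Omega\setminus B(Y,50\kappa_0\delta(Y))$). The upper bound alone gives $G_0(X,Y)\lesssim\hm_0^X(\Delta_Y)\,\delta(Y)^{1-n}$, and since $\Delta_Y\supset\Delta_X$ one has $\hm_0^X(\Delta_Y)\approx 1$, so this estimate loses a factor of roughly $\delta(Y)/\delta(X)$ compared to the true Green function. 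Concretely, in the half-space with $X_0=e_{n+1}$, $X=s\,e_{n+1}$, and $\Bb$ supported in the non-tangential cone $\Gamma(0)\cap\{\delta(Y)>s\}$ with $|\Bb|=\eps/\delta$, the Carleson condition \eqref{eqepsmall2} holds with constant $\approx\eps^2$, and one can check that $J_1\approx\eps^2$; but your CFMS-based estimate yields only
\[
J_1 \;\lesssim\; \eps^2 \int_s^{r_0}\frac{dt}{t}\;=\;\eps^2\log\bigl(r_0/\delta(X)\bigr)\,,
\]
which blows up as $X\to\pom$. So the $J_1$ bound you assert is likely \emph{true}, but the argument you sketch does not prove it, and the ``ratio is essentially constant'' heuristic breaks exactly where you need a gain of $\delta(X)/\delta(Y)$. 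This is precisely the difficulty the paper circumvents by working in $L^2$: the offending ratio $H^X(Y)=\hm^X(\Delta_Y)/\hm_0(\Delta_Y)$ is kept inside the integrand, Carleson embedding is applied in $L^2(\hm_0)$ to produce the quantity $\eps\,\|\mathcal M_{\hm_0} h^X\|_{L^2(\hm_0)}\lesssim\eps\,\|h^X\|_{L^2(\hm_0)}$, and the resulting term is then absorbed by the duality/bootstrap. Your linear (pointwise) formulation has no analogous absorption mechanism for $J_1$, so to salvage your route you would need a genuinely sharper estimate on $G_0(X,\cdot)$ in the borderline cone regime (e.g., via the comparison principle Lemma \ref{lemmaBHP} applied in dyadic annuli around the contact point $\hat x$, with the corkscrew-normalized ratio tracked carefully), rather than the naive CFMS upper bound.
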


 \begin{proof} We shall work with the operator $L_k$, with truncated drift coefficient
 $\Bb_k$, as in Subsection \ref{approxhm}. Note that the smallness conditions of
 Definition \ref{epsmall} hold uniformly in $k$.  Then elliptic measure exists, by
 Lemma \ref{hmexistssmall}. 
 Also, $\Bb_k$ is (qualitatively) bounded (depending on $k$), so the conclusions of 
 Lemmas \ref{lemma2.green}, \ref{l2.10}, and \ref{lemmadouble} are all valid, and
all the quantitative bounds (explicit and implicit) in those Lemmas hold uniformly in $k$.
Moreover, the hypotheses of Lemma \ref{hmklimit} also hold.
Note further that since $L_0=L_k$ near the boundary, $\hm_0$ and
 $\hm_k$ are (qualitatively) mutually absolutely continuous, hence
 $h_k^X:= d\hm_k^X/d\hm_0$ exists, for each $X\in\Omega$.  
 Recall that $\hm_0:= \hm_0^{X_0}$. We shall show that
 $h^X_k \in L^2(\hm_0)$, uniformly in $k$, with an $L^2$ bound 
 that in turn implies the claimed
 $A_\infty$ property.  We may then invoke Lemma \ref{hmklimit} to deduce that
 $h^X :=d\hm^X/d\hm_0$ exists, and belongs to $L^2(\hm_0)$, with the same bound
 that held uniformly for all $h_k^X$, whence the conclusion of Theorem \ref{Tauxsmall}
 follows.
 

For the rest of the proof, we therefore assume that $\Bb=\Bb_k$ for some $k\geq 1$, but we suppress the subscript $k$ to simplify the notation.  We may then invoke Lemmas \ref{lemma2.green}, \ref{l2.10}, and \ref{lemmadouble} as needed.
  Of course, all of the constants appearing in our estimates will be independent of the truncation, which allows passage to the limit.   
 
 By Lemma \ref{lemmadouble}, for $\Delta=B\cap \pom$, with
 $B$ centered on $\pom$, we have the doubling estimate
 \begin{equation}
 \hm^Z(\Delta) \,\leq \, N_1 \, \hm^Z\big(\frac12 \Delta\big)\,,\qquad \forall \,
 Z\in \Omega \setminus 2B\,,
 \end{equation}
 uniformly for all $\eps\leq \eps_0$, for a sufficiently small $\eps_0>0$.
 
 \noindent{\bf Claim}.
 We claim that for all $\eps\leq \eps_0$, with $\eps_0$ sufficiently small,
 \begin{equation}\label{eqclaim}
 \|h^X\|_{L^2(\pom,\hm_0)}\, \lesssim \, \hm_0(\Delta_X)^{-1/2} + \eps\,,
 \end{equation}
 for some uniform implicit constant depending only on $n$, the 1-sided CAD constants, and
 ellipticity of $A$.  
 
 Let us take the claim for granted momentarily.  Fix $X\in \Omega$, and let $E\subset\Delta_X$ be a Borel set satisfying
 \begin{equation}\label{eq4.11}
 \hm_0(E) \,\leq\, \theta \hm_0(\Delta_X)\,,
 \end{equation}
 where $\theta\in (0,1)$ will be chosen momentarily.
  Then for all $\eps\leq \eps_0$,
 \begin{multline*}
\hm^X(E) =\int_{E} h^x d\hm_0 \,\leq\, \hm_0(E)^{1/2}\,  \|h^X\|_{L^2(\pom,\hm_0)}
\, \lesssim \,\left(\frac{\hm_0(E)}{\hm_0(\Delta_X)}\right)^{1/2} +\,\hm_0(E)^{1/2}\eps
\\[4pt]
\lesssim \, \theta^{1/2} +\eps\, \lesssim \, \left(\theta^{1/2} +\eps_0\right) \hm^X(\Delta_X)\,,
\end{multline*}
by \eqref{eqclaim}, \eqref{eq4.11}, and then Lemma \ref{Bourgainhm}.  Choosing $\theta$ 
and $\eps_0$ sufficiently small, we obtain that $\hm^X(E) \leq \frac12 \hm^X(\Delta_X)$, 
whence it follows that for some uniform $c_0>0$,
\[
\hm_0(F) \geq (1-\theta)\,\hm_0(\Delta_X) \,\implies\, \hm^X(F) \geq \frac12 \hm^X(\Delta_X)\geq c_0\,,
\]
again by Lemma \ref{Bourgainhm}, provided that $\eps\leq \eps_0$; i.e.,
$\hm$ is $(\hm_0,c_0,\theta)$-ample.  Fixing this choice of $\theta$, and recalling that $N_1$ denotes the doubling constant for $\hm$,
we may therefore apply Lemma \ref{BLlemma2}, with
$\ttm=\hm_0$, and with $s=1/(2N_{1})$, to deduce that there is a uniform choice of
$\eta\in(0,1)$ such that for 
any surface ball $\Delta=B\cap\pom$ with radius less than $\frac12 \diam(\pom)$, 
for any Borel set $F\subset \Delta$, and for every $Z\in\Omega\setminus 2B$,
 \begin{multline*}
 \hm_0(F)\geq (1-\eta)\, \hm_0(\Delta) \,\implies\, 
 \frac1{N_1}\hm^Z(\Delta)\,\leq\, \hm^Z\big(\frac12 \Delta\big)
 \,\leq\, C_{\!s} \hm^Z(F) + s \hm^Z(\Delta)
 \\[4pt]
 =\, C_{\!s} \hm^Z(F) + \frac1{2N_1} \hm^Z(\Delta)\,.
 \end{multline*}
Thus, hiding the last term, we have
 \begin{equation}\label{eqAinfty}
 \hm_0(F)\geq (1-\eta)\, \hm_0(\Delta) \,\implies\, \hm^Z(F) \gtrsim \hm^Z(\Delta)
 \,,\quad \forall\, Z\in \Omega\setminus 2B\,,
\end{equation}
where the implicit constant is uniform. Since \eqref{eqAinfty} holds also 
with $\Delta$ replaced by $\Delta' =B'\cap\pom$, 
whenever $B'\subset B$, we see that $\hm^Z\in A_\infty(\Delta,\hm_0)$
for each $Z\in \Omega\setminus 2B$.  In particular, taking $Z=X_0$, we obtain the conclusion 
of Theorem \ref{Tauxsmall}, modulo \eqref{eqclaim}.

It therefore remains only to verify the claimed estimate \eqref{eqclaim}.
To this end, let $f\in L^2(\pom,\hm_0)$ be non-negative, 
with $\|f\|_{L^2(\pom,\hm_0)}\leq 1$, and set
\[
u_0(X)=u_{0,f}(X):= \int_{\pom} f d\hm_0^X\,,\quad u(X)
=u_{f}(X):= \int_{\pom} f d\hm^X=
\int_{\pom} f h^X d\hm_0\,.
\]
By Lemma \ref{lemmapolechange},
\[
\frac{d\hm_0^X}{d\hm_0} \approx \hm_0(\Delta_X)^{-1}\,,
\]
 and therefore, since $\hm_0^X$ is a probability measure,
 \begin{equation}\label{u0est}
 u_0(X) \leq \left(\int_{\pom} f^2 d\hm_0^X\right)^{1/2} \lesssim
 \|f\|_{L^2(\hm_0)}\, \hm_0(\Delta_X)^{-1/2} \lesssim \hm_0(\Delta_X)^{-1/2}\,.
 \end{equation}
 In particular, if $X=X_0$, we simply have
  \begin{equation}\label{u0est2}
   u_0(X_0) \leq \left(\int_{\pom} f^2 d\hm_0\right)^{1/2} \leq 1\,.
   \end{equation}
 Observe also that
 \[
  \|h^X\|_{L^2(\hm_0)} =\sup u_{f}(X)\,,
 \]
 where the supremum runs over all non-negative $f\in L^2(\hm_0)$ with
 $\|f\|_{L^2(\hm_0)}\leq 1$.
 Consequently, to prove \eqref{eqclaim}, it is enough to show that
  \begin{equation}\label{Fest}
  |u(X)-u_0(X)|\,\lesssim \, 
  u_0(X) + \eps  \|h^X\|_{L^2(\hm_0)} +\eps
   \end{equation}

Since $u_0-u = 0$ on $\pom$, we may follow the idea of \cite{FKP} to write, 
using \eqref{eq2.14},
 \begin{multline*}
F(X):=\, u_0(X)-u(X) \,= \iint_\Omega G(X,Y) \Bb(Y)\cdot\nabla u_0(Y)\, dY
\\[4pt]
=\, \iint_{|X-Y| < \delta(Y)/4} ... \,\, dY 
\,+\,
\iint_{\Omega  \cap \{|X-Y| \geq \delta(Y)/4\}} ... \,\, dY 
\,=: \,I + II\,,
 \end{multline*}
 With $X$ fixed, set $r=\delta(X)$.  Observe that in term $I$,
 $|X-Y| < \delta(Y)/4$ implies that $\delta(Y)/4 < \delta(X)/3$.  Thus, 
by \eqref{eqepsmall1} and 
the pointwise estimate for the Green function (in ambient dimension $\ree$),
 \begin{multline*}
|I| \, \lesssim \,\eps \,r^{-1} \iint_{|X-Y| < r/3} |X-Y|^{1-n}\,|\nabla u_0(Y)|\, dY
\\[4pt]
\,\lesssim \, \eps  
\sum_{k= 1}^{\infty} 2^{-k}\iint_{2^{-k-1} r\,\leq|X-Y| < 2^{-k} r} (2^{-k} r)^{-n}\,|\nabla u_0(Y)|\, dY
\,=:\, \eps   \sum_{k= 1}^{\infty} \,2^{-k} I_k\,.
 \end{multline*}
By the inequalities of Cauchy-Schwarz, Caccioppoli, and Harnack,
$I_k \lesssim u_0(X)$,
and summing the geometric series, we then find that 
\[
|I| \lesssim\, \eps \,u_0(X)\,.
\]

We consider now term $II$.  For notational convenience, let $\Omega^X$ denote the domain of integration in term $II$, i.e., 
$$
\Omega^X:=  \{Y\in\Omega: |X-Y| \geq \delta(Y)/4\}\,,
$$
and set
$$
\Omega_0:= \{Y\in\Omega: |X_0-Y| \geq \delta(X_0)/4\}\,,\quad
\Omega^{X,X_0}:=  \Omega^X\cap \Omega_0\,.
$$
We then make the further splitting
\[
II\,=\, \iint_{\Omega^{X,X_0} } ... \,\, dY
\,+\, \iint_{\Omega^X  \cap \{|X_0-Y| < \delta(X_0)/4\}} ... \,\, dY\,=:\, II_1 + II_2\,.
\]
Observe that $\delta(Y) \approx \delta(X_0)$ in term $II_2$, and thus also
$|X-Y| \gtrsim \delta(X_0)$.  Hence, by Lemma \ref{lemma2.green} 
and \eqref{eqepsmall1}, 
\begin{multline*}
|II_2|\, \lesssim\, \eps \delta(X_0)^{-n} \iint_{|X_0-Y| < \delta(X_0)/4}|\nabla u_0(Y)|\, dY\,
\\[4pt]
\lesssim \eps \left(\delta(X_0)^{-n-1}\iint_{|X_0-Y| < \delta(X_0)/2}| u_0(Y)|^2\, dY\right)^{1/2}\,
\lesssim\,\eps u_0(X_0)\lesssim \eps\,,
\end{multline*}
where in the last three steps we have used
the inequalities of Cauchy-Schwarz and Caccioppoli, then Harnack, and finally \eqref{u0est2}.

By Lemma \ref{l2.10}, setting $\mathcal{G}_0(Y):= G_0(X_0,Y)$, we see that
\begin{multline*}
| II_1|\,\lesssim \,
\iint_{\Omega^{X,X_0}  } 
\,\frac{\hm^X(\Delta_Y)}{\hm_0(\Delta_Y)}\,|\Bb(Y)| \,|\nabla u_0(Y)|\, \mathcal{G}_0(Y)\,dY
\\[4pt]
\lesssim\, \left(
\iint_{\Omega} 
H^X(Y)^2\,|\Bb(Y)|^2 \, \mathcal{G}_0(Y)\,dY\right)^{1/2} \left(
\iint_{\Omega_0} 
|\nabla u_0(Y)|^2\, \mathcal{G}_0(Y)\,dY\right)^{1/2}
\\=:\, {\tt A}  \cdot {\tt B}\,,
\end{multline*}
where $H^X(Y):= \frac{\hm^X(\Delta_Y)}{\hm_0(\Delta_Y)}$. 
Recalling Definition \ref{conentmaxdef}, we observe that for $z\in\pom$,
\[
N_*H^X(z) \approx \mathcal{M}_{\hm_0} h^X(z)\,.
\]
Then by \eqref{eqepsmall2} and Carleson's embedding 
lemma (with respect to the doubling measure
$\hm_0$),
\[
{\tt A}\, \lesssim \,\eps\, \|\mathcal{M}_{\hm_0} h^X\|_{L^2(\hm_0)}\,
\lesssim \, \eps\, \| h^X\|_{L^2(\hm_0)}\,.
\]
Moreover, since $u_0 = f$ on $\pom$, we have as in \cite{DJK} that
\[
{\tt B} \lesssim \|f\|_{L^2(\hm_0)}\leq 1\,.
\]
Thus, $|II_1| \lesssim \eps\, \| h^X\|_{L^2(\hm_0)}$, so adding together our estimates for
$I,\,II_1$ and $II_2$, we see that \eqref{Fest} holds.  In turn, as noted above, 
this proves the claim \eqref{eqclaim}, and therefore also Theorem \ref{Tauxsmall}.
 \end{proof}



\section{Two key lemmas}\label{section2key}  
In this section, we prove two lemmas that are central to our proof
of Theorem \ref{Tmain}.
The first is a stopping-time lemma 
that plays a key role in the proof of Theorem \ref{Tmain} in Section \ref{Sgeneral}, specifically, 
in our use of the method of
``extrapolation of Carleson measures", a bootstrapping technique to lift the Carleson measure constant,
developed by J. L. Lewis \cite{LM}, and based on
the corona construction of Carleson \cite{Car} and Carleson and Garnett \cite{CG}.
This technique was also used in \cite{HL}
(see in addition \cite{AHLT}, \cite{AHMTT}, \cite{HM-TAMS}, \cite{HM-I}).

We shall require some additional notation.  Throughout this section, $M$ is a fixed constant, eventually to be 
chosen depending only on allowable parameters.
Given $M\geq 2$, and a cube $Q\in\dd(\pom)$, we set
\[
\dd^M(Q):= \left\{Q': \frac1M\leq \frac{\ell(Q')}{\ell(Q)}\leq M\, \text{ and }
\dist(Q,Q') \leq M \diam(Q)\right\}\,,
\]
and define the $M$-fattened Whitney region
\begin{equation}\label{UQfat}
U_Q^M=\bigcup_{Q'\in \dd^M(Q)} U_{Q'}\,.
\end{equation}
Note that the fattened Whitney regions enjoy the bounded overlap property
\[
\sum_Q 1_{U_Q^M}(Y) \lesssim_M 1\,, \qquad Y\in \Omega\,.
\]
Recall that $\dd_Q:=\{Q'\in\dd(\pom): Q'\subset Q\}$.
For $x\in\pom$, define the ``dyadic cone with aperture $M$, truncated at height
$\ell(Q)$" by
\begin{equation}\label{dyadiccone}
\Upsilon^M_{Q}(x):= \bigcup_{Q'\in\dd_Q:\, x\in Q'} U_{Q'}^M\,.
\end{equation}

We further define a ``truncated dyadic-conical square function" by
\begin{equation}\label{dy-conesqfunction}
\mathcal{A}^M_{Q}({\bf F})^2(x)\,:=\, 
\sum_{Q'\in\dd_Q}1_{Q'}(x)\iint_{U_{Q'}^M} |{\bf F}(Y)|^2 \delta(Y)^{1-n}\,dY \,,
\end{equation}
and its ``shortened" version
\begin{equation}\label{dy-conesqfunctionshort}
\mathcal{A}^{M,short}_{Q}({\bf F})^2(x)\,:=\, 
\sum_{Q'\in\dd_Q\setminus \{Q\}}1_{Q'}(x)\iint_{U_{Q'}^M} |{\bf F}(Y)|^2 \delta(Y)^{1-n}\,dY \,,
\end{equation}
Set
\begin{equation}\label{dy-conesqfunction2}
\mathfrak{A}_Q^M({\bf F})^2(x)\,:=\, \iint_{\Upsilon^M_{Q}(x)} |{\bf F}(Y)|^2 \delta(Y)^{1-n}\,dY\,,
\end{equation}
and observe that by the bounded overlap property, 
\begin{equation}\label{compare_sqfunctions}
\mathcal{A}^M_{Q}({\bf F})(x) \approx_M \mathfrak{A}_Q^M({\bf F})(x) \,,\qquad x\in \pom\,,
\end{equation}
(of course, the direction $\mathfrak{A}_Q^M({\bf F})(x)\leq \mathcal{A}^M_{Q}({\bf F})(x)$ 
is trivial) where

Given a tree $\sbf$ with maximal element $Q(\sbf)=Q$, we also define the dyadic square function restricted to the tree by
\begin{equation}\label{dy-treesqfunction}
\mathcal{A}^M_{\sbf}({\bf F})^2(x)\,:=\, 
\sum_{Q'\in\sbf}1_{Q'}(x)\iint_{U_{Q'}^M} |{\bf F}(Y)|^2 \delta(Y)^{1-n}\,dY \,,
\end{equation}

Let $x_Q$ denote the ``center" of $Q$ as in \eqref{cube-ball}, and
set $B^M_Q:= B(x_Q, CM\diam(Q))$, with $C$ large enough that
$\cup_{Q'\in\dd_Q} U_{Q'}^M \subset B_Q^M$. Then
in the special case that ${\bf F} =\Bb_*$, we note for future reference that by definition,
\begin{multline}\label{eqAMCM}
\fint_Q \mathcal{A}^M_{Q}({\bf B_*})^2 d\sigma \,=\,
\fint_Q \sum_{Q'\in\dd_Q}1_{Q'}(x)\iint_{U_{Q'}^M} |\Bb_*(Y)|^2 \delta(Y)^{1-n}\,dY
\\[4pt]
\approx_M\,\frac1{\sigma(Q)} 
\sum_{Q'\in\dd_Q} \iint_{U_{Q'}^M} |\Bb_*(Y)|^2 \delta(Y)\,dY
\\[4pt]
\lesssim_M\,\frac1{\sigma(Q)} \iint_{B_Q^M\cap\Omega} |\Bb_*(Y)|^2 \delta(Y)\,dY\,
\leq \, CM_1=:M_2\,,
\end{multline}
uniformly for every $Q\in\dd(\pom)$,
where $M_1$ is the constant in \eqref{driftmax}, and $C$ depends on $M$ and the various underlying constants.

We turn now to the stopping time lemma.

\begin{lemma}\label{lemmastoptime} Fix $M\geq 2$. 
Let $Q\in \dd(\pom)$, and consider constants $a\geq 0,\, b>0$. Suppose that
\begin{equation} \label{aplusb}
\fint_{Q} \mathcal{A}^M_{Q}({\bf B_*})^2 \,d\sigma \,\leq\, (a+b)\,,
\end{equation}
then either 
\begin{equation} \label{sq-functionshortbound}
\fint_{Q} \mathcal{A}^{M,short}_{Q}({\bf B_*})^2 \,d\sigma \,\leq\, a\,,
\end{equation}
or, there is a tree $\sbf$ with maximal cube $Q(\sbf)=Q$ 
such that 
\begin{equation} \label{sq-functionsmall}
\mathcal{A}^M_{\sbf}({\bf B_*})^2(x)
\leq 2b,\qquad \sigma\text{ \em - a.e. } \, x\in Q\,,
\end{equation}
\begin{equation}
\label{Corona-bad-cubes}
\sum_{\F_{bad}} \sigma(Q_j) 
\,\leq\, \frac{a+b}{a+2b}\, \sigma(Q)\,,
\end{equation}
where 
 $\F_{bad}:=
\{Q_j\in\F:\,\fint_{Q_j}\mathcal{A}^{M,short}_{Q_j}({\bf B_*})^2d\sigma>\,a\}$, and 
$\F=\{Q_j\}_j$ denotes the collection of sub-cubes of $Q=Q(\sbf)$ that are maximal with respect to the property that $Q_j\notin\sbf$.
\end{lemma}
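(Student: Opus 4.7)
The plan is to run a standard one-generation stopping-time argument driven by the ancestor sums
\[
T_{Q'} := \sum_{Q'' \in \dd_Q:\, Q' \subseteq Q'' \subseteq Q} \iint_{U_{Q''}^M} |\Bb_*(Y)|^2 \,\delta(Y)^{1-n}\, dY,
\]
stopping where $T_{Q'}$ first exceeds the threshold $2b$. The first thing I would note, before defining anything, is that since the $Q$-term of $\mathcal{A}^M_Q(\Bb_*)^2$ is the constant $T_Q = \iint_{U_Q^M}|\Bb_*|^2\delta^{1-n}dY$, the two square functions differ by exactly this constant, so
\[
T_Q + \fint_Q \mathcal{A}^{M,short}_Q(\Bb_*)^2 \,d\sigma \,=\, \fint_Q \mathcal{A}^M_Q(\Bb_*)^2 \,d\sigma \,\leq\, a+b.
\]
Consequently, \emph{if \eqref{sq-functionshortbound} fails} (the case we must handle), then $T_Q < b < 2b$. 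This strict initialization bound is the crucial point: it prevents $Q$ itself from triggering the stopping rule, and so allows $Q$ to serve as the top of a non-degenerate tree.

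Assuming \eqref{sq-functionshortbound} fails, I would then define $\F=\{Q_j\}_j$ to consist of the cubes in $\dd_Q$ that are maximal with respect to $T_{Q_j} > 2b$ (each necessarily a strict sub-cube of $Q$ by the preceding paragraph), and set $\sbf := \dd_Q \setminus \bigcup_j \dd_{Q_j}$. This is automatically semi-coherent with unique top $Q$, and its complementary maximal cubes are precisely $\F$. The pointwise bound \eqref{sq-functionsmall} then falls out from two cases: if $x$ lies in some $Q_j \in \F$, the cubes of $\sbf$ containing $x$ are exactly the chain from the parent $\widetilde{Q_j}$ up to $Q$, so $\mathcal{A}^M_\sbf(\Bb_*)^2(x) = T_{\widetilde{Q_j}} \leq 2b$ by the maximality of $Q_j$; and if $x$ escapes all the $Q_j$, then every dyadic ancestor of $x$ in $\dd_Q$ lies in $\sbf$, the truncated ancestor sums $T_{Q'}$ along the chain are all $\leq 2b$ (else some ancestor would belong to $\F$, putting $x$ in some $Q_j$), and their monotone limit $\mathcal{A}^M_\sbf(\Bb_*)^2(x) = \mathcal{A}^M_Q(\Bb_*)^2(x)$ is therefore also $\leq 2b$.

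For the measure estimate \eqref{Corona-bad-cubes}, the plan is to exploit the split
\[
\mathcal{A}^M_Q(\Bb_*)^2(x) \,=\, T_{Q_j} \,+\, \mathcal{A}^{M,short}_{Q_j}(\Bb_*)^2(x), \qquad x\in Q_j\in\F,
\]
integrating over $Q_j$ (where the first term contributes strictly more than $2b\,\sigma(Q_j)$), and then summing over $\F$ using disjointness of the $Q_j$ together with the defining bound $\fint_{Q_j}\mathcal{A}^{M,short}_{Q_j}(\Bb_*)^2 d\sigma > a$ available only on $\F_{bad}$. The right-hand side of the resulting inequality is at least $2b\sum_\F \sigma(Q_j) + a\sum_{\F_{bad}}\sigma(Q_j) \geq (a+2b)\sum_{\F_{bad}}\sigma(Q_j)$, while the left-hand side is bounded by $\int_Q \mathcal{A}^M_Q(\Bb_*)^2 d\sigma \leq (a+b)\sigma(Q)$ via \eqref{aplusb}, and dividing yields the claim. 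The only conceptually delicate step in the whole argument is the opening observation that $T_Q<b$ when \eqref{sq-functionshortbound} fails; without it the stopping rule could fire at $Q$ itself and leave $\sbf$ empty. With that initialization secured, everything else is routine corona bookkeeping.
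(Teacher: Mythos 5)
Your proof is correct and follows essentially the same line as the paper's: a one-generation stopping-time decomposition driven by the ancestor sums $T_{Q'}$ (which are exactly the quantities $\mathcal H^M_{Q',Q}$ used in the paper), with the dichotomy at $Q$ phrased as the contrapositive of the paper's Case 1 ($\xi_Q\geq b$ forces \eqref{sq-functionshortbound}) versus Case 2 ($\xi_Q<b$ allows the corona construction). All the individual steps — the initialization bound $T_Q<b$, the maximality argument giving \eqref{sq-functionsmall} on and off $\cup_\F Q_j$, and the split $\mathcal A^M_Q = T_{Q_j} + \mathcal A^{M,\mathrm{short}}_{Q_j}$ on $Q_j$ feeding into \eqref{Corona-bad-cubes} — match the paper's bookkeeping.
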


\begin{proof} For convenience of notation, we set 
\[
\mathcal{H}_Q^M
:= \mathcal{A}^M_{Q}({\bf B_*})^2 \,,\qquad \mathcal{H}_Q^{M,short}
:= \mathcal{A}^{M,short}_{Q}({\bf B_*})^2\,,
\]
and similarly with $Q$ replaced by any other cube $Q'\in\dd_Q$.
We also set
\[
\xi_{Q'}:= \iint_{U_{Q'}^M} |\Bb_*(Y)|^2 \delta(Y)^{1-n}\,dY\,, \quad Q'\in\dd_Q
\]
so that, e.g.,
$
\mathcal{H}_Q^M (x)\,=\,
 \sum_{Q'\in\dd_Q}1_{Q'}(x) \,\xi_{Q'}
$.  Note in particular that
\begin{equation}\label{HQMsplit}
\mathcal{H}_Q^M(x) = \mathcal{H}_Q^{M,short}(x) + \xi_Q\,,\qquad \forall \, x\in Q\,.
\end{equation}
For $Q''\subset Q$, we also set
\[
\mathcal{H}_{Q'',Q}^M(x):=  \sum_{Q': Q''\subset Q'\subset Q}1_{Q'}(x) \,\xi_{Q'}\,,
\]
so that, in particular, $\mathcal{H}_{Q,Q}^M(x)=\xi_Q$ for $x\in Q$.
We consider two cases.

\noindent{\bf Case 1}:  $\xi_Q\geq b$.  In this case, \eqref{aplusb} and \eqref{HQMsplit} imply that
\[
\fint_Q H_Q^{M,short} d\sigma \leq a\,,
\]
i.e., \eqref{sq-functionshortbound} holds.

\noindent{\bf Case 2}:  $\xi_Q< b$.  In this case, we subdivide dyadically, to select a family
$\F:=\{Q_j\}_j\subset \dd_Q$ 
of cubes that are maximal with respect to the property that
\begin{equation}\label{stoppingbound}
 \sum_{Q': Q_j\subset Q'\subset Q} \,\xi_{Q'}\,=\,\fint_{Q_j} \mathcal{H}_{Q_j,Q}^M \,d\sigma >2b\,.
\end{equation}
Set $\sbf:=\{Q'\in\dd_Q: Q' \text{ is not contained in any } Q_j\in\F\}$.  If $x\in Q\setminus
\cup_{\F}Q_j$, then 
\begin{equation}\label{prestoppingbound}
\fint_{Q''} \mathcal{H}_{Q'',Q}^M \,d\sigma \leq 2b\,,
 \quad \forall \, Q''\ni x\,,
\end{equation}
so letting $Q''\downarrow x$, we see that
\eqref{sq-functionsmall} holds for a.e.~ such $x$.  

On the other hand, suppose now that $x\in Q_j$ for some $Q_j\in\F$.  In this case, 
by definition \eqref{dy-treesqfunction},
\begin{equation}\label{ASMidentity}
\mathcal{A}^M_{\sbf}({\bf B}_*)^2(x) = \mathcal{H}_{Q^*_j,Q}^M(x) =
 \sum_{Q': Q^*_j\subset Q'\subset Q} \,\xi_{Q'}\,,\quad\forall\, x\in Q_j\,,
\end{equation}
where $Q_j^*$ denotes the dyadic parent of $Q_j$. Moreover,
$\mathcal{H}_{Q^*_j,Q}^M(x)$ is constant on $Q_j^*$ (indeed,
 $\mathcal{H}_{Q^*_j,Q}^M(x)=
 \sum_{Q': Q^*_j\subset Q'\subset Q} \,\xi_{Q'}$ on $Q_j^*$),
 and therefore
\[
\mathcal{H}_{Q^*_j,Q}^M(x) = \fint_{Q_j^*} \mathcal{H}_{Q^*_j,Q}^M\,d\sigma \leq 2b\,,
\] 
by maximality of $Q_j$,
for all $x\in Q_j^*$, in particular, for all $x\in Q_j$.  Combining the latter estimate with the identity
in \eqref{ASMidentity}, we obtain \eqref{sq-functionsmall}.

It remains to verify \eqref{Corona-bad-cubes}. To this end, note that 
by the definition of $\F_{bad}$, along with the stopping time criterion \eqref{stoppingbound},
we see that for each $Q_j\in\F_{bad}$,
\begin{multline*}
(a+2b) \sigma(Q_j) < \int_{Q_j}\left(\mathcal{A}^{M,short}_{Q_j}({\bf B_*})^2\,+\,
 \mathcal{H}_{Q_j,Q}^M\right) d\sigma
 \\[4pt]
=  \int_{Q_j}\Big(\sum_{Q'\in\dd_{Q_j}\setminus \{Q_j\}}1_{Q'}(x) \,\xi_{Q'}\,+\,
 \sum_{Q': Q_j\subset Q'\subset Q}1_{Q'}(x) \,\xi_{Q'}\Big) d\sigma
 \\[4pt]
 =\,\int_{Q_j} H_Q^M\,d\sigma \,=\,\int_{Q_j} \mathcal{A}^M_{Q}({\bf B_*})^2\,d\sigma\,.
\end{multline*}
Summing over $Q_j\in\F_{bad}$, we obtain
\[
(a+2b)\sum_{\F_{bad}}\sigma(Q_j) \leq \sum_{\F} 
\int_{Q_j} \mathcal{A}^M_{Q}({\bf B_*})^2\,d\sigma
\leq \int_{Q} \mathcal{A}^M_{Q}({\bf B_*})^2\,d\sigma\leq (a+b)\sigma(Q)\,,
\]
by hypothesis, and \eqref{Corona-bad-cubes} follows.
\end{proof}

The next lemma provides control of a square function in a sawtooth region, in terms of a dyadic
square function in the original domain.  If $\Omega_{\sbf}$ denotes the sawtooth domain, set
\begin{equation}\label{eqsawtoothconedef}
\Gamma_\star(z) := \{Y\in\Omega_\sbf: |z-Y|< 10\delta_\star(Y)\}\,,\quad z\in\pom_{\sbf}\,,
\end{equation}
where $\delta_\star(Y):= \dist(Y,\pom_{\sbf})$, 
and define the corresponding conical square function
\begin{equation}\label{Astardef}
\mathcal{A}_\star(\Bb)^2 (z) := \iint_{\Gamma_\star(z)}|\Bb(Y)|^2\, \delta_\star^{1-n}(Y)\, dY\,.
\end{equation}

Let $M$ denote the parameter in the definition of the dyadic conical square function, as above.
\begin{lemma}\label{lemmaAstarbound} 
There is a choice of $M$ large enough, depending only on the 
allowable parameters, such that for every $Q_0\in\dd(\pom)$, 
and for any tree $\sbf$ with top cube
$Q(\sbf)=Q_0$, we have
\begin{equation}\label{squaretosquare}
\esssup_{z\in \pom_{\sbf}}\mathcal{A}_\star(\Bb)(z) \,\lesssim \,
\|\mathcal{A}_{\sbf}^M(\Bb)\|_{L^\infty(Q_0)}\,+\esssup_{Y\in\Omega_{\sbf}} \delta(Y)|\Bb(Y)|\,,
\end{equation}
where the implicit constant also depends only on the 
allowable parameters, and where 
$\Omega_{\sbf}=\interior\left(\cup_{Q\in \sbf} \,U_Q\right)$
is the usual ``sawtooth domain" associated to $\sbf$.
\end{lemma}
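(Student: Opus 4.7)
The plan is to show that for a.e.\ $z \in \pom_\sbf$ there exists a ``shadow'' $x=x(z) \in Q_0$ with
\[
\mathcal{A}_\star(\Bb)(z)^2 \,\lesssim\, \mathcal{A}_\sbf^M(\Bb)(x)^2 \,+\, \bigl(\esssup_{\Omega_\sbf} \delta|\Bb|\bigr)^2,
\]
after which the conclusion follows by taking the supremum in $z$. The two essential ingredients will be (i) a geometric covering of the cone $\Gamma_\star(z)$ by fattened Whitney regions $U_Q^M$ with $Q\in\sbf$ and $Q\ni x$, and (ii) a pointwise comparison of the weights $\delta_\star^{1-n}$ and $\delta^{1-n}$ on this cone.

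\textbf{Choice of $x(z)$.} Modulo a $\sigma_\star$-null set (invoking Proposition \ref{prop:sawtooth-contain} and Proposition \ref{prop5.0a} applied to the ADR measure $\sigma_\star$ on $\pom_\sbf$), every $z \in \pom_\sbf$ lies either in $Q_0\setminus\cup_\F Q_j$, in which case I set $x:=z$ and every ancestor of $x$ in $\dd_{Q_0}$ lies in $\sbf$ (by maximality of $\F$); or $z$ lies on the sawtooth portion $\pom_\sbf\setminus\pom$, in which case $z$ is at distance comparable to $\ell(Q_j)$ from some $Q_j \in \F$ with $\delta(z) \approx \ell(Q_j)$, and I pick any $x \in Q_j$, so that every ancestor of $x$ in $\dd_{Q_0}$ of scale $\geq \ell(Q_j^*)$ lies in $\sbf$.

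\textbf{Covering $\Gamma_\star(z)$.} For $Y\in \Gamma_\star(z)$, the bounded overlap property selects $Q' \in \sbf$ with $Y\in U_{Q'}$; then $\delta(Y) \approx \ell(Q') \approx |Y-x_{Q'}|$, while the cone condition $|z-Y|<10\delta_\star(Y)\leq 10\delta(Y)$ forces $|z-x_{Q'}|\lesssim \ell(Q')$. In the sawtooth case, the chain $\ell(Q_j)\approx \delta(z)\leq \delta(Y)+|z-Y|\lesssim \ell(Q')$ shows that $\ell(Q')\gtrsim \ell(Q_j)$, so the $\dd_{Q_0}$-ancestor $Q$ of $x$ at the scale closest to $\ell(Q')$ does lie in $\sbf$; the triangle inequality $|x-x_{Q'}|\leq |x-z|+|z-x_{Q'}|\lesssim\ell(Q')$ then places $Q'$ in $\dd^M(Q)$ provided $M$ is taken large enough depending only on the allowable parameters. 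Hence $\Gamma_\star(z)\subset \bigcup_{Q\in\sbf,\, Q\ni x} U_Q^M$.

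\textbf{Weight comparison.} Split $\Gamma_\star(z)=G\cup B$ with $G:=\{Y:\delta_\star(Y) \geq \delta(Y)/K\}$, for a large constant $K$. On $G$ one has $\delta_\star^{1-n}\leq K^{n-1}\delta^{1-n}$, so the covering above combined with \eqref{compare_sqfunctions} (in the $\sbf$-restricted form) yields $\iint_G |\Bb|^2\delta_\star^{1-n}\,dY \lesssim \mathcal{A}_\sbf^M(\Bb)^2(x)$. For the bad set $B$: if $z\in \pom$ then $\delta(Y)\leq |z-Y|<10\delta_\star(Y)$ forces $B=\emptyset$ once $K\geq 10$; if $z$ is on the sawtooth, once $K$ is large enough one has $|z-Y|\leq \delta(z)/2$ on $B$, whence $\delta(Y)\approx \delta(z)$ and $\delta_\star(Y)\approx |z-Y|\lesssim \delta(z)/K$, and a polar-coordinate estimate using $|\Bb|\leq C_0/\delta$ with $C_0:=\esssup_{\Omega_\sbf}\delta|\Bb|$ gives
\[
\iint_B |\Bb|^2\delta_\star^{1-n}\,dY\,\lesssim\, C_0^2\delta(z)^{-2}\int_0^{C\delta(z)/K} r\,dr\,\lesssim\, C_0^2/K^2.
\]

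The main technical obstacle is the covering step when $z$ lies on the sawtooth: one must verify that the $\sbf$-ancestor $Q$ of $x(z)$ at scale $\approx\ell(Q')$ does indeed belong to $\sbf$ and lies close enough to $Q'$ to force $Q'\in\dd^M(Q)$. This requires a careful interplay of the semi-coherence of $\sbf$, the maximality of $\F$, and the geometric observation that any $Q'\in\sbf$ whose Whitney region meets $\Gamma_\star(z)$ must satisfy $\ell(Q')\gtrsim\ell(Q_j)$; once this is in hand, the remaining estimates are routine.
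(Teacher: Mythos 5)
Your overall plan — choosing a shadow point $x(z)$, a good/bad split of the cone into $G$ and $B$, the polar-coordinate bound on $B$, and the covering of $G$ by fattened Whitney regions — is essentially the paper's strategy, and the $z\in\pom$ case and the $B$ estimate are both correct. But the sawtooth case $z\in\pom_\sbf\cap\Omega$ has a real gap. You assert the existence of $Q_j\in\F$ with $\dist(z,Q_j)\approx\ell(Q_j)$ and $\delta(z)\approx\ell(Q_j)$, and then take $x\in Q_j$. The second comparability is false in general: $\pom_\sbf\cap\Omega$ is not confined to the ``roofs'' of the removed Carleson boxes $R_{Q_j}$ (at height $\approx\ell(Q_j)$); it also has ``side walls'' descending along $\partial Q_j$ toward $\pom$, on which $\delta(z)$ is arbitrarily small compared with $\ell(Q_j)$. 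For instance, with $\Omega=\re^2_+$, $Q_0=[0,1]$, and $\F=\{[0,1/2]\}$, the point $z=(1/2,h)$ lies on $\pom_\sbf$ for every small $h>0$, yet $\delta(z)=h\ll\ell(Q_j)=1/2$ and $\dist(z,Q_j)=h$. Consequently the chain $\ell(Q_j)\approx\delta(z)\lesssim\ell(Q')$ only yields $\ell(Q')\gtrsim\delta(z)$, not $\ell(Q')\gtrsim\ell(Q_j)$. There are then points $Y\in G$ (e.g.\ with $\delta_\star(Y)\approx\delta(Y)/2\approx\delta(z)/2$) lying in Whitney regions $U_{Q'}$ with $Q'\in\sbf$ and $\ell(Q')\approx\delta(z)\ll\ell(Q_j^*)$, while every ancestor of $x\in Q_j$ that belongs to $\sbf$ has scale at least $\ell(Q_j^*)$. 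Such $Y$ are missed by your covering of $G$, and since $\delta_\star(Y)\approx\delta(Y)$ on them, they are not absorbed into $B$ either.

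The paper avoids this precisely by choosing the shadow cube in $\sbf$ rather than in $\F$: since $z\in\overline{\Omega_\sbf}$, there is $Z_1\in\Omega_\sbf$ with $|Z_1-z|<\delta(z)/2000$, hence $Z_1\in U_{Q_1}$ for some $Q_1\in\sbf$, which gives $\ell(Q_1)\approx\delta(z)\approx\dist(z,Q_1)$ automatically. By semi-coherence, every $Q$ with $Q_1\subset Q\subset Q_0$ lies in $\sbf$, so the covering by $\{U_Q^M:Q_1\subset Q\subset Q_0\}$ succeeds with $x$ any point of $Q_1$; with this change of shadow point, your $G$ and $B$ estimates go through.
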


\begin{proof}

We note that since $\Omega_{\sbf}\subset\Omega$, we have the trivial inequality
\[
\delta_\star(Y) \leq \delta(Y)\,,
\]
which we shall use repeatedly without further explanation.  We further note that by
Proposition \ref{sawtoothprop} (7), 
\[
\delta(Y) \leq N \ell(Q_0)\,,\qquad \forall \, Y\in R_{Q_0}\,, 
\]
for some uniform constant $N$, hence in particular for all $Y\in \Omega_{\sbf}$.

Suppose first that $z\in \pom\cap\pom_{\sbf}$.  
Let $\F:=\{Q_j\}_j$ denote the collection of sub-cubes of $Q_0$ that are maximal with 
respect to the property that $Q_j\notin\sbf$.  Observe that  
$Q_0\setminus (\cup_{\F} Q_j) \subset \pom\cap\pom_{\sbf}$, and
$\pom\cap\pom_{\sbf} \setminus \left(Q_0\setminus (\cup_{\F} Q_j)\right)$ 
has $\mathcal{H}^n$ measure zero, by Propositions \ref{prop:sawtooth-contain} and \ref{prop5.0a}.
Thus, we may assume that $z\in Q_0\setminus (\cup_{\F} Q_j)$.
Let $Y\in\Gamma_\star(z)$ so that 
\begin{equation}\label{deltacompare}
\delta(Y)\leq |z-Y|<10\delta_\star(Y) \leq 10\delta(Y)
\end{equation}
(where in the first inequality we have used that $z\in\pom$).
Choose $Q\subset Q_0$, with $z\in Q$, and $\delta(Y)\leq N\ell(Q)< 2\delta(Y)$.
Then $Y\in U_{Q}^M$ for $M$ chosen large enough (depending on $N$).  Since there is such a cube $Q$ for each $Y\in \Gamma_\star(z)$, we see that
$\Gamma_\star(z)\subset \Upsilon^M_{Q_0}(z)$.  Moreover, $\delta_\star(Y) \approx\delta(Y)$,
by \eqref{deltacompare}.  Consequently,  
\[
\mathcal{A}_\star(\Bb)(z) \,\lesssim \, \mathfrak{A}_{Q_0}^M({\Bb})(z)\,\leq\,
\mathcal{A}_{\sbf}^M(\Bb)(z)\,,
\] 
as desired, since $z\in Q_0\setminus (\cup_{\F} Q_j)$ implies that
$Q\in\sbf$ for every $Q$ with $z\in Q\subset Q_0$.

Next, suppose that $z\in \pom_{\sbf}\setminus \pom$.  Then 
$z\in \pom_{\sbf}\cap\Omega$, so $\delta(z) >0$.  Fix $Z_1\in\Omega_{\sbf}$ such that
$|Z_1-z|<\delta(z)/2000$.  Then by definition of $\Omega_{\sbf}$, 
$Z_1\in U_{Q_1}$ for some $Q_1\in\sbf$, hence
\begin{equation}\label{zQ1compare}
\ell(Q_1)\approx\delta(Z_1)\approx \delta(z)\approx \dist(z,Q_1)\,.
\end{equation}

Let $Y\in\Gamma_\star(z)$.  We consider three cases. Let us 
first summarize some geometric information in each case, which we will then use
to deduce \eqref{squaretosquare}.

\noindent{\bf Case 1}: $|z-Y|<\delta(z)/2$.  Then $\delta(Y)\approx \delta(z)\approx
\ell(Q_1)$, by \eqref{zQ1compare}.

\noindent{\bf Case 2}: 
$|z-Y|\geq \delta(z)/2$, and $\delta(Y) \leq 4\delta(z)$.  By definition of $\Gamma_\star(z)$, this means that
\[
\frac18\delta(Y)\leq \frac12\delta(z) \leq |z-Y| <10\delta_\star(Y) \leq 10\delta(Y)\,.
\]
Thus, $\delta(Y) \approx\delta_\star(Y)$, and 
$|z-Y|\approx \delta(z)\approx \delta(Y)$, 
hence $Y\in U_{Q_1}^M$ for $M$ large enough.

\noindent{\bf Case 3}: 
$|z-Y|\geq \delta(z)/2$, and $\delta(Y) > 4\delta(z)$.
In this case, $\delta(Y) <\frac43|z-Y|$, and therefore
\[
 \delta(Y) \lesssim |z-Y| < 10\delta_\star(Y) \leq 10\delta(Y)\,,
\]
since $Y\in\Gamma_\star(z)$.  In particular, $\delta(Y) \approx\delta_\star(Y)$.
Choose $Q$ such that $Q_1\subset Q\subset Q_0$, with
$\delta(Y)\leq N\ell(Q)< 2\delta(Y)$.  Then using \eqref{zQ1compare}, we see that
\[
\delta(Y)\leq \dist(Y,Q)\leq \dist(Y,Q_1)\leq |Y-z| + \dist(z,Q_1) + \diam(Q_1)
\lesssim_N\, \delta(Y)\,.
\]
Thus, $Y\in U_Q^M$ for $M$ chosen large enough.

We now write
\[
\Gamma_\star(z) = \Gamma_1(z) \cup \Gamma_2(z) \cup \Gamma_3(z)\,,
\]
where $\Gamma_1, \Gamma_2$ and $\Gamma_3$ are the subsets of 
$\Gamma_\star$ corresponding to Cases 1, 2 and 3, respectively.  This induces a corresponding splitting
\[
\mathcal{A}_\star(\Bb)^2 (z) = \sum_{i=1}^3 \mathcal{A}_i(\Bb)^2 (z)
:= \sum_{i=1}^3 \iint_{\Gamma_i(z)}|\Bb(Y)|^2\, \delta_\star^{1-n}(Y)\, dY\,.
\]

Observe that for $Y\in \Gamma_2(z) \cup \Gamma_3(z)$ (thus, as in Case 2 or Case 3), 
we have shown that $\delta_\star(Y) \approx \delta(Y)$ and that $Y\in U_Q^M$ for some
$Q$ with $Q_1\subset Q\subset Q_0$, where $Q_1\in \sbf$.  Consequently,
for any $x\in Q_1$,
\begin{multline*}
\mathcal{A}_2(\Bb)^2 (z) + \mathcal{A}_3(\Bb)^2 (z)
\\[4pt]
\lesssim \sum_{Q: Q_1\subset Q\subset Q_0} \iint_{U_Q^M}|\Bb(Y)|^2\, \delta^{1-n}(Y)\, dY
\,\leq \,
 \mathcal{A}^M_{\sbf}(\Bb)^2 (x)
\, \leq \,\|\mathcal{A}_{\sbf}^M(\Bb)\|^2_{L^\infty(Q_0)}\,, 
\end{multline*}
as desired.

It remains to consider $\mathcal{A}_1(\Bb) (z)$.  
Set $\Xi:= \esssup_{Y\in\Omega_{\sbf}} \delta(Y)|\Bb(Y)|$.
In Case 1, we have $|z-Y|<\delta(z)/2$, and $\delta(Y)\approx \delta(z)\approx
\ell(Q_1)$.  Moreover,
for $Y\in \Gamma_\star(z)$, we have $|z-Y|\approx \delta_\star(Y)$.
Therefore,
\begin{multline*}
\mathcal{A}_1(\Bb)^2 (z)\, =\, 
\iint_{\Gamma_\star(z)\cap\{|z-Y|<\,\delta(z)/2\}}|\Bb(Y)|^2\, \delta_\star^{1-n}(Y)\, dY
\\[4pt]
\lesssim \, \frac{\Xi^2}{\ell(Q_1)^2}\iint_{|z-Y|\,\lesssim \,\ell(Q_1)} |z-Y|^{1-n}\,dY
\,\lesssim\,\Xi^2\,,
\end{multline*}
since our ambient dimension is $n+1$.  We conclude by summing our estimates for 
$\mathcal{A}_i(\Bb)(z)$, $ i = 1,2,3$.
\end{proof}

In the following remark and corollary, we shall continue to use the same notation as in Lemma \ref{lemmaAstarbound}.

\begin{remark}\label{x0fix} Fix a cube $Q_0\in\dd(\pom)$, let $\sbf$ be a tree with top cube
$Q(\sbf)=Q_0$, and consider the associated sawtooth domain $\Omega_\sbf$ and Carleson region $R_{Q_0}$ as in Proposition \ref{sawtoothprop}.
By the properties of the Whitney regions $U_Q$, and of the sawtooth and Carleson regions,
we may (and do) fix a point $X_0\in U_{Q_0}\subset \Omega_\sbf \subset R_{Q_0}$, 
such that 
\[\dist(X_0,\partial U_{Q_0})\approx 
\delta_\star(X_0) 
\approx \dist(X_0,\partial R_{Q_0}) \approx \delta(X_0)
 \approx \ell(Q_0)
\approx \diam(\Omega_\sbf)\approx \diam(R_{Q_0})\,.
\]
Thus, $X_0$ is a corkscrew point for $\Omega_\sbf$ at the 
scale $r_0:= \diam (\Omega_{\sbf})\approx \ell(Q_0)$, with respect to any center
$z\in\pom_{\sbf}$.
\end{remark}  

With the point $X_0$ fixed, we let $\hm_{0,\star}:= 
\hm^{X_0}_{L_0,\Omega_{\sbf}}$ and $\mathcal{G}_{0,\star} := G_{L_0,\Omega_{\sbf}} (X_0,\cdot)$
denote the elliptic measure and Green function for the operator $L_0=-\dv A\nabla$, in the domain
$\Omega_{\sbf}$, with pole at $X_0$.

With $M$ now fixed as in Lemma \ref{lemmaAstarbound}, we shall simply write
$\mathcal{A}_{\sbf}(\Bb):= \mathcal{A}_{\sbf}^M(\Bb)$.

\begin{corollary}\label{corsawtooth}
Let $Q_0$, $\sbf$, and $\Omega_{\sbf}$ be as in Lemma \ref{lemmaAstarbound}, 
and suppose that $\Bb$ is truncated in $\Omega$, 
as in Subsection \ref{approxhm}.
Let $X_0$ be the corkscrew point fixed in Remark \ref{x0fix}, and let $\hm_{0,\star}$ and $\mathcal{G}_{0,\star}$ be the elliptic measure and Green function as described above.
Suppose that 
\begin{equation}\label{eqbetamall0}
\|\mathcal{A}_{\sbf}(\Bb)\|_{L^\infty(Q_0)}\,+\esssup_{Y\in\Omega_{\sbf}} \delta(Y)|\Bb(Y)|\,\leq \,\beta\,.
\end{equation}
Then
\begin{equation}\label{eqbetamall1}
|\Bb(Y)| \leq \frac{\beta}{\delta_\star(Y)}\,,\quad \text{\em a.e. } Y\in\Omega_{\sbf}\,,
\end{equation}
and for every $\Delta_\star=B\cap\pom_{\sbf}$, 
 with $B=B(z,r)$, $z\in\pom_{\sbf}$, $0<r<r_0$,
 \begin{equation}\label{eqbetamall2}
 \iint_{B\cap\Omega_{\sbf}} |\Bb(Y)|^2\, \mathcal{G}_{0,\star}(Y) \,dY\, \leq\, C\beta^2 \hm_{0,\star}(\Delta_\star)\,,
 \end{equation}
where $C\geq 1$ depends only on the allowable parameters;
i.e., $\Bb$ is ``$\sqrt{C}\beta$-small with respect to $\hm_{0,\star}$" in the sense of Definition \ref{epsmall}, in the domain $\Omega_{\sbf}$.
\end{corollary}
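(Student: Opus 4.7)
The statement has two parts; I would treat them separately, with \eqref{eqbetamall1} being essentially immediate and \eqref{eqbetamall2} requiring the main effort.

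For \eqref{eqbetamall1}, hypothesis \eqref{eqbetamall0} gives $\delta(Y)|\Bb(Y)|\leq\beta$ a.e.\ in $\Omega_\sbf$, and since $\Omega_\sbf\subset \Omega$ forces $\delta_\star(Y)\leq \delta(Y)$, we immediately obtain $|\Bb(Y)|\leq \beta/\delta(Y)\leq \beta/\delta_\star(Y)$.

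For \eqref{eqbetamall2}, my strategy is a standard CFMS plus Fubini plus doubling argument, carried out in the sawtooth $\Omega_\sbf$, which by Proposition \ref{sawtoothprop}(1) is itself a bounded 1-sided CAD. First I would invoke Lemma \ref{lemmaAstarbound}, which together with \eqref{eqbetamall0} yields the global bound $\|\mathcal{A}_\star(\Bb)\|_{L^\infty(\pom_\sbf)}\lesssim \beta$. Next, Lemma \ref{l2.10} applied to $L_0=-\dv A\nabla$ in $\Omega_\sbf$ (for which the smallness hypothesis is trivially satisfied, since there is no drift) gives, for $Y$ with $|X_0-Y|\geq \delta_\star(Y)/4$,
\[
\mathcal{G}_{0,\star}(Y)\lesssim \hm_{0,\star}(\Delta_Y^\star)\,\delta_\star(Y)^{1-n},
\]
where $\Delta_Y^\star:=B(Y,10\delta_\star(Y))\cap\pom_\sbf$ and we have absorbed $\sigma_\star(\Delta_Y^\star)\approx \delta_\star(Y)^n$ via ADR of $\pom_\sbf$. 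Writing $\hm_{0,\star}(\Delta_Y^\star)=\int_{\pom_\sbf} 1_{\Delta_Y^\star}(z)\,d\hm_{0,\star}(z)$, noting that for $z\in\pom_\sbf$ the condition $z\in\Delta_Y^\star$ is equivalent (up to constants) to $Y\in\Gamma_\star(z)$, and applying Fubini, one finds
\[
\iint_{B\cap\Omega_\sbf}|\Bb(Y)|^2\mathcal{G}_{0,\star}(Y)\,dY\,\lesssim\, \int_{11\Delta_\star} \mathcal{A}_\star(\Bb)^2(z)\,d\hm_{0,\star}(z)\,\lesssim\,\beta^2\,\hm_{0,\star}(11\Delta_\star),
\]
where the restriction to $11\Delta_\star$ is justified because for $Y\in B\cap\Omega_\sbf$ one has $\delta_\star(Y)\leq r$, so any $z$ with $Y\in\Gamma_\star(z)$ satisfies $|z-z_0|<11r$, with $z_0$ the center of $B$. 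The doubling property of $\hm_{0,\star}$ in $\Omega_\sbf$ (Lemma \ref{lemmadouble} applied with zero drift) then absorbs the dilate to produce the desired bound $C\beta^2\hm_{0,\star}(\Delta_\star)$.

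The chief technical subtlety is that both the CFMS upper bound and the doubling estimate require $X_0$ to be separated from the ball $B$. When $r$ is comparable to $r_0\approx\diam(\Omega_\sbf)\approx\delta(X_0)$ this separation may fail; however, in that boundary regime $X_0$ serves as a corkscrew point for every $z\in\pom_\sbf$ at scale $r_0$ (Remark \ref{x0fix}), so Lemma \ref{Bourgainhm} yields $\hm_{0,\star}(\Delta_\star)\gtrsim 1$, while the left-hand side is uniformly dominated by a constant times $\beta^2$ (using the interior Green function bound $\mathcal{G}_{0,\star}(Y)\lesssim |X_0-Y|^{1-n}$ of Lemma \ref{lemma2.green} where applicable, combined with the pointwise bound on $|\Bb|$ just established in \eqref{eqbetamall1}). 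A standard splitting of the region of integration into pieces close to and far from $X_0$ then completes the argument.
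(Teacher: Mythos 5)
Your argument is correct, and it rests on the same three ingredients the paper uses: the bound $\|\mathcal{A}_\star(\Bb)\|_{L^\infty(\pom_\sbf)}\lesssim\beta$ from Lemma \ref{lemmaAstarbound}, the CFMS estimate of Lemma \ref{l2.10} for $L_0$ in the sawtooth $\Omega_\sbf$, and Fubini to convert the weighted volume integral into a surface integral of $\mathcal{A}_\star(\Bb)^2$ against $\hm_{0,\star}$. The two write-ups differ in how they partition the analysis. The paper splits the $Y$-integral pointwise according to whether $|X_0-Y|>\delta(Y)/4$ or not, using the interior Green function bound on the ``near'' piece and CFMS plus Fubini on the ``far'' piece; you instead split according to whether the radius $r$ of $B$ is small relative to $r_0$ (so that $X_0$ is separated from $B$) or comparable to $r_0$. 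Your version is arguably the cleaner route to the stated conclusion: you explicitly retain the constraint $Y\in B$ through Fubini, observe that this forces $z\in 11\Delta_\star$, and then invoke doubling of $\hm_{0,\star}$ to absorb the dilate, which is precisely what produces the $\hm_{0,\star}(\Delta_\star)$ factor on the right. The paper's printed argument drops the $B$-restriction after Fubini and integrates $\mathcal{A}_\star(\Bb)^2$ over all of $\pom_\sbf$, invoking only that $\hm_{0,\star}$ is a probability measure; as written this yields the bound $\lesssim\beta^2$ rather than $\lesssim\beta^2\,\hm_{0,\star}(\Delta_\star)$, and your localization-plus-doubling step is the right way to recover the stated inequality. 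Your handling of the endpoint case $r\approx r_0$ via Lemma \ref{Bourgainhm} is also the correct way to dispose of the region where separation of $X_0$ from $B$ (and hence both doubling and the uniform validity of the CFMS condition on all of $B\cap\Omega_\sbf$) can fail.
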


\begin{proof}
Note that since $\Bb$ has been truncated in $\Omega$
as in Subsection \ref{approxhm}, and hence is qualitatively bounded,
then by construction, 
$\mathcal{A}_\star(\Bb)$ is continuous on $\pom_{\sbf}$ (see \eqref{Astardef}).
Consequently, \eqref{squaretosquare} self-improves to give
\begin{equation}\label{squaretosquare2}
\sup_{z\in \pom_{\sbf}}\mathcal{A}_\star(\Bb)(z) \,\lesssim \,
\|\mathcal{A}_{\sbf}(\Bb)\|_{L^\infty(Q_0)}\,+\esssup_{Y\in\Omega_{\sbf}} \delta(Y)|\Bb(Y)|
\,\lesssim \beta\,,
\end{equation}
i.e., the essential supremum on the left hand side has now been replaced by a supremum; the upper bound on the right hand side is simply the hypothesis \eqref{eqbetamall0}.

The bound \eqref{eqbetamall1} is trivial, since $\delta_\star(Y) \leq \delta(Y)$ for all $Y\in \Omega_{\sbf}$.

To establish \eqref{eqbetamall2}, we write 
 \begin{multline*}
 \iint_{B\cap\Omega_{\sbf}} |\Bb(Y)|^2\, \mathcal{G}_{0,\star}(Y) \,dY
 \\[4pt]
 =\,
 \iint_{B\cap\Omega_{\sbf}\cap\{|X_0-Y|>\delta(Y)/4\}} ...\, dY +
  \iint_{B\cap\Omega_{\sbf}\cap \{|X_0-Y|\leq\delta(Y)/4\}} ...\, dY\,=: I + II\,.
 \end{multline*}
 (It may be that $II$ is vacuous, unless $B$ has radius $r\approx r_0$).
 By standard Green function estimates in $\ree$, and \eqref{eqbetamall0},
 \[
 II\,\lesssim \, \frac{\beta^2}{r_0^2} \iint_{|X_0-Y|\leq r_0} |X_0-Y|^{1-n} dY \,\lesssim \beta^2\,,
 \]
since $r_0:= \diam (\Omega_{\sbf}) \approx \delta(X_0)\approx \delta(Y)$
in the regime $|X_0-Y|\leq\delta(Y)/4$.
Furthermore, by Lemma \ref{l2.10} applied in $\Omega_{\sbf}$, 
setting $\Delta_{\star,Y}:= B(Y,10\delta_\star(Y))\cap\pom_{\sbf}$, and using
\eqref{eqsawtoothconedef} and \eqref{Astardef}, we see that
 \begin{multline*}
I \lesssim  \iint_{B\cap\Omega_{\sbf}} 
|\Bb(Y)|^2\, \hm_{0,\star}(\Delta_{\star,Y})\, \delta_*^{1-n}(Y) \,dY
\\[4pt]
=\,
\int_{\pom_{\sbf}} \iint_{\Omega_{\sbf}\cap\{|z-Y|<10\delta_\star(Y)\}}
 |\Bb(Y)|^2 \, \delta_*^{1-n}(Y) \, dY \, d\hm_{0,\star} (z)
 \\[4pt]
 =\, 
 \int_{\pom_{\sbf}} \mathcal{A}_\star(\Bb)^2 (z) \,d\hm_{0,\star} (z)\,\lesssim\, \beta^2\,,
\end{multline*}
by \eqref{squaretosquare2}, 
since $\hm_{0,\star}$ is a probability measure.
\end{proof}

\section{Proof of Theorem \ref{Tmain}:  the extrapolation argument}
\label{Sgeneral}

We now arrive at the heart of the matter.  
In this section, we give the proof of Theorem \ref{Tmain}.  As mentioned above in Section \ref{section2key}, our approach will be based on the method of ``extrapolation of Carleson measures".

We first require some preliminaries.  For $X\in\Omega$, let $\hm^X$ denote the 
elliptic measure at $X$ for the operator 
$L=-\dv A\nabla +\Bb\cdot \nabla$ in $\Omega$.

An important tool for us will be 
the following result of \cite{BL}, which provides a (necessary and sufficient)
criterion for elliptic measure to belong to
weak-$A_\infty$
(recall that the latter property yields
the conclusion of Theorem \ref{Tmain};
see Remark \ref {Dp=Ainfty}).  We recall the notation of \eqref{DXdef}:  for $X\in\Omega$, we set
$\Delta_X:=B\big(X,10\delta(X)\big)\cap\pom$.


\begin{lemma}[\cite{BL}]\label{BLlemma}   Let $L:=-\dv A\nabla +\Bb\cdot\nabla$, 
where $\Bb$ satisfies \eqref{driftsize}, and suppose that the continuous Dirichlet problem
(Definition \ref{Dc})
is solvable for $L$, and that the solutions satisfy the weak maximum principle (thus, elliptic measure exists).
For $X\in\om$, define
$\Delta_X$ as in \eqref{DXdef}.
Assume that $\pom$ is ADR, and that there are uniform
positive constants $\eta$ and $\gamma$
such that for each $X\in\om$ with $\delta(X)< 2 \diam(\pom)$,
the following implication holds for the
Borel subsets of  $\Delta_X$:
\begin{equation}\label{eq3.1b}
E\subset \Delta_X\,,\quad
\sigma(E)\geq (1-\eta) \sigma(\Delta_X)  \, \, \implies \,\,  \hm^X(E) \geq \gamma\,.
\end{equation}
Then elliptic measure for $L$ is locally in weak-$A_\infty$ in the sense of Definition
\ref{deflocalAinfty}, with uniform constants depending only on $\eta$, $\gamma$, 
$n$, $\lambda$, $\Lambda$, ADR, and the constant in \eqref{driftsize}.  
\end{lemma}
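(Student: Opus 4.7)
The hypothesis \eqref{eq3.1b} is exactly the statement that $\hm$ is $(\sigma,\gamma,\eta)$-ample in the sense of Definition \ref{defample}, with the control measure taken to be surface measure itself. Since $\pom$ is ADR, $\sigma$ is trivially doubling, with a doubling constant $N_{db}$ depending only on the ADR constant. Under \eqref{driftsize}, positive solutions of $Lu=0$ satisfy the interior Harnack principle (Lemma \ref{Moser}); solvability of the continuous Dirichlet problem and the weak maximum principle are given by hypothesis. Thus the structural prerequisites of Lemma \ref{BLlemma2} are all in place for the pair $(L,\sigma)$.

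The first step of the plan is to invoke Lemma \ref{BLlemma2} with $\ttm=\sigma$. Fixing a parameter $s\in(0,1)$ to be chosen at the end, this produces $\tilde\eta=\tilde\eta(\eta,s,N_{db})$ such that for every surface ball $\Delta=B\cap\pom$ with $r_B<\tfrac12\diam(\pom)$, every Borel $F\subset\Delta$, and every $Z\in\Omega\setminus 2B$,
\begin{equation}\label{ppgl-plan}
\sigma(F)\geq (1-\tilde\eta)\,\sigma(\Delta)\quad\implies\quad \hm^Z(\tfrac12\Delta)\,\leq\, C_s\,\hm^Z(F)\,+\,s\,\hm^Z(\Delta).
\end{equation}
The main step is then to bootstrap \eqref{ppgl-plan} into the weak-$A_\infty$ bound of Definition \ref{deflocalAinfty} via a Coifman--Fefferman-type iteration. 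Given a surface ball $\Delta=B\cap\pom$ with $r_B<\diam(\pom)/4$, a point $X\in\Omega\setminus 4B$, and a Borel set $E\subset\Delta$ with $\tau:=\sigma(E)/\sigma(\Delta)$ small, I would perform a Calder\'on--Zygmund stopping-time decomposition of $\Delta$ in the Christ dyadic grid of Lemma \ref{lemmaCh}, halting on the maximal sub-cubes $Q_i$ at which the density $\sigma(E\cap Q_i)/\sigma(Q_i)$ first exceeds $\tilde\eta/2$. Applying \eqref{ppgl-plan} at each stopping scale (with $F:=Q_i\setminus E$ inside the surface ball associated to $Q_i$ via \eqref{cube-ball}) and iterating over generations yields geometric decay of $\hm^X$-mass, summing to $\hm^X(E)\lesssim \tau^{s_0}\,\hm^X(2\Delta)$ for some $s_0>0$ depending only on $\tilde\eta$, $C_s$, $s$, and the ADR constants.

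The main technical obstacle is that doubling of $\hm$ is \emph{not} available here: Lemma \ref{lemmadouble} requires smallness of $\Bb$, which is not assumed. Consequently one cannot freely replace $\tfrac12\Delta$ by $\Delta$ on the left-hand side of \eqref{ppgl-plan}, which is precisely why Definition \ref{deflocalAinfty} carries the factor $\hm^Z(2\Delta)$, rather than $\hm^Z(\Delta)$, on the right. I would accommodate this by keeping half-ball evaluations on the left consistently throughout the iteration, applying \eqref{ppgl-plan} only to the surface balls produced by the stopping decomposition, and performing a single final expansion $\tfrac12\Delta\subset\Delta\subset 2\Delta$ at the outermost scale, using ADR to control $\sigma(2\Delta)/\sigma(\Delta)$. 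This closes the argument along the lines of the classical Coifman--Fefferman self-improvement scheme, in the same spirit as the proof of Theorem \ref{Tauxsmall} above (and ultimately of \cite[Theorem 2.5]{FKP}).
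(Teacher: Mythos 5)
Your approach is the same as the paper's: the paper establishes Lemma \ref{BLlemma} simply by citing \cite{BL}, noting that it follows from \cite[Lemma 2.2]{BL} (Lemma \ref{BLlemma2} here, applied with $\ttm=\sigma$) together with the Calder\'on--Zygmund/Coifman--Fefferman iteration of \cite[Lemma 3.1]{BL}, the only verification required being that Harnack's inequality, the weak maximum principle and solvability of the continuous Dirichlet problem are available in the present generality. Your proposal makes this same reduction and then sketches the bootstrap (the content of \cite[Lemma 3.1]{BL}) that the paper leaves to the cited reference, correctly identifying the failure of doubling for $\hm$ as the reason the conclusion is stated in weak-$A_\infty$ form with $\hm^X(2\Delta)$ on the right.
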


\begin{remark}\label{remarkunbound}
Clearly, the assumption $\delta(X)< 2 \diam(\pom)$ imposes a meaningful restriction 
only in the case that $\Omega$ is unbounded and $\pom$ is bounded.
\end{remark}

Although not stated explicitly in this form, Lemma \ref{BLlemma} is proved in \cite{BL}:  it follows from the
combination of \cite[Lemma 2.2]{BL} and its proof,
and \cite[Lemma 3.1]{BL}.  The results in \cite{BL} are stated only for the case that
$L$ is the Laplacian, but in fact those results may be applied much more generally:  the proof in \cite{BL} requires only Harnack's inequality and the weak maximum principle. 
 In our setting, we recall that Harnack's inequality is available to us 
 by Lemma \ref{Moser}, and 
 if we truncate $\Bb$ as in Subsection \ref{approxhm}, then
 the weak maximum principle, solvability of the continuous Dirichlet problem, and existence of elliptic measure are also available, by Lemma \ref{hmkexists}.  
 
 Thus, we consider truncations $\Bb_k$ and the corresponding operators $L_k$
as in Subsection \ref{approxhm}. 
We shall show that \eqref{eq3.1b}
holds, with the constants $\eta$ and $\gamma$ uniform in $k$ (that is, independent of the truncation), depending only on allowable parameters.   In particular, taking $E=\Delta_X$ 
in \eqref{eq3.1b}, we see that  $\hm^X_k(\Delta_X) \geq \gamma$, uniformly in $k$, and 
therefore for any $x\in \pom$, $r<\diam{\pom}$, and for all
$Y\in \Omega \cap B(x,r/2)$, we also have $\omega_k^{Y} (\Delta(x,r)) \geq c\gamma$,
again uniformly in $k$.
Thus, by a standard iteration argument,
the conclusion of Lemma \ref{proppde} holds for $L_k$ (even without
 \eqref{smallptwise}), uniformly in $k$.  
We may therefore apply Lemma \ref{hmklimit} to deduce that
$\hm^X$, the elliptic measure for $L$ exists, and also satisfies \eqref{eq3.1b}.
By Lemma \ref{BLlemma}, 
 the weak-$A_\infty$ property then holds for $\hm$, whence 
the conclusion of Theorem \ref{Tmain} follows, by
Remark \ref {Dp=Ainfty}.



In the sequel, we henceforth implicitly assume that $\Bb$ has 
been truncated near the boundary, but we  
suppress the subscript $k$, for simplicity of notation.  Our quantitative bounds will
depend only on allowable parameters, and we may then pass to the limit as described above.

By Lemma \ref{BLlemma}, 
 it suffices to show that there are uniform positive constants $\eta$ and $\gamma$
such that for each $X\in\om$ with $\delta(X)<2 \diam(\pom)$ 
(see Remark \ref{remarkunbound}), and for
$\Delta_X$ as in \eqref{DXdef}, we have the implication \eqref{eq3.1b}
for any Borel subset $E\subset \Delta_X$.

We begin by reducing matters to a dyadic version of \eqref{eq3.1b}. 
To this end,
 let $X\in \om$, 
and let $\hat{x}\in\pom$ be a touching point for $X$, i.e., $|X-\hat{x}|=\delta(X)$. 
Choose a cube $Q\in \dd(\pom)$, with $\hat{x}\in Q$, and such that 
$\ell(Q)\approx \delta(X)$ and $Q\subset \Delta_X$.
By ADR, $\sigma(\Delta_X)\approx \sigma(Q)$, so 
matters are reduced to verifying that there are uniform positive constants
$\eta$ and $\gamma$ such that for Borel subsets of any cube $Q\in \dd(\pom)$,
\begin{equation}\label{eq5.1a}
E\subset Q\,,\quad \sigma(E)\geq (1-\eta) \sigma(Q)  \, \, \implies \,\,  \hm^X(E) \geq \gamma\,.
\end{equation}
Indeed, \eqref{eq5.1a} for some $\eta$ and $\gamma$, with $Q\subset \Delta_X$ 
as above, implies \eqref{eq3.1b} 
for a somewhat smaller $\eta$ (on the order of $c\eta$, with $c=c(n,ADR)$), 
and for the same $\gamma$.

\begin{remark}\label{XinUQ}
In the construction in \cite{HM-I} of the Whitney regions $U_Q$ (see Proposition \ref{sawtoothprop}), one may choose the associated parameters in such a way
that for any $X\in \Omega$, and for $Q\subset \Delta_X$ with $\ell(Q)\approx \delta(X)$,
as in the preceding
discussion, we have $X\in U_Q$.
\end{remark}

Set $\mathcal{A}_{Q}(\cdot):= \mathcal{A}^M_{Q}(\cdot)$, where the latter
 is defined in \eqref{dy-conesqfunction}, and $M$ has been fixed as in Lemma \ref{lemmaAstarbound}.
 Set $\Theta(Q):= \fint_Q \mathcal{A}_{Q}({\bf B_*})^2 d\sigma$, and
recall that by \eqref{eqAMCM}, 
\begin{equation}\label{ThetaQdef}
\sup_{Q\in \dd}\Theta(Q)\,:= \,\sup_{Q\in \dd}\,\fint_Q \mathcal{A}_{Q}({\bf B_*})^2 
d\sigma \leq M_2\,,
\end{equation}
where $M_2 = CM_1$, with $M_1$ as in \eqref{driftmax}, and with $C$ depending only on allowable parameters.

The proof will proceed by induction. 
The inductive parameter will be
a number $a\in[0,M_2)$.

\medskip

\null\hskip.1cm \fbox{\rule[8pt]{0pt}{0pt}$H[a]$}\hskip4pt \fbox{\ \parbox[c]{.75\textwidth}{%
%
\rule[10pt]{0pt}{0pt}\it  
There exist positive numbers $\gamma_{a}$ and $\eta_{a}$ such that
for any given $Q\in\dd(\pom)$,  if
\begin{equation}\label{Mabound}
\!\!\!\!\!\!\!\!\!\!\!\!\!\!\!\!\!\!\!\!\!\!\!\!\!\!\!\!\!\!\!\!\!\!\!\!\!\!\!\!\!\! 
\Theta(Q)\,\le \, a \,,
\end{equation} 
and if $E\subset Q$ is a Borel set, then 
\begin{equation}\label{eq5.8}
\!\!\!\!\!\!\!\! \!\!\!\sigma(E)\geq (1-\eta_{a}) \sigma(Q)\ \   
\implies\ \  \inf_{X\in U_Q} \hm^X(E)
\geq \gamma_{a}\,,
\end{equation}
} }

\medskip

Our proof strategy is as follows.  We first verify $H[0]$ (this will be easy),
and then show that there is a uniform number $b>0$ such that 
$H[a]\!\!\! \implies \!\!\!H[a+b]$.  After iterating this procedure 
on the order of $M_1/b\approx M_2/b$ times, we then 
find that $H[M_2]$ holds, hence by
\eqref{ThetaQdef}, we see that the conclusion \eqref{eq5.1a} holds for all $Q\in\dd(\pom)$
with $\eta=\eta_{M_2}$, and with $\gamma =\gamma_{M_2}$, for all $X\in U_Q$.  Consequently, as observed above,
using Remark \ref{XinUQ}, we find that \eqref{eq3.1b} 
holds for all $X\in \Omega$ (for a somewhat smaller, but still fixed and uniform value of $\eta$).  In turn, Lemma \ref{BLlemma} then yields the desired weak-$A_{\infty}$ property.

Thus, it remains to verify $H[0]$, and that
$H[a] \!\!\implies \!\!H[a+b]$, for each $a\in [0,M_2)$, and for some uniform $b>0$.




\smallskip
\noindent{\em Proof of} $H[0]$.  
Fix $Q_0$ such that
$\Theta(Q_0) = 0$. Then by definition, $\Bb_* \equiv 0$ in the ``Carleson box"
$R_{Q_0}$ 
Consequently, $L:= L_0$ in $R_{Q_0}$.  Recall (see Proposition \ref{sawtoothprop} (6)), that
 there is a ball $B_{Q_0}=(x_{Q_0},r_{Q_0})$, with $r_{Q_0}\approx \ell({Q_0})$, such that
 $2B_{Q_0}\cap\Omega\subset R_{Q_0}$, 
 where as usual $2B_{Q_0}$ denotes the double of $B_{Q_0}$. 
 Let $X_0$ be the corkscrew point fixed in Remark \ref{x0fix}, and
 let $ \hm^{X_0}_{L_0}$ and $\widetilde{\hm}^{X_0}_{L_0}$ denote elliptic measure
 at the point $X_0$ for $L_0$, in the domains $\Omega$ and $R_{Q_0}$ respectively.
Similarly, let $G_{L_0}$ denote the Green function for $L_0$ in $\Omega$, and let
 $\widetilde{G}_{L_0}$ denote the Green function for $L_0$ in the domain $R_{Q_0}$.  
 By the usual comparison principle, 
 aka ``boundary Harnack principle" (i.e., Lemma \ref{lemmaBHP}), 
along with Harnack's inequality and the Harnack Chain property,
 \[
 G_{L_0}(X_0,Y) \approx \widetilde{G}_{L_0}(X_0,Y)\,, 
 \quad \forall  \, Y\in  \frac32 B_{Q_0}\cap\Omega\,.
 \]
 Hence, for any Borel set
 $F\subset \Delta_{Q_0}:= B_{Q_0}\cap\pom$, by the CFMS estimates 
 for $L_0$ (Lemma \ref{l2.10})
 we have
 \begin{equation}\label{eqhmcompare}
 \hm^{X_0}_{L_0}(F) \approx  \widetilde{\hm}^{X_0}_{L_0}(F)\,.
 \end{equation}
 
 Let $E\subset Q_0$ with $\sigma(E)>(1-\eta_0)\sigma(Q_0)$.
 If we choose $\eta_0$ small enough, then by the $A_\infty$ property for
$\hm_{L_0}$ and Lemma \ref{Bourgainhm} 
(and Harnack's inequality and the Harnack Chain condition), 
using \eqref{eqhmcompare} we see that
\[
\widetilde{\hm}_{L_0}^{X_0}(E\cap \Delta_{Q_0})
\,\approx\, \hm_{L_0}^{X_0}(E\cap \Delta_{Q_0})\,\gtrsim\, \hm_{L_0}^{X_0}(Q_0)\,
 \gtrsim 1\,. 
\]
Let $\hm_L$ denote elliptic measure for $L$ in $\Omega$.
Since $L=L_0$ in $R_{Q_0}$, by Harnack's inequality, the Harnack Chain condition, and the maximum principle we have
\[
\inf_{X\in U_{Q_0}}\hm_L^{X} (E)\,\gtrsim\,
\hm_L^{X_0} (E) \,\geq \,\widetilde{\hm}_{L_0}^{X_0}(E\cap \Delta_{Q_0}) \,\gtrsim \,1\,. 
\]
 Thus, $H[0]$ holds.
\hfill\(\Box\)

The next (and main) step is to show that $H[a]$ implies $H[a+b]$, for a sufficiently small (but uniform) choice of $b>0$.  

\smallskip

\noindent  $H[a]\!\! \implies \!\!H[a+b]$.
Let $a\geq 0$, let $b$ be a sufficiently small positive number to be chosen, and 
consider a cube $Q_0$ such that
\begin{equation}\label{eqQ0}
\Theta(Q_0) \leq (a+b) \,\sigma(Q_0)\,,
\end{equation}
with $\Theta(Q)$ defined as in \eqref{ThetaQdef}.  
We apply Lemma \ref{lemmastoptime} to $Q_0$. 
Then either \eqref{sq-functionshortbound} holds, or both 
 \eqref{sq-functionsmall} and \eqref{Corona-bad-cubes} hold.
 The former case is easy to handle.

 Indeed, if \eqref{sq-functionshortbound} holds, then by definition of $\mathcal{A}_Q^{short}$
 (recall that we have suppressed the parameter $M$, which has now been fixed), there is at least one dyadic child of $Q$, call it $Q'$, such that
 $\Theta(Q') \leq a$.  Consequently, we may apply the induction hypothesis $H[a]$ to $Q'$.
 Suppose that $E\subset Q$, with $\sigma(E)\geq (1-\eta)\sigma(Q)$, where $\eta>0$ 
 is chosen small enough, depending only on dimension and ADR, so that
 $\sigma(E\cap Q')\geq (1-\eta_a)\,\sigma(Q')$.  Applying $H[a]$ in $Q'$, and using Harnack's inequality, we see that
 \[
 \inf_{X\in U_Q} \hm^X(E) \gtrsim  \inf_{X\in U_{Q'}} \hm^X(E\cap Q') \geq \gamma_a\,,
 \]
 thus, we obtain $H[a+b]$ in the present case with $\eta_{a+b} = \eta$, and
 $\gamma_{a+b} = c\gamma_a$, for some $c>0$ depending only on 
 allowable parameters.

In order to treat the main case that both
 \eqref{sq-functionsmall} and \eqref{Corona-bad-cubes} hold, we shall need to
discuss some preliminary matters.

Recall (see Lemma \ref{lemmastoptime}) that $\F =\{Q_j\}_j$ is the family of sub-cubes of
$Q_0$ that are maximal with respect to the property that $Q_j \notin \sbf$.  As in \cite{HM-I},
we define a projection operator $\P_{\F}$ with respect to the 
family $\F$ as follows: given a Borel measure $\ttm$ defined on $\pom$, 
and a Borel subset $F\subset\pom$, we set
\begin{equation}\label{pfdef}
\P_\F\ttm(F):= \ttm\left(F\setminus (\cup_\F Q_j)\right) + 
\sum_\F \frac{\sigma(F\cap Q_j)}{\sigma(Q_j)}\, \ttm(Q_j)\,.
\end{equation}

We note that in particular, for any dyadic cube $Q$ that is {\em not} contained in any 
$Q_j\in\F$, we have
\begin{equation}\label{mu=pmuoncubes}
\P_\F\ttm(Q) = \ttm(Q)\,,\quad Q\not\subset Q_j\,,\, \text{ for any } Q_j\in\F\,.
\end{equation}

\begin{lemma}({\em Essentially}, \cite[Lemma 6.15]{HM-I}) {\em (Dyadic sawtooth lemma for projections).}\label{lemma:DJK-dyadic-proj}
Suppose that $\Omega$ is a 1-sided chord-arc domain, and let
$L_0:=-\dv A\nabla$ be uniformly elliptic in $\Omega$.
Fix $Q_0\in\dd(\pom)$,  let $\F=\{Q_j\}\subset \dd_{Q_0}$ be a family of pairwise disjoint dyadic cubes that are maximal with respect to the property that they are do not belong to a stopping time tree $\sbf$ with top cube $Q(\sbf)=Q_0$.
Let $\P_\F$ be the corresponding projection operator, and 
fix $X_0 \in U_{Q_0}$ as in Remark \ref{x0fix}.
Let $\hm_0=\hm_{0}^{X_0}$ and $\hm_{0,\star}=\hm_{0,\star}^{X_0}$ denote the respective
elliptic measures (for the operator $L_0$) for the domains $\Omega$ and $\Omega_{\sbf}$, 
with fixed pole at $X_0$.  
Let $\nu=\nu^{X_0}$ be the measure on $Q_0$ defined by
\begin{equation}\label{defi:nu-bar}
\nu(F)
=\hm_{0,\star}\left(F\setminus(\cup_{\F} Q_j)\right) +
\sum_{Q_j\in\F} \frac{\hm_0(F\cap Q_j)}{\hm_0(Q_j)}\,
\hm_{0,\star}(P_j),
\qquad F\subset Q_0
\end{equation}
where $P_j\subset \pom_{\sbf}$ is the surface ball satisfying \eqref{eqpj}, 
whose existence is guaranteed by Proposition \ref{prop:Pj}
(that is, by \cite[Lemma 6.7]{HM-I}).
Then 
\begin{equation}\label{defi-proj-nubar}
\P_\F \nu(F)
=
\hm_{0,\star}\left(F\setminus(\cup_{ \F} Q_j)\right)+\sum_{Q_j\in\F} \frac{\sigma(F\cap Q_j)}{\sigma(Q_j)}\,
\hm_{0,\star}(P_j),
\qquad F\subset Q_0.
\end{equation}
Moreover, there exists $\rho>0$ 
such that for all  $Q\in\dd_{Q_0}$ and $F\subset Q$, we have
\begin{equation}\label{dyadic-DJK:proj}
\left(\frac{\P_\F \hm_0(F)}{\P_\F \hm_0(Q)}\right)^{\rho}
\lesssim
\frac{\P_\F \nu(F)}{\P_\F \nu(Q)}
\lesssim
\frac{\P_\F \hm_0(F)}{\P_\F \hm_0(Q)}.
\end{equation}
\end{lemma}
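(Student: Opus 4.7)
The argument splits into two parts: the identity \eqref{defi-proj-nubar} is a direct, bookkeeping computation, while the $A_{\infty}$-type comparison \eqref{dyadic-DJK:proj} is the substantive content and follows \cite[Lemma 6.15]{HM-I} essentially verbatim, since all of its ingredients (1-sided CAD structure, ADR, comparison/boundary Harnack, CFMS, pole change, and the sawtooth geometry of Propositions \ref{sawtoothprop}, \ref{prop:sawtooth-contain}, \ref{prop:Pj}, \ref{prop:Pj2} and Corollary \ref{corPj}) are in place in our setting.

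\emph{Part 1: verification of \eqref{defi-proj-nubar}.} Apply the definition \eqref{pfdef} of $\P_{\F}$ to $\nu$:
\[
\P_\F \nu(F)\,=\,\nu\!\left(F\setminus\cup_\F Q_j\right)\,+\,\sum_{Q_j\in\F}\frac{\sigma(F\cap Q_j)}{\sigma(Q_j)}\,\nu(Q_j).
\]
Since $F\setminus\cup_\F Q_j$ is disjoint from every $Q_j\in\F$, the second sum in \eqref{defi:nu-bar} contributes nothing to $\nu(F\setminus\cup_\F Q_j)$, giving $\nu(F\setminus\cup_\F Q_j)=\hm_{0,\star}(F\setminus\cup_\F Q_j)$. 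For each $Q_j\in\F$, the ``main'' term of \eqref{defi:nu-bar} vanishes because $Q_j\subset\cup_\F Q_k$, while the correction sum collapses to the index $k=j$ by pairwise disjointness of $\F$, yielding $\nu(Q_j)=\hm_{0,\star}(P_j)$. Substituting gives \eqref{defi-proj-nubar}.

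\emph{Part 2: verification of \eqref{dyadic-DJK:proj}.} Fix $Q\in\dd_{Q_0}$ and a Borel set $F\subset Q$. I would treat three cases.

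\emph{Case (a): $Q\subset Q_j$ for some $Q_j\in\F$.} Here \eqref{pfdef} and Part 1 show that both $\P_\F\hm_0$ and $\P_\F\nu$ are proportional to $\sigma$ on $Q_j$ (with constants $\hm_0(Q_j)/\sigma(Q_j)$ and $\hm_{0,\star}(P_j)/\sigma(Q_j)$, respectively), so both ratios in \eqref{dyadic-DJK:proj} equal $\sigma(F)/\sigma(Q)$ and the claim is trivial.

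\emph{Case (b): $Q$ not contained in any $Q_j\in\F$.} Then \eqref{mu=pmuoncubes} gives $\P_\F\hm_0(Q)=\hm_0(Q)$, and the formula of Part 1 together with Proposition \ref{prop5.0a} (no $\ttm$-mass on cube boundaries, for $\ttm=\hm_{0,\star}$) plus Proposition \ref{prop:sawtooth-contain} identify $\P_\F\nu(Q)$ with a sum over $Q\cap\pom_{\sbf}$ plus the contributions $\hm_{0,\star}(P_j)$ for those $Q_j\in\F$ with $Q_j\subset Q$. This is the point at which the ``dyadic sawtooth lemma'' of Dahlberg-Jerison-Kenig type intervenes: applied to $L_0$ in the 1-sided CAD $\Omega_{\sbf}$, together with the pole-change formula (Lemma \ref{lemmapolechange}), boundary Harnack/CFMS (Lemmas \ref{lemmaBHP}, \ref{l2.10}), and the covering \eqref{eqDstar2} of Corollary \ref{corPj} (which replaces the $Q_j$'s by the surface balls $MP_j\subset\pom_{\sbf}$ up to doubling), this yields the two-sided comparison
\[
\left(\frac{\P_\F\hm_0(F)}{\P_\F\hm_0(Q)}\right)^{\!\rho}\lesssim\frac{\P_\F\nu(F)}{\P_\F\nu(Q)}\lesssim\frac{\P_\F\hm_0(F)}{\P_\F\hm_0(Q)}.
\]
The exponent $\rho$ comes from the reverse H\"older/$A_\infty$ self-improvement in the DJK-type lemma.

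\emph{Case (c): the mixed case.} Decompose $F=F_{good}\cup F_{bad}$ and $Q=Q_{good}\cup Q_{bad}$, where the ``bad'' parts are the intersections with $\cup_\F Q_j$. Apply Case (a) inside each $Q_j\subset Q$ and Case (b) to the outside parts, then recombine using the doubling of $\hm_0$ (Lemma \ref{lemmadouble}) and of $\hm_{0,\star}$ (same lemma, applied in $\Omega_{\sbf}$).

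\emph{Main obstacle.} The hard step is Case (b): establishing the dyadic sawtooth $A_\infty$ comparison between $\P_\F\hm_0$ and the pushed-forward copy of $\hm_{0,\star}$. This is where the boundary Harnack principle (Lemma \ref{lemmaBHP}), the two-sided CFMS estimates (Lemma \ref{l2.10}) for $L_0$ in both $\Omega$ and $\Omega_\sbf$, and the careful geometric correspondence between $\F$ and the boundary surface balls $P_j\subset\pom_{\sbf}$ (Propositions \ref{prop:Pj}, \ref{prop:Pj2} and Corollary \ref{corPj}) must all be combined. I would follow \cite[Section 6]{HM-I} for the details, since none of the steps there use more than what is recorded in Section \ref{Sdefinitions}; in particular, the argument involves only the homogeneous operator $L_0$, so no smallness or Carleson hypothesis on $\Bb$ is needed.
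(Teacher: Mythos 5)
Your proposal is correct and follows essentially the same route as the paper: both defer the substantive $A_\infty$-comparison \eqref{dyadic-DJK:proj} to \cite[Lemma 6.15]{HM-I}, after observing that its proof uses only the standard estimates for $L_0$ (CFMS, boundary Harnack/comparison, pole change, doubling) that have been re-established here in the 1-sided CAD setting; your Part 1 bookkeeping for \eqref{defi-proj-nubar} is also correct. One small remark on your case split: because $\F$ and $Q$ live in the same dyadic grid, if $Q$ is not contained in any $Q_j\in\F$ then every $Q_j$ is either a subset of $Q$ or disjoint from $Q$, so your cases (a) and (b) are already exhaustive and the ``mixed'' case (c) is subsumed by (b).
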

\noindent
We remark that in \eqref{defi:nu-bar} and in \eqref{defi-proj-nubar}, we
are using that $Q_0\setminus (\cup_\F Q_j) \subset \pom \cap\pom_\sbf$ 
(see Proposition \ref{prop:sawtooth-contain}), so that 
$\hm_{0,\star}(F\setminus (\cup_\F Q_j))$ makes sense.

The preceeding lemma is a dyadic version of the ``Main Lemma" of \cite{DJK}, from which 
its proof is adapted.  
In the present form, it has essentially been proved:  see \cite[Lemma 6.15]{HM-I}.  
The latter result was stated only for the Laplacian, but the proof carries 
through essentially verbatim to the case
stated above:  one requires only that solutions to the operator $L_0u=0$ satisfy the 
standard estimates in the Lemmata \ref{proppde}, \ref{Bourgainhm}, \ref{lemma2.green}, 
\ref{l2.10}, \ref{lemmadouble}, \ref{lemmaBHP}, and \ref{lemmapolechange}.
We mention that there is an extraneous qualitative hypothesis in  
the statement of \cite[Lemma 6.15]{HM-I}, namely
that $\Omega$ has exterior corkscrew points in a {\em qualitative} sense (i.e., 
only at small scales, with no quantitative limitation on how small the scales might 
be): in \cite{HM-I}, the lemma was 
applied in approximating domains which had the qualitative property, but 
the quantitative estimates were uniform for all the approximating domains, and
did not depend on this qualitative assumption; in any case this hypothesis is not needed in the proof,
as the interested reader may readily verify by consulting \cite{HM-I}.

 We now turn to the main case, in which 
 both \eqref{sq-functionsmall} and \eqref{Corona-bad-cubes} hold.
For $\sbf$ as in Lemma \ref{lemmastoptime}, 
by \eqref{sq-functionsmall} and the definition \eqref{dy-treesqfunction} of 
$\mathcal{A}_{\sbf}(\cdot)=\mathcal{A}^M_{\sbf}(\cdot)$, 
we have that whenever $x\in Q \in \sbf$,
\begin{equation*}
\iint_{U_{Q}^M} |{\Bb_*}(Y)|^2 \delta(Y)^{1-n}\,dY\,\leq\,
\mathcal{A}_{\sbf}({\bf \Bb_*})^2(x)\,\leq 2b\,, 
\end{equation*}
and thus also, by the trivial containment $U_Q\subset U_Q^M$,
\begin{equation}\label{UQbbound}
\iint_{U_{Q}} |{\Bb_*}(Y)|^2 \delta(Y)^{1-n}\,dY\, \leq\, 2b\,,\qquad \forall \, Q\in\sbf\,.
\end{equation}
Since $U_Q$ is a union of fattened Whitney cubes, we may construct a covering of $U_Q$ by a collection $\mathcal{C}$ of
$(n+1)$-dimensional Euclidean cubes, each of side length
$\ell(I) =\ell(Q)/N$, so that $U_Q =\cup_{I\in\mathcal{C}}\,I$, and 
where we fix $N$ large enough that $|Z-Y|<\delta(Y)/4$
for any pair of points $Y,Z\in I$;
here, we have used that
$\delta(Y) \approx \ell(Q)$, for all $Y\in U_Q$.

 With this choice of $N$ now fixed,
using \eqref{UQbbound},
we see that for each such $I$,
\[
\frac1{|I|} \iint_I \,|\delta(Y) \Bb_*(Y)|^2 dY \,\lesssim\, b
\]
In particular, there is some fixed $Y_I\in I$ such that 
$\delta(Y_I) |\Bb_*(Y_I)| \lesssim \sqrt{b}$.  By definition \eqref{bstardef}
of $\Bb_*$, we see that
\[
\delta_\star(Y)|\Bb(Y)|\leq \delta(Y) |\Bb (Y)| \lesssim \sqrt{b}\,,\qquad \text{a.e. } Y\in I\,,
\]
hence for a.e.~$Y\in U_Q$, and hence for a.e.~$Y\in \Omega_{\sbf}$, where as above,
$\delta_\star(Y):=\dist(Y,\pom_{\sbf})$.
Moreover, by \eqref{sq-functionsmall} and Corollary \ref{corsawtooth}, choosing
$b=c\eps^2$ for some small enough but fixed constant $c>0$, 
we see that $\Bb$ is $\eps$-small with respect to $\hm_{0,\star}$ in $\Omega_S$,
in the sense of Definition \ref{epsmall}, where as above
$\hm_{0,\star}:= \hm^{X_0}_{L_0,\Omega_{\sbf}}$ 
denotes the elliptic measure for the operator $L_0=-\dv A\nabla$, in the domain
$\Omega_{\sbf}$, with pole at the fixed point $X_0\in U_{Q_0}$ in Remark \ref{x0fix}.
Thus, by Theorem \ref{Tauxsmall}, applied in the domain $\Omega_{\sbf}$, with
$\eps\leq\eps_0$, we find that 
\begin{equation}\label{sawtoothainfinity}
\hm^{X_0}_{L,\Omega_{\sbf}} =: \hm_{\star}\in A_\infty(\pom_{\sbf}, \hm_{0,\star})\,,
\end{equation}
where $\hm^{X}_{L,\Omega_{\sbf}}$ is elliptic measure for $L=L_0+\Bb\cdot \nabla$ in
the domain $\Omega_{\sbf}$.

\begin{remark}\label{remarkhm} In agreement with 
our previous usage (as in \eqref{sawtoothainfinity} for example),
we formalize some notation to distinguish elliptic measures in different domains, and for different operators.  We let $\hm^X:=\hm^X_{L,\Omega}$, and
$\hm_\star^X:=\hm^X_{L,\Omega_{\sbf}}$ denote elliptic measure for $L$ at the point $X$, in the domains $\Omega$ and $\Omega_{\sbf}$ respectively.  Similarly
$\hm_0^X:=\hm^X_{L_0,\Omega}$, and
$\hm_{0,\star}^X:=\hm^X_{L_0,\Omega_{\sbf}}$ will
denote elliptic measure for $L_0$ at the point $X$, in the domains $\Omega$ and $\Omega_{\sbf}$.
In the special case that $X=X_0$, we simply write
$\hm=\hm^{X_0}$, $\hm_\star = \hm_\star^{X_0}$, 
$\hm_0=\hm_0^{X_0}$ and $\hm_{0,\star} =\hm_{0,\star}^{X_0}$,
where $X_0\in U_{Q_0}$ has been fixed as in Remark \ref{x0fix}.
\end{remark}

Proceeding with the proof of Theorem \ref{Tmain}, we
recall that we are assuming that $H[a]$ holds, 
and we seek to establish the conclusion
of $H[a+b]$, for the cube $Q_0$ satisfying \eqref{eqQ0}, 
where as above, we have now fixed $b=c\eps^2$ 
with $\eps\leq \eps_0$.  


Suppose then that
$E\subset Q_0$, such that
\begin{equation}\label{eqEample}
\sigma(E)\geq (1-\eta)\sigma(Q_0)\,.
\end{equation}

As above, let $X_0\in U_{Q_0}$ be as in Remark \ref{x0fix}.
Our goal is to show that there is a uniform number $\gamma>0$ such that
\begin{equation}\label{BLforhm}
\hm(E):=\hm^{X_0}(E)\geq \gamma\,,
\end{equation}
provided that
$\eta =\eta_{a+b}>0$ is small enough.  By the Harnack chain condition and Harnack's inequality, \eqref{BLforhm} immediately implies the analogous estimate with
$X_0$ replaced by an arbitrary $X\in U_{Q_0}$, and with $\gamma$ replaced by 
$c\gamma=:\gamma_{a+b}$, for some uniform constant $c$, and $H[a+b]$ will then follow.

We now turn to the proof of \eqref{BLforhm}.  Recall that we have applied Lemma 
\ref{lemmastoptime} to $Q_0$, so that, by \eqref{Corona-bad-cubes}, since $a<M_2$
(see \eqref{ThetaQdef}),
\begin{equation*}
\sigma \bigg(\bigcup_{\F_{bad}}Q_j\bigg)
\leq \frac{a+b}{a+2b}\, \sigma(Q_0)\,\leq \,\frac{M_2+b}{M_2+2b}\, \sigma(Q_0)\,=:
(1-\theta)\,\sigma(Q_0)\,.
\end{equation*}
Consequently, either

\smallskip
\noindent {\bf Case 1}:
\begin{equation}\label{case1}
 \sigma \big(\cup_{\F_{good}}Q_j\big) \geq \frac{\theta}{2} \sigma (Q_0)\,,
\end{equation}
where $\F_{good}:= \F \setminus \F_{bad}$, and $\theta = \theta(M_2,b)$ is a uniform positive number, or

\smallskip
\noindent {\bf Case 2}:
\begin{equation}\label{case2}
 \sigma \left(Q_0\setminus \left(\cup_\F Q_j\right) \right) \geq \frac{\theta}{2} \sigma (Q_0)\,.
\end{equation}

\noindent{\em Proof of $H[a] \!\!\implies \!\!H[a+b]$ in Case 1}.
 Suppose that \eqref{eqEample} and \eqref{case1} hold.  Note that by definition,
\[
 \F_{good}:=
\{Q_j\in\F:\,\fint_{Q_j}\mathcal{A}^{short}_{Q_j}({\bf B_*})^2d\sigma\leq\,a\}\,.
\]
Then for $\eta>0$ small enough, by pigeon-holing and the definition of
$\mathcal{A}^{short}_{Q_j}$ (see \eqref{dy-conesqfunctionshort}), 
there is an ``extra good" subset 
$\F_{eg}\subset \F_{good}$ such that there is a child $Q_j'$ of each $Q_j\in\F_{eg}$,
satisfying the following three estimates:
\begin{equation}\label{feg}
 \sigma \big(\cup_{Q_j\in \F_{eg}}Q'_j\big) \gtrsim_{a,\theta,\eta} \sigma(Q_0)\,,
\end{equation}
\begin{equation}\label{feg2}
\fint_{Q'_j}\mathcal{A}_{Q'_j}({\bf B_*})^2d\sigma \, \leq \, a\,,
\end{equation}
and 
\begin{equation}\label{feg3}
\sigma(E\cap Q'_j) \geq (1-\eta_a) \sigma(Q_j')\,.
\end{equation}
Note that we may therefore apply $H[a]$ to such $Q'_j$, to deduce that
\begin{equation}\label{feg4}
\inf_{X\in U_{Q'_j}}\hm^X(E\cap Q'_j) \geq \gamma_a\,.
\end{equation}

By a covering lemma argument, we may extract a further
``extra good separated" subcollection $\F_{egs}\subset \F_{eg}$, such that
for some sufficiently large constant  $N_0\gg 1$ to be chosen momentarily (depending as usual only on allowable parameters),
\begin{equation}\label{eqseparate}
\dist(Q_{j_1},Q_{j_2}) \geq N_0 \max(\diam Q_{j_1},\diam Q_{j_2})
\end{equation}
for distinct $Q_{j_1},Q_{j_2} \in \F_{egs}$, and
\begin{equation}\label{eqegs}
\sum_{Q_j\in \F_{egs}} \sigma(Q_j) =
 \sigma \left(\cup_{Q_j\in \F_{egs}}Q_j\right) 
 \geq
\sum_{Q_j\in \F_{egs}} \sigma(Q_j') =
 \sigma \left(\cup_{Q_j\in \F_{egs}}Q'_j\right) \gtrsim_{a,\theta,\eta,N_0} \sigma(Q_0)\,,
\end{equation}
where as above, $Q_j'$ is a child of $Q_j$ satisfying \eqref{feg2} and \eqref{feg3}.

We recall the surface balls $P_j\subset \pom_\sbf$ given in Proposition \ref{prop:Pj}. 
Note that we may (and do) choose $N_0$ large enough, 
depending on the implicit constants in \eqref{eqpj}, such that
for any distinct $Q_{j_1},Q_{j_2} \in \F_{egs}$, we have 
\begin{equation}\label{eqpjseparate}
\dist(P_{j_1},P_{j_2}) \,\geq \, \max(\diam Q_{j_1},\diam Q_{j_2})\,.
\end{equation}

We recall that 
$E\subset Q_0$ is a Borel set satisfying \eqref{eqEample}, and 
we remind the reader of the notation in Remark \ref{remarkhm}.
Since $\Omega_\sbf\subset \Omega$,
\begin{multline}
\hm(E) \,\geq\, \sum_{Q_j\in \F_{egs}} \hm(E\cap Q_j')
\\[4pt]=\, \sum_{Q_j\in \F_{egs}} \int_{\pom_\sbf} \hm^{Y}(E\cap Q_j') \,
d\hm_{\star}(Y)
\,\geq\,
\sum_{Q_j\in \F_{egs}} \int_{P_j} \hm^{Y}(E\cap Q_j') \,
d\hm_\star(Y),
\end{multline}
where in the last step we have used \eqref{eqpjseparate}. 
Note that by \eqref{feg4}, \eqref{eqpj} and Harnack's inequality,
\[
\hm^{Y}(E\cap Q_j') \gtrsim \gamma_a\,.
\]
Hence,
\begin{equation}\label{hmotohmos}
\hm(E) \gtrsim \sum_{Q_j\in \F_{egs}} \hm_\star(P_j)
= \hm_\star\left(\cup_{Q_j\in \F_{egs}} P_j\right)=:
\hm_\star(E_\sbf)\,,
\end{equation}
where $E_\sbf:= \cup_{Q_j\in \F_{egs}} P_j$.

We have therefore reduced matters, in Case 1, to proving
\begin{equation}\label{eq5.37}
\hm_\star(E_\sbf) \gtrsim 1\,.
\end{equation}

A remark is in order concerning the proof of \eqref{eq5.37}.
Even though $\Bb$ satisfies a small constant version of \eqref{driftmax} in 
$\Omega_{\sbf}$,
we cannot simply prove a ``small constant" analogue of Theorem \ref{Tmain} 
in $\Omega_{\sbf}$ (which would yield \eqref{eq5.37} directly)
because we do not know that $L_0$ is ``good" in 
$\Omega_\sbf$, i.e., it is not clear that $\hm_{0,\star}$ enjoys the $A_\infty$ 
property with respect to surface measure on $\pom_{\sbf}$.
By hypothesis, such a property holds in 
$\Omega$, but it seems unlikely that it
can be transferred, in general, to the subdomain $\Omega_{\sbf}$.

Instead, our strategy is to use Lemma \ref{lemma:DJK-dyadic-proj},
that is, the dyadic version of the sawtooth lemma 
(the ``Main Lemma") of \cite{DJK}, but here we encounter another obstacle, 
namely that it is not clear that Lemma \ref{lemma:DJK-dyadic-proj} may be applied 
directly to $L$.  Thus, we do something a bit more indirect, inspired by ideas
in \cite{FKP}:  we shall 
 use \eqref{sawtoothainfinity} (which we had deduced 
 from Theorem \ref{Tauxsmall}, our analogue of \cite[Theorem 2.5]{FKP}), 
 to compare $\hm_\star$ to $\hm_{0,\star}$, and then we apply 
 Lemma \ref{lemma:DJK-dyadic-proj} to the operator $L_0$, 
 to compare $\hm_0$ and $\hm_{0,\star}$.


Let us now proceed to establish \eqref{eq5.37}.  The first step is now easy.
We view $\pom_{\sbf}$ as a surface ball of radius 
$r=2\diam(\Omega_{\sbf}) \approx \ell(Q_0)$, so that by
\eqref{sawtoothainfinity}, there is a uniform exponent $\alpha>0$ such that
\begin{equation}\label{ainfinityforhmstar}
\hm_\star(E_\sbf) \gtrsim 
\left(\frac{\hm_{0,\star}(E_\sbf)}{\hm_{0,\star}(\pom_{\sbf})}\right)^{\!\alpha}
\hm_{\star}(\pom_{\sbf})\,.
\end{equation}
Since $\hm_\star$ and $\hm_{0,\star}$ 
are probability measures on $\pom_{\sbf}$, it now suffices to prove that
\begin{equation}\label{hm0starbound}
\hm_{0,\star}(E_\sbf) \gtrsim 1\,,
\end{equation}
which we shall do, in turn, using Lemma \ref{lemma:DJK-dyadic-proj}.

To this end, define $\P_\F$ as in \eqref{pfdef}, let
$\nu$ be as in \eqref{defi:nu-bar}, so that $\P_{\F}\nu$ is given by 
\eqref{defi-proj-nubar}. Set
\begin{equation}\label{eq5.45}
E_1:= E \cap \left(\cup_{Q_j\in\F_{egs}} Q_j'\right)\,.
\end{equation}
We then have that
\begin{equation*}
\P_\F\nu(E_1) = 
\sum_{Q_j\in\F_{egs}} \frac{\sigma(E\cap Q'_j)}{\sigma(Q_j)}\, 
\hm_{0,\star}(P_j) \approx 
\sum_{Q_j\in\F_{egs}} 
\hm_{0,\star}(P_j) = 
\hm_{0,\star}(E_\sbf)\,,
\end{equation*}
where in the next-to-last step, we have used that, in particular, \eqref{feg3} holds for each
$Q_j\in\F_{egs}$, while the last step is just the definition of $E_\sbf$, along with the separation property \eqref{eqpjseparate}.
Thus, to prove \eqref{hm0starbound}, it is equivalent to show that
\begin{equation*}
\P_\F\nu(E_1) \gtrsim 1.
\end{equation*}
To this end, we first note that by \eqref{dyadic-DJK:proj}, applied with $Q=Q_0$,
\[
\P_\F\nu(E_1) \gtrsim
\left(\frac{\P_\F \hm_0(E_1)}{\P_\F 
\hm_0(Q_0)}\right)^{\rho} \P_\F\nu(Q_0)\,.
\]
It therefore suffices to verify the following pair of claims.

\smallskip
\noindent{\bf Claim 1}:
\begin{equation}\label{eq5.47}
\frac{\P_\F \hm_0(E_1)}{\P_\F 
\hm_0(Q_0)} \,\gtrsim\, 1\,,
\end{equation}
and 

\smallskip
\noindent{\bf Claim 2}:
\begin{equation}\label{eq5.48}
\P_\F\nu(Q_0) \gtrsim 1\,,
\end{equation}
where in each case the implicit constants depend only on allowable parameters.

\begin{proof}[Proof of Claim 1]
By hypothesis,
$\hm_0\in A_\infty(\sigma,Q_0)$. 
By definition of $E_1$ \eqref{eq5.45}, and of $\P_\F$ \eqref{pfdef}, 
\begin{multline*}
\P_\F \hm_0(E_1) \,=\,
\sum_{Q_j\in\F_{egs}} \frac{\sigma(E\cap Q'_j)}{\sigma(Q_j)}\,
\hm_0(Q_j)
\\[4pt]
\approx \sum_{Q_j\in\F_{egs}} \hm_0(Q_j)\, =\,
\hm_0(\cup_{\F_{egs}} Q_j) \,\gtrsim\, \hm_0(Q_0)
\,=\, \P_\F\hm_0(Q_0)\,,
\end{multline*}
where in the second line of the display, we have used
\eqref{feg3}, then \eqref{eqegs} and the $A_\infty$ property of 
$\hm_0$,
 and finally \eqref{mu=pmuoncubes}.  
Thus, Claim 1 \eqref{eq5.47} holds.
\end{proof}

\begin{proof}[Proof of Claim 2] 
By the formula for $\P_\F\nu$ in \eqref{defi-proj-nubar},
\begin{multline} \label{eq5.54}
\P_\F\nu(Q_0)
= \hm_{0,\star}\left(Q_0\setminus (\cup_\F Q_j)\right) + 
\sum_\F  \hm_{0,\star}(P_j)
\\[4pt]
\gtrsim \,
\hm_{0,\star}\left(Q_0\setminus (\cup_\F Q_j)\right) + 
\sum_\F  \hm_{0,\star}(M P_j)\,,
\end{multline}
where in the last line we have used the doubling property of 
$\hm_{0,\star}$ to replace
the surface ball $P_j$ 
by the concentric dilate
$M P_j$ defined in Proposition \ref{prop:Pj2}, 
where $M$ is the dilation factor in Corollary \ref{corPj}.
Let $\Delta_\star$ be the surface ball on $\pom_\sbf$ given by 
Proposition \ref{prop:Pj2}.  
Then by \eqref{eq5.54} and Corollary \ref{corPj},
\begin{equation}\label{eq5.55a}
\P_\F\nu(Q_0) \gtrsim \hm_{0,\star}(\Delta_\star) \gtrsim 1\,,
\end{equation}
where in the second inequality we have used
Lemma \ref{Bourgainhm}, 
Harnack's inequality and the Harnack Chain property,
all applied in $\Omega_\sbf$.
Of course, \eqref{eq5.55a} yields Claim 2 \eqref{eq5.48}. 
\end{proof}

We have now
completed the proof of the implication $H[a]\implies H[a+b]$ 
in the scenario of Case 1, i.e., \eqref{case1}. \hfill\(\Box\)

\smallskip

\noindent{\em Proof of $H[a] \!\!\implies \!\!H[a+b]$ in Case 2}.
We now turn to Case 2 \eqref{case2}.  Recall that $E$ satisfies \eqref{eqEample}.
Hence, for $\eta=\eta(\theta)$ small enough, \eqref{case2} yields
\begin{equation}\label{Ecase2}
 \sigma(E'):=\sigma \left(E\setminus \left(\cup_\F Q_j\right) \right) 
 \geq \frac{\theta}{4} \sigma (Q_0)\,,
\end{equation}
where $E':=E\setminus \left(\cup_\F Q_j\right)$.  
Since $E'\subset \pom\cap\pom_{\sbf}$, by the maximum principle and then \eqref{sawtoothainfinity}, we find that for some uniform exponent $\alpha>0$,
\begin{equation}\label{ainfinityforhmstar2}
\hm(E)\geq \hm_\star(E') \gtrsim 
\left(\frac{\hm_{0,\star}(E')}{\hm_{0,\star}(\pom_{\sbf})}\right)^{\!\alpha}
\hm_{\star}(\pom_{\sbf})\,,
\end{equation}
Since $\hm_\star$ and $\hm_{0,\star}$ 
are probability measures on $\pom_{\sbf}$, it now suffices to prove that
\begin{equation}\label{hm0starbound2}
\hm_{0,\star}(E') \gtrsim 1\,.
\end{equation}

To this end, note that by definition of $\P_\F\nu$ \eqref{defi-proj-nubar}, and then 
\eqref{dyadic-DJK:proj} applied with $Q=Q_0$,
\[
\hm_{0,\star}(E') = \P_\F\nu(E') \gtrsim
\left(\frac{\P_\F \hm_0(E')}{\P_\F 
\hm_0(Q_0)}\right)^{\rho} \P_\F\nu(Q_0)\,.
\]
By definition of $\P_\F$ \eqref{pfdef}, and \eqref{mu=pmuoncubes},
we have
\[
\frac{\P_\F \hm_0(E')}{\P_\F 
\hm_0(Q_0)} = \frac{\hm_0(E')}{\hm_0(Q_0)} \approx 1\,,
\]
where in the last step we have used \eqref{Ecase2} and that
by hypothesis, $\hm_0\in A_\infty(Q_0,\sigma)$.
Moreover, exactly as in \eqref{eq5.55a},
$\P_\F\nu(Q_0) \gtrsim 1$, so that \eqref{hm0starbound2} follows.
This concludes the proof that $H[a]\!\!\implies \!\!H[a+b]$ in Case 2. \hfill\(\Box\)
 
Thus, $H[a]\!\!\implies \!\!H[a+b]$ in both cases, and the proof of Theorem \ref{Tmain} is 
complete.

\section{The doubling property}\label{appdoubl}

We show that for $L_0$ and $L$ as in Theorem \ref{Tmain}, the doubling property holds for 
the corresponding elliptic-harmonic measure $\hm_L$.  We 
follow the argument in \cite[Ch. III, Section 4]{HL}, adapted to our setting.  
Given a domain $\Omega$, 
we suppose throughout this section that
\[
L=-\dv A \nabla +\Bb\cdot\nabla\,,
\]
where 
\begin{equation*} 
|\Bb(X)|\,\leq\, \sqrt{M_0} \,\delta(X)^{-1}\,,\qquad \text{a.e. } X\in \Omega\,.
\end{equation*} 
At times, we may assume that $\Bb$ satisfies additional properties, to be specified as required.
We let $\hm=\hm_L$ denote the elliptic-harmonic measure for $L$, in the domain $\Omega$ under consideration. Recall that
 \begin{equation}\label{7.DXdef}
\Delta_Y:=B\big(Y,10\delta(Y)\big)\cap\pom\,,\qquad Y\in\Omega\,.
\end{equation}

We first record extensions of \eqref{eq2.14} and Lemma \ref{l2.10}.

\begin{lemma}\label{lemma7.CFMS}
Suppose that $\Omega\subset \ree,\, n\geq 2$, is a 1-sided CAD,
and that $\Bb$ is {\tt qualitatively} bounded, and 
compactly supported if $\Omega$ is unbounded.  
Then for $\Phi \in Y^{1,2}(\Omega) \cap C(\overline{\Omega})$,
and $X\in\Omega$, we have
\begin{equation}\label{eq7.rieszformula}
u(X) \,-
\, \Phi(X)
=
-\iint_\Omega
\big[A^{\tt \!T}(Z)\nabla_Z G(X,Z) \,+\, \Bb(Z) G(X,Z)\big]\cdot\nabla\Phi(Z)\, dZ\,,
\end{equation}
where $u$ is the 
solution of the Dirichlet problem $Lu=0$ in $\Omega$, $u\in 
Y^{1,2}(\Omega) \cap C(\overline{\Omega})$, with 
$u\lfloor_{\pom}=\Phi\lfloor_{\pom}$.
Next, suppose further
that there is a uniform constant $\gamma >0$
such that 
\begin{equation}\label{eq7.Bourgain1}
\omega^{Y} (\Delta_Y)  
\geq \gamma \,, \qquad \forall\, Y\in\Omega\,.
\end{equation}
Then for all $X,Y\in \Omega$ such that $|X-Y|\ge \delta(Y)/4$ we have
\begin{equation}\label{7.eqn:right-CFMS}
\frac{G(X,Y)}{\delta(Y)}\,
\lesssim_\gamma
\,\frac{\hm^X( \Delta_Y)}{\sigma( \,\Delta_Y)}\,,
\end{equation}
and for all $X,Y\in \Omega$ such that $X\in \Omega \setminus B(Y, 50\kappa_0\delta(Y))$
\begin{equation}\label{7.eqn:left-CFMS}
\frac{\hm^X( \Delta_Y)}{\sigma( \,\Delta_Y)}\,
\lesssim\, \frac{G(X,Y)}{\delta(Y)}\,,
\end{equation}
where the implicit constants depend only on allowable parameters, and $\kappa_0$ is the constant fixed in Lemma \ref{localHardy}.
\end{lemma}
\begin{remark}
Observe that \eqref{eq7.rieszformula}
is a mild extension 
of \eqref{eq2.14}, 
where we now allow unbounded $\Omega$ 
and a slightly more general class of $\Phi$, 
and that \eqref{eq7.Bourgain1} is essentially a restatement of \eqref{eq2.Bourgain1}.  In 
Lemma \ref{lemma7.CFMS}, \eqref{eq7.Bourgain1} replaces the smallness 
condition \eqref{smallptwise} of Lemma \ref{l2.10}. 
\end{remark}

\begin{proof}[Proof of Lemma \ref{lemma7.CFMS}]
Given the qualitative assumptions on $\Bb$, we obtain \eqref{eq7.rieszformula}
directly from the results of either \cite{S} or \cite{KS} (in the case that $\Omega$ is bounded),
or from \cite{M} (in the case that $\Omega$ is unbounded).  
Let us now turn to the quantitative estimates.

\begin{proof}[Proof of \eqref{7.eqn:right-CFMS}]  Note that $\sigma(\Delta_Y)\approx \delta(Y)^n$
by the ADR property of $\pom$, and that
\begin{equation}\label{eq7.Bourgain1a}
\omega^{X} (\Delta_Y)  \gtrsim \gamma\,,
\qquad \forall\, X,Y \in \Omega
\text{ such that } |X-Y|\le \delta(Y)/4\,,
\end{equation}
 by \eqref{eq7.Bourgain1} and Harnack's inequality.
Then by a standard argument, it suffices to prove that
\begin{equation}\label{eq7.ptwisegreen}
G(X,Y) \lesssim_\gamma |X-Y|^{1-n}\,,\qquad |X-Y|= \delta(Y)/4\,,
\end{equation}
where the implicit constant may depend on $\gamma$, and on the usual allowable parameters. 
Indeed, viewing $Y$ as fixed,
we obtain \eqref{7.eqn:right-CFMS} by combining \eqref{eq7.ptwisegreen} with
\eqref{eq7.Bourgain1a}, and then using the maximum principle in 
$\Omega\setminus \overline{B(Y,\delta(Y)/4)}$
(see, e.g., \cite[Proof of Lemma 1.3.3]{K}).

To verify \eqref{eq7.ptwisegreen}, 
we follow closely the argument in \cite[Ch.~III, Proof of Lemma 4.3]{HL}.  Fix
$Y\in \Omega$, set $R:= \delta(Y)$, let $\xi \in C_0^\infty (B(Y, 12 R))$, with 
$0\leq \xi\leq 1$ and $\xi\equiv 1$ on $B(Y, 10R)$,
and set $v(Z):= \int_{\pom}\xi(x) \,d\hm^Z(x)$.  Define
\[
U_\eta(Y):= \left( B\big(Y,MR\big) \setminus
B\big(Y,20R\big)\right) \cap \big\{Z\in \Omega:  \eta R/M\leq \delta(Z)\leq \eta R\big\},
\]
where $M$ is simply chosen large enough, depending on the corkscrew constants for 
$\Omega$, to ensure that $U_\eta(Y)$ is non-empty. 
Then by \eqref{eq7.Bourgain1}
and an iteration argument, there exists a uniform exponent $\alpha>0$ such that
\[
v(Z) \lesssim \big(\delta(Z)/R\big)^\alpha 
\lesssim \eta^\alpha\,,
\qquad \forall\, Z\in U_\eta(Y)\,.
\]
Note also that $v(Z)\geq \hm^Z(\Delta_Y)$, by construction.
 Choosing
$\eta$ small enough depending on $\gamma$, fixing a point $Z_0\in U_\eta(Y)$ and a point
$X_0$ with $|X_0-Y|= R/2$,
and then using \eqref{eq7.Bourgain1}, Harnack's inequality
and Poincare's inequality, we see that for an appropriate closed set $U^*_\eta(Y)$,
\begin{equation}\label{eq7.lowerbound1}
\gamma \lesssim v(X_0) - v(Z_0) \lesssim_\eta \, 
R \left(R^{-1-n}\iint_{U^*_\eta(Y)} |\nabla v(Z)|^2dZ\right)^{1/2}\,,
\end{equation}
where $U^*_\eta(Y)$ satisfies
\[
B(X_0,R/8)\cup\{Z_0\}\subset U^*_\eta(Y)\subset \Omega \setminus B(Y,R/3)\,,
\qquad \diam(U^*_\eta(Y))\lesssim R\,.
\] 
Consider any $X$ with $|X-Y|= R/4$,
and set $m:= \min_{Z\in U^*_\eta(Y)} G(X,Z)$.  Squaring the inequality in
\eqref{eq7.lowerbound1}, and using that $Lv=0$, hence
$A\nabla v\cdot\nabla v =-(1/2) L(v^2)$ in the weak sense, we then have
\begin{multline*}
\gamma^2 m \,\lesssim \, R^{1-n} \iint_{U^*_\eta(Y)} G(X,Z) \, |\nabla v(Z)|^2dZ 
\,\lesssim\, R^{1-n} \int_\Omega G(X,Z)\, A(Z)\nabla v(Z)\cdot \nabla v(Z) dZ
\\[4pt] 
= \, -\frac12 R^{1-n} \iint_\Omega
\big[A^{\tt \!T}(Z)\nabla_Z G(X,Z) \,+\, \Bb(Z) G(X,Z)\big]\cdot\nabla v^2(Z)\, dZ
\\[4pt] 
= \frac12 R^{1-n}\left(u(X) - v^2(X)\right) \lesssim R^{1-n}\,,
\end{multline*}
where we have used
\eqref{eq7.rieszformula} with $\Phi=v^2$, and $u$ is the solution of the Dirichlet problem for 
$L$, with data $u\lfloor_{\pom} = v^2\lfloor_{\pom} = \xi^2\lfloor_{\pom}$.
Let $Z_1\in U^*_\eta(Y)$ be a point in where the minimum value is achieved, i.e.,
$G(X,Z_1)=m$.  By the Harnack chain condition, we may choose a collection of at most
$C(\eta)$ balls connecting $Z_1$ to $Y$, 
each of radius $r\gtrsim \eta R$, and each missing the ball $B(X,R/100)$.  Since $\eta$ depends only on $\gamma$, using Harnack's inequality and the definition of $R$, we obtain 
\eqref{eq7.ptwisegreen}.  This completes the proof of \eqref{7.eqn:right-CFMS}.
\end{proof}

To establish \eqref{7.eqn:left-CFMS}, we first
note that the conclusion of Lemma \ref{proppde} continues to hold in the present setting, 
even without the smallness condition \eqref{smallptwise}.  Indeed,
\eqref{eq7.Bourgain1} (more precisely, its equivalent, \eqref{eq2.Bourgain1})
yields \eqref{holderatbdry}  (H\"older continuity at the boundary) by an iteration argument, and  the Carleson estimate \eqref{carleson} follows in turn from \eqref{holderatbdry}, as in the proof of
\cite[Theorem 1.1]{CFMS} or \cite[Lemma 4.4]{JK}.   With \eqref{carleson} in hand, we may then repeat the proof of Lemma \ref{l2.10}, specifically, that of \eqref{eqn:left-CFMS}.
This concludes the proof of Lemma \ref{lemma7.CFMS}.
\end{proof}


\begin{corollary} Suppose that $\Omega\subset \ree,\, n\geq 2$, is a 1-sided CAD,
let $L_0$ and $L$ be as in Theorem \ref{Tmain}, and let $\hm$ be
the elliptic-harmonic measure corresponding to $L$ (which exists, as shown in the proof of
Theorem \ref{Tmain}:  see the discussion following Lemma \ref{BLlemma}).  
Then $\hm$ satisfies the doubling property: 
\begin{equation}  \label{eq7.hmdouble}
\hm^X(2\Delta) \,\lesssim\, \hm^X(\Delta)\,,\qquad X\in\Omega\setminus 4B\,,
\end{equation}
for any $\Delta =B\cap\pom$, where $B$ is a ball centered on $\pom$, of radius
$r_B<\diam{\pom}$.  The implicit constant in \eqref{eq7.hmdouble} depends only on the allowable parameters for Theorem \ref{Tmain}.
\end{corollary}
\begin{proof}
In the proof of Theorem \ref{Tmain}, we showed that $\hm_k$, the elliptic-harmonic measure corresponding to the operator $L_k$ defined in Subsection \ref{approxhm},
satisfies the Bennewitz-Lewis criterion \eqref{eq3.1b}, uniformly in $k$.  
In particular, taking $E=\Delta_X$ in 
\eqref{eq3.1b}, and then using Harnack's inequality,
we obtain \eqref{eq7.Bourgain1}, again uniformly in $k$.  
We may then invoke Lemma \ref{lemma7.CFMS}
to see that the conclusion of 
Lemma \ref{l2.10} holds, so in turn Lemma \ref{lemmadouble} (the doubling property) 
remains valid for $\hm_k$, and once again the quantitative bounds are all
uniform in $k$.  We may then invoke the weak convergence result in 
Lemma \ref{hmklimit} to obtain \eqref{eq7.hmdouble}.
\end{proof}

\begin{remark}
In \cite{P}, a counter-example is presented for which
the Green function 
estimate \eqref{eq7.ptwisegreen} fails. 
In an earlier preprint version of the present manuscript, in view
of the example in \cite{P}, 
we had stated, erroneously, that the proof of doubling in \cite{HL}, 
based on \eqref{eq7.ptwisegreen},
was incorrect.  In fact, 
while the counter-example of \cite{P} is valid, nonetheless
\eqref{eq7.ptwisegreen} does hold {\em in the presence of \eqref{eq7.Bourgain1}}, as was established (correctly) in \cite{HL}.  Indeed, the proof of \eqref{eq7.ptwisegreen}
given here follows that of \cite[Ch.~III, Lemma 4.3]{HL}.  
Thus, the proof of doubling in \cite{HL} 
is correct.
\end{remark}

\end{document}